\newtheorem{theorem}{Theorem}[section]
\newtheorem{assumption}[theorem]{Assumption}
\newtheorem{corollary}[theorem]{Corollary}
\newtheorem{definition}[theorem]{Definition}
\newtheorem{lemma}[theorem]{Lemma}
\newtheorem{proposition}[theorem]{Proposition}
\newtheorem{remark}[theorem]{Remark}
\newenvironment{proof}[1][Proof]{\textbf{#1.} }{\ \rule{0.5em}{0.5em}}
\numberwithin{equation}{section}
\tikzset{
    scale plot marks/.is choice,
    scale plot marks/false/.code={
        \def\pgfuseplotmark##1{\pgftransformresetnontranslations\csname pgf@plot@mark@##1\endcsname}
    },
    scale plot marks/true/.style={},
    scale plot marks/.default=true
}
\newcommand{\Prob}{\mathbb{P}}
\newcommand{\E}{\mathbb{E}}
\DeclareMathOperator*{\argmax}{arg\,max}
\DeclareMathOperator*{\argmin}{arg\,min}
\title{Complete resource pooling of a load balancing policy for a network of battery swapping stations}
\let\@fnsymbol\@arabic
\author{Fiona Sloothaak\thanks{Eindhoven University of Technology, Eindhoven, Netherlands.} \and James Cruise\thanks{Riverlane, Cambridge, United Kingdom.} \and Seva Shneer\thanks{Heriot-Watt University, Edinburgh, United Kingdom.}~\textsuperscript{,}\thanks{Novosibirsk State University, Russia} \and Maria Vlasiou\footnotemark[1]~\textsuperscript{,}\thanks{University of Twente, Enschede, Netherlands.} \and Bert Zwart\footnotemark[1]~\textsuperscript{,}\thanks{Centrum Wiskunde \& Informatica, Amsterdam, Netherlands.}}
\date{\vspace{-1ex}}
\begin{document}
\maketitle
\begin{abstract}
		To reduce carbon emission in the transportation sector, there is currently a steady move taking place to an electrified transportation system. This brings about various issues for which a promising solution involves the construction and operation of a battery swapping infrastructure rather than in-vehicle charging of batteries. In this paper, we study a closed Markovian queueing network that allows for spare batteries under a dynamic arrival policy. We propose a provisioning rule for the capacity levels and show that these lead to near-optimal resource utilization, while guaranteeing good quality-of-service levels for Electric Vehicle (EV) users. Key in the derivations is to prove a state-space collapse result, which in turn implies that performance levels are as good as if there would have been a single station with an aggregated number of resources, thus achieving complete resource pooling.
\end{abstract}

	\maketitle
	
	%
	
	
\section{Introduction}
A key challenge in the deployment and take up of electric vehicles by society is the provision of a scalable charging infrastructure. A viable solution is the development of a battery swapping network. Currently, there has been work done on the operation and control of a single battery swapping stations (for example \cite{TanPerformance2018}), but there is a clear gap within the literature when extending this to the operation a wider network of stations. In this paper, we introduce a novel stochastic network model describing a network of battery swapping stations which clearly addresses this need and provides a foundation for future studies. In addition, we carry out a detailed analysis of this model and obtained a number of novel insights into the operation of a battery swapping network.
	
	A steady energy transition is taking place due to the de-carbonization of the economy, leading to many intrinsic challenges and research opportunities, of which an overview is given in~\cite{NAP2016} and~\cite{Bienstock2015}. There are numerous challenging problems caused by developments on the demand side. Examples include control problems in local, smart distribution grids, as well as managing increasing demand irregularities caused by e.g.~electric vehicles. Modeling the behavior of individual agents and their interaction naturally leads to stochastic models.

	Despite the apparent need of alternative energy sources in the transportation sector, the adoption of electrified vehicles has been slow initially due to various practical challenges, such as high purchase costs of an EV, battery life problems and long battery charging times~\cite{SunTanTsang2014}. A possible solution to address these issues is the construction and operation of a battery swapping infrastructure. The upfront costs of purchase of an EV can be significantly reduced when battery swapping station operators own and lease batteries to customers, the batteries can be charged more appropriately to prolong batteries' lifetime~\cite{TanPerformance2018}, and EV users can experience a fast exchange of batteries in contrast to long charging times. Beyond the consumer benefits, the centralized charging paradigm of battery swapping allows the deferment of huge network reinforcement works required to support charging at home by connecting the chargers to the medium voltage network.  Furthermore, the aggregation of a large number of batteries at charging stations can provide a comprehensive range of flexibility services to transmission and distribution network service operators.
	
	In this paper, we introduce a model for EVs utilizing battery swapping technology within the context of a fixed region. Within the region there are a number of charging/swapping stations and vehicles, in general, do not leave the region leading to the conservation of batteries. This leads us to introduce a class of closed Markovian queueing network model, which we use in a novel way to model the evolution of the battery population within a city.
	
	With the advancement of smartphones and online technologies, a range of service providers will utilize these advancements to provide occupancy level information to customers to improve delay performance. In a battery swapping system, such information can motivate EV users to visit the most appealing location in the direct vicinity. In this paper, we integrate a \textit{load-balancing} policy to incorporate this. An intrinsic problem is to establish suitable capacity levels that account for the inherent tradeoff between EV users' quality-of-service and operational costs. To the best of our knowledge, this is the first work that considers this question for a battery swapping system in a network framework under a dynamic arrival policy.

	Adequately balancing service performance and resource utilization is very much in the spirit of the \textit{Quality-and-Efficiency-Driven (QED) regime} known from asymptotic many-server queueing theory~\cite{HalfinWhitt1981}. Typically, this gives rise to a square-root slack provisioning policy for the capacity levels and has been successfully implemented in many applications such as call centers~\cite{Borst2004,Khudyakov2010,ZhangJvLZwart2012}, healthcare systems~\cite{Jennings2011,YomTov2014,JvLMatSloYT2016} and more. This policy leads to favorable performance for large systems: as the number of customers $r$ grows large, the waiting probability tends to a value strictly between zero and one, the waiting time vanishes with a rate $1/\sqrt{r}$, and near-optimal resource utilization of $1-O(1/\sqrt{r})$ is achieved. To inherit such properties for the battery swapping framework, we adopt a similar capacity level design policy for both the number of charging servers and the number of spare batteries relative to the expected offered load under the load-balancing arrival strategy.

	To add to the agreeable properties of delay performance in the QED regime, the arrival strategy ensures that the relative charging loads at the different stations do not grow apart too much since arriving EV users always move to the least loaded station. This phenomenon has been observed in a number of settings and is referred to as state space collapse, see~\cite{MR1663763,Williams1998} for an overview and~\cite{DaiTezcan2011} for work most closely related to this paper.  In fact, when capacity levels are chosen appropriately, this effect is so strong that complete resource pooling takes place: the system behaves as if there is only a single station with an aggregated number of resources. It ensures that it is unlikely that EV users are waiting for a battery at one station, while another is readily available at any other station, even among those stations that are far from his direct vicinity.
	
	The first main contribution of this paper is the introduction of a stochastic model for battery charging in a network setting. In recent years, there has been a growing amount of research on both the planning/design as well as the operation/scheduling in battery swapping systems, see~\cite{TanPerformance2018} for an overview. Most papers employ robust optimization techniques to find optimal solutions for certain objectives, while little of the works focus on the quality-of-service for EV users. The exception are a collection of papers written by a set of authors~\cite{SunTanTsang2014,TanSunTsang2014,SunTanTsang2018,TanPerformance2018,SunWhitt2018}, that use asymptotic analysis and Markov Decision Process techniques to propose suitable solutions. Whereas the focus in those papers is on issues arising in a single station, we propose a network setting to account for queue length correlations between stations.
	
	Our second main contribution involves the novelty of our load-balancing arrival mechanism. Load-balancing policies have attracted a lot of attention in recent years due to extremely relevant applications in large data centers, see~\cite{BoorMukherjee2018} for an overview. Typically, these systems comprise many single-server stations where a central dispatcher decides where to allocate incoming tasks. In contrast, our framework involves a network of (a fixed number of) multi-server stations for which we introduce a unique feature: an arriving EV user restricts itself to move only to one of the stations in his direct vicinity. By appropriately setting the capacity levels according to the QED provisioning rule, we show that this constraint turns redundant in the sense that the resource pooling effect can still be achieved.
	
	In this paper, we also make several theoretical contributions. Direct analysis of the steady-state distribution of the queue-length process is intractable under the load-balancing strategy in case of multiple stations. Instead, we resort to a fluid and diffusion limit approach. We derive the existence of the fluid limit and point out its unique invariant state. Using a diffusion-scaled queue length process, we zoom in on the fluctuations around the invariant state. We prove a state space collapse (SSC) result by showing that in the limit (as the number of EVs grows larger) the diffusion-scaled queue lengths tend to become arbitrarily close almost instantaneously and stay that way for any fixed interval. This property can be exploited to derive the limiting queue length behavior at every station, and show that it implies the complete resource pooling effect. The derivations of our results rely heavily on the framework developed by Dai and Tezcan~\cite{DaiTezcan2011}, that in turn can be seen as an extension of~\cite{Bramson1998}. We adapt their framework to incorporate a closed network setting under the novel load-balancing policy.

	The introduction of the novel framework within this paper acts as a foundation for a substantial research programme in the modeling of battery swapping networks. This will provide practitioners with a better understanding of how such networks should be designed and operated from both the perspective of quality of service requirements but also from an economic viewpoint.
	This can be carried out by enriching the model, here we highlight a few possible directions we consider important and challenging future steps.  Each of these will provide a detailed insight a specific aspect of such systems. Firstly, the inclusion of multiple customer types to model a range of car brands within the network using different battery systems. Secondly, there is a delay between the moment an EV user consults queue length information and the actual arrival due to transportation time. As is perceived in health care settings and bike-sharing systems, this can have a considerable effect on the queue length behavior. A third enhancement would be to incorporate a time-inhomogeneous demand rate to better simulate the expected diurnal variation. This will lead to a varying amount of slackness in the capacity within the QED regime. Finally, there is substantial underlying variability in the fluctuating energy prices, which sharply rise whenever the energy grid is more strained and vice versa. A battery swapping infrastructure will be sensitive to these prices changes and can provide an indispensable asset for supporting a stable grid in the future. Since it can relieve strain during peak moments by deferring the moment of charging or even deplete batteries providing energy to the grid. It is beyond the scope of this paper to provide efficient and adequate provisioning rules in these challenging settings, yet offers intriguing avenues to pursue in future research.
	The main insight provided in the present study is the effectiveness of simple load balancing policies, and while the model is parsimonious, this insight is useful in, at least, the planning stage of a swapping network.

	The remainder of this paper is organized as follows. In Section~\ref{sec:ModelDescription} we describe the battery swapping network and its corresponding load-balancing arrival mechanism. In Section~\ref{sec:SingleStation} we present the fluid and diffusion results in the special case of a single station, and generalize these results for the multiple stations setting in Section~\ref{sec:MultipleStations}. Our results imply approximations for certain performance measures, which we validate through several simulation experiments described in Section~\ref{sec:SimulationExperiments}.
	
	\section{Model Description}\label{sec:ModelDescription}
	In this paper, we consider a queueing network with $S$ battery swapping stations and $r$ EVs. Each EV has one battery (collection) providing the energy for the car to drive. Every station $i \in \{1,\ldots,S\}$ has three types of assets: $F_i$ \textit{charging points}, $B_i$ \textit{spare batteries} and $G_i$ \textit{swapping servers}. Whenever there is an EV arrival at a station, a swapping server takes out the almost depleted battery and exchanges it for a fully-charged one if available. The swapping time is relatively very short (with respect to charging times), and therefore we assume it to occur instantaneously. Batteries in need of charging are being recharged whenever a charging point is available, and we assume every recharge to take an exponential amount of time with rate $\mu$, independent of everything else. Whenever a battery is fully charged, it is placed in an EV immediately if one is waiting, and otherwise stocked for a future EV arrival. After receiving a fully-charged battery, the EV requires recharging after an exponential amount of time with rate $\lambda$. With probability $p_{ij}$ stations $i$ and $j$ are in the EV user's direct vicinity. We assume that EV users consult some online device, and are motivated to move to the station that is relatively least loaded (ties are broken evenly). We define which station is relatively least loaded more precisely later in this section. Figure~\ref{fig:MultipleStationIllustration} illustrates the closed queueing model under this load-balancing arrival mechanism.
	
	\begin{figure}
		\centering
		\includegraphics[width=10cm]{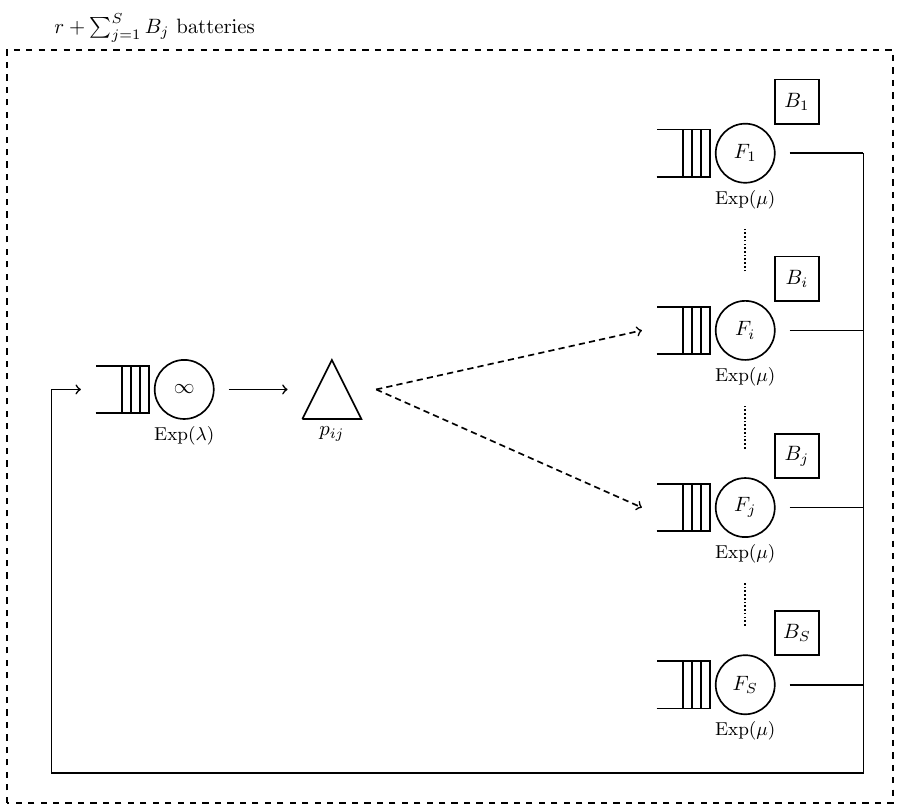}
		\caption{Illustration of closed queueing network with multiple stations.}
		\label{fig:MultipleStationIllustration}
	\end{figure}
	
	We point out that batteries are always exchanged, and therefore the number of batteries physically present at a station can never be below this station's number of spare batteries. In fact, this observation implies that the queueing model is closed, where the total number of batteries is given by
	\begin{align*}
	\textrm{Total \# batteries in system} = r + \sum_{j=1}^S B_i.
	\end{align*}
	Another observation concerns the role of the swapping servers. Whenever a battery is taken out of the EV, it cannot move from the swapping server until an exchange of batteries has taken place. Therefore, no more than $B_i+G_i$ batteries can be charged simultaneously at a station $i \in \{1,\ldots,S\}$. As a consequence, having more charging points creates no additional charging capacity, and can be bounded as
	\begin{align}
	F_i \leq B_i + G_i, \quad i=1,\ldots,S.
	\label{eq:ChargingPointsLessThanSpareBatteries}
	\end{align}
	In addition, we assume that the number of such expensive swapping technologies is small at every station, i.e.~$G_i < G$ for all $i=1,\ldots,S$, with $G< \infty$ being a small fixed number.
	
	The main quantity of interest in this paper is the number of batteries that are in need of charging, i.e.\ the aggregated number of batteries that are being charged at a charging point and the possible exchanged batteries that are waiting for an available charging point. We also refer this quantity as to as the queue length. Let $Q_i(t)$ denote the number of batteries in need of charging at station $i$ at time $t \geq 0$, and we write $Q(t)=(Q_1(t),\ldots,Q_S(t))$. Besides the queue length process, we focus on three performance measures in this paper: the waiting probability of an arbitrary EV, its expected waiting time and the resource utilization levels of the stations. As the role of swapping servers is non-existent in this framework, we consider the resources of the swapping stations to be the charging points and the spare batteries. We define the utilization level of the charging points to be the fraction of charging points that are busy with charging, and the utilization level of the spare batteries to be the fraction of batteries that are not fully-charged with respect to the total number of batteries at the station. In steady state, the latter corresponds to the fraction of time at a station that a battery is expected to wait for an arriving EV.
	
	To achieve favorable performance levels, we propose an associated QED-scaled capacity level for the resources at the stations. More specifically, we consider a sequence of systems indexed by the number of cars $r$, where we write a superscript $r$ for processes and quantities to stress the dependency on $r$. Under the policy where every arrival would choose randomly between the two stations in its direct vicinity, we observe that $p_i = \sum_{j=1}^S p_{ij}/2$ represents the effective arrival probability for every station~$i=1,\ldots,S$. Therefore, for a system with $r$ cars, we set the capacity levels of the number of charging points and the number of spare batteries as
	\begin{align}
	\left\{ \begin{array}{ll}
	B_i^r = p_i \left(\frac{\lambda r}{\mu} + \beta \sqrt{\frac{\lambda r}{\mu}} \right), & \beta \in \mathbb{R}, \\
	F_i^r = p_i \left(\frac{\lambda r}{\mu} + \gamma \sqrt{\frac{\lambda r}{\mu}} \right), & \gamma \leq \beta,
	\end{array}\right.
	\label{eq:QEDscalingNetworkModel}
	\end{align}
	for all $i =1,\ldots,S$. We remark that the bound for the number of charging points originates from~\eqref{eq:ChargingPointsLessThanSpareBatteries}. Since the number of swapping servers is fixed and small and the number of cars $r$ grows large, this condition reduces to the $\gamma \leq \beta$ requirement in~\eqref{eq:QEDscalingNetworkModel}.
	
	Since there are two types of resources at every station, i.e.~charging points and spare batteries, one can consider two types of utilization levels. However, in view of~\eqref{eq:QEDscalingNetworkModel}, we see that the capacity levels of both resources are of the magnitude~$p_i \lambda r/\mu + O(\sqrt{r})$, and hence the utilization levels of both resources are given by $Q(t)/(p_i \lambda r/\mu) (1+o(1))$. Using this observation, we define the relative occupancy level (load) of a station as $Q_i(t)/p_i$. We let our load-balancing policy prescribe that an EV in need of charging closest to station $i$ and $j$ at time $t\geq 0$ moves to station~$i$ iff
	\begin{align}
	\frac{Q_i(t)}{p_i} < \frac{Q_j(t)}{p_j},
	\label{eq:ArrivalMoveLoadBalancing}
	\end{align}
	where ties are broken evenly.  In our results, we show that this load-balancing policy ensures that the resource utilization levels at the different stations are approximately equal at all times. Consequently, this also ensures that the expected waiting times are approximately the same at every station at all times.

		\begin{remark}\label{rem:BetterScaling}\normalfont
	Our modeling prescribes that every EV user can choose between two stations in its direct vicinity. We point out that this is done for simplicity, as it helps to describe our scaling and load balancing-policy in a clear and concise manner. We point out that our model and results extends naturally to the cases where some EV arrivals may always move to one station, and some EV arrivals choose from multiple stations. With respect to the modeling, this extension can be included as follows. Let $\mathcal{M}$ be the set of arrival types, where every $m \in \mathcal{M}$ is a set of stations that is in the direct vicinity of the EV user. Let $s_m, m \in \mathcal{M}$ denote the probability that an EV arrival is of type~$m$. Then, the effective arrival rate at any station~$i \in \{1,...,S\}$ is given by
	\begin{align*}
		p_i = \sum_{m \in \mathcal M} \mathds{1}_{\{i \in m\}} \; s_m/|m|.
	\end{align*}
	In this extended setting, the scaling~\eqref{eq:QEDscalingNetworkModel} for the number of resources and the load-balancing policy~\eqref{eq:ArrivalMoveLoadBalancing} remains the same.
	\end{remark}

	\section{System behavior in case of a single swapping station}\label{sec:SingleStation}
	When there is only a single battery swapping station, all EVs simply move to this station with probability one. The system reduces to a closed network where batteries are in two possible locations: either positioned in a car in no need of charging, or at the station. An illustration of the closed queueing model is given in Figure~\ref{fig:SingleStationIllustration}. The square-root scaling rules reduces to
	\begin{align}
	\begin{split}
	B^r &= \frac{\lambda r}{\mu} + \beta \sqrt{\frac{\lambda r}{\mu}}, \quad \beta \in \mathbb{R}, \\
	F^r &= \frac{\lambda r}{\mu} + \gamma \sqrt{\frac{\lambda r}{\mu}}, \quad \gamma \leq \beta,
	\end{split}
	\label{eq:QEDscalingSingleBSSModel}
	\end{align}
	where we suppress the subscript $1$ for the station number in this case.
	
	\begin{figure}[htb]
		\centering
		\includegraphics[width=10cm]{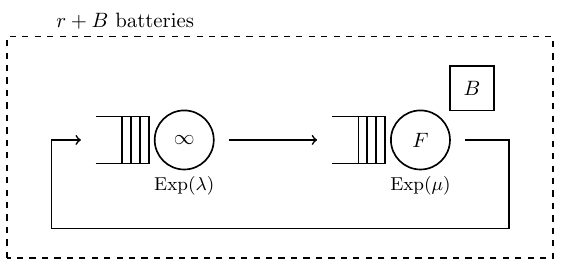}
		\caption{Illustration of the closed queueing network with a single stations.}
		\label{fig:SingleStationIllustration}
	\end{figure}
	
	The notable advantage of a single station is that all resources are assembled at one entity, and inherently, no resources are unavailable for being at different locations. There is also a considerable upside in terms of the analysis: since there is no routing policy anymore, the queue length process becomes a simple birth-death process for which the steady state distribution is easily derived. Yet, the steady-state distribution provides little qualitative insight in the queue length behavior, and in particular, the behavior of the process when it has not reached steady state yet. Therefore, we resort to fluid and diffusion limits, which in practice serve as good approximations for moderate to large-scale systems. This allows us to provide approximations for the performance measures of our interest, e.g.~the waiting probability and the expected waiting time.
	
	At first glance, the single-station variant of our model may seem similar to the classic repair man model. This model and its QED-scaling implications are thoroughly treated in~\cite{Jennings2008,Jennings2011}, which mainly focus on the healthcare setting. We point out that there is a crucial difference: our single-station model includes spare batteries, causing none of $r$ cars to be waiting at the station as long as there are sufficient fully-charged spares available. If $B=0$, our model reduces to the repair man model with $r$ machines and $F$ repair men. Generally, however, the birth rates are different.
	
	\subsection{Steady state distribution}
	As the queue length process is a birth-death process, it is straightforward to derive the steady-state distribution of the queue-length process by standard theory for Markov chains, irrespective of whether the QED~scaled provisioning rules~\eqref{eq:QEDscalingSingleBSSModel} hold. More specifically, the queue length $\{Q(t),t\geq 0\}$ is a birth-death process with state space $Q(t) \in \{0,1,\ldots,B^r+r\}$ for all $t \geq 0$, with birth rate $\lambda (r - \left(Q(t)-B^r\right)^+)$ and death rate $\mu \min\{Q(t),F^r\}$. Let
	\begin{align*}
	\pi_k^{(B^r,F^r,r)} = \Prob\left(Q(\infty) = k \right)
	\end{align*}
	denote the steady state distribution of the number of batteries in need of charging.
	
	\begin{lemma}
		Suppose $S=1$, where the single swapping station has $F$ charging points and $B$ spare batteries, i.e.\ we disregard the scaling in\eqref{eq:QEDscalingSingleBSSModel}. The steady state distribution is given by
		\begin{align}
		\pi_k^{(B,F,r)} = \left\{ \begin{array}{ll}
		\frac{(\lambda r / \mu)^k}{k!} \pi_0^{(B,F,r)} & \textrm{if } 0 \leq k \leq \min\{B,F\}, \\
		\frac{\left(\lambda r /\mu \right)^k }{F!F^{k-F}} \pi_0^{(B,F,r)} & \textrm{if } F \leq k \leq B, \\
		\frac{r^{B} r!}{(r+B-k)!} \frac{\left(\lambda/\mu \right)^k }{k!} \pi_0^{(B,F,r)} & \textrm{if } B \leq k \leq F,\\
		\frac{r^{B} r!}{(r+B-k)!} \frac{\left(\lambda/\mu \right)^k }{F!F^{k-F}} \pi_0^{(B,F,r)} & \textrm{if } \max\{B,F\}  \leq k \leq B+r,
		\end{array} \right.
		\label{eq:StDistributionLimitedChargingPoints}
		\end{align}
		where
		\begin{align}
		\pi_0^{(B,F,r)} = \left(\sum_{k=0}^{F} \frac{(\lambda r / \mu)^k}{k!} + \sum_{k=F+1}^{B-1} \frac{\left(\lambda r /\mu \right)^k }{F!F^{k-F}} + \sum_{k=B}^{B+r} \frac{r^{B} r!}{(r+B-k)!} \frac{\left(\lambda/\mu \right)^k }{F!F^{k-F}} \right)^{-1}
		\label{eq:StDistributionNormalizationConstantLimitedChargingPoints1}
		\end{align}
		if $F \leq B$, and
		\begin{align}
		\pi_0^{(B,F,r)} = \left(\sum_{k=0}^{B} \frac{(\lambda r / \mu)^k}{k!} + \sum_{k=B+1}^{F-1} \frac{r^{B} r!}{(r+B-k)!} \frac{\left(\lambda/\mu \right)^k }{k!} + \sum_{k=F}^{B+r} \frac{r^{B} r!}{(r+B-k)!} \frac{\left(\lambda/\mu \right)^k }{F!F^{k-F}}  \right)^{-1}
		\label{eq:StDistributionNormalizationConstantLimitedChargingPoints2}
		\end{align}
		if $B \leq F$.
		\label{lem:SteadyStateSingleBSS}
	\end{lemma}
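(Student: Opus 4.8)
The plan is to use the fact, recorded just before the statement, that $\{Q(t),t\ge 0\}$ is a birth-death process on the finite state space $\{0,1,\dots,B+r\}$ with birth rate $\lambda_j=\lambda\bigl(r-(j-B)^+\bigr)$ out of state $j$ and death rate $\mu_{j+1}=\mu\min\{j+1,F\}$ out of state $j+1$. Since $\lambda_j>0$ for every $j<B+r$ and $\mu_j>0$ for every $j\ge 1$, the chain is irreducible on this finite set and therefore possesses a unique stationary distribution, which is characterized by the detailed-balance equations $\pi_k\lambda_k=\pi_{k+1}\mu_{k+1}$. Iterating these gives the classical product form
\begin{align*}
\pi_k=\pi_0\prod_{j=0}^{k-1}\frac{\lambda_j}{\mu_{j+1}},
\end{align*}
so the entire lemma reduces to evaluating this product explicitly. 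The natural way to do this is to split the product at the two thresholds where the rates change their functional form: the level $B$, at which the birth rate ceases to be constant, and the level $F$, at which the death rate saturates.

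First I would handle the numerator, i.e.\ the product of birth rates. For indices $j<B$ the rate equals $\lambda r$, contributing a factor $(\lambda r)^{\min\{k,B\}}$; for $B\le j\le k-1$ the rate is $\lambda(r+B-j)$, and as $j$ ranges over this block the quantity $r+B-j$ runs through the consecutive integers $r,r-1,\dots,r+B-k+1$, so the product telescopes into $\lambda^{k-B}\,r!/(r+B-k)!$. I would then treat the denominator analogously: for $j+1\le F$ the death rate is $\mu(j+1)$, whose product over that block is $\mu^{\min\{k,F\}}\min\{k,F\}!$, while for $j+1>F$ the death rate is the constant $\mu F$, contributing $(\mu F)^{(k-F)^+}$.

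Combining the two factorizations and collecting the powers of $\lambda/\mu$ yields each of the four displayed cases directly, the relevant case being dictated solely by the position of $k$ relative to $\min\{B,F\}$ and $\max\{B,F\}$ (and hence by whether $F\le B$ or $B\le F$). Finally I would pin down $\pi_0$ from the normalization $\sum_{k=0}^{B+r}\pi_k=1$: splitting this sum along the same case boundaries and inserting the closed forms just obtained produces the two stated expressions for $\pi_0^{(B,F,r)}$, one valid when $F\le B$ and one when $B\le F$.

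I do not expect a genuine obstacle, since the birth-death structure turns the assertion into a direct computation; the only points requiring care are bookkeeping ones. One should verify that the piecewise formulas agree at the interface levels $k=F$ and $k=B$ (indeed $F^{k-F}=1$ and $k!=F!$ when $k=F$, and the two expressions for $k=B$ coincide likewise), so that each boundary index may be assigned to either adjacent block without affecting the normalization sum. The mild modeling subtlety underlying the birth rate—that exactly $(Q-B)^+$ EVs are stranded without a charged battery once all $B$ spare slots are occupied by depleted batteries, leaving $r-(Q-B)^+$ EVs actively discharging at rate $\lambda$—is already encoded in the rates stated above and need not be re-derived here.
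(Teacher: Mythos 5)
Your proposal is correct and is exactly the argument the paper has in mind: the paper gives no separate proof of this lemma, asserting only that it follows from standard birth--death theory with birth rate $\lambda\bigl(r-(k-B)^+\bigr)$ and death rate $\mu\min\{k,F\}$, and your detailed-balance product form, split at the thresholds $F$ and $B$ with the telescoping factor $r!/(r+B-k)!$, is precisely that standard derivation. One remark: executing your normalization step carefully shows that the summation limits in the paper's stated $\pi_0^{(B,F,r)}$ contain typos (for $F\le B$ the first sum should run to $F$, not $B$; for $B\le F$ the first sum should run to $B$ and the last sum should start at $F$), so your computation actually corrects the displayed constants rather than reproducing them verbatim.
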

	
	\begin{remark}\normalfont
		In view of~\eqref{eq:ChargingPointsLessThanSpareBatteries}, we exclude the case that $F \geq B$ in our analysis further on in this paper. Yet, in an application where e.g.~$G = \infty$ and hence $F \geq B$ possibly holds, we point out that the distribution can be derived similarly. That is, all EVs that arrive at the station find an available swapping server, and the swapping servers do not pose any restriction on the number of batteries that can be charged simultaneously. Only the number of charging points bounds the charging rate. One can also consider the QED provisioning rule in this case, which we treat in Appendix~\ref{app:UnlimitedGSingle}. Moreover, if both $G=\infty$ and $F = \infty$, Lemma~\ref{lem:SteadyStateSingleBSS} shows that
		\begin{align}
		\pi_k^{(B,r)} = \left\{ \begin{array}{ll}
		\frac{(\lambda r / \mu)^k}{k!} \pi_0^{(B,r)} & \textrm{if } 0 \leq k \leq B\\
		\frac{r!r^{B}}{(r+B)!} \binom{r+B}{k} \left(\frac{\lambda}{\mu} \right)^k \pi_0^{(B,r)} & \textrm{if } B \leq k \leq B+r,
		\end{array} \right.
		\label{eq:StDistribution}
		\end{align}
		where
		\begin{align}
		\pi_0^{(B,r)} = \left(\sum_{k=0}^{B-1} \frac{(\lambda r / \mu)^k}{k!} + \frac{r!r^{B}}{(r+B)!} \sum_{k=B}^{B+r}  \binom{r+B}{k} \left(\frac{\lambda}{\mu} \right)^k  \right)^{-1}
		\label{eq:StDistributionNormalizationConstant}
		\end{align}
		Also in this particular case one can pose a QED provisioning rule for the number of spare batteries alone, and derive the asymptotic properties. We treat this case in Appendix~\ref{app:UnlimitedCPSingle}.
	\end{remark}
	
	\subsection{Limiting queue length behavior}
	Due to the curse of dimensionality, it is very challenging to gain a qualitative insight in the (transient) behavior of processes in large-scale systems. Therefore, we resort to fluid and diffusion limits to provide good approximations for the behavior in the actual system when $r$ is large. Recall that $Q^r(t)$ corresponds to the queue length process (the number of batteries in need of charging) under the scaling rules~\eqref{eq:QEDscalingSingleBSSModel} with $r$ cars at time $t \geq 0$. We consider the fluid scaling
	\begin{align}
	\bar{Q}^r(t) = \frac{Q^r(t)}{r}, \quad r \geq 1, t \geq 0.
	\label{eq:FluidScaledProcessDefinition}
	\end{align}
	The fluid-scaled process converges to a deterministic, continuous monotone process with a single fixed steady state value.
	
	\begin{proposition}
		Suppose $S=1$ and scaling rules~\eqref{eq:QEDscalingSingleBSSModel} hold. If $\bar{Q}^r(0) \rightarrow \bar{Q}(0)$ as $r \rightarrow \infty$ with $\bar{Q}(0)$ a finite constant, then $\bar{Q}^r \rightarrow \bar{Q}$ in distribution as $r \rightarrow \infty$, where $\bar{Q}$ satisfies the ODE
		\begin{align*}
		\frac{d \bar{Q}(t) }{dt} = \left\{ \begin{array}{ll}
		\lambda -\mu \bar{Q}(t) & \textrm{if } \bar{Q}(t) < \lambda /\mu, \\
		\lambda^2/\mu - \lambda \bar{Q}(t) & \textrm{if } \bar{Q}(t) \geq \lambda /\mu
		\end{array}\right.
		\end{align*}
		and has the steady state value
		\begin{align*}
		\lim_{t \rightarrow \infty} \bar{Q}(t) = \frac{\lambda}{\mu}.
		\end{align*}
		\label{prop:SingleFluidLimitProcess}
	\end{proposition}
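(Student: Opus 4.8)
The plan is to treat $\{Q^r(t)\}$ as a density-dependent birth--death Markov chain and invoke the standard functional law of large numbers machinery for such chains. First I would record the transition structure: from state $k$ the chain jumps up at rate $\lambda(r-(k-B^r)^+)$ and down at rate $\mu\min\{k,F^r\}$, and the state space is contained in $\{0,\dots,B^r+r\}$. The bounded state space immediately yields the a priori bound $\bar Q^r(t)\le B^r/r+1$, whose limit is $\lambda/\mu+1$; this bound will be used to keep the arguments of the nonlinear rate functions in a fixed compact set throughout.

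Next I would write the random time-change (Poisson) representation
\begin{align*}
Q^r(t)=Q^r(0)+N_+\!\left(\int_0^t \lambda\big(r-(Q^r(s)-B^r)^+\big)\,ds\right)-N_-\!\left(\int_0^t \mu\min\{Q^r(s),F^r\}\,ds\right),
\end{align*}
with $N_+,N_-$ independent unit-rate Poisson processes. Dividing by $r$ and using the functional strong law of large numbers $r^{-1}N_\pm(ru)\to u$ uniformly on compact $u$-intervals, the Poisson fluctuations are of order $r^{-1/2}$ and vanish in the limit. Under the QED scaling both scaled thresholds satisfy $B^r/r=\lambda/\mu+\beta\sqrt{\lambda/(\mu r)}\to\lambda/\mu$ and $F^r/r\to\lambda/\mu$; in particular the intermediate band between $\min\{B^r,F^r\}$ and $\max\{B^r,F^r\}$ has width $O(\sqrt r)=o(r)$ and is invisible at fluid scale. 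This identifies the candidate drift
\begin{align*}
h(x)=\lambda\big(1-(x-\lambda/\mu)^+\big)-\mu\min\{x,\lambda/\mu\},
\end{align*}
and a short case check ($x<\lambda/\mu$ versus $x\ge\lambda/\mu$) confirms that $h$ equals $\lambda-\mu x$ and $\lambda^2/\mu-\lambda x$ respectively, matching the stated ODE.

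The convergence itself I would obtain by a Gronwall argument. The key analytic fact is that $h$ is continuous and piecewise linear, hence globally Lipschitz, so the limiting equation $\bar Q'=h(\bar Q)$ has a unique solution for the given initial condition. Writing both $\bar Q^r$ and $\bar Q$ in integral form, subtracting, and bounding the difference of the drift integrals by the Lipschitz constant times $\int_0^t|\bar Q^r(s)-\bar Q(s)|\,ds$, while the stochastic remainders are controlled uniformly on $[0,T]$ by Doob's inequality together with the functional strong law above, Gronwall's lemma yields $\sup_{t\le T}|\bar Q^r(t)-\bar Q(t)|\to 0$ in probability. Since the limit is continuous and deterministic, this uniform convergence in probability is equivalent to the asserted convergence in distribution; a Skorokhod-representation step can be inserted if one prefers to argue almost surely.

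Finally, the steady-state value follows by solving $h(x)=0$: on $[0,\lambda/\mu)$ one has $h(x)=\lambda-\mu x>0$ and on $(\lambda/\mu,\infty)$ one has $h(x)=\lambda^2/\mu-\lambda x<0$, so $x^\ast=\lambda/\mu$ is the unique zero, and the sign pattern forces $\bar Q(t)$ to increase toward $\lambda/\mu$ from below and decrease toward it from above, giving the monotone convergence $\bar Q(t)\to\lambda/\mu$. I expect the main obstacle to be technical rather than conceptual: justifying cleanly that the two thresholds collapsing to the same point $\lambda/\mu$, with only an $O(\sqrt r)$ gap between them, does not obstruct the limit, and handling the nondifferentiable kink of $h$ at $\lambda/\mu$ --- both of which are dissolved by the Lipschitz/Gronwall route, which never requires smoothness of the drift.
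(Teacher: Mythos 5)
Your proposal is correct, but it takes a genuinely different route from the paper's own proof. The paper stays entirely within the birth--death framework: it computes the infinitesimal drift $m_r(x)$ and infinitesimal variance $\sigma_r^2(x)$ of the scaled process $\bar Q^r$, observes that $m_r(x)$ converges to the piecewise-linear drift in the statement while $\sigma_r^2(x)=O(1/r)$ vanishes, and concludes that the limit is a \emph{degenerate diffusion}, i.e.\ the deterministic ODE solution (the same Browne--Whitt-style argument it invokes explicitly in the parallel appendix proofs); the steady state is then obtained by solving the ODE in closed form on each side of $\lambda/\mu$, giving exponential relaxation to $\lambda/\mu$. You instead use the Kurtz random time-change representation with unit-rate Poisson processes, the FSLLN, and a Gronwall argument, and you get the steady state from a sign analysis of the zero of the limiting drift rather than from the explicit solution. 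What each buys: the paper's route is shorter and dovetails with its diffusion-limit proof (Theorem~\ref{thm:DiffusionLimitProcess}), which repeats the same infinitesimal computation with non-vanishing variance, but it leaves the convergence step implicit in the cited framework; your route is more self-contained, does not require the birth--death structure per se, and deals head-on with the only real subtlety, namely that $B^r$ and $F^r$ are distinct prelimit thresholds that merge at fluid scale. To make that last step fully rigorous you should state the uniform bound on the prelimit drift $h_r(x)=\lambda\bigl(1-(x-B^r/r)^+\bigr)-\mu\min\{x,F^r/r\}$, namely $\sup_x|h_r(x)-h(x)|\le \lambda\,|B^r/r-\lambda/\mu|+\mu\,|F^r/r-\lambda/\mu|=O(r^{-1/2})$, which is exactly the term your Gronwall estimate absorbs; with that made explicit, your argument is complete.
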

	
	Proposition~\ref{prop:SingleFluidLimitProcess} implies that the number of batteries in need of charging can be approximated by
	\begin{align*}
	Q^r(t) \approx  r  \bar{Q}(t),
	\end{align*}
	where $\bar{Q}(t) = \lim_{r \rightarrow \infty} \bar{Q}^r(t)$ is a solution of an ODE. It describes the approximate (possible) transient behavior before reaching steady state. The proof of Proposition~\ref{prop:SingleFluidLimitProcess} is given in Appendix~\ref{app:SingleMainFluidDiffusion}.
	
	We point out that whenever the queue length is near its steady state value, it remains close to its steady state value from that time onward. That is, if $Q^r(t_0) \approx  \lambda r/\mu$ for some $t_0 \geq 0$, then $Q^r(t) \approx  \lambda r /\mu$ for all $t \geq t_0$. From that point on, the fluid limit becomes a rather rough estimate for the number of batteries in need of charging that allows for further investigation on the fluctuations around this value.
	
	Therefore, we turn our focus to the diffusion scaling
	\begin{align}
	\hat{Q}^r(t) = \frac{Q^r(t)- \lambda r/\mu}{\sqrt{\lambda r / \mu}}, \quad r \geq 1, t\geq 0.
	\label{eq:DiffusionScaledProcessDefinition}
	\end{align}
	This scaling provides more sensitive approximations, as it captures fluctuations of order $\sqrt{r}$. The diffusion-scaled process will tend to a piecewise linear Ornstein-Uhlenbeck processes, with a steady state distribution that can be expressed analytically. The proof can be found in Appendix~\ref{app:SingleMainFluidDiffusion}.
	
	\begin{theorem}
		Suppose $S=1$ and the system operates under~\eqref{eq:QEDscalingSingleBSSModel}. If~$\hat{{Q}}^r(0) \rightarrow \hat{{Q}}(0)$ in distribution as $r \rightarrow \infty$, then $\hat{{Q}}^r \rightarrow \hat{{Q}}$ in distribution as $r \rightarrow \infty$. The process $\hat{{Q}}$ is a diffusion process with drift
		\begin{align*}
		m(x) = -\lambda(x-\beta)^+ -\mu \min\{x,\gamma\},
		\end{align*}
		and constant infinitesimal variance $2\mu$. The steady state density of $\hat{Q}(\infty) = \lim_{t \rightarrow \infty} \hat{Q}(t)$ is given by
		\begin{align}
		\hat{f}(x) = \left\{\begin{array}{ll}
		\alpha_1 \frac{\phi(x)}{\Phi(\gamma)} & \textrm{if } x < \gamma,\\
		\alpha_2 \left(\gamma e^{-\gamma(x-\gamma)} \right)\left(1-e^{-\gamma(\beta-\gamma)} \right)^{-1} & \textrm{if } \gamma \leq x < \beta, \\
		\alpha_3 \sqrt{\frac{\lambda}{\mu}} \phi\left( \frac{x-(\beta-\frac{\mu}{\lambda} \gamma)}{\sqrt{\mu / \lambda}}\right) \Phi\left( -\sqrt{\frac{\mu}{\lambda}}\gamma\right)^{-1} & \textrm{if } x \geq \beta,\\
		\end{array} \right.
		\label{eq:DensityQEDLimited1}
		\end{align}
		where $\alpha_i=r_i/(r_1+r_2+r_3)$, $i=1,2,3$ with
		\begin{align*}
		r_1 &=1, \\
		r_2 &= \left\{ \begin{array}{ll}
		{\phi(\gamma)}{\Phi(\gamma)}^{-1} \frac{1}{\gamma} \left(1-e^{-\gamma(\beta-\gamma)} \right) & \textrm{if } \gamma \neq 0, \\
		\sqrt{\frac{2}{\pi}} \beta & \textrm{if } \gamma = 0, \\
		\end{array}\right.\\
		r_3 &= \frac{\phi(\gamma)}{\Phi(\gamma)} e^{-\gamma(\beta-\gamma)}\sqrt{\frac{\mu}{\lambda}} \phi\left( \sqrt{\frac{\mu}{\lambda}}\gamma \right)^{-1} \Phi\left( -\sqrt{\frac{\mu}{\lambda}}\gamma\right).
		\end{align*}
		\label{thm:DiffusionLimitProcess}
	\end{theorem}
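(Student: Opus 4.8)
The plan is to split the proof into two essentially independent parts: first establish the process-level weak convergence $\hat{Q}^r \Rightarrow \hat{Q}$ via a generator-convergence and martingale-problem argument, and then identify the stationary density $\hat{f}$ of the limit diffusion $\hat{Q}$ by solving its stationary Fokker--Planck equation. Since $Q^r$ is a birth--death process, $\hat{Q}^r$ is a Markov jump process on the lattice $\{(k-\lambda r/\mu)/\sqrt{\lambda r/\mu} : 0\le k\le B^r+r\}$ with jumps of size $\pm\delta_r$, where $\delta_r := (\lambda r/\mu)^{-1/2}\to 0$. For $f\in C_c^\infty(\mathbb{R})$ I would write the generator $\mathcal{A}^r f$ at the lattice point $x$ corresponding to level $k$,
\[
\mathcal{A}^r f(x) = \lambda\big(r - (k-B^r)^+\big)\big(f(x+\delta_r)-f(x)\big) + \mu\min\{k,F^r\}\big(f(x-\delta_r)-f(x)\big).
\]
Using $k = \lambda r/\mu + x/\delta_r$, the scaling~\eqref{eq:QEDscalingSingleBSSModel} gives $(k-B^r)^+ = (x-\beta)^+/\delta_r$ and $\min\{k,F^r\} = \lambda r/\mu + \min\{x,\gamma\}/\delta_r$; a second-order Taylor expansion of $f$ then yields, uniformly on compacts,
\[
\mathcal{A}^r f(x) \to -\big(\lambda(x-\beta)^+ + \mu\min\{x,\gamma\}\big)f'(x) + \mu f''(x) =: \mathcal{L}f(x),
\]
since the $O(\delta_r^{-1})$ first-order contributions from birth and death rates combine into exactly $m(x)$, while both rates contribute $\lambda r$ at leading order to the second-order term, producing infinitesimal variance $2\mu$. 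I would then prove $C$-tightness of $\{\hat{Q}^r\}$: the jumps vanish, and the drift is strongly mean-reverting ($m(x)\sim -\mu x$ as $x\to-\infty$ and $m(x)\sim -\lambda x$ as $x\to+\infty$), so a Lyapunov / compact-containment argument with test function $x^2$ controls excursions and prevents the process from reaching its (diverging) lattice boundaries. Every limit point then solves the martingale problem for $\mathcal{L}$; because $m$ is globally Lipschitz (piecewise linear and continuous at the kinks $\gamma,\beta$) and the diffusion coefficient is constant and nondegenerate, this martingale problem is well posed, so $\hat{Q}^r\Rightarrow\hat{Q}$ follows from $\hat{Q}^r(0)\Rightarrow\hat{Q}(0)$ by standard martingale-problem results (Ethier--Kurtz / Stroock--Varadhan).

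For the stationary density, the limit $\hat{Q}$ is a one-dimensional diffusion with generator $\mathcal{L}f = m(x)f' + \mu f''$, whose stationary density solves the zero-flux equation
\[
\mu\,\hat{f}'(x) = m(x)\,\hat{f}(x), \qquad \text{so} \qquad \hat{f}(x)\propto \exp\!\Big(\tfrac{1}{\mu}\textstyle\int_0^x m(y)\,dy\Big).
\]
Evaluating the primitive on the three regions fixed by the kinks gives $m(y) = -\mu y$ on $(-\infty,\gamma)$, $m(y)=-\mu\gamma$ on $[\gamma,\beta)$, and $m(y) = -\lambda(y-\beta)-\mu\gamma$ on $[\beta,\infty)$; hence $\hat{f}$ is proportional to the standard-normal shape $\phi(x)$ for $x<\gamma$, to $e^{-\gamma(x-\gamma)}$ for $\gamma\le x<\beta$, and, after completing the square, to a Gaussian with mean $\beta-\tfrac{\mu}{\lambda}\gamma$ and variance $\mu/\lambda$ for $x\ge\beta$. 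The weights $\alpha_1,\alpha_2,\alpha_3$ are then pinned down by imposing continuity of $\hat{f}$ at $x=\gamma$ and $x=\beta$ together with $\int_{\mathbb{R}}\hat{f}=1$; normalizing each piece over its own interval (truncated normal on $(-\infty,\gamma)$, exponential on $[\gamma,\beta)$, truncated normal on $[\beta,\infty)$) reduces these conditions to the stated $r_1=1$, $r_2$, $r_3$, the $\gamma=0$ case arising from $\gamma^{-1}(1-e^{-\gamma(\beta-\gamma)})\to\beta$ as $\gamma\to0$.

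I expect the generator expansion and the density ODE to be routine; the genuinely delicate step is the tightness / compact-containment estimate, namely ruling out that $\hat{Q}^r$ feels its diverging lattice boundaries and justifying the uniform-on-compacts generator convergence across the kinks of $m$. Because the drift is only Lipschitz (not smooth) at $x=\gamma,\beta$, some care is needed to confirm that the limiting operator is nevertheless the generator of a well-defined diffusion; this is secured by the constant, nondegenerate diffusion coefficient, which simultaneously guarantees uniqueness of the martingale problem and hence the asserted convergence $\hat{Q}^r\Rightarrow\hat{Q}$.
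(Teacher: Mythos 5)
Your proposal is correct, and its analytic core coincides with the paper's: both proofs identify the limit by computing the infinitesimal drift and variance of the diffusion-scaled birth--death process under~\eqref{eq:QEDscalingSingleBSSModel}, using $(k-B^r)^+=(x-\beta)^+\sqrt{\lambda r/\mu}$ and $\min\{k,F^r\}=\lambda r/\mu+\min\{x,\gamma\}\sqrt{\lambda r/\mu}$ to obtain $m(x)=-\lambda(x-\beta)^+-\mu\min\{x,\gamma\}$ and variance $2\mu$. Where you genuinely differ is in what happens after that computation. The paper stops there and invokes Browne and Whitt~\cite{BrowneWhitt1995}, whose results for birth--death processes deliver in one stroke both the weak convergence to the piecewise-linear diffusion (their Eqs.~(28) and~(33)) and the closed-form piecewise stationary density with its weights (their Eqs.~(5) and~(6)); the published proof is thus a few lines plus a citation. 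You instead supply both outsourced ingredients yourself: (i) process-level convergence via generator convergence on $C_c^\infty$, compact containment with the Lyapunov function $x^2$ (which does work: the drift is mean-reverting uniformly in $r$, and at the lattice endpoints it points inward at rate of order $\sqrt{r}$, so the diverging boundaries are never felt), and well-posedness of the martingale problem for a globally Lipschitz drift with constant nondegenerate diffusion coefficient; and (ii) the stationary density from the zero-flux relation $\mu\hat{f}'=m\hat{f}$, so that $\hat{f}\propto\exp\bigl(\mu^{-1}\int_0^x m(y)\,dy\bigr)$ is continuous across the kinks and the weights follow from normalizing each piece --- I checked that your continuity-plus-normalization conditions reproduce exactly the stated $r_1=1$, $r_2$, $r_3$, including the $\gamma=0$ value $\sqrt{2/\pi}\,\beta$ as the limit of $\gamma^{-1}(1-e^{-\gamma(\beta-\gamma)})\phi(\gamma)/\Phi(\gamma)$. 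The trade-off is clear: the paper's citation-based route buys brevity and leans on classical birth--death-to-diffusion theory, while your route is self-contained and isolates the one genuinely delicate step (tightness/compact containment and uniqueness across the nonsmooth drift), which the citation silently absorbs.
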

	
	Equation~\eqref{eq:DensityQEDLimited1} in Theorem~\ref{thm:DiffusionLimitProcess} is obtained by taking the limit of the scaled diffusion process (as $r \rightarrow \infty$), and finding its steady state distribution (as $t\rightarrow\infty$). However, in order to obtain a good approximation of the steady state distribution with a fixed number of cars $r$, it is arguably more reasonable to consider the steady state distribution of the scaled diffusion process (as $t\rightarrow \infty$) and next take the limit as $r \rightarrow \infty$. Fortunately, the following theorem shows that the order at which one takes the limit leads to the same result.
	
	\begin{theorem}
		If $S=1$ and~\eqref{eq:QEDscalingSingleBSSModel} holds, the steady state distribution of the diffusion scaled process~$\hat{Q}^r(\infty)$ converges in distribution to $\hat{Q}(\infty)$ as in Theorem~\ref{thm:DiffusionLimitProcess}.
		\label{thm:StStateToDiffusion}
	\end{theorem}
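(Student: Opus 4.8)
The plan is to bypass any abstract interchange-of-limits machinery and instead exploit the fact that, for $S=1$, the stationary law of $Q^r(\infty)$ is available in closed form from Lemma~\ref{lem:SteadyStateSingleBSS}. I would show directly that the diffusion-scaled stationary pmf, rescaled to a density, converges pointwise to the density $\hat f$ of Theorem~\ref{thm:DiffusionLimitProcess}, and then upgrade this local limit theorem to convergence in distribution via Scheff\'e's lemma. Concretely, fix $x\in\mathbb R$ and set $k=k_r(x)=\lfloor \lambda r/\mu + x\sqrt{\lambda r/\mu}\rfloor$, so that $\{\hat Q^r(\infty)\approx x\}$ corresponds to $\{Q^r(\infty)=k_r(x)\}$. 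Writing $g_r(x)=\sqrt{\lambda r/\mu}\,\pi^{(B,F,r)}_{k_r(x)}$ for the density of the piecewise-constant interpolation of the law of $\hat Q^r(\infty)$, the goal is to prove $g_r(x)\to\hat f(x)$ for almost every $x$. Since each $g_r$ is a probability density and $\hat f$ is a probability density (the constants $\alpha_i$ normalise it), Scheff\'e's lemma then yields $\int_{\mathbb R}|g_r-\hat f|\,dx\to 0$, i.e.\ convergence in total variation, which in particular gives the claimed convergence in distribution.

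Because $\gamma\le\beta$ forces $F^r\le B^r$, only three branches in~\eqref{eq:StDistributionLimitedChargingPoints} are nonempty, and they correspond exactly to the three pieces of $\hat f$. For $x<\gamma$ one has $k_r(x)\le F^r$ and $\pi_k\propto (\lambda r/\mu)^k/k!$, a Poisson weight with mean $\lambda r/\mu$; the local central limit theorem for the Poisson law gives $g_r(x)\to c_1\phi(x)$. For $\gamma\le x<\beta$ one has $F^r\le k_r(x)\le B^r$ and $\pi_{k+1}/\pi_k=\lambda r/(\mu F^r)=(1+\gamma\sqrt{\mu/(\lambda r)})^{-1}$; multiplying this ratio over the $(x-\gamma)\sqrt{\lambda r/\mu}$ steps separating $k_r(x)$ from $F^r$ produces the geometric decay $e^{-\gamma(x-\gamma)}$, so $g_r(x)\to c_2\,\gamma e^{-\gamma(x-\gamma)}$ (with the degenerate case $\gamma=0$ giving a flat profile, whence $r_2=\sqrt{2/\pi}\,\beta$). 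For $x\ge\beta$ one has $k_r(x)\ge B^r$ and $\pi_{k+1}/\pi_k=(r+B^r-k)\lambda/(\mu F^r)$, a ratio decreasing linearly in $k$; a Stirling expansion of $r^{B}r!/(r+B-k)!$ together with the saturated death rate $\mu F^r$ turns this into a Gaussian weight, giving $g_r(x)\to c_3\sqrt{\lambda/\mu}\,\phi((x-(\beta-\tfrac{\mu}{\lambda}\gamma))/\sqrt{\mu/\lambda})$. These limits match the shapes in~\eqref{eq:DensityQEDLimited1}, and comparing the ratios $\pi_{k+1}/\pi_k$ across $k=F^r$ and $k=B^r$ shows the pieces join continuously.

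It remains to pin down the multiplicative constants, which is where the bookkeeping concentrates. I would normalise each branch on its own sub-interval, namely $\phi(x)/\Phi(\gamma)$ on $(-\infty,\gamma)$, $\gamma e^{-\gamma(x-\gamma)}/(1-e^{-\gamma(\beta-\gamma)})$ on $[\gamma,\beta)$, and the truncated Gaussian on $[\beta,\infty)$, and recover the mixture weights $\alpha_i=r_i/(r_1+r_2+r_3)$ by identifying the limiting relative masses of the three sums in~\eqref{eq:StDistributionNormalizationConstantLimitedChargingPoints1}. Each of those sums, after the substitution $k=k_r(x)$, is a Riemann sum of mesh $1/\sqrt{\lambda r/\mu}$ for the integral of the corresponding unnormalised density piece, and evaluating these integrals (for instance $\int_\gamma^\beta \gamma e^{-\gamma(x-\gamma)}dx=1-e^{-\gamma(\beta-\gamma)}$) reproduces the stated $r_1,r_2,r_3$, including the cross-boundary factors $\phi(\gamma)/\Phi(\gamma)$ and $e^{-\gamma(\beta-\gamma)}$ that arise from evaluating the Poisson and geometric weights at the junction points. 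The same computation shows that $\pi^{(B,F,r)}_0\sqrt{\lambda r/\mu}$ has a finite positive limit, so the overall normalisation behaves correctly.

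The main obstacle is not any single regime but the uniformity needed to pass from pointwise convergence of $g_r$ to a genuine local limit theorem and to justify the Riemann-sum limits of the normalisation: I must control the Stirling error terms uniformly on compact $x$-intervals and, crucially, show that the far tails (near $k=0$ and near $k=B^r+r$) carry asymptotically negligible mass, so that no probability escapes. Establishing uniform bounds on the per-step ratios $\pi_{k+1}/\pi_k$ that dominate both tails by summable geometric and Gaussian envelopes is the technical heart of the argument; once this domination is in hand, Scheff\'e's lemma closes the proof cleanly and, as a by-product, shows that the order in which one takes the limits $t\to\infty$ and $r\to\infty$ is immaterial, which is precisely the assertion of the theorem.
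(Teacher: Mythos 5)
Your proposal is correct in substance but follows a genuinely different route from the paper. The paper never proves a local limit theorem: it splits the state space into the three ranges $[0,F^r)$, $[F^r,B^r)$, $[B^r,B^r+r]$, proves convergence of the \emph{conditional cumulative distribution} on each range (Lemma~\ref{lem:StStateConditionedQEDDistribution}, via the CLT for Poisson sums and geometric series), separately proves convergence of the three interval masses to $r_i/(r_1+r_2+r_3)$ (Lemmas~\ref{lem:LimitedA1}--\ref{lem:LimitedA3} and~\ref{lem:StStateIntervalDistributions}), and then assembles the theorem; working at the CDF level lets the CLT absorb all uniformity and tail issues automatically. You instead work at the density level: a pointwise (local) limit $g_r(x)\to\hat f(x)$ for the rescaled pmf in each of the same three regimes, then Scheff\'e's lemma. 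This buys a strictly stronger conclusion (total variation rather than weak convergence), and your ratio analysis $\pi_{k+1}/\pi_k$ correctly reproduces the Gaussian, exponential, and shifted-Gaussian pieces of~\eqref{eq:DensityQEDLimited1}, with the junction-continuity observation giving a clean consistency check on the constants. Two remarks. First, your stated ``technical heart'' is lighter than you fear: Scheff\'e requires only a.e.\ pointwise convergence of probability densities -- no uniform Stirling control on compacts and no tail domination are needed to pass from the local limit to $L^1$ convergence, since $(\,\hat f-g_r)^+\le\hat f$ is the only domination used. Second, what you cannot avoid is the asymptotics of the normalization constant (your Riemann-sum step); this is exactly the content of the paper's Lemmas~\ref{lem:LimitedA1}--\ref{lem:LimitedA3}, requires the same Stirling and geometric-series computations, and is where the factors $\phi(\gamma)/\Phi(\gamma)$, $e^{-\gamma(\beta-\gamma)}$ and the $\gamma=0$ case $r_2=\sqrt{2/\pi}\,\beta$ actually get pinned down, so the overall computational burden of the two proofs is comparable.
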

	
	The proof of Theorem~\ref{thm:StStateToDiffusion} is given in Appendix~\ref{app:SingleMainSteadyStateLimits}. As the order at which the limits are taken does not affect the result, we use the limiting process $\hat{Q}(\infty)$ to obtain approximations for the performance measures.
	
	\subsection{Performance measures}
	\label{sec:PerformanceSingleBSS}
	Typical performance measures for the QoS level for the EV users include the waiting probability and the expected waiting time. We view the efficiency-level for the station by the resources utilization. Typically, the QED regime in many-server systems causes the waiting probability to tend to a non-degenerate limit as $r \rightarrow \infty$, the waiting time to vanish, while the resource utilization tends to one. These features also appear in our system under the proposed QED scaling.
	
	Due to the PASTA (Poisson Arrivals See Time Averages) property in open queueing systems where the arrival process is a time-homogeneous Poisson process, the steady state value of any quantity is the same as at arrival instances. In particular, the waiting probability equals the steady state probability that the number of fully-charged batteries is zero, or equivalently, the number of batteries in need of charging is at least~$B$. Unfortunately, the arrival process in our closed setting is state-dependent. Yet, Theorem~\ref{thm:DiffusionLimitProcess} shows that the fluctuations in arrival rate is of order $O(\sqrt{r})$, i.e.\ the arrival rate is $\lambda r - O(\sqrt{r})$ (with high probability). These small changes will therefore become negligible as $r \rightarrow \infty$. In other words, this argument implies that the PASTA property remains valid asymptotically. This notion can be formalized similarly as is done in~\cite{Jennings2008}. Summarizing, if $W$ denotes the waiting time of an arriving EV user, then
	\begin{align*}
	\Prob(W > 0) = \lim_{r \rightarrow \infty} \Prob\left( Q^r(\infty) \geq B^r \right) = \Prob\left( \hat{Q}(\infty) \geq \beta \right),
	\end{align*}
	where $\hat{Q}(\infty)$ is as in Theorem~\ref{thm:DiffusionLimitProcess}.

	The key concept to derive the expected waiting time is Little's law, stating that the long-term average number of waiting cars, denoted by $Q^r_W$, equals the long-term throughput multiplied by the average waiting time. In other words,
	\begin{align*}
	\E(Q^r_W) = \theta \E(W),
	\end{align*}
	where the throughput $\theta$ can be viewed as the long-term average rate at which EVs arrive, and hence also leave the battery swapping station. We can express the throughput as
	\begin{align*}
	\theta = \lambda r - \lambda \E(Q^r_W),
	\end{align*}
	since the long-term average number of batteries not in need of charging is in fact the expected number of cars not waiting at the station in this closed system. Therefore, it follows that
	\begin{align}
	\E(W) = \frac{\E(Q^r_W)}{\lambda (r- \E(Q^r_W))}.
	\label{eq:LittleLawWaitingTime}
	\end{align}
	In turn, the expected number of waiting cars can be derived directly using Theorem~\ref{thm:DiffusionLimitProcess} and the observation $Q_W^r=(Q^r(\infty)-B^r)^+$,
	\begin{align*}
	\E(Q_W^r) &= \sum_{k=B^r+1}^r (k-B^r) \Prob\left(Q(\infty) = k\right) = \sqrt{\frac{\lambda r}{\mu}} \sum_{k=B^r+1}^r \frac{k-B^r}{\sqrt{\lambda r / \mu}} \Prob\left(\hat{Q}(\infty) = \frac{k-\lambda r / \mu}{\sqrt{\lambda r / \mu}} \right) \\
	&\sim \sqrt{\frac{\lambda r}{\mu}} \int_{\beta}^\infty  (x-\beta) \hat{f}(x) \, dx
	\end{align*}
	as $r\rightarrow\infty$. We point out that $\E(Q_W^r)$ is consequently of order $\Theta(\sqrt{r})$, and together with~\eqref{eq:LittleLawWaitingTime} this implies that $E(W)$ is of order $\Theta(1/\sqrt{r})$ and hence vanishes in the limit.
	
	The resources will be fully utilized under~\eqref{eq:QEDscalingSingleBSSModel} as $r \rightarrow \infty$. Theorem~\ref{thm:DiffusionLimitProcess} implies that at most $O(\sqrt{r})$ charging points are not utilized, and the number of fully-charged batteries is also of order $O(\sqrt{r})$. Therefore, as $r \rightarrow \infty$,
	\begin{align}
	\rho_{F^r} = 1- O(1/\sqrt{r}) , \quad \rho_{B^r} = 1- O(1/\sqrt{r}).
	\label{eq:UtilizationSingleBSS}
	\end{align}
	
	\begin{theorem}
		Suppose $S=1$, and the system is operating under~\eqref{eq:QEDscalingSingleBSSModel}. Then the following properties hold as $r \rightarrow \infty$. The waiting probability has a non-degenerate limit given by
		\begin{align*}
		\Prob&(W>0) \sim \Prob\left( \hat{Q}(\infty) \geq \beta \right) \\
		&= \left(1+ \sqrt{\frac{\lambda}{\mu}}\frac{\phi(\sqrt{\mu/\lambda}\gamma)}{\phi(\gamma)} e^{\gamma(\beta-\gamma)} \frac{\Phi(\gamma)}{\Phi(-\sqrt{\mu/\lambda}\gamma)} + \sqrt{\frac{\lambda}{\mu}} \frac{\phi(\sqrt{\mu/\lambda}\gamma)}{\gamma}\left( e^{\gamma(\beta-\gamma) } -1\right) \Phi\left(-\sqrt{\frac{\mu}{\lambda}}\gamma\right)^{-1} \right)^{-1}.
		\end{align*}
		The expected waiting time behaves as
		\begin{align*}
		\frac{\E(W)}{\sqrt{r}} \sim \frac{\alpha_3}{\sqrt{\lambda \mu}} \left(  \sqrt{\frac{\mu}{\lambda}} \phi\left(\frac{\mu}{\lambda}\gamma \right) \Phi\left(-\sqrt{\frac{\mu}{\lambda}} \gamma\right)^{-1} - \frac{\mu}{\lambda}\gamma \right),
		\end{align*}
		with $\alpha_i$ are as in Theorem~\ref{thm:DiffusionLimitProcess}. Finally, the resource utilizations behave as
		\begin{align*}
		\rho_{F^r} \rightarrow 1 , \quad \rho_{B^r} \rightarrow 1.
		\end{align*}
		\label{thm:PerformanceMeasuresSingleBSS}
	\end{theorem}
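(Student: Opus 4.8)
The plan is to read off all three statements from the limiting steady-state density $\hat f$ in~\eqref{eq:DensityQEDLimited1}, using Theorem~\ref{thm:StStateToDiffusion} to justify working with $\hat Q(\infty)$ in place of $\hat Q^r(\infty)$ and the asymptotic PASTA argument of the preceding subsection to identify $\Prob(W>0)$ with $\Prob(\hat Q(\infty)\ge\beta)$. The bulk of the work is then three separate computations against the explicit density, so I would organise the proof as one paragraph per performance measure.

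\emph{Waiting probability.} First I would compute $\Prob(\hat Q(\infty)\ge\beta)=\int_\beta^\infty \hat f(x)\,dx$. On $[\beta,\infty)$ the density equals $\alpha_3$ times the $N(\beta-\tfrac{\mu}{\lambda}\gamma,\tfrac{\mu}{\lambda})$ density renormalised by $\Phi(-\sqrt{\mu/\lambda}\,\gamma)^{-1}$; since a $N(\beta-\tfrac{\mu}{\lambda}\gamma,\tfrac{\mu}{\lambda})$ variable exceeds $\beta$ with probability exactly $\Phi(-\sqrt{\mu/\lambda}\,\gamma)$, this branch integrates to $\alpha_3$, giving $\Prob(W>0)=\alpha_3=r_3/(r_1+r_2+r_3)$. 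It then remains to insert $r_1,r_2,r_3$ from Theorem~\ref{thm:DiffusionLimitProcess} and rewrite $\alpha_3^{-1}=1+r_1/r_3+r_2/r_3$. The ratio $r_1/r_3$ reproduces the second term of the claimed bracket verbatim, and in $r_2/r_3$ the factors $\phi(\gamma)/\Phi(\gamma)$ cancel while $(1-e^{-\gamma(\beta-\gamma)})e^{\gamma(\beta-\gamma)}=e^{\gamma(\beta-\gamma)}-1$ produces the third term; the $\gamma=0$ case follows by the continuous extension already recorded for $r_2$.

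\emph{Expected waiting time.} Here I would start from Little's law in the form~\eqref{eq:LittleLawWaitingTime}. Because $\E(Q_W^r)=\Theta(\sqrt r)=o(r)$, the denominator satisfies $\lambda(r-\E(Q_W^r))\sim\lambda r$, so $\E(W)\sim\E(Q_W^r)/(\lambda r)$. Combining this with $\E(Q_W^r)\sim\sqrt{\lambda r/\mu}\int_\beta^\infty (x-\beta)\hat f(x)\,dx$ collapses the $r$-dependence to a single factor $1/\sqrt{\lambda\mu\, r}$. The remaining integral is a truncated-normal first moment: writing $Y\sim N(\beta-\tfrac{\mu}{\lambda}\gamma,\tfrac{\mu}{\lambda})$ one has $\int_\beta^\infty (x-\beta)\hat f(x)\,dx=\alpha_3\,\E[(Y-\beta)^+]/\Phi(-\sqrt{\mu/\lambda}\,\gamma)$, and the standard stop-loss identity $\E[(Y-\beta)^+]=\sqrt{\mu/\lambda}\,\phi(\sqrt{\mu/\lambda}\,\gamma)-\tfrac{\mu}{\lambda}\gamma\,\Phi(-\sqrt{\mu/\lambda}\,\gamma)$ yields, after dividing by $\Phi(-\sqrt{\mu/\lambda}\,\gamma)$, the bracket in the statement multiplied by $\alpha_3/\sqrt{\lambda\mu}$ and the appropriate power of $r$.

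\emph{Utilisation and the main obstacle.} The two utilisation limits are immediate from the order estimates~\eqref{eq:UtilizationSingleBSS}, since the number of idle charging points and of fully-charged batteries are each $O(\sqrt r)$ against capacities of order $\lambda r/\mu$. The step I expect to require the most care is not any of these algebraic reductions but the replacement of $\E(Q_W^r)$ by its diffusion integral: weak convergence $\hat Q^r(\infty)\Rightarrow\hat Q(\infty)$ from Theorem~\ref{thm:StStateToDiffusion} does not by itself transfer to convergence of $\E[(\hat Q^r(\infty)-\beta)^+]$, so I would first establish uniform integrability of $(\hat Q^r(\infty)-\beta)^+$. This I would obtain from Lemma~\ref{lem:SteadyStateSingleBSS} by showing that for $k$ beyond $B^r$ the ratios $\pi_{k+1}^{(B,F,r)}/\pi_k^{(B,F,r)}$ are bounded away from $1$ uniformly in $r$, giving a geometric tail that dominates $(\hat Q^r(\infty)-\beta)^+$ and legitimises the limit; the same tail bound also underpins the $o(r)$ estimate used above.
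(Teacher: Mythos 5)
Your proposal is correct and follows essentially the same route as the paper's proof: asymptotic PASTA plus the identification $\Prob(\hat Q(\infty)\ge\beta)=\alpha_3=r_3/(r_1+r_2+r_3)$ for the waiting probability, Little's law~\eqref{eq:LittleLawWaitingTime} combined with the truncated-normal (stop-loss) first moment for the waiting time, and the $\Theta(\sqrt r)$ order estimates~\eqref{eq:UtilizationSingleBSS} for the utilizations; your algebraic reductions of $r_1/r_3$ and $r_2/r_3$ and your handling of the power of $r$ (the statement's $\E(W)/\sqrt r$ is a typo for $\E(W)\sqrt r$) are all sound.

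The one point to fix is in your uniform-integrability step, which is extra rigor the paper omits (the paper simply asserts $\E(Q_W^r)/\sqrt{\lambda r/\mu}\to\int_\beta^\infty(x-\beta)\hat f(x)\,dx$). Your claim that $\pi_{k+1}^{(B,F,r)}/\pi_k^{(B,F,r)}$ is bounded away from $1$ uniformly in $r$ for $k$ beyond $B^r$ is literally false: at $k=B^r$ the ratio equals $\lambda r/(\mu F^r)=\bigl(1+\gamma/\sqrt{\lambda r/\mu}\bigr)^{-1}=1-\Theta(1/\sqrt r)$. What is true, and what suffices, is that for $k\ge B^r+\epsilon\sqrt{\lambda r/\mu}$ the ratio is at most $1-c_\epsilon/\sqrt{\lambda r/\mu}$, so compounding over blocks of $\Theta(\sqrt r)$ states yields geometric decay of $\Prob\bigl(\hat Q^r(\infty)-\beta>x\bigr)$ in $x$ on the diffusion scale, uniformly in $r$; this gives the uniform integrability of $(\hat Q^r(\infty)-\beta)^+$ and the $o(r)$ bound you need.
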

	
	The proof of Theorem~\ref{thm:PerformanceMeasuresSingleBSS} is given in Appendix~\ref{app:SingleMainPerformance}.
	
	\section{System behavior in case of multiple stations}
	\label{sec:MultipleStations}
	When the number of stations $S \geq 2$, the analysis of system behavior needs to account for the underlying routing mechanism of arriving EVs. Whenever an EV is in need of recharging, stations~$i$ and~$j$ are in its direct vicinity with probability $p_{ij}$, and it chooses to move the station~$i$ if~\eqref{eq:ArrivalMoveLoadBalancing} holds. For a resource pooling effect to occur, we require that there is a sufficient number of pairs $(i,j)$ for which $p_{ij}>0$. For example, if the network consists of four stations with $p_{12}=p_{34}=1/2$, there are no arrivals that can choose between one station in the set $\{1,2\}$ and another in the set $\{3,4\}$. Therefore, possible discrepancies in queue lengths are not levelled by the arrival mechanism between these two sets. Therefore, we assume that for every non-empty set $\mathcal{S}$ of stations, there is at least one pair $(i,j)$ with $i \in \mathcal{S}$ and $j \not\in \mathcal{S}$ for which $p_{ij}>0$. This statement is equivalent to the following assumption.
	
	\begin{assumption}
		Let $G = (V,E)$ be a graph, where $V=\{1,\ldots,S\}$ and $E=\{(i,j) : p_{ij}>0\}$. We assume that the graph $G$ is connected.
		\label{ass:ConnectedUnderlyingGraph}
	\end{assumption}

\begin{remark}\normalfont
For our results to follow through in the extended model as described in Remark~\ref{rem:BetterScaling}, Assumption~\ref{ass:ConnectedUnderlyingGraph} needs to be updated as follows. Let $G = (V,E)$ be a graph, where $V=\{1,\ldots,S\}$ and $E=\{(i,j) : i,j \in m, |m| \geq 2, m \in \mathcal{M}\}$. Then, we assume that the graph $G$ is connected. Note that if $m=2$ for every $m \in \mathcal{M}$, the setting as well as this assumption reduces to the original setting as described in this paper.
\end{remark}

	\subsection{System dynamics}
	There are many processes that are of interest in this system, and in particular, the queue length process at each station. In our analysis, we consider $\{\mathbb{X}^r(t), t \geq 0\}$ with
	\begin{align*}
	\mathbb{X}^r = \left(A^r, A_d^r, Q^r, Z^r, Y^r, T^r, D^r, L^r \right),
	\end{align*}
	where
	\begin{itemize}
		\item $A^r = \left(A_{ij}^r ; \{i,j\} \in E \right)$, where $A_{ij}^r(t)$ is the number of arrivals that are closest to stations $i$ and $j$ until time $t \geq 0$ in the $r$'th system;
		\item $A^r_d = \left(A_{ij,i}^r ; \{i,j\} \in E \right)$, where $A_{ij,i}^r(t)$ is the number of arrivals that are closest to stations $i$ and $j$ and are routed to station $i$ until time $t \geq 0$ in the $r$'th system;
		\item $Q^r = \left(Q_j^r ; 1 \leq j \leq S \right)$, where $Q_j^r(t)$ is the number of batteries in need of charging at time $t \geq 0$ in the $r$'th system;
		\item $Z^r = \left(Z_j^r ; 1 \leq j \leq S \right)$, where $Z_j^r(t)$ is the number of busy servers (charging points) at time $t \geq 0$ in the $r$'th system;
		\item $Y^r$, where $Y^r(t)$ is the aggregated time of all cars that are not waiting at some station until time $t \geq 0$ in the $r$'th system;
		\item $T^r = \left(T_j^r ; 1 \leq j \leq S \right)$, where $T_j^r(t)$ is the aggregated time of all servers at station~$j$ that were charging until time $t \geq 0$ in the $r$'th system;
		\item $D^r = \left(D_j^r ; 1 \leq j \leq S \right)$, where $D_j^r(t)$ is the number of service completions at station~$j$ until time $t \geq 0$ in the $r$'th system;
		\item $L^r$, where $L^r(t)$ is the number of batteries that are positioned in an EV not waiting at a station in the $r$'th system at time $t \geq 0$.
	\end{itemize}
	
	Clearly, there are strong relations between the individual processes in $\mathbb{X}^r$. For example, there is a routing policy that dictates where a car in need of a full battery drives to in order to swap its battery. This notion is captured by the arrival processes $A^r$ (the classification of the different arrival types) and $A^r_d$ (the routing decision). To generate the arrival and service completion processes, we introduce a set of independent Poisson processes. Let $\{\Lambda_{ij}(t),t\geq 0 \}$ for all $\{i,j\} \in E$ be independent Poisson processes with rate $p_{ij} \lambda$ and $\{S_j(t),t \geq 1\}$ for all $1 \leq j \leq S$ be independent Poisson processes with rate $\mu$. The system dynamics satisfy the following identities:
	
	\begin{align}
	A_{ij}^r(t) &= A_{ij,i}^r(t) + A_{ij,j}^r(t) \quad \forall \{i,j\} \in E, \label{eq:IdentityArrivals1}\\
	A_{ij}^r(t) &= \Lambda_{ij}\left( Y^r (t) \right), \quad \forall \{i,j\} \in E, \label{eq:IdentityArrivals2}\\
	Q_j^r(t) &= Q_j^r(0) + \sum_{i : \{i,j\} \in E } A_{ij,j}^r(t)-D_j^r(t), \quad \forall j=1,\ldots,S, \label{eq:IdentityQueueLength}\\
	D_j^r(t) &= S_j\left( T_j^r(t) \right), \quad \forall j=1,\ldots,S, \label{eq:IdentityServiceCompletions}\\
	Y^r(t) &= \int_0^t L^r(s) \, ds, \label{eq:IdentityAggregatedArrivalTime}\\
	T_j^r(t) &= \int_0^t Z_j^r(s) \, ds, \quad \forall j=1,\ldots,S, \label{eq:IdentityBusyTime}\\
	Z_j^r(t) &= \min\{Q_j^r(t),F_j^r\}, \quad \forall j=1,\ldots,S, \label{eq:IdentityBusyServers}\\
	L^r(t) &= r - \sum_{j=1}^S \left( Q_j^r(t) - B_j^r \right)^+, \label{eq:IdentityDrivingCars}\\
	\forall \{i,j\} \in E, A_{ij,i}^r(t) &\textrm{ can only increase when } Q_i^r(t)/p_i \leq Q_j^r(t)/p_j.
	\end{align}
	
	We refer to these equations as the system identities, and they prove to be central for deriving our results. The derivations use the framework set out in~\cite{DaiTezcan2011}, which in turn is based on~\cite{Bramson1998}. We adopt much of the notation and definitions in this paper, and before stating our main results, we repeat them for the purpose of self-containment. For each positive integer $d$, we denote by $\mathbb{D}^d[0,\infty]$ the $d$-dimensional Skorohod path space. For $x,y \in \mathbb{D}^d[0,\infty]$ and $T >0$, let
	\begin{align*}
	\lVert x(\cdot)-y(\cdot) \rVert_T  = \sup_{0 \leq t \leq T} |x(t)-y(t)|,
	\end{align*}
	where $|z| = \max_{i=1,\ldots,d} |z_i|$ for any $z=(z_1,\ldots,z_d) \in \mathbb{R}^d$. The space $\mathbb{D}^d[0,\infty]$ is endowed with the $J_1$ topology, and the weak convergence in this space is considered with respect to this topology. We say a sequence of functions $\{x_n\} \in \mathbb{D}^d[0,\infty]$ converges uniformly on compact sets (u.o.c) sets to $x \in \mathbb{D}^d[0,\infty]$ as $n \rightarrow \infty$ if for each $T \geq 0$,
	\begin{align*}
	\lVert x_n(\cdot) - x(\cdot) \rVert_T \rightarrow 0
	\end{align*}
	as $n \rightarrow \infty$. Moreover, we say that $t \geq 0$ is a regular point of a function $x$ if $x$ is differentiable at $t \geq 0$, and denote its derivative by $x'(\cdot)$. We assume that the random variables in $\mathbb{X}^r$ live on the same probability space $(\Omega, \mathcal{F}, \mathbb{P})$. Often, we consider sample paths of stochastic processes, and whenever we want to make the dependence on the sample path explicit, we write $X^r(\cdot,\omega)$ for the sample path associated with $\omega \in \Omega$ for a stochastic process $X^r$.

	\subsection{Fluid limit}
	To capture the rough system dynamics, we consider the fluid-scaled process
	\begin{align*}
	\bar{\mathbb{X}} = \lim_{r \rightarrow \infty} \bar{\mathbb{X}}^r, \quad \bar{\mathbb{X}}^r = \frac{\mathbb{X}^r}{r}.
	\end{align*}
	For each process $X^r$ in $\mathbb{X}^r$, we define similarly its fluid equivalent as $\bar{X}^r = X^r / r$ and its limiting process $\bar{X} = \lim_{r \rightarrow \infty} \bar{X}^r$. We adopt the definition of a fluid limit and its invariant state(s) from~\cite{DaiTezcan2011}. That is, we consider $\mathcal{A} \subset \Omega$ such that the FSLLN holds, i.e.
	\begin{align*}
	\frac{\Lambda_{ij}(r x)}{r} \rightarrow p_{ij} \lambda x, \quad \{i,j\} \in E \quad\textrm{and }\quad \frac{S_j(r x)}{r} \rightarrow \mu x, \hspace{0.25cm} j=1,\ldots,S,
	\end{align*}
	u.o.c. as $r \rightarrow \infty$. Due to the FSLLN, we observe that one can choose $\mathcal{A}$ large enough such that $\Prob(\mathcal{A})=1$.
	
	\begin{definition}\label{def:FluidLimitMultipleStations}
		We call $\bar{\mathbb{X}}$ a fluid limit of $\{\mathbb{X}^r \}$ if there exists an $\omega \in \mathcal{A}$ and (sub)sequence  $\{r_l\}$ with $r_l \rightarrow \infty$ as $l \rightarrow \infty$, such that $\bar{\mathbb{X}}^{r_l}(\cdot, \omega)$ converges u.o.c. to $\bar{\mathbb{X}}(\cdot, \omega)$. Moreover, let ${q} = (q_1,\ldots,q_S)$ be an invariant state of the fluid limits if for any fluid limit $\bar{\mathbb{X}}$, $\bar{Q}(0)= (\bar{Q}_1(0),\ldots,\bar{Q}_S(0))=(q_1,\ldots,q_S) = q$ implies that $\bar{Q}(t)=q$ for all $t\geq 0$.
	\end{definition}
	
	In Proposition~\ref{prop:SingleFluidLimitProcess}, we focus on the fluid-scaled queue length process only for $S=1$, and the sequence $r_l=l$. Instead of requiring $\bar{Q}^r(0) \rightarrow \bar{Q}(0)$ with $\bar{Q}(0)$ a finite constant, Definition~\ref{def:FluidLimitMultipleStations} allows for $\bar{Q}(0)$ to be random. Proposition~\ref{prop:SingleFluidLimitProcess} implies that in case that $S=1$, the fluid limits exist and are deterministic, (Lipschitz) continuous paths that depend only on the realization of $\bar{Q}(0)$. Moreover, there is a single unique invariant state given by $\lambda/\mu$. A similar result holds when $S \geq 2$.
	
	\begin{theorem}
		Let $\{\mathbb{X}^r\}$ be a sequence of systems. Then the fluid limits exist, where each component is Lipschitz continuous. Each fluid limit $\bar{\mathbb{X}}$ satisfies the following equations for all $t\geq 0$:
		\begin{align}
		\bar{A}_{ij}(t) &= \bar{A}_{ij,i}(t) + \bar{A}_{ij,j}(t), \quad \forall \{i,j\} \in E, \label{eq:IdentityArrivals1Fluid}\\
		\bar{A}_{ij}(t) &=  p_{ij} \lambda \bar{Y}(t), \quad \forall \{i,j\} \in E,  \label{eq:IdentityArrivals2Fluid}\\
		\bar{Q}_j(t) &= \bar{Q}_j(0) + \sum_{i : \{i,j\} \in E } \bar{A}_{ij,j}(t)-\bar{D}_j(t), \quad \forall j=1,\ldots,S, \label{eq:IdentityQueueLengthFluid} \\
		\bar{D}_j(t) &= \mu \bar{T}_j(t), \quad \forall j=1,\ldots,S,\label{eq:IdentityServiceCompletionsFluid}\\
		\bar{Y}(t) &= \int_0^t \bar{L}(s) \, ds, \quad \forall j=1,\ldots,S, \label{eq:IdentityAggregatedArrivalTimeFluid}\\
		\bar{T}_j(t) &= \int_0^t \bar{Z}_j(s) \, ds, \quad \forall j=1,\ldots,S, \label{eq:IdentityBusyTimeFluid}\\
		\bar{Z}_j(t) &= \min\{\bar{Q}_j(t),p_j \lambda/\mu\}, \quad \forall j=1,\ldots,S,\label{eq:IdentityBusyServersFluid} \\
		\bar{L}(t) &= 1 - \sum_{j=1}^S \left( \bar{Q}_j(t) - p_j \lambda/\mu \right)^+. \label{eq:IdentityDrivingCarsFluid}
		\end{align}
		Also, for every $\{i,j\} \in E$, if $t$ is a regular point of $\bar{\mathbb{X}}$, then
		\begin{align}
		\bar{A}'_{ij,i}(t) = \lambda p_{ij} \bar{L}(t) \quad \textrm{and}\quad  \bar{A}'_{ij,j}(t) = 0\quad \textrm{if   } \; \frac{\bar{Q}_j(t)}{p_j} > \frac{\bar{Q}_i(t)}{p_i}.
		\label{eq:IdentityArrivalDerivativeFluid}
		\end{align}
		Finally, there is a unique invariant state given by $q=(q_1,\ldots,q_S)$ with $q_i = p_i \lambda/\mu$ for $i=1,\ldots,S$.
		\label{thm:FluitLimitResult}
	\end{theorem}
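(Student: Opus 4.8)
The plan is to establish Theorem~\ref{thm:FluitLimitResult} in two stages: first, prove existence of fluid limits together with the fluid identities~\eqref{eq:IdentityArrivals1Fluid}--\eqref{eq:IdentityArrivalDerivativeFluid}; second, identify the unique invariant state $q_i = p_i\lambda/\mu$. For existence, I would follow the standard relative-compactness-plus-characterization route of~\cite{Bramson1998,DaiTezcan2011}. The driving randomness enters only through the Poisson processes $\Lambda_{ij}$ and $S_j$, which by the FSLLN satisfy $\bar\Lambda_{ij}(x)\to p_{ij}\lambda x$ and $\bar S_j(x)\to \mu x$ u.o.c.\ on the set $\mathcal{A}$ of full measure. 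On $\mathcal{A}$, each scaled process is Lipschitz: the arrival and departure counts increase at rates bounded by $\lambda r$ and $\mu\sum_j F_j^r = O(r)$ respectively, so $\bar{A}^r,\bar{D}^r,\bar{Q}^r$ and the time-integrals $\bar Y^r,\bar T^r$ are uniformly Lipschitz in $r$. Arzel\`a--Ascoli then gives, along any subsequence, a further subsequence converging u.o.c.\ to a Lipschitz (hence absolutely continuous) limit. Passing to the limit in the scaled system identities~\eqref{eq:IdentityArrivals1}--\eqref{eq:IdentityDrivingCars}, using the random time-change composition $\bar A_{ij}^r = \bar\Lambda_{ij}(\bar Y^r)$ together with the continuity of the FSLLN limit and continuity of $\bar Y$, yields~\eqref{eq:IdentityArrivals2Fluid}; the remaining identities follow directly. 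Equation~\eqref{eq:IdentityBusyServersFluid} uses $F_j^r/r \to p_j\lambda/\mu$ from the scaling~\eqref{eq:QEDscalingNetworkModel}, and~\eqref{eq:IdentityDrivingCarsFluid} uses $B_j^r/r\to p_j\lambda/\mu$.

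The routing relation~\eqref{eq:IdentityArrivalDerivativeFluid} is the delicate part of the characterization and I expect it to be \emph{the main obstacle}. The statement is that at a regular point where station $j$ is strictly more loaded than $i$ (i.e.\ $\bar Q_j(t)/p_j > \bar Q_i(t)/p_i$), all arrivals of type $\{i,j\}$ are routed to $i$, so $\bar A'_{ij,j}(t)=0$ and $\bar A'_{ij,i}(t)=\lambda p_{ij}\bar L(t)$. The prelimit routing rule only guarantees $A_{ij,i}^r$ increases when $Q_i^r/p_i \le Q_j^r/p_j$, a \emph{pointwise} statement that need not survive scaling, since strict fluid separation allows a vanishing fraction of time near the tie-set. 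The argument I would give is: fix a regular $t$ with $\bar Q_j(t)/p_j - \bar Q_i(t)/p_i = \delta > 0$; by u.o.c.\ convergence and continuity there is a neighborhood $[t-\varepsilon,t+\varepsilon]$ on which $Q_j^r(s)/p_j - Q_i^r(s)/p_i > \delta r/2 > 0$ for all large $r$, so by the prelimit routing constraint \emph{no} type-$\{i,j\}$ arrival is routed to $j$ on this interval; hence $A_{ij,j}^r(t+\varepsilon)-A_{ij,j}^r(t-\varepsilon)=0$, and dividing by $r$ and letting $r\to\infty$ forces $\bar A_{ij,j}$ to be constant there, giving $\bar A'_{ij,j}(t)=0$. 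The companion claim $\bar A'_{ij,i}(t)=\lambda p_{ij}\bar L(t)$ then follows by combining~\eqref{eq:IdentityArrivals1Fluid} and~\eqref{eq:IdentityArrivals2Fluid} (differentiated) with $\bar Y'(t)=\bar L(t)$ from~\eqref{eq:IdentityAggregatedArrivalTimeFluid}.

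For the invariant state, suppose $\bar Q(0)=q$ is invariant, so $\bar Q_j(t)\equiv q_j$ and all derivatives vanish. I would first rule out overload: if some $q_j > p_j\lambda/\mu$, then by~\eqref{eq:IdentityDrivingCarsFluid} the total driving mass $\bar L$ drops below one, but a flow-balance argument at $j$ shows the net drift is nonzero, contradicting invariance. The core of the argument is a balance computation at a regular point: differentiating~\eqref{eq:IdentityQueueLengthFluid} and using~\eqref{eq:IdentityServiceCompletionsFluid}--\eqref{eq:IdentityBusyServersFluid}, stationarity forces $\sum_{i:\{i,j\}\in E}\bar A'_{ij,j}(t) = \mu\min\{q_j,p_j\lambda/\mu\}$ for every $j$; summing over $j$ and using that the total arrival rate equals $\lambda\bar L(t)\sum_{ij}p_{ij}$ together with $\sum_j p_j = \sum_{\{i,j\}\in E}p_{ij}$ gives a global balance. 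The key structural input is Assumption~\ref{ass:ConnectedUnderlyingGraph}: if the normalized loads $q_i/p_i$ were not all equal, pick a set $\mathcal S$ achieving the maximum of $q_i/p_i$; connectivity supplies an edge $\{i,j\}$ with $i\in\mathcal S$, $j\notin\mathcal S$, and by~\eqref{eq:IdentityArrivalDerivativeFluid} all its arrivals flow \emph{into} the less-loaded side, so $\mathcal S$ receives no external arrivals yet drains at the positive service rate, contradicting invariance of the maximal-load stations. Hence $q_i/p_i$ is constant across $i$, and plugging the common value into the global balance with $\sum_j p_j \lambda = \mu \sum_j q_j$ pins it to $q_i = p_i\lambda/\mu$, which one checks is indeed invariant. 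The connectivity-driven maximality argument is what makes the state unique and is, after the routing relation, the second place where real care is needed.
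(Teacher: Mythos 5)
Your existence argument, the passage to the limit in the system identities, and your treatment of the routing relation~\eqref{eq:IdentityArrivalDerivativeFluid} (localize around a regular point where the fluid loads are strictly ordered, observe that the prelimit inequality then holds on a whole interval for all large $r$ so that $A^r_{ij,j}$ is frozen there, then recover $\bar A'_{ij,i}=p_{ij}\lambda\bar L(t)$ from \eqref{eq:IdentityArrivals1Fluid}--\eqref{eq:IdentityArrivals2Fluid} and $\bar Y'=\bar L$) coincide with the paper's proof in Appendix~\ref{app:FluidLimit}. That part is sound.

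The invariant-state argument, however, has a genuine gap. Your key step asserts that the maximal-load set $\mathcal S$ ``receives no external arrivals yet drains at the positive service rate.'' This is false in the regime where the maximal normalized load $\theta=\max_i q_i/p_i$ is strictly below $\lambda/\mu$: stations in $\mathcal S$ still receive all arrivals on edges \emph{internal} to $\mathcal S$, at aggregate rate $\lambda\bar L\sum_{\{i,i'\}\subset\mathcal S}p_{ii'}$, and this can exactly balance the aggregate service rate $\mu\theta\sum_{j\in\mathcal S}p_j$ whenever $\theta=\frac{\lambda}{\mu}\sum_{\{i,i'\}\subset\mathcal S}p_{ii'}\big/\sum_{j\in\mathcal S}p_j$, which connectivity only forces to be $<\lambda/\mu$, not impossible. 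Connectivity (the strict inequality $\sum_{\{i,i'\}\subset\mathcal S}p_{ii'}<\sum_{j\in\mathcal S}p_j$) produces a contradiction only when the stations of $\mathcal S$ serve at the capped rate, i.e.\ when $\theta\ge\lambda/\mu$; so your argument rules out an overloaded maximal set (and, run on the maximal set rather than at a single overloaded station $j$, it also correctly rules out overload, since a non-maximal overloaded station can have positive net inflow), but not an underloaded maximal set with unequal ratios. The paper closes exactly this hole with a Lyapunov-type argument: setting $h(t)=\max_j\bar Q_j(t)/p_j-\min_j\bar Q_j(t)/p_j$, it shows $h'(t)<0$ whenever $h(t)>0$ by comparing the max set, whose normalized arrival rate is $<\lambda\bar L(t)$, against the \emph{min} set, whose normalized arrival rate is $>\lambda\bar L(t)$, and using that the normalized service rates $\mu\min\{\bar Q_j/p_j,\lambda/\mu\}$ are ordered accordingly; the min-set inequality is precisely the ingredient your argument lacks. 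Alternatively, your own ingredients suffice after reordering: once overload is excluded, $\bar L\equiv 1$ and the global balance $\mu\sum_j q_j=\lambda\sum_j p_j$ together with the termwise bounds $q_j\le p_j\lambda/\mu$ already forces $q_j=p_j\lambda/\mu$ for every $j$, so no max-set argument is needed in the underloaded case at all.
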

	
	The (uniqueness of the) invariant state result for the fluid limit is central for the existence of a properly-defined diffusion process as it states that if $\bar{Q}(0)=(p_1 \lambda/\mu,\ldots,p_S \lambda/\mu)$, the fluid limits are time invariant. We present a proof of Theorem~\ref{thm:FluitLimitResult} in Appendix~\ref{app:FluidLimit}.
	
	\subsection{Diffusion limit}
	Due to the policy governing which station a car drives to in order to replace a battery, one observes the so-called load-balancing effect. By setting the number of resources as in~\eqref{eq:QEDscalingNetworkModel}, this load-balancing effect is so strong that in fact complete resource pooling occurs. In other words, the system behaves as if there is a single large swapping station where the number of resources equals the aggregated total of the individual stations. This appealing consequence ensures that there are no idle resources at one station, while at another there are possible long waiting lines of cars that are waiting for a battery exchange.
	
	The key concept to derive this effect is to show a state space collapse (SSC) result. That is, we consider the diffusion-scaled queue length process defined as
	\begin{align*}
	\hat{Q}^r_i(t) =\frac{Q^r_i(t)-p_i \lambda r /\mu}{p_i \sqrt{\lambda r/\mu}}, \quad i=1,\ldots,S.
	\end{align*}
	In our model, the SSC result states that (almost instantaneously) the diffusion-scaled queue length processes are arbitrarily close at all stations, and stay close during any fixed interval.
	
	\begin{theorem}
		Suppose
		\begin{align*}
		\hat{Q}^r(0) \overset{d}{\rightarrow} \hat{Q}(0),
		\end{align*}
		as $r \rightarrow \infty$, where $\hat{Q}(0)$ is a random vector. Then, for every $K^r=o(\sqrt{r})$ with $K^r \rightarrow \infty$ as $r \rightarrow \infty$, and for every $T >0$ and $ \epsilon >0$,
		\begin{align}
		\Prob\left( \sup_{K^r/\sqrt{r} \leq t \leq T} \lvert  \hat{Q}_i(t) - \hat{Q}_j(t) \rvert > \epsilon \right) \rightarrow 0
		\label{eq:StrongSSCInititiallyNonequal}
		\end{align}
		for every $i,j \in \{1,\ldots,S\}$ as $r \rightarrow \infty$. If, in addition for every $i,j \in \{1,\ldots,S\}$,
		\begin{align*}
		\lvert  \hat{Q}^r_i(0) - \hat{Q}^r_j(0) \rvert \overset{\Prob}{\rightarrow} 0,
		\end{align*}
		then
		\begin{align}
		\Prob\left( \lVert  \hat{Q}_i(\cdot) - \hat{Q}_j(\cdot) \rVert_T > \epsilon \right) \rightarrow 0
		\label{eq:StrongSSCInititiallyEqual}
		\end{align}
		for every $i,j \in \{1,\ldots,S\}$ as $r \rightarrow \infty$.
		\label{thm:StrongSSC}
	\end{theorem}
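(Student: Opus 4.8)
\section*{Proof proposal for Theorem~\ref{thm:StrongSSC}}

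The plan is to follow the hydrodynamic state-space-collapse machinery of Dai and Tezcan~\cite{DaiTezcan2011} (itself built on~\cite{Bramson1998}), adapting it to our closed network and load-balancing routing. The quantity to be collapsed is the pairwise unbalance in the normalized queue lengths: writing $U_{ij}^r(t) = Q_i^r(t)/p_i - Q_j^r(t)/p_j$, note that $\hat{Q}_i^r(t) - \hat{Q}_j^r(t) = U_{ij}^r(t)/\sqrt{\lambda r/\mu}$, so the routing rule \eqref{eq:ArrivalMoveLoadBalancing} sends every type-$\{i,j\}$ arrival to whichever of the two stations has the smaller $\hat{Q}$. Since the theorem is stated for all pairs $i,j$ but the routing couples only adjacent stations, I would first prove both claims for edges $\{i,j\}\in E$ and then lift to arbitrary pairs: by Assumption~\ref{ass:ConnectedUnderlyingGraph} the graph $G$ is connected, so any two stations are joined by a path of bounded length ($S$ being fixed), and the triangle inequality chains the pairwise estimates along that path. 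The overall logic mirrors the two displays: first show that an arbitrary order-one initial unbalance is wiped out in real time $O(1/\sqrt r)$, giving \eqref{eq:StrongSSCInititiallyNonequal}, and then show that the collapsed set is effectively invariant on the diffusion time scale, so a vanishing initial unbalance stays vanishing throughout $[0,T]$, giving \eqref{eq:StrongSSCInititiallyEqual}.

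For the first stage I would introduce the hydrodynamic scaling of~\cite{DaiTezcan2011}: center each queue at its fluid equilibrium $p_i\lambda r/\mu$ (Theorem~\ref{thm:FluitLimitResult}), measure space on the $\sqrt r$ scale, speed up time correspondingly, and study the resulting shifted fluid-type limits over a fixed hydrodynamic horizon. Passing to the limit in the system identities \eqref{eq:IdentityArrivals1}--\eqref{eq:IdentityDrivingCars} together with the routing constraint along this scaling yields hydrodynamic model equations; the essential input is the routing behaviour embodied in \eqref{eq:IdentityArrivalDerivativeFluid}, which in the limit forces the entire type-$\{i,j\}$ arrival stream onto station $j$ whenever its normalized queue is strictly smaller. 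Because near the invariant state $\sum_j (Q_j^r - B_j^r)^+ = O(\sqrt r)$, the driving population satisfies $\bar{L}=1$ in this limit, so the diverted stream carries flux of order $r$, which is the mechanism that makes the collapse fast.

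The heart of the argument is the contraction (Lyapunov) step: I would show that every hydrodynamic model solution drives the unbalance to zero within bounded hydrodynamic time. Concretely, for an edge $\{i,j\}$ whose normalized queues are currently apart, the all-or-nothing routing produces a restoring flux into the less-loaded station, so the gap function decreases at a rate bounded away from zero as long as it is positive; hence it reaches zero in bounded hydrodynamic time and, once balanced, stays balanced (ties split evenly to rebalance instantaneously in the limit). Translating back, an initial unbalance of order $\sqrt r$ is eliminated in real time of order $1/\sqrt r$, with constant governed by the tight limiting initial condition $\hat{Q}(0)$; choosing $K^r\to\infty$ guarantees $K^r/\sqrt r$ eventually exceeds this collapse time with high probability, while $K^r=o(\sqrt r)$ keeps the collapse instantaneous on the diffusion scale. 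The accumulated Poisson fluctuations over a window of length $O(1/\sqrt r)$ are of order $r^{1/4}$, hence negligible after dividing by $\sqrt r$, which is what upgrades ``collapse in the fluid sense'' to the probabilistic statement \eqref{eq:StrongSSCInititiallyNonequal} for any fixed $\epsilon$. Feeding these verified hydrodynamic properties into the general lifting theorem of~\cite{DaiTezcan2011} produces the diffusion-scale SSC. For \eqref{eq:StrongSSCInititiallyEqual}, the same contraction estimate shows the collapsed manifold is attracting and invariant: if the initial unbalance already tends to zero in probability there is no transient to remove, so the supremum over all of $[0,T]$ (not just $[K^r/\sqrt r, T]$) is controlled.

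I expect the main obstacle to be this contraction step in the presence of the closed-network coupling and the non-smoothness of the dynamics. Unlike the single-station birth-death analysis, here the arrival rate at every station is tied to the common factor $L^r(t) = r - \sum_j (Q_j^r - B_j^r)^+$ from \eqref{eq:IdentityDrivingCars}, while the service rate involves the kink $\min\{Q_j^r, F_j^r\}$ and the threshold $(\cdot)^+$, so the hydrodynamic vector field is only piecewise smooth and the routing map is discontinuous (with ties broken evenly). Establishing that the restoring drift is uniformly bounded below across the relevant region---and that the state remains where $\bar{L}$ is genuinely of order one, so the diverted flux really is of order $r$---is the delicate part, as is the accompanying tightness and oscillation control over short windows needed to pass to the hydrodynamic limit and to invoke the Dai and Tezcan framework. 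Once the contraction and these regularity estimates are in hand, propagating the pairwise collapse to all $i,j$ through the connected graph is routine.
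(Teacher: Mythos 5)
Your overall architecture (hydrodynamic scaling, a contraction step, lifting via the Dai--Tezcan framework) matches the paper's, but the heart of your argument --- the \emph{per-edge} contraction --- is false as stated, and this is a genuine gap. The pairwise unbalance $U_{ij}^r = Q_i^r/p_i - Q_j^r/p_j$ along a single edge $\{i,j\}\in E$ is not a Lyapunov function for the hydrodynamic dynamics, because stations $i$ and $j$ also receive arrival streams from their \emph{other} neighbors, routed according to the states of those neighbors. Concretely, suppose $\tilde Q_i/p_i > \tilde Q_j/p_j$ (so the type-$\{i,j\}$ stream is indeed restoring), but $i$ has a neighbor $k$ with $\tilde Q_k/p_k \gg \tilde Q_i/p_i$ while $j$ has a neighbor $l$ with $\tilde Q_l/p_l \ll \tilde Q_j/p_j$: then the entire type-$\{i,k\}$ flux $p_{ik}\lambda$ enters $i$, the entire type-$\{j,l\}$ flux bypasses $j$, and the normalized drift of the gap is $p_{ik}\lambda\bar L/p_i - p_{ij}\lambda\bar L/p_j$, which is strictly positive whenever $p_{ik}/p_i > p_{ij}/p_j$. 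So the claim that ``the gap decreases at a rate bounded away from zero as long as it is positive'' cannot be established edge by edge, and the subsequent triangle-inequality chaining inherits this failure; likewise ``once balanced, stays balanced'' is false for a single pair. The paper's proof avoids this by contracting the \emph{global} quantity $g(q)=\max_{1\leq j\leq S} q_j/p_j - \min_{1\leq j\leq S} q_j/p_j$ (Lemma~\ref{lem:BoundedSSCfunction}): for the argmax set $S_{\max}(t)$ and argmin set $S_{\min}(t)$, Assumption~\ref{ass:ConnectedUnderlyingGraph} guarantees an edge leaving $S_{\max}(t)$, so every max station has normalized inflow strictly below $\lambda$ and every min station strictly above $\lambda$, while normalized service rates coincide in the hydrodynamic limit; Lemma~2.8.6 of~\cite{Dai1999} then yields $g'(\tilde Q(t))\leq -h$ with $h>0$ depending only on $\lambda$ and the $p_{ij}$, at every regular point where $g>0$. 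Note also that $g$ dominates every pairwise difference, so no path-chaining is needed at all.

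A second gap: what the hydrodynamic machinery delivers is only \emph{multiplicative} SSC, i.e.\ control of $\hat g(\hat Q^r)$ divided by $\max\{\lVert \hat Q^r\rVert_T,1\}$ (Theorem~\ref{thm:MultiplicativeSSC}); your proposal treats the passage to the unnormalized statements \eqref{eq:StrongSSCInititiallyNonequal} and \eqref{eq:StrongSSCInititiallyEqual} as automatic. It is not: one must prove the compact containment property $\lim_{K\to\infty}\lim_{r\to\infty}\Prob\left(\lVert\hat Q^r\rVert_T>K\right)=0$, which in the paper is a separate, substantial argument (Proposition~\ref{prop:CompactContainmentProperty}) built on stopping times for up- and down-crossings of the summed diffusion-scaled queue, an FCLT comparison with a drifted Brownian motion, and Lemma~\ref{lem:QueueLengthInitiallyBounded} to control the initial stretch $[0, M y_{r,0}/\sqrt r]$ when the initial unbalance does not vanish. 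Relatedly, your fluctuation estimate (``order $r^{1/4}$ per window'') is a single-window bound; the argument requires uniformity over all $T\sqrt r$ hydrodynamic windows simultaneously, which the paper obtains through maximal and oscillation inequalities (Lemma~\ref{lem:UnlikelyBadEvents} and Proposition~\ref{prop:AlmostLipschitzContinuousHydroScaling}) whose error probabilities are small enough to survive a union bound over $m<\sqrt r\,T$.
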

	
	The proof of Theorem~\ref{thm:StrongSSC} is given in Appendix~\ref{app:SSCproofs}. This result reveals that instead of considering the individual queue length processes, it suffices to track the total queue length process instead. More specifically, define the sequence of random processes $\{Q^r_\Sigma(t), t \geq 0\}$ with $r \in \mathbb{N}$, where $Q^r_\Sigma(t) = \sum_{j=1}^S Q_j^r(t)$, and
	\begin{align*}
	\hat{Q}_\Sigma^r (t) = \frac{\sum_{j=1}^S (Q^r_j(t)-p_j \lambda r/\mu)}{ \sqrt{\lambda r/\mu}} = \sum_{j=1}^S p_j \hat{Q}_j^r (t).
	\end{align*}
	As the state space collapse implies that $\hat{Q}_i^r (t) \approx \hat{Q}_j^r (t)$ for all $i,j \in \{1,\ldots,S\}$ (for $t \geq K^r/\sqrt{r}$), we can approximate the queue length at an individual queue by
	\begin{align*}
	Q_j^r (t) = p_j\left( \frac{\lambda r}{\mu} +  \hat{Q}_j^r (t) \sqrt{\frac{\lambda r}{\mu} }\right) \approx p_j\left( \frac{\lambda r}{\mu} +  \hat{Q}_\Sigma^r (t) \sqrt{\frac{\lambda r}{\mu} }\right)
	\end{align*}
	for all $j=1,\ldots,S$. The limiting process of the total queue length can be derived using the SSC result.
	
	\begin{theorem}
		Suppose~$\hat{Q}^r(0) \rightarrow \hat{Q}(0)$ in distribution as $r \rightarrow \infty$, and
		\begin{align*}
		\left\lvert \hat{Q}_i^r(0) - \hat{Q}_j^r(0) \right\rvert \overset{\Prob}{\rightarrow} 0
		\end{align*}
		for all $i,j \in \{1,\ldots,S\}$. Then, $\hat{Q}^r_\Sigma \rightarrow \hat{Q}_\Sigma$ in distribution as $r \rightarrow \infty$, where $\hat{Q}_\Sigma$ is a diffusion process with drift
		\begin{align*}
		m(x) = -\lambda(x-\beta)^+ -\mu \min\{x,\gamma\},
		\end{align*}
		and constant infinitesimal variance $2\mu$. The steady state density $\hat{Q}_\Sigma(\infty)$ is given by
		\begin{align}
		\hat{f}_{\Sigma}(x) = \left\{\begin{array}{ll}
		\alpha_1 \frac{\phi(x)}{\Phi(\gamma)} & \textrm{if } x < \gamma,\\
		\alpha_2 \left(\gamma e^{-\gamma(x-\gamma)} \right)\left(1-e^{-\gamma(\beta-\gamma)} \right)^{-1} & \textrm{if } \gamma \leq x < \beta, \\
		\alpha_3 \sqrt{\frac{\lambda}{\mu}} \phi\left( \frac{x-(\beta-\frac{\mu}{\lambda} \gamma)}{\sqrt{\mu / \lambda}}\right) \Phi\left( -\sqrt{\frac{\mu}{\lambda}}\gamma\right)^{-1} & \textrm{if } x \geq \beta,\\
		\end{array} \right.
		\label{eq:DensityQEDLimited1a}
		\end{align}
		where $\alpha_i=r_i/(r_1+r_2+r_3)$, $i=1,2,3$ with
		\begin{align*}
		r_1 &=1, \\
		r_2 &= \left\{ \begin{array}{ll}
		{\phi(\gamma)}{\Phi(\gamma)} \frac{1}{\gamma} \left(1-e^{-\gamma(\beta-\gamma)} \right) & \textrm{if } \gamma \neq 0, \\
		\sqrt{\frac{2}{\pi}} \beta & \textrm{if } \gamma = 0, \\
		\end{array}\right.\\
		r_3 &= \frac{\phi(\gamma)}{\Phi(\gamma)} e^{-\gamma(\beta-\gamma)}\sqrt{\frac{\mu}{\lambda}} \phi\left( \sqrt{\frac{\mu}{\lambda}}\gamma \right)^{-1} \Phi\left( -\sqrt{\frac{\mu}{\lambda}}\gamma\right).
		\end{align*}
		\label{thm:DiffusionLimitProcessA}
	\end{theorem}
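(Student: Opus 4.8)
The plan is to reduce the $S$-station problem to the one-dimensional analysis already carried out for a single station in Theorem~\ref{thm:DiffusionLimitProcess}, using the state space collapse of Theorem~\ref{thm:StrongSSC} to eliminate the dependence on the individual queues. First I would aggregate the system identities: summing \eqref{eq:IdentityQueueLength} over $j$ and using \eqref{eq:IdentityArrivals1}--\eqref{eq:IdentityServiceCompletions}, the total queue $Q_\Sigma^r=\sum_j Q_j^r$ satisfies
\begin{align*}
Q_\Sigma^r(t) = Q_\Sigma^r(0) + \Lambda\bigl(Y^r(t)\bigr) - \sum_{j=1}^S S_j\bigl(T_j^r(t)\bigr),
\end{align*}
where $\Lambda=\sum_{\{i,j\}\in E}\Lambda_{ij}$ is a Poisson process of rate $\sum_{\{i,j\}\in E}p_{ij}\lambda=\lambda$. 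Centering by $\lambda r/\mu$ and dividing by $\sqrt{\lambda r/\mu}$ (recall $\sum_j p_j=1$, so $\hat{Q}_\Sigma^r=(Q_\Sigma^r-\lambda r/\mu)/\sqrt{\lambda r/\mu}$) produces the semimartingale decomposition
\begin{align*}
\hat{Q}_\Sigma^r(t) = \hat{Q}_\Sigma^r(0) + \hat{M}^r(t) + \int_0^t \frac{\lambda L^r(s)-\mu\sum_{j} Z_j^r(s)}{\sqrt{\lambda r/\mu}}\,ds,
\end{align*}
with $\hat{M}^r$ the scaled compensated martingale built from $\Lambda(Y^r)-\sum_j S_j(T_j^r)$, and $L^r,Z_j^r$ as in \eqref{eq:IdentityDrivingCars}, \eqref{eq:IdentityBusyServers}.

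The central step is to turn the state-dependent drift into a function of $\hat{Q}_\Sigma^r$ alone. Under the extra hypothesis $|\hat{Q}_i^r(0)-\hat{Q}_j^r(0)|\overset{\Prob}{\rightarrow}0$, Theorem~\ref{thm:StrongSSC} yields the strong form \eqref{eq:StrongSSCInititiallyEqual}, so that $\sup_{0\le t\le T}|\hat{Q}_i^r(t)-\hat{Q}_j^r(t)|\overset{\Prob}{\rightarrow}0$ for all $i,j$; equivalently $Q_j^r(t)=p_j\bigl(\lambda r/\mu+\hat{Q}_\Sigma^r(t)\sqrt{\lambda r/\mu}\bigr)+o(\sqrt{r})$ uniformly on $[0,T]$, as already noted after Theorem~\ref{thm:StrongSSC}. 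Substituting this into \eqref{eq:IdentityDrivingCars}, \eqref{eq:IdentityBusyServers} and using the capacity scalings \eqref{eq:QEDscalingNetworkModel} (so $\sum_j B_j^r=\lambda r/\mu+\beta\sqrt{\lambda r/\mu}$, $\sum_j F_j^r=\lambda r/\mu+\gamma\sqrt{\lambda r/\mu}$) together with the identity $\gamma-(\gamma-x)^+=\min\{x,\gamma\}$, I would obtain
\begin{align*}
\lambda L^r(s)-\mu\sum_{j}Z_j^r(s) = \sqrt{\tfrac{\lambda r}{\mu}}\Bigl(-\lambda(\hat{Q}_\Sigma^r(s)-\beta)^+ -\mu\min\{\hat{Q}_\Sigma^r(s),\gamma\}\Bigr)+o(\sqrt{r}),
\end{align*}
uniformly on $[0,T]$. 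Because $x\mapsto x^+$ and $x\mapsto\min\{x,\gamma\}$ are $1$-Lipschitz, the per-station SSC errors add up and remain $o(\sqrt{r})$, so after dividing by $\sqrt{\lambda r/\mu}$ the integrand converges to $m(\hat{Q}_\Sigma^r(s))$ with $m$ exactly the drift of Theorem~\ref{thm:DiffusionLimitProcess}.

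It remains to identify the martingale limit and pass to the limit in the integral equation. Using the fluid limits of Theorem~\ref{thm:FluitLimitResult} at the invariant state ($\bar{Y}(t)=t$ and $\bar{T}_j(t)=p_j\lambda t/\mu$, hence $Y^r(t)\approx rt$ and $T_j^r(t)\approx p_j\lambda r t/\mu$), the predictable quadratic variation of the compensated arrival term is $\lambda Y^r(t)\approx\lambda r t$ while that of the summed compensated departure terms is $\mu\sum_j T_j^r(t)\approx\lambda r t$; since the arrival and service processes are independent, after scaling by $(\lambda r/\mu)^{-1/2}$ the total infinitesimal variance is $2\mu$, and the martingale FCLT gives $\hat{M}^r\Rightarrow\sqrt{2\mu}\,W$. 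Since $m$ is Lipschitz, the map sending a driving path to the solution of the integral equation is continuous (by Gronwall), and the limiting SDE $d\hat{Q}_\Sigma=m(\hat{Q}_\Sigma)\,dt+\sqrt{2\mu}\,dW$ has a unique law; the continuous-mapping argument then delivers $\hat{Q}_\Sigma^r\Rightarrow\hat{Q}_\Sigma$. As $\hat{Q}_\Sigma$ has the same generator as the single-station diffusion of Theorem~\ref{thm:DiffusionLimitProcess}, its stationary law coincides, giving the stated density \eqref{eq:DensityQEDLimited1a}, identical to \eqref{eq:DensityQEDLimited1}.

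I expect the main obstacle to be the rigorous control of the SSC-induced error inside the drift integral. One must upgrade the fixed-time closeness of the queues to uniform closeness on $[0,T]$ — which is precisely why the additional initial-condition assumption and the strong form \eqref{eq:StrongSSCInititiallyEqual} are required — and then propagate these errors through the nonlinear $(\cdot-\beta)^+$ and $\min\{\cdot,\gamma\}$ terms while verifying that the accumulated error stays $o(\sqrt{r})$ after the division by $\sqrt{\lambda r/\mu}$. A secondary technical point is justifying the interchange of the SSC approximation with the random time changes $Y^r$ and $T_j^r$ appearing inside the Poisson processes, which needs the fluid estimates of Theorem~\ref{thm:FluitLimitResult} to bound these time scales uniformly on compact sets.
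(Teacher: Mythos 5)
Your proposal is correct and follows essentially the same route as the paper's own proof: aggregate the system identities for $Q^r_\Sigma$, use the FCLT together with the fluid limit (Theorem~\ref{thm:FluitLimitResult}) to obtain the $\sqrt{2\mu}\,W$ martingale part, use the state space collapse of Theorem~\ref{thm:StrongSSC} to close the drift as a function of $\hat{Q}^r_\Sigma$ alone, and finish with the continuous mapping theorem and the known stationary density of the piecewise-linear diffusion. The only cosmetic differences are that the paper realizes the noise as the sum of two independent variance-$\mu$ Brownian motions rather than via a quadratic-variation computation, and it states the continuous-mapping step without your explicit Lipschitz/Gronwall justification.
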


\begin{proof}
	We observe that the steady state density is a direct consequence of the diffusion process~\cite{BrowneWhitt1995}. What remains to show is that $\hat{Q}_\Sigma^r$ converges to the described diffusion process as $r \rightarrow \infty$. Equivalently, we need to show that
	\begin{align}
	d \hat{Q}_\Sigma(t) = -\lambda \left( \hat{Q}_\Sigma(t) - \beta \right)^+ - \mu \min\left\{ \hat{Q}_\Sigma(t),\gamma \right\} + \sqrt{2 \mu} dW(t),
	\label{eq:AlternativeExpressionSummedQueueLength}
	\end{align}
	where $\{W(t),t\geq0\}$ is a standard Brownian motion. We note that due to the system identities,
	\begin{align*}
	\hat{Q}_\Sigma(t) = \hat{Q}_\Sigma(0) + \frac{\sum_{\{i,j\}\in E} A_{ij}^r(t) - \sum_{j=1}^S D^r_j(t) }{\sqrt{\lambda r/\mu}},
	\end{align*}
	where
	\begin{align*}
	\sum_{\{i,j\}\in E} A_{ij}^r(t) = \Lambda\left( \int_{0}^t L^r(s) \, ds\right),
	\end{align*}
	and
	\begin{align*}
	\sum_{j=1}^S D^r_j(t) =  \sum_{j=1}^S S_j \left( \int_{0}^t Z_j^r(s) \, ds\right).
	\end{align*}
	We observe that due to the FCLT and Theorem~\ref{thm:FluitLimitResult},
	\begin{align*}
	\frac{\Lambda\left( \int_{0}^t L^r(s) \, ds\right) -  \lambda \int_{0}^t L^r(s) \, ds}{\sqrt{\lambda r/\mu}} =  \frac{\Lambda\left( r \int_{0}^t \bar{L}^r(s) \, ds\right) - \lambda r \int_{0}^t \bar{L}^r(s) \, ds}{\sqrt{1/\mu}\sqrt{\lambda r}} \overset{d}{\rightarrow} \textrm{BM}_A(t),
	\end{align*}
	where $\{\textrm{BM}_A(t), t\geq 0\}$ is Brownian motion with mean zero and variance $\mu$. Similarly, due to the FCLT and Theorem~\ref{thm:FluitLimitResult},
	\begin{align*}
	\sum_{j=1}^S \frac{S_j\left( \int_{0}^t Z_j^r(s) \, ds\right) -  \mu \int_{0}^t Z_j^r(s) \, ds}{\sqrt{\lambda r/\mu}} = \sum_{j=1}^S \frac{S_j\left( r \int_{0}^t \bar{Z}^r(s) \, ds\right) - \mu r \int_{0}^t \bar{Z}^r(s) \, ds}{\sqrt{1/(p_j \mu)}\sqrt{p_j \lambda r}} \overset{d}{\rightarrow} \textrm{BM}_D(t),
	\end{align*}
	where $\{\textrm{BM}_D(t), t\geq 0\}$ is an (independent) Brownian motion with mean zero and variance $\mu$. The sum of these two processes results is equal to (in distribution) a Brownian with mean zero and variance $2\mu$, which contributes to the $\sqrt{2\mu} \, dW(t)$ term in~\eqref{eq:AlternativeExpressionSummedQueueLength}. Next, we observe that due to the system identities and the definition of the diffusion scaling,
	\begin{align*}
	\frac{ \lambda \int_{0}^t L^r(s) \, ds- \sum_{j=1}^S  \mu \int_{0}^t  Z_j^r(s) \, ds}{\sqrt{\lambda r/\mu}} = -\lambda \sum_{j=1}^S p_j \int_0^t \left( \hat{Q}_j^r(s)-\beta \right)^+ \, ds - \mu \sum_{j=1}^S p_j \int_0^t \min\left\{\hat{Q}_j^r(s),\gamma \right\} \, ds.
	\end{align*}
	Since
	\begin{align*}
	\min_{1 \leq j \leq S} \hat{Q}^r(t) \leq \hat{Q}^r_\Sigma(t) \leq \max_{1 \leq j \leq S} \hat{Q}^r(t)
	\end{align*}
	for all $t \in [0,T]$, and due to Theorem~\ref{thm:StrongSSC},
	\begin{align*}
	\left\lVert \hat{Q}_\Sigma^r(\cdot)- \hat{Q}_j^r(\cdot)  \right\rVert_T \leq \epsilon(r), \quad j=1,\ldots,S.
	\end{align*}
	where the sequence $\{\epsilon(r),r \in \mathbb{N}\}$ can be chosen such that $\epsilon(r) \rightarrow 0$ as $r \rightarrow \infty$. Then, as $r \rightarrow \infty$,
	\begin{align*}
	\frac{ \lambda \int_{0}^t L^r(s) \, ds- \sum_{j=1}^S  \mu \int_{0}^t  Z_j^r(s) \, ds}{\sqrt{\lambda r/\mu}} \overset{\Prob}{\rightarrow} -\lambda  \int_0^t \left( \hat{Q}_\Sigma(s)-\beta \right)^+ \, ds - \mu \int_0^t \min\left\{\hat{Q}_{\Sigma}(s),\gamma \right\} \, ds.
	\end{align*}
	This contributes to the first two terms in~\eqref{eq:AlternativeExpressionSummedQueueLength}. Applying the continuous mapping theorem concludes the result.
\hfill\end{proof}

	Another consequence of the state space collapse result is that the waiting probabilities and expected waiting times are equal at all stations, as well as the resource utilization levels. In fact, it exhibits the same behavior as if there would be a single station due to the complete resource pooling effect.
	
	\begin{corollary}
		Suppose the system is operating under~\eqref{eq:QEDscalingSingleBSSModel}. Then the following properties hold as $r \rightarrow \infty$ for all $i=1,\ldots,S$. The waiting probability has a non-degenerate limit given by
		\begin{multline*}
		\Prob(W^r_i>0) \to \Prob\left( \bar{Q}_\Sigma(\infty) \geq \beta \right) \\
		= \left(1+ \sqrt{\frac{\lambda}{\mu}}\frac{\phi(\sqrt{\mu/\lambda}\gamma)}{\phi(\gamma)} e^{\gamma(\beta-\gamma)} \frac{\Phi(\gamma)}{\Phi(-\sqrt{\mu/\lambda}\gamma)} + \sqrt{\frac{\lambda}{\mu}} \frac{\phi(\sqrt{\mu/\lambda}\gamma)}{\gamma}\left( e^{\gamma(\beta-\gamma) } -1\right) \Phi\left(-\sqrt{\frac{\mu}{\lambda}}\gamma\right)^{-1} \right)^{-1}.
		\end{multline*}
		The expected waiting time behaves as
		\begin{align*}
		\frac{\E(W^r_i)}{\sqrt{r}} \to \frac{\alpha_3}{\sqrt{\lambda \mu}} \left(  \sqrt{\frac{\mu}{\lambda}} \phi\left(\frac{\mu}{\lambda}\gamma \right) \Phi\left(-\sqrt{\frac{\mu}{\lambda}} \gamma\right)^{-1} - \frac{\mu}{\lambda}\gamma \right).
		\end{align*}
		with $\alpha_i$ are as in Theorem~\ref{thm:DiffusionLimitProcessA}. Finally, the resource utilizations behave as
		\begin{align*}
		\rho_{F^r_i} \rightarrow 1 , \quad \rho_{B^r_i} \rightarrow 1.
		\end{align*}
		\label{cor:PerformanceMeasures}
	\end{corollary}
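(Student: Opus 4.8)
The plan is to derive Corollary~\ref{cor:PerformanceMeasures} as an essentially immediate consequence of Theorem~\ref{thm:DiffusionLimitProcessA} together with the state space collapse in Theorem~\ref{thm:StrongSSC}, reducing each claim to its single-station counterpart in Theorem~\ref{thm:PerformanceMeasuresSingleBSS}. The key observation is that the limiting total queue length process $\hat{Q}_\Sigma$ has exactly the same drift $m(x)=-\lambda(x-\beta)^+-\mu\min\{x,\gamma\}$, infinitesimal variance $2\mu$, and steady-state density $\hat{f}_\Sigma=\hat{f}$ as the single-station diffusion $\hat{Q}(\infty)$ in Theorem~\ref{thm:DiffusionLimitProcess}. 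Hence all performance quantities expressed through $\hat{Q}_\Sigma(\infty)$ will evaluate to precisely the single-station formulas, and the proof amounts to transferring those formulas verbatim while justifying that station-$i$ quantities collapse onto the aggregate.

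First I would handle the waiting probability. Using the asymptotic PASTA argument already invoked for $S=1$, and the SSC approximation $\hat{Q}_i^r(t)\approx\hat{Q}_\Sigma^r(t)$ valid for every $i$, I would write $\Prob(W_i^r>0)=\lim_{r\to\infty}\Prob(Q_i^r(\infty)\geq B_i^r)$. Dividing through by the scaling $p_i\sqrt{\lambda r/\mu}$ and using $B_i^r=p_i(\lambda r/\mu+\beta\sqrt{\lambda r/\mu})$ from~\eqref{eq:QEDscalingNetworkModel}, the event $\{Q_i^r(\infty)\geq B_i^r\}$ becomes $\{\hat{Q}_i^r(\infty)\geq\beta\}$, which by the collapse coincides with $\{\hat{Q}_\Sigma^r(\infty)\geq\beta\}$ in the limit. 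Since $\hat{Q}_\Sigma(\infty)$ has density $\hat{f}_\Sigma=\hat{f}$, the value $\Prob(\hat{Q}_\Sigma(\infty)\geq\beta)=\int_\beta^\infty\hat{f}(x)\,dx$ is literally the integral computed in Theorem~\ref{thm:PerformanceMeasuresSingleBSS}, yielding the stated closed form with no new computation.

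Next I would treat the expected waiting time by reusing Little's law exactly as in the single-station case. The crucial point is that complete resource pooling makes the throughput and the number of waiting cars at station $i$ scale identically to the single-station quantities after normalizing by $p_i$: the per-station waiting count is $(Q_i^r(\infty)-B_i^r)^+\approx p_i\sqrt{\lambda r/\mu}(\hat{Q}_\Sigma^r(\infty)-\beta)^+$, whose expectation is $\Theta(\sqrt{r})$ with leading coefficient determined by $\int_\beta^\infty(x-\beta)\hat{f}(x)\,dx$. Feeding this into~\eqref{eq:LittleLawWaitingTime} (with the station-$i$ effective arrival rate $p_i\lambda r$ replacing $\lambda r$, the factor $p_i$ cancelling between numerator and denominator) reproduces the same $\Theta(1/\sqrt{r})$ behavior and the same limiting constant $\alpha_3/\sqrt{\lambda\mu}(\cdots)$ as for $S=1$. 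For the utilizations I would note that Theorem~\ref{thm:DiffusionLimitProcessA} forces at most $O(\sqrt{r})$ idle charging points and $O(\sqrt{r})$ fully-charged spare batteries at each station, so $\rho_{F_i^r},\rho_{B_i^r}=1-O(1/\sqrt{r})\to1$, mirroring~\eqref{eq:UtilizationSingleBSS}.

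The main obstacle, and the step requiring genuine care rather than quotation, is justifying that the station-$i$ steady-state performance quantities may legitimately be replaced by their $\hat{Q}_\Sigma$ counterparts. Theorem~\ref{thm:StrongSSC} provides pathwise closeness of $\hat{Q}_i(\cdot)$ and $\hat{Q}_j(\cdot)$ on compact time intervals $[K^r/\sqrt{r},T]$, but the performance measures are \emph{steady-state} functionals, so one must argue an interchange of the $r\to\infty$ and $t\to\infty$ limits for the aggregate process. Here I would lean on Theorem~\ref{thm:StStateToDiffusion} (the single-station interchange-of-limits result), extended to the total queue length $\hat{Q}_\Sigma^r(\infty)$, to guarantee that the stationary distribution of $\hat{Q}_\Sigma^r$ indeed converges to that of $\hat{Q}_\Sigma(\infty)$; combined with the uniform integrability needed to pass the expectation $\E(Q_W^r)$ through the limit (exactly as in the single-station derivation preceding Theorem~\ref{thm:PerformanceMeasuresSingleBSS}), this closes the argument. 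The remaining manipulations are the same routine integral evaluations already performed for $S=1$, so I would cite Theorem~\ref{thm:PerformanceMeasuresSingleBSS} rather than repeat them.
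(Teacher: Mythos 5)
Your proposal is correct and follows essentially the same route as the paper: the paper states this corollary without a separate proof, treating it exactly as you do --- as an immediate consequence of the state space collapse (Theorem~\ref{thm:StrongSSC}), the fact that $\hat{Q}_\Sigma$ in Theorem~\ref{thm:DiffusionLimitProcessA} has the same drift, infinitesimal variance and steady-state density as the single-station diffusion of Theorem~\ref{thm:DiffusionLimitProcess}, and the integral computations already carried out for Theorem~\ref{thm:PerformanceMeasuresSingleBSS}, with the factor $p_i$ cancelling in Little's law just as you describe. If anything, you are more careful than the paper, which passes silently over the steady-state interchange-of-limits point you flag in your last paragraph (and which is genuinely nontrivial here, since $Q^r_\Sigma$ is not itself a birth-death process for $S \geq 2$, so Theorem~\ref{thm:StStateToDiffusion} does not extend by mere quotation).
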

	
	\section{Simulation experiments}\label{sec:SimulationExperiments}
	The results presented are given in an asymptotic regime where the charging times are exponentially distributed. In this section, we conduct simulation experiments to evaluate the quality of our approximations and the robustness of the state-space collapse result. We first focus on a large-scale system to illustrate the implications of our results. Next, we also zoom in on a moderate-sized system that reflect a more realistic setting for an EV battery swapping infrastructure.
	
	\subsection{Large-scaled system}\label{sec:LargeScaleSystemSimulations}
	Throughout the experiments in this section, we consider a network with $5$ stations where the arrival probabilities are given by $p_{12}=p_{24}=p_{34}=p_{35}=0.1$, $p_{23}=0.4$ and $p_{45}=0.2$, see Figure~\ref{fig:StationNetworkExperiments}. This results in an effective arrival probability $p=(p_1,\ldots,p_5)=(0.05,0.3,0.3,0.2,0.15)$ at the stations.
	
	\begin{figure}[htb]
		\centering
		\includegraphics[width=6cm]{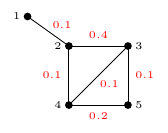}
		\caption{Illustration of the battery swapping network with arrival streams used in the simulation experiments.}
		\label{fig:StationNetworkExperiments}
	\end{figure}
	
	As a battery swapping infrastructure currently does not exist yet in real-life, there is no (significant) data that can be exploited to obtain useful parameter choices. Instead, we discuss an adequate provisioning strategy under the following assumptions. We assume that the battery swapping facility installed (relatively) fast charging points where recharging takes one hour on average ($\mu=1$), and that every EV user returns for recharging services after every $40$ hours on average ($\lambda=0.025$). In addition, we stress that our results are based on an asymptotic regime, and therefore require the system to be sufficiently large for the approximation to become meaningful. We allow for (at least) $r=50 000$ EV users in this infrastructure. The effective loads at the stations in this case are
	\begin{align*}
	\left(\frac{p_j \lambda r}{\mu}\right)_{j=1,\ldots,5}=(62.5,375,375,250,187.5),
	\end{align*}
	and we note that due to the QED provisioning rule in~\eqref{eq:QEDscalingNetworkModel}, the numbers of charging points and spare batteries are close to these values. Obviously, the number of resources are integer values, and in our simulation experiments we choose
	\begin{align*}
	F^r &= \left(\left\lceil \frac{\lambda r}{\mu} + \gamma \sqrt{\frac{\lambda r}{\mu}}\right\rceil\right)_{j=1,\ldots,5} , \qquad B^r = \left(\left\lceil \frac{\lambda r}{\mu} + \beta \sqrt{\frac{\lambda r}{\mu}}\right\rceil\right)_{j=1,\ldots,5}
	\end{align*}
	
	\subsubsection{State space collapse for exponential charging times}
	A first-order approximation for the queue length process is implied by the fluid result in Theorem~\ref{thm:FluitLimitResult}. We validate this approximation for the above-described setting, with initial queue length $Q_i^r(0)=150$ for all stations $i=1,\ldots,5$. That is, only station~1 is initially overloaded, while all other station are underloaded. The equations in Theorem~\ref{thm:FluitLimitResult} together with the Lipschitz continuity describe a unique fluid limit with the given initial queue length. This yields the approximations
	\begin{align*}
	Q_i^r(t) \approx r \bar{Q}_(t), \quad j=1,\ldots,S.
	\end{align*}
	In particular, in the case when the initial queue length is $Q_i^r(0)=150$ for all stations $i=1,\ldots,5$, this results in the approximations
	\begin{align*}
	Q_1^r(t) \approx \left\{ \begin{array}{ll}
	150-62.5t & \textrm{if } t \leq t_3, \\
	62.5 e^{1.4-t} & \textrm{if } t_3 \leq t \leq t_4, \\
	62.5 + \frac{195}{64} e^{1.4-t}-\frac{517}{16} e^{-t}& \textrm{otherwise},
	\end{array}\right.
	\end{align*}
	
	\begin{align*}
	Q_2^r(t) \approx Q_3^r(t) \approx \left\{ \begin{array}{ll}
	498.5+0.625t-348.5 e^{-t} & \textrm{if } t \leq t_2, \\
	\frac{75 t}{152}+\frac{14955}{38}-\frac{7755}{38} e^{-t} & \textrm{if } t_2 \leq t \leq t_3, \\
	\frac{7500}{19}-\frac{75}{152} e^{1.4-t}-\frac{7755}{38}e^{-t} & \textrm{if } t_3 \leq t \leq t_4, \\
	375 +\frac{585}{32} e^{1.4-t}-\frac{1551}{8} e^{-t} & \textrm{otherwise},
	\end{array}\right.
	\end{align*}
	
	\begin{align*}
	Q_4^r(t) \approx \left\{ \begin{array}{ll}
	249.25 + 0.3125 t  - 99.25 e^{-t} & \textrm{if } t \leq t_1, \\
	\frac{997}{7}+\frac{5}{28}t+29 e^{-t} & \textrm{if } t_1 \leq t \leq t_2, \\
	\frac{4985}{19} + \frac{25}{76} t -\frac{2585}{19} e^{-t} & \textrm{if } t_2 \leq t \leq t_3, \\
	\frac{5000}{19}-\frac{25}{76} e^{1.4-t}-\frac{2585}{19}  e^{-t} & \textrm{if } t_3 \leq t \leq t_4, \\
	250+\frac{195}{16} e^{1.4-t}-129.25 e^{-t} & \textrm{otherwise},
	\end{array}\right.
	\end{align*}
	
	\begin{align*}
	Q_5^r(t) \approx \left\{ \begin{array}{ll}
	150 e^{-t} & \textrm{if } t \leq t_1, \\
	\frac{15 t}{112}+\frac{2991}{28}+\frac{87}{4}e^{-t} & \textrm{if } t_1 \leq t \leq t_2, \\
	\frac{14955}{76}+\frac{75}{304}t-\frac{7755}{76}e^{-t}& \textrm{if } t_2 \leq t \leq t_3, \\
	\frac{3750}{19}-\frac{75}{304} e^{1.4-t}-\frac{7755}{76}e^{-t} & \textrm{if } t_3 \leq t \leq t_4, \\
	187.5+\frac{585}{64} e^{1.4-t}-\frac{1551}{16} e^{-t} & \textrm{otherwise},
	\end{array}\right.
	\end{align*}
	where $t_1\approx 0.1826$, $t_2 \approx 0.3189$, $t_3=1.4$ and $t_4\approx 1.4758$. The times $t_i, i=1,2,4$ correspond to the times where two stations (approximately) have the same relative queue lengths, and $t_3$ is the moment where the number of EVs in need of recharging is (approximately) equal to the number of stations/spare batteries.
	
	\begin{figure}[htb]
		\centering
		\includegraphics[width=12cm]{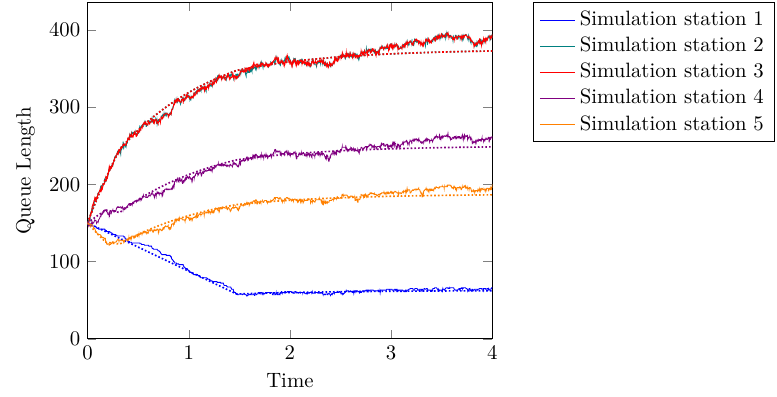}
		\caption{Sample path of the queue lengths when $Q^r(0)=(150,150,150,150,150)$.}
		\label{fig:QueueLengthExponential}
	\end{figure}
	
	A sample path comparison with its fluid approximation (dotted lines) is graphically illustrated in Figure~\ref{fig:QueueLengthExponential}. We observe that the fluid limit approximations capture the typical values of the actual queue length process quite accurately. We observe apparent fluctuations around its approximation, and we note that these become relatively small as $r$ grows large.
	
	\begin{figure}[htb]
		\centering
		\includegraphics[width=12cm]{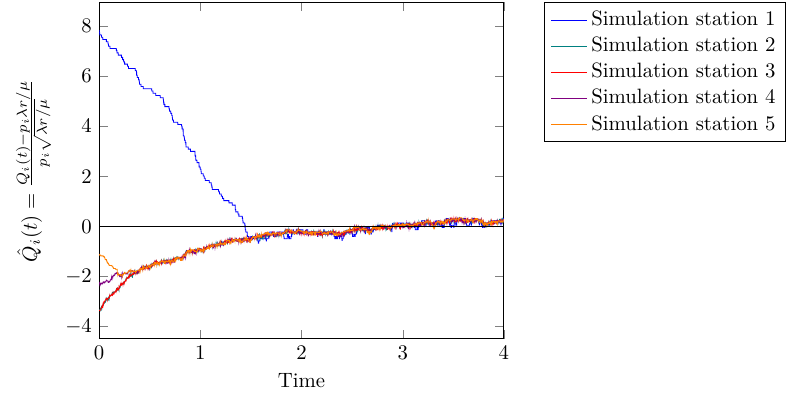}
		\caption{Sample path of the diffusion-scaled queue lengths when $Q^r(0)=(150,150,150,150,150)$.}
		\label{fig:QueueLengthExponentialDiffusion}
	\end{figure}
	
	To observe the state-space collapse, we plot the same sample path by its diffusion scaling, see Figure~\ref{fig:QueueLengthExponentialDiffusion}. Indeed, around $t_4 \approx 1.4758$ the diffusion-scaled queue lengths appear to become close and remain nearly equal to one another after this time. In addition, as time moves, on the diffusion-scaled queue lengths fluctuate around zero.

	\subsubsection{Performance measures}
	Our results imply approximations for performance measures such as the waiting probability and waiting time, see Corollary~\ref{cor:PerformanceMeasures}. In particular, the state-space collapse result implies that the performance at all stations is approximately the same, and can be approximated by the closed-form expressions as given in Corollary~\ref{cor:PerformanceMeasures}.
	
	\begin{figure}[htb]
		\centering
		\includegraphics[width=12cm]{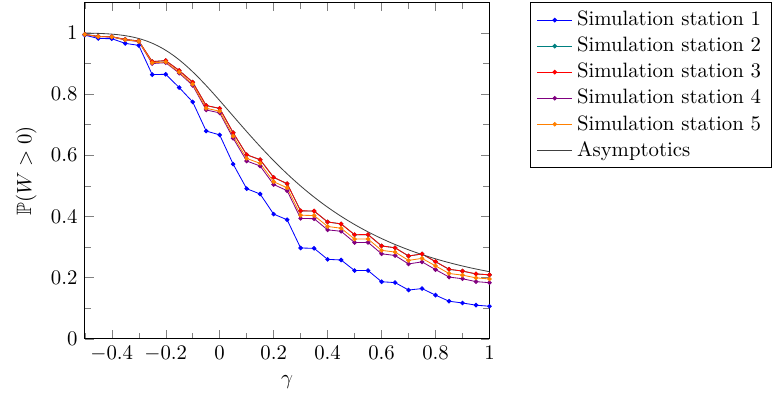}
		\caption{Waiting probabilities with respect to its asymptotic expression when $\beta=1$.}
		\label{fig:WaitingProb}
	\end{figure}
	
	In Figure~\ref{fig:WaitingProb}, we plotted the waiting probabilities of all stations in the case of $2 500 000$ EV arrivals averaged over $20$ samples for the large-scaled system. We point out that the stair-type effect appearing in the waiting probabilities is due to the ceiling of the number of resources at the stations. Moreover, as $r$ is finite and we use the ceiling function, the waiting times are not all exactly equal, which is most apparent for station~1. This is also reflected in Figure~\ref{fig:QueueLengthExponentialDiffusion}, where a closer view suggests that the diffusion-scaled queue length at station 1 being smaller than the queue length at another station (often station~2 or station~3). As $r$ grows large, the waiting probabilities do grow closer and move near to their asymptotic expressions. Still, the waiting probabilities are typically below their asymptotic expressions. This implies that the provisioning rules~\eqref{eq:QEDscalingNetworkModel} guarantee that a desired waiting probability is achieved.
	
	\subsection{Universality result for charging time distribution}
	In order to be able to rigorously prove the state-space and consequential results, we assumed exponential charging times in our framework. Yet, extensive simulation experiments suggest that these results hold for any charging time distribution with finite mean and variance. In Figure~\ref{fig:FluidDeterministic} and~\ref{fig:FluidUniform}, we consider the system setting as described in Section~\ref{sec:LargeScaleSystemSimulations}. It appears that similar behavior occurs on fluid scale in case of deterministic and uniformly distributed charging times as for the exponential case.
	
	\begin{figure}[!htb]
		\centering
		\includegraphics[width=12cm]{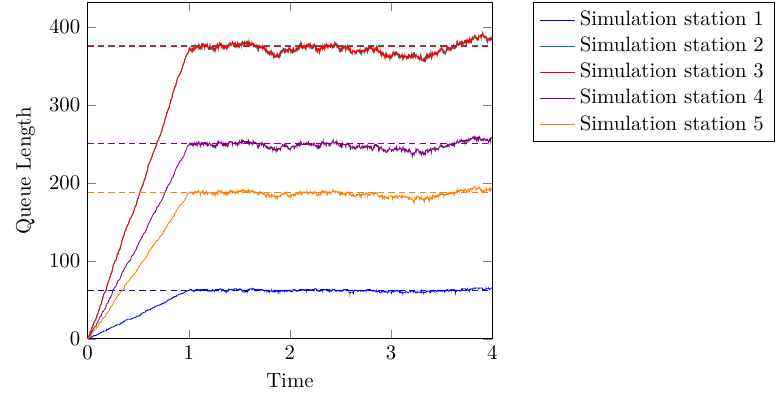}
		\caption{Sample paths of the queue lengths for charging times equal to one when $Q^r(0)=(0,0,0,0,0)$.}
		\label{fig:FluidDeterministic}
	\end{figure}
	
	\begin{figure}[!htb]
		\centering
		\includegraphics[width=12cm]{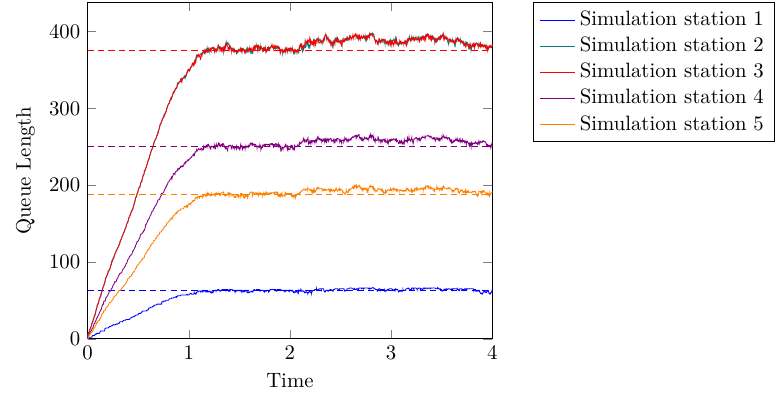}
		\caption{Sample paths of the queue lengths for charging distribution uniform $U(0.75,1.25)$ when $Q^r(0)=(0,0,0,0,0)$.}
		\label{fig:FluidUniform}
	\end{figure}
	
	When the queue lengths are initially zero, the system behaves close to its invariant state for $t \geq 1$. Similarly to the setting with exponential charging times, the maximum difference between the diffusion-scaled queue length behaves quite erratically, see Figure~\ref{fig:GTDifference}. Still, the differences are very small, and grow smaller as $r$ grows larger suggesting that state-space collapse also holds in this setting. That is, the system behaves similarly to the situation when there would be a single station with an aggregated number of charging points and spare batteries and a charging time distribution as at the individual stations. Consequently, performance measures such as  waiting probability and expected waiting time are approximately equal to their equivalents in a single-station system.
	
	\begin{figure}[!htb]
		\centering
		\includegraphics[width=11cm]{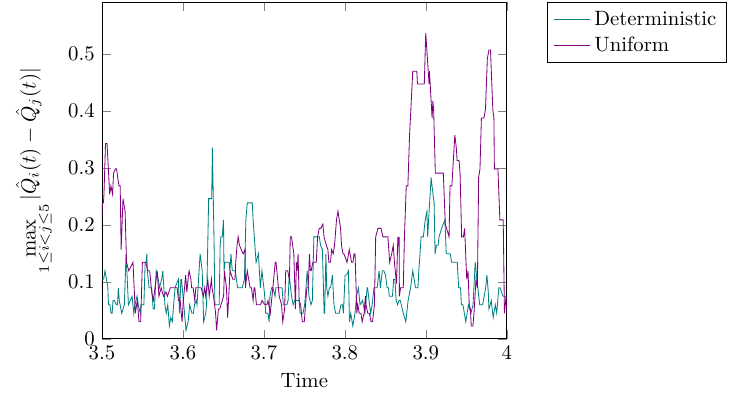}
		\caption{Maximum distance between queue lengths for non-exponential charging times.}
		\label{fig:GTDifference}
	\end{figure}

	\subsection{The role of system size}
	In the previous sections, we commented at a view points that the differences between the diffusion-scaled queue lengths are small and fluctuate erratically among each other when one would zoom in on this domain. Obviously, the differences between the diffusion-scaled queue lengths are not arbitrarily small since $r$ is finite. Even if the diffusion-scaled queue lengths at all stations are the same, and an arriving EV moves to station~1, this causes a discrepancy of $1/(p_1 \sqrt{\lambda r/\mu})\approx 0.5657$ in the described setting in Section~\ref{sec:LargeScaleSystemSimulations}. Theorem~\ref{thm:StrongSSC} implies that the distance between the queue lengths become smaller as the number of EV users $r$ grows large. To illustrate this notion, we consider the maximum difference between the queue lengths over a finite interval $T=1$ in Figure~\ref{fig:MaxDiffExponential}, which is monotonically decreasing in $r$. In addition, we observe that the average maximum distance, i.e.\ $1/T \int_{0}^T \max_{1 \leq i < j \leq S}\{|Q_i(t)-Q_j(t)|\, \} dt$, is also monotonically decreasing in $r$, and is not excessively smaller than the maximum distance of the interval.
	
	\begin{figure}[htb]
		\centering
		\includegraphics[width=12cm]{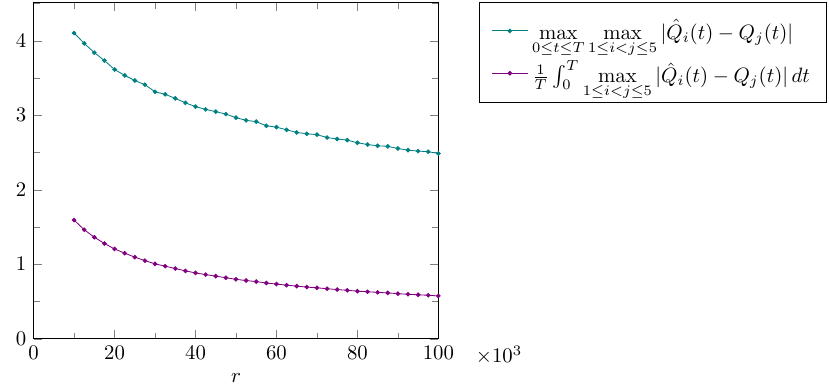}
		\caption{Maximum queue length measures for $T=1$ averaged over 10 000 samples.}
		\label{fig:MaxDiffExponential}
	\end{figure}
	
	Summarizing, as the system size increases, the accuracy of the approximations improve. However, one can imagine that a real-life battery swapping infrastructure is not of the scale as discussed in the previous sections. Therefore, we consider how our results hold up in a more realistic for an EV battery swapping infrastructure.
	
	\subsection{Moderate-sized system}
	We consider the following setting. We have the same network structure as given in Figure~\ref{fig:StationNetworkExperiments}, but with other arrival probabilities. More specifically, we assume $p_{12}=p_{23}=p_{24}=p_{25}=0.2$ and $p_{13}=p_{34}=0.1$, giving an effective arrival probability of $(p_1,\ldots,p_5)=(0.1,0.25,0.3,0.2,0.15)$. For the other parameters, we assume that recharging takes four hours on average ($\mu=0.25$) and every EV user returns for recharging services after every 50 hours on average ($\lambda=0.02$). We assume that there our infrastructure consists of a thousand electric vehicles ($r=1000$). The effective load at the stations are
	\begin{align*}
	\left(\frac{p_j \lambda r}{\mu}\right)_{j=1,\ldots,5}=(8,20,24,16,12),
	\end{align*}
	
	Note that $\lambda r / \mu = 80$ and $\sqrt{\lambda r / \mu} = 4\sqrt{5} \approx 8.94$. Relatively, their sizes are much closer to one another than when $r$ becomes larger, and this will also have its impact on the behavior.
	
	Due to the smaller system size, we can expect that the fluctuations of around its fluid limit is relatively larger. Indeed, if one plots a sample path for this system with initial queue length $Q(0)=(0,0,0,0,0)$, we observe that the fluctuations to its corresponding fluid limit approximation are significant, see Figure~\ref{fig:QLModerateSizeLargeBetaAndGamma} and~\ref{fig:QLModerateSizeLargeBetaGamma0}. Moreover, in Figure~\ref{fig:QLModerateSizeLargeBetaGamma0}, the queue lengths even seem to lie above the fluid limit results. We point out that this is a consequence of the high waiting probabilities for these parameter settings. Moreover, since the fluctuations are of a significant size (relatively), this leads to sample paths that appear quite far off (above) its fluid limit approximation at first glance.
	
	\begin{figure}[htb!]
		\centering
		\includegraphics[width=10.5cm]{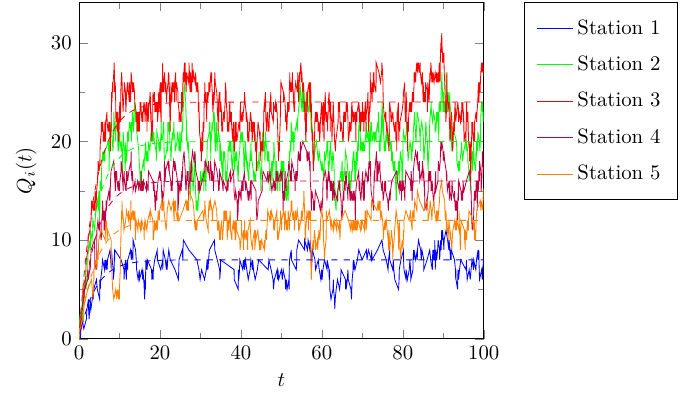}
		\caption{Queue length behavior for moderate-sized system with $\beta=\gamma \sqrt{5}$.}
		\label{fig:QLModerateSizeLargeBetaAndGamma}
	\end{figure}
	
	\begin{figure}[htb!]
		\centering
		\includegraphics[width=10.5cm]{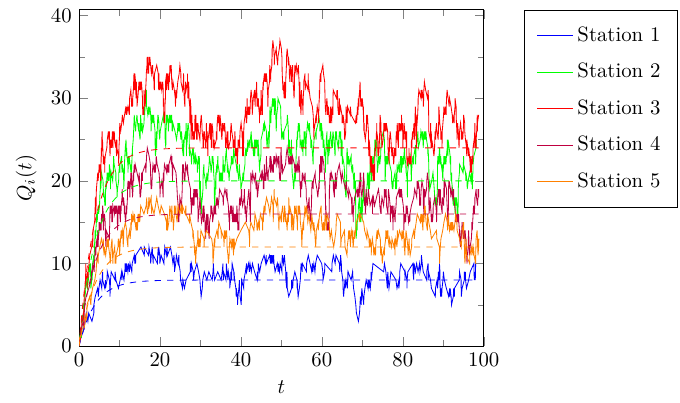}
		\caption{Queue length behavior for moderate-sized system with $\beta= \sqrt{5}$ and $\gamma=0$.}
		\label{fig:QLModerateSizeLargeBetaGamma0}
	\end{figure}
	
	To see whether a load-balancing effect still takes place for a moderate-sized system, we should consider the diffusion scaled queue lengths, see Figure~\ref{fig:QLModerateSizeDiffusion}. Numerous experiments, including the setting as in Figure~\ref{fig:QLModerateSizeDiffusion} suggest that even for these settings, the effect of state-space collapse is very much visible. In other words, there is still a strong load-balancing effect present that leads to the occupation level at the different stations stay close to one another. In turn, this lead that performance, e.g. waiting probability and waiting times, are comparable at the different stations at all times.
	
	\begin{figure}[htb!]
		\centering
		\includegraphics[width=10.5cm]{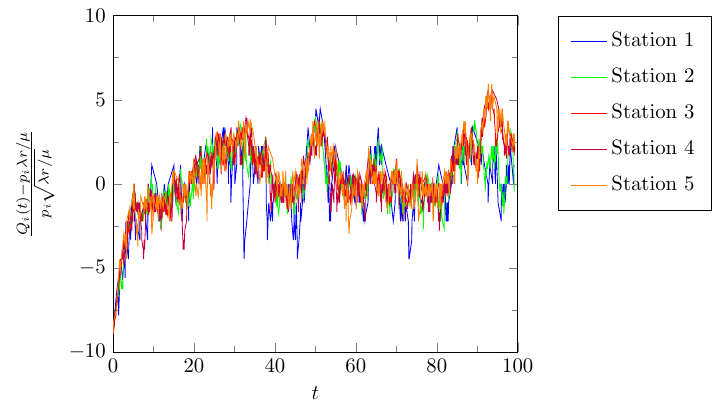}
		\caption{Queue length behavior for moderate-sized system with $\beta= \sqrt{5}$ and $\gamma=0$.}
		\label{fig:QLModerateSizeDiffusion}
	\end{figure}
	
	\subsection*{Acknowledgment}
	This research is supported by the Netherlands Organisation for Scientific Research through the programmes Gravitation NETWORKS grant 024.002.003, MEERVOUD grant 632.003.002, and VICI grant 639.033.413. The work of Seva Shneer is supported by the Mathematical Center in Akademgorodok under agreement No. 075-15-2019-1675 with the Ministry of Science and Higher Education of the Russian Federation.

	

	\clearpage
	
	\bibliographystyle{plain}
	\bibliography{bibEV}

	\newpage

\appendix	
		\section{Fluid and diffusion limit proofs for single station system} \label{app:SingleMain}
		In this appendix, we provide the proofs for the results in case that $S=1$.
		
		\subsection{Fluid and diffusion limits} \label{app:SingleMainFluidDiffusion}
		First, we derive the fluid limit.
		
\begin{proof}[Proof of Proposition~\ref{prop:SingleFluidLimitProcess}]
		We observe that the $Q^r(\cdot)$ is a birth-death process with state space $\mathcal{Q}^r=\{0,1,\ldots,B^r+r\}$, arrival (birth) rate $\lambda_r(j)=\lambda \left(r-(j-B^r)^+\right)$ and service (death) rate $\mu_r(j) = \mu \min\{j,F^r\}$ for all $j \in \mathcal{Q}^r$. The fluid-scaled process $\bar{Q}^r$ therefore has state space $\{0,1/r,\ldots,(B^r+r)/r\}$, and drift and infinitesimal variance functions
		\begin{align*}
		m_r(x) = \frac{\lambda_r( \lfloor r x \rfloor)}{r} - \frac{\mu_r( \lfloor r x \rfloor)}{r} = \lambda - \frac{\lambda(\lfloor r x \rfloor -B^r)^+}{r}- \frac{\mu \min\{\lfloor r x \rfloor,F^r\}}{r}
		\end{align*}
		and
		\begin{align*}
		\sigma^2_r(x) = \frac{\lambda_r( \lfloor r x \rfloor)}{r^2} + \frac{\mu_r( \lfloor r x \rfloor)}{r^2} = \lambda - \frac{\lambda(\lfloor r x \rfloor -B^r)^+}{r} + \frac{\mu \min\{\lfloor r x \rfloor,F^r\}}{r}.
		\end{align*}
		Taking the limit yields under scaling rule~\eqref{eq:QEDscalingSingleBSSModel}
		\begin{align*}
		m(x) := \lim_{r \rightarrow \infty} m_r(x) = \left\{ \begin{array}{ll}
		\lambda -\mu x & \textrm{if } x < \lambda /\mu, \\
		-\lambda x + \lambda^2/\mu & \textrm{if } x \geq \lambda /\mu,
		\end{array}\right.
		\end{align*}
		and
		\begin{align*}
		\sigma^2(x):= \lim_{r \rightarrow \infty} \sigma^2_r(x) = 0.
		\end{align*}
		That is, we obtain a degenerate limiting diffusion, and hence the limiting initial point will yield a deterministic path for the limiting process $\bar{Q}$. The mean directly provides the ODE that the limiting process $Q$ satisfies. The limiting value (as $t \rightarrow \infty)$ of the limiting process is also correct due to the following reasoning. For all $t \geq 0$ for which $\bar{Q}(t) < \frac{\lambda}{\mu}$, we observe $d \bar{Q}(t)/dt > 0$. Hence, if $\bar{Q}(0) < \lambda /\mu$, then the limiting process follows by the deterministic path
		\begin{align*}
		\bar{Q}(t) = \frac{\lambda}{\mu}+\left(\bar{Q}(0)-\frac{\lambda}{\mu}\right)e^{-\mu t}.
		\end{align*}
		Similarly, for all $t \geq 0$, if $\bar{Q}(t) > \lambda /\mu$, then $d \bar{Q}(t)/dt < 0$. Hence if $\bar{Q}(0) > \lambda /\mu$ , then the limiting process follows the deterministic path
		\begin{align*}
		\bar{Q}(t) = \frac{\lambda}{\mu}+\left(\bar{Q}(0)-\frac{\lambda}{\mu}\right)e^{-\lambda t}.
		\end{align*}
		Finally, if $\bar{Q}(0)=\lambda/\mu$, then $d \bar{Q}(t) /dt =0$ for all $t \geq 0$, which yields $\bar{Q}(t)=\lambda/\mu$ for all $t\geq 0$.
\hfill\end{proof}
		
		The second result involves the diffusion limit.
		
\begin{proof}[Proof of Theorem~\ref{thm:DiffusionLimitProcess}]
		The infinitesimal mean of the centered scaled process is given by
		\begin{align*}
		m_r(x) &= \frac{\lambda_r\left(\lfloor \lambda r / \mu + x \sqrt{\lambda r / \mu } \rfloor \right) - \mu_r\left(\lfloor \lambda r / \mu + x \sqrt{\lambda r / \mu } \rfloor \right)}{\sqrt{\lambda r/ \mu}} \rightarrow  -\lambda(x-\beta)^+ -\mu \min\{x,\gamma\}, \\
		\end{align*}
		and the infinitesimal variance is given by
		\begin{align*}
		\sigma^2_r(x) &= \frac{\lambda_r\left(\lfloor \lambda r / \mu + x \sqrt{\lambda r / \mu } \rfloor \right) + \mu_r\left(\lfloor \lambda r / \mu + x \sqrt{\lambda r / \mu } \rfloor \right)}{\lambda r/ \mu} \rightarrow \frac{2\lambda r}{\lambda r/ \mu} = 2\mu.
		\end{align*}
		Using Equations~(28) and~(33) in~\cite{BrowneWhitt1995}, this implies that the limiting process is a piecewise-linear diffusion process with steady state density given by~\eqref{eq:DensityQEDLimited1}, where $\alpha_i$, $i=1,2,3$ is the probability that the steady state process is in that interval. Equations~(5) and (6) from~\cite{BrowneWhitt1995} yield the steady state probabilities.
\hfill\end{proof}
		
		\subsection{Steady state limits} \label{app:SingleMainSteadyStateLimits}
		Theorem~\ref{thm:StStateToDiffusion} states that it is allowed to interchange the limits. To prove this result, we need to show that the steady state distribution (as $t\rightarrow \infty$) converges to the same distribution as $r \rightarrow \infty$ as the diffusion limiting result. First we derive the limiting steady state distribution conditioned to be in one of the intervals $[0,F^r]$, $[F^r,B^r]$, and $[B^r,B^r+r]$.
		
		\begin{lemma}
			Suppose $S=1$. Under scaling rules~\eqref{eq:QEDscalingSingleBSSModel}, the following limiting results hold as $r \rightarrow \infty$:
			\begin{align}
			\begin{array}{ll}
			\Prob\left(Q^r < \frac{\lambda r}{\mu} + x \sqrt{\frac{\lambda r}{\mu}} \big| Q^r < F^r \right) \rightarrow \frac{\Phi(x)}{\Phi(\gamma)}, & x<\gamma, \\
			\Prob\left(Q^r < \frac{\lambda r}{\mu} + x \sqrt{\frac{\lambda r}{\mu}} \big| F^r \leq Q^r < B^r \right) \rightarrow  \left(1- e^{-\gamma(x-\gamma)} \right)  \left(1- e^{-\gamma(\beta-\gamma)} \right)^{-1}  , & \gamma \leq x < \beta, \\
			\Prob\left(Q^r \geq \frac{\lambda r}{\mu} + x \sqrt{\frac{\lambda r}{\mu}} \big| Q^r \geq B^r \right) \rightarrow \Phi\left((\beta-x)\sqrt{\frac{\lambda}{\mu}}-\gamma\sqrt{\frac{\mu}{\lambda}} \right) \Phi\left(-\gamma\sqrt{\frac{\mu}{\lambda}}\right)^{-1}, & x \geq \beta.
			\end{array}
			\end{align}
			\label{lem:StStateConditionedQEDDistribution}
		\end{lemma}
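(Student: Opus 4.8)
The plan is to exploit the explicit product-form expression for $\pi_k^{(B,F,r)}$ supplied by Lemma~\ref{lem:SteadyStateSingleBSS} (in the relevant case $F^r \leq B^r$, i.e.~$\gamma \leq \beta$) and to treat each of the three regions separately. In every case the normalizing constant $\pi_0^{(B,F,r)}$ cancels, so each conditional probability is a ratio of one-sided partial sums of an \emph{unnormalized} pmf, and the task reduces to identifying the appropriate classical limit of that ratio.

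For the region $\{Q^r < F^r\}$ the formula gives $\pi_k \propto (\lambda r/\mu)^k/k!$, which is precisely (up to normalization) the pmf of a Poisson variable $N$ with mean $\nu = \lambda r/\mu$. Hence the law of $Q^r$ given $Q^r < F^r$ coincides with that of $N$ conditioned on $N < F^r$. Since $(N-\nu)/\sqrt{\nu} \Rightarrow \mathcal{N}(0,1)$ and $F^r = \nu + \gamma\sqrt{\nu}$, the conditioning event is $\{(N-\nu)/\sqrt{\nu} < \gamma\}$; as the limiting normal places no mass at $\gamma$, weak convergence together with convergence of the conditioning probability yields the truncated-normal limit $\Phi(x)/\Phi(\gamma)$. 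For the band $\{F^r \leq Q^r < B^r\}$ the consecutive ratio is constant, $\pi_{k+1}/\pi_k = (\lambda r/\mu)/F^r =: \rho_r$, so $Q^r$ restricted to this band is (truncated) geometric. Writing $\rho_r = (1 + \gamma\sqrt{\mu/(\lambda r)})^{-1}$ and $k - F^r = (x-\gamma)\sqrt{\lambda r/\mu}$, a direct computation gives $\rho_r^{\,k-F^r} \to e^{-\gamma(x-\gamma)}$, so the telescoped geometric sums converge to $\bigl(1 - e^{-\gamma(x-\gamma)}\bigr)\bigl(1 - e^{-\gamma(\beta-\gamma)}\bigr)^{-1}$, with the degenerate case $\gamma=0$ giving the uniform limit $x/\beta$.

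The third region is where the main work lies, but it becomes transparent after the change of variable $n = r + B^r - k$ (the number of non-waiting cars). Substituting into $\pi_k \propto (\lambda/\mu)^k\bigl((r+B^r-k)!\,F^{k-F}\bigr)^{-1}$ and discarding the factors independent of $n$ collapses the expression to $\pi \propto (F^r\mu/\lambda)^n/n!$ — again a Poisson pmf, now with mean $\nu' = F^r\mu/\lambda = r + \gamma\sqrt{\mu r/\lambda}$. The event $\{Q^r \geq \lambda r/\mu + x\sqrt{\lambda r/\mu}\}$ becomes $\{n \leq r + (\beta-x)\sqrt{\lambda r/\mu}\}$ and $\{Q^r \geq B^r\}$ becomes $\{n \leq r\}$; applying the Poisson central limit theorem with $\sqrt{\nu'}\sim\sqrt r$ sends the standardized thresholds to $(\beta-x)\sqrt{\lambda/\mu} - \gamma\sqrt{\mu/\lambda}$ and $-\gamma\sqrt{\mu/\lambda}$ respectively, producing exactly the stated ratio of $\Phi$-values.

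I expect the only genuine obstacle to be bookkeeping rather than conceptual: verifying that the Poisson and geometric limit statements apply to the one-sided partial sums uniformly enough to pass to conditional cumulative distribution functions, and checking that the floor/ceiling discretization of $\lambda r/\mu + x\sqrt{\lambda r/\mu}$ contributes only $o(1)$ to the standardized thresholds. The decisive simplification is the reversal $n = r + B^r - k$, which exposes the third region as a second (truncated) Poisson and lets a single central limit argument serve both Gaussian tails; the $\sqrt{\mu/\lambda}$ rescaling in the limiting density then arises purely from the mismatch between the scalings $\sqrt{\nu'}\sim\sqrt r$ and $\sqrt{\lambda r/\mu}$.
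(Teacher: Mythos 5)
Your proposal is correct and follows essentially the same route as the paper's own proof: the cancellation of the normalizing constant, the truncated-Poisson CLT argument giving $\Phi(x)/\Phi(\gamma)$ in the first region, the geometric-series computation with ratio $\lambda r/(\mu F^r)$ in the middle band, and the index reversal $n = r + B^r - k$ exposing a second Poisson law with mean $\mu F^r/\lambda$ whose CLT yields both Gaussian thresholds in the third region. The only remaining work is the routine bookkeeping you already identify, which the paper handles in exactly the same way.
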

		
\begin{proof}
		The first expression follows from the central limit theorem and the properties of the Poisson distribution,
		\begin{align*}
		\Prob \left(Q^r < \frac{\lambda r}{\mu} + x \sqrt{\frac{\lambda r}{\mu}} \bigg| Q^r < F^r \right) &= \frac{\Prob\left(Q^r < \frac{\lambda r}{\mu} + x \sqrt{\frac{\lambda r}{\mu}} \right)}{\Prob\left( Q^r < F^r \right)} = \frac{\sum_{k=0}^{\lambda r/\mu + x\sqrt{\lambda r/\mu}-1} \pi_k }{\sum_{k=0}^{F^r-1} \pi_k}\\
		&= \frac{\Prob\left( \textrm{Pois}(\lambda r/ \mu) \leq  \lambda r/\mu + x\sqrt{\lambda r/\mu}-1 \right)}{\Prob\left( \textrm{Pois}(\lambda r/ \mu) \leq  \lambda r/\mu + \gamma\sqrt{\lambda r/\mu}-1 \right)} \rightarrow \frac{\Phi(x)}{\Phi(\gamma)}.
		\end{align*}
		The second expression can be obtained using geometric series,
		\begin{align*}
		\Prob&\left(Q^r < \frac{\lambda r}{\mu} + x \sqrt{\frac{\lambda r}{\mu}} \bigg| F^r \leq Q^r < B^r \right) = \frac{\Prob\left(F^r\leq Q^r < \frac{\lambda r}{\mu} + x \sqrt{\frac{\lambda r}{\mu}} \right)}{\Prob\left(F^r \leq Q^r < B^r \right)} = \frac{\sum_{k=F^r}^{\lambda r/\mu + x\sqrt{\lambda r/\mu}-1} \pi_k }{\sum_{k=F^r}^{B^r-1} \pi_k} \\
		&= \frac{\sum_{k=F^r}^{\lambda r/\mu + x\sqrt{\lambda r/\mu}-1} \left( \frac{\lambda r}{\mu F^r}\right)^k }{\sum_{k=F^r}^{B^r-1} \left( \frac{\lambda r}{\mu F^r}\right)^k} = \frac{1-\left( \frac{\lambda r}{\mu F^r}\right)^{\lambda r/\mu + x\sqrt{\lambda r/\mu}} -1 + \left( \frac{\lambda r}{\mu F^r}\right)^{F^r}  }{1-\left( \frac{\lambda r}{\mu F^r}\right)^{B^r} - 1 + \left( \frac{\lambda r}{\mu F^r}\right)^{F^r}}\\
		& = \frac{\left(  1+\frac{\gamma}{\sqrt{\lambda r /\mu}}\right)^{ -x\sqrt{\lambda r/\mu}} -\left(  1+\frac{\gamma}{\sqrt{\lambda r /\mu}}\right)^{ -\gamma \sqrt{\lambda r/\mu}} }{\left( 1+\frac{\gamma}{\sqrt{\lambda r /\mu}}\right)^{ -\beta\sqrt{\lambda r/\mu}} - \left(  1+\frac{\gamma}{\sqrt{\lambda r /\mu}}\right)^{ -\gamma \sqrt{\lambda r/\mu}}} \rightarrow  \frac{e^{-\gamma x} -e^{-\gamma^2}}{e^{-\gamma \beta}-e^{-\gamma^2}} = \frac{1- e^{-\gamma (x-\gamma)} }{1-e^{-\gamma (\beta-\gamma)}}.
		\end{align*}
		The final expression again uses the central limit theorem and the properties of the Poisson distribution. That is,
		\begin{align*}
		\Prob&\left(Q^r \geq \frac{\lambda r}{\mu} + x \sqrt{\frac{\lambda r}{\mu}} \bigg| Q^r \geq B^r \right) = \frac{\Prob\left(Q^r \geq \frac{\lambda r}{\mu} + x \sqrt{\frac{\lambda r}{\mu}}  \right)}{\Prob\left(Q^r \geq B^r \right)} = \frac{\sum_{k=\lambda r/\mu + x\sqrt{\lambda r/\mu}}^{B^r+r} \frac{1}{(r+B^r-k)!} \left(\frac{\lambda}{\mu F^r}\right)^k }{\sum_{k=B^r}^{B^r+r}\frac{1}{(r+B^r-k)!} \left(\frac{\lambda}{\mu F^r}\right)^k} \\
		&= \frac{\sum_{k=0}^{B^r+r-\lambda r/\mu - x\sqrt{\lambda r/\mu}} \frac{1}{k!} \left(\frac{\lambda}{\mu F^r}\right)^{B^r+r-k} }{\sum_{k=0}^{r}\frac{1}{k!} \left(\frac{\lambda}{\mu F^r}\right)^{B^r+r-k} }= \frac{\sum_{k=0}^{r + (\beta- x)\sqrt{\lambda r/\mu}} \frac{1}{k!} \left(\frac{\mu F^r}{\lambda}\right)^{k} e^{-\mu F^r/\lambda}}{\sum_{k=0}^{r}\frac{1}{k!} \left(\frac{\mu F^r}{\lambda}\right)^{k} e^{-\mu F^r/\lambda}}.
		\end{align*}
		Since for every $z \in \mathbb{R}$,
		\begin{align*}
		\Prob\left(\textrm{Pois}\left(\mu F^r/\lambda \right) \leq r +z \sqrt{\lambda r /\mu} \right) &= \Prob\left(\frac{\textrm{Pois}\left(\mu F^r/\lambda \right) - r - \gamma \sqrt{\frac{\mu r}{\lambda}}}{\sqrt{r}} \leq z \sqrt{\frac{\lambda}{\mu}} -\gamma\sqrt{\frac{\mu}{\lambda}} \right) \\
		&\rightarrow \Phi\left( z \sqrt{\frac{\lambda}{\mu}} -\gamma\sqrt{\frac{\mu}{\lambda}}\right)
		\end{align*}
		as $r \rightarrow \infty$, we obtain
		\begin{align*}
		\Prob&\left(Q^r \geq \frac{\lambda r}{\mu} + x \sqrt{\frac{\lambda r}{\mu}} \bigg| Q^r \geq B^r \right) \rightarrow \Phi\left((\beta-x) \sqrt{\frac{\lambda}{\mu}} -\gamma\sqrt{\frac{\mu}{\lambda}}\right) \Phi\left( -\gamma\sqrt{\frac{\mu}{\lambda}}\right)^{-1}.
		\end{align*}
\hfill\end{proof}
		
		To obtain the limiting distribution of the steady state distribution as $r \rightarrow \infty$, it remains to derive the asymptotic behavior of the probabilities that the queue length process is in the intervals $[0,F^r]$, $[F^r,B^r]$, and $[B^r,B^r+r]$. To do so, we use the following three lemmas.
		
		\begin{lemma}
			Under scaling rules~\eqref{eq:QEDscalingSingleBSSModel}, as $r \rightarrow \infty$,
			\begin{align*}
			\sum_{k=0}^{F^r-1} \frac{\left( \frac{\lambda r}{\mu} \right)^k}{k!} e^{-\lambda r / \mu} \rightarrow \Phi(\gamma).
			\end{align*}
			\label{lem:LimitedA1}
		\end{lemma}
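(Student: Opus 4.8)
The plan is to recognize the sum as the cumulative distribution function of a Poisson random variable and then invoke a central limit theorem, exactly in the spirit of the first display in Lemma~\ref{lem:StStateConditionedQEDDistribution}. Writing $m_r = \lambda r/\mu$ and letting $N_r$ be a Poisson random variable with mean $m_r$, the summand $\frac{(\lambda r/\mu)^k}{k!}e^{-\lambda r/\mu}$ is precisely $\Prob(N_r = k)$, so that
\[
\sum_{k=0}^{F^r-1} \frac{(\lambda r/\mu)^k}{k!}\, e^{-\lambda r/\mu} = \Prob\!\left(N_r \leq F^r - 1\right).
\]
Since the scaling rule~\eqref{eq:QEDscalingSingleBSSModel} gives $F^r = m_r + \gamma\sqrt{m_r}$, I would standardize and rewrite the right-hand side (up to the vanishing single-point mass discussed below) as $\Prob\!\left( (N_r - m_r)/\sqrt{m_r} < \gamma \right)$. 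This reduces the claim to showing that the standardized variable $W_r := (N_r - m_r)/\sqrt{m_r}$ has distribution function converging to $\Phi$ at the fixed point $\gamma$.

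The next step is to establish $W_r \Rightarrow \mathcal{N}(0,1)$ as $r \to \infty$. Because $m_r \to \infty$ but is in general non-integer, I would avoid the ``sum of i.i.d.\ $\mathrm{Pois}(1)$'' shortcut and argue directly with characteristic functions: the characteristic function of $W_r$ equals $\exp\!\big( m_r(e^{it/\sqrt{m_r}}-1) - it\sqrt{m_r}\big)$, and a second-order Taylor expansion of the exponent gives $-t^2/2 + o(1)$ as $r \to \infty$, whence $\phi_{W_r}(t) \to e^{-t^2/2}$. L\'evy's continuity theorem then yields $W_r \Rightarrow \mathcal{N}(0,1)$; alternatively, Lindeberg--Feller applied to a suitable triangular array of independent Poisson increments would serve the same purpose.

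To conclude, I would use that the limiting standard normal law is continuous at every real number, in particular at $\gamma$, so weak convergence gives $\Prob(W_r < \gamma) \to \Phi(\gamma)$. The only remaining point is the bookkeeping around the boundary: the distinction between the strict and non-strict inequality, and the floor implicit in the discrete upper summation limit $F^r - 1$, are immaterial because the Poisson point mass near the mean satisfies $\Prob(N_r = \lfloor F^r\rfloor) = O(1/\sqrt{m_r}) \to 0$. I expect this discreteness correction to be the only step requiring any care, and it is minor; the substance of the argument is entirely the Poisson central limit theorem.
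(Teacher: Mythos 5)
Your proof is correct and follows essentially the same route as the paper: the paper's own argument is exactly the identification of the sum as $\Prob\left( \textrm{Pois}(\lambda r/\mu) < \lambda r/\mu + \gamma\sqrt{\lambda r/\mu} \right)$ followed by the central limit theorem for the Poisson distribution. You simply spell out the details the paper leaves implicit (the characteristic-function verification of the Poisson CLT and the vanishing $O(1/\sqrt{m_r})$ boundary mass), which is sound but not a different method.
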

		
\begin{proof}
		This follows from the central limit and the properties of the Poisson distribution,
		\begin{align*}
		\sum_{k=0}^{F^r-1} \frac{\left( \frac{\lambda r}{\mu} \right)^k}{k!} e^{-\lambda r / \mu} = \Prob\left( \textrm{Pois}(\lambda r/ \mu) <  \lambda r/\mu + \gamma\sqrt{\lambda r/\mu} \right) \rightarrow \Phi(\gamma).
		\end{align*}
\hfill\end{proof}

		\begin{lemma}
			Under scaling rules~\eqref{eq:QEDscalingSingleBSSModel} with $\gamma \neq 0$, as $r \rightarrow \infty$,
			\begin{align*}
			\sum_{k=F^r}^{B^r-1} \left(\frac{\lambda r}{\mu F^r} \right)^k \frac{{F^r}^{F^r}}{F^r!} e^{-\frac{\lambda r}{\mu}} \rightarrow \frac{1}{\sqrt{2\pi}} \frac{1}{\gamma}\left( e^{-\gamma^2/2} - e^{-\beta \gamma+\gamma^2/2} \right) = \frac{\phi(\gamma)}{\gamma} \left(1-e^{-\gamma(\beta-\gamma)}\right).
			\end{align*}
			If $\gamma=0$, then
			\begin{align*}
			\sum_{k=F^r}^{B^r-1} \left(\frac{\lambda r}{\mu F^r} \right)^k \frac{{F^r}^{F^r}}{F^r!} e^{-\frac{\lambda r}{\mu}} \rightarrow \frac{\beta}{\sqrt{2\pi}}.
			\end{align*}
			\label{lem:LimitedA2}
		\end{lemma}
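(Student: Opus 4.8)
The plan is to recognize the sum as a finite geometric series with a $k$-independent prefactor, and then to extract its asymptotics by combining Stirling's formula with a second-order Taylor expansion. Throughout I write $\nu = \lambda r/\mu$, so that $F^r = \nu + \gamma\sqrt{\nu}$ and $B^r = \nu + \beta\sqrt{\nu}$. The summand factors as
\[
\left(\frac{\lambda r}{\mu F^r}\right)^k \frac{{F^r}^{F^r}}{F^r!}e^{-\nu} = C_r\, q^k, \qquad C_r := \frac{{F^r}^{F^r}}{F^r!}e^{-\nu}, \quad q := \frac{\nu}{F^r} = \left(1+\frac{\gamma}{\sqrt{\nu}}\right)^{-1},
\]
where $C_r$ does not depend on $k$. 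For $\gamma \neq 0$ we have $q \neq 1$, so summing the geometric series over $F^r \le k \le B^r - 1$ gives
\[
\sum_{k=F^r}^{B^r-1} C_r\, q^k = C_r\, q^{F^r}\,\frac{1 - q^{B^r - F^r}}{1-q}.
\]
It then suffices to find the limits of the two factors $C_r q^{F^r}$ and $(1-q^{B^r-F^r})/(1-q)$ separately.

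The crux is the first factor. By Stirling's formula $\ln(n^n/n!) = n - \tfrac12\ln(2\pi n) + o(1)$, so $\ln C_r = (F^r - \nu) - \tfrac12\ln(2\pi F^r) + o(1) = \gamma\sqrt{\nu} - \tfrac12\ln(2\pi F^r) + o(1)$. For the geometric factor I would expand, using $F^r = \nu(1+\gamma/\sqrt{\nu})$ and $\ln(1+u) = u - u^2/2 + O(u^3)$ with $u = \gamma/\sqrt{\nu}$,
\[
\ln q^{F^r} = -F^r \ln\!\left(1+\frac{\gamma}{\sqrt{\nu}}\right) = -\gamma\sqrt{\nu} - \frac{\gamma^2}{2} + o(1).
\]
Adding the two expansions, the $\gamma\sqrt{\nu}$ terms cancel exactly and one is left with $\ln(C_r q^{F^r}) = -\tfrac12\ln(2\pi F^r) - \gamma^2/2 + o(1)$, i.e.
\[
C_r\, q^{F^r} \sim \frac{1}{\sqrt{2\pi \nu}}\,e^{-\gamma^2/2} = \frac{\phi(\gamma)}{\sqrt{\nu}},
\]
where I used $F^r \sim \nu$. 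This cancellation of the divergent $\gamma\sqrt{\nu}$ contributions, leaving precisely the Gaussian factor $e^{-\gamma^2/2}$, is the delicate point and the step most sensitive to the order to which the expansions are carried; carrying $\ln(1+u)$ only to first order would lose the decisive $-\gamma^2/2$.

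For the remaining factor, $1 - q = (\gamma/\sqrt{\nu})/(1+\gamma/\sqrt{\nu}) \sim \gamma/\sqrt{\nu}$, while expanding the logarithm once more with $B^r - F^r = (\beta-\gamma)\sqrt{\nu}$ gives $\ln q^{B^r - F^r} = -\gamma(\beta-\gamma) + o(1)$, so $q^{B^r-F^r} \to e^{-\gamma(\beta-\gamma)}$. Hence $(1-q^{B^r-F^r})/(1-q) \sim (\sqrt{\nu}/\gamma)(1 - e^{-\gamma(\beta-\gamma)})$, and multiplying by $C_r q^{F^r} \sim \phi(\gamma)/\sqrt{\nu}$ the powers of $\sqrt{\nu}$ cancel, yielding the claimed limit $\tfrac{\phi(\gamma)}{\gamma}(1 - e^{-\gamma(\beta-\gamma)})$. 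Finally, for $\gamma = 0$ one has $q = 1$, so the series degenerates to $B^r - F^r = \beta\sqrt{\nu}$ equal terms $C_r = \nu^\nu e^{-\nu}/\nu! \sim (2\pi\nu)^{-1/2}$, giving $\beta\sqrt{\nu}\cdot(2\pi\nu)^{-1/2} = \beta/\sqrt{2\pi}$. The floor/ceiling rounding of $F^r$ and $B^r$ perturbs the relevant exponents and the number of summands by $O(1)$, which only affects the $o(1)$ error terms and can be absorbed throughout.
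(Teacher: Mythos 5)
Your proposal is correct and follows essentially the same route as the paper: both sum the geometric series, apply Stirling's approximation to ${F^r}^{F^r}/F^r!$, and use a second-order expansion of $\ln(1+\gamma/\sqrt{\lambda r/\mu})$ so that the divergent $e^{\pm\gamma\sqrt{\lambda r/\mu}}$ factors cancel, leaving $\phi(\gamma)$; the $\gamma=0$ case is handled identically as a sum of $\beta\sqrt{\lambda r/\mu}$ equal terms. The only difference is organizational (you group the Stirling factor with $q^{F^r}$ before expanding, while the paper factors out $q^{\lambda r/\mu}$ and cancels at the end), plus your explicit remark on integer rounding, which the paper omits.
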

		
\begin{proof}
		If $\gamma \neq 0$, we observe that
		\begin{align*}
		\sum_{k=F^r}^{B^r-1} \left(\frac{\lambda r}{\mu F^r} \right)^k = \frac{\left(\frac{\lambda r}{\mu F^r} \right)^{F^r+1} - \left(\frac{\lambda r}{\mu F^r} \right)^{B^r} }{1-\left(\frac{\lambda r}{\mu F^r} \right)} = \left(\frac{\lambda r}{\mu F^r} \right)^{\frac{\lambda r}{\mu}} \frac{\left(\frac{\lambda r}{\mu F^r} \right)^{\gamma \sqrt{\lambda r /\mu}+1} - \left(\frac{\lambda r}{\mu F^r} \right)^{\beta \sqrt{\lambda r /\mu}} }{1-\left(\frac{\lambda r}{\mu F^r} \right)}
		\end{align*}
		due to geometric series. Moreover,
		\begin{align*}
		\left(\frac{\lambda r}{\mu F^r} \right)^{\gamma \sqrt{\lambda r /\mu}+1} - \left(\frac{\lambda r}{\mu F^r} \right)^{\beta \sqrt{\lambda r /\mu}} &= \left(1+\frac{\gamma}{\sqrt{\lambda r/\mu}} \right)^{-\gamma \sqrt{\lambda r /\mu}+1} - \left(1+\frac{\gamma}{\sqrt{\lambda r/\mu}}\right)^{-\beta \sqrt{\lambda r /\mu}}\\
		&\rightarrow  e^{-\gamma^2} - e^{-\beta \gamma},
		\end{align*}
		and due to geometric series,
		\begin{align*}
		1-\left(\frac{\lambda r}{\mu F^r} \right) = 1-\left(1+\frac{\gamma}{\sqrt{\lambda r/\mu}} \right)^{-1} = -\frac{\gamma}{\sqrt{\lambda r/ \mu}} + O(r^{-1}).
		\end{align*}
		In addition,
		\begin{align*}
		\left(\frac{\lambda r}{\mu F^r} \right)^{\frac{\lambda r}{\mu}} &= \left(1+\frac{\gamma}{\sqrt{\lambda r/\mu}} \right)^{\frac{\lambda r}{\mu}} = \textrm{exp}\left\{ -\frac{\lambda r}{\mu} \left(\frac{\gamma}{\sqrt{\lambda r/\mu}} -\frac{1}{2} \frac{\gamma^2}{\lambda r/\mu} + O(r^{-3/2}) \right)  \right\} \\
		&= \textrm{exp}\left\{ -\sqrt{\frac{\lambda r}{\mu}} \gamma + \frac{\gamma^2}{2} + O(r^{-1/2})  \right\}.
		\end{align*}
		Finally, using Stirling's approximation
		\begin{align*}
		\frac{{F^r}^{F^r}}{F^r!} e^{-\frac{\lambda r}{\mu}} \sim \frac{1}{\sqrt{2 \pi F^r}}e^{-\frac{\lambda r}{\mu}+F^r} \sim \frac{1}{\sqrt{2 \pi}\sqrt{ \lambda r /\mu}}e^{\gamma \sqrt{\frac{\lambda r}{\mu}}}.
		\end{align*}
		Combining the expressions yields
		\begin{align*}
		\sum_{k=F^r}^{B^r-1} \left(\frac{\lambda r}{\mu F^r} \right)^k \frac{{F^r}^{F^r}}{F^r!} e^{-\frac{\lambda r}{\mu}}  \rightarrow \frac{1}{\sqrt{2\pi}} \frac{1}{\gamma} e^{\gamma^2/2} \left( e^{-\gamma^2} - e^{-\beta \gamma} \right) =  \frac{1}{\sqrt{2\pi}} \frac{1}{\gamma}\left( e^{-\gamma^2/2} - e^{-\beta \gamma+\gamma^2/2} \right).
		\end{align*}
		If $\gamma = 0$, using Stirling's approximation, we directly obtain
		\begin{align*}
		\sum_{k=F^r}^{B^r-1} \left(\frac{\lambda r}{\mu F^r} \right)^k \frac{{F^r}^{F^r}}{F^r!} e^{-\frac{\lambda r}{\mu}} =  (B^r-F^r-1) \frac{{F^r}^{F^r}}{F^r!} e^{-\frac{\lambda r}{\mu}} \rightarrow \frac{\beta}{\sqrt{2\pi}}.
		\end{align*}
\hfill\end{proof}

		\begin{lemma}
			Under scaling rules~\eqref{eq:QEDscalingSingleBSSModel}, as $r \rightarrow \infty$,
			\begin{align*}
			\frac{r^{B^r} r! {F^r}^{F^r} }{F^r!} e^{-\frac{\lambda r}{\mu}} \sum_{k=B^r}^{B^r+r} \frac{1}{(r+B^r-k)!}\left(\frac{\lambda}{\mu F^r} \right)^k &\rightarrow \sqrt{\frac{\mu}{\lambda}} e^{\frac{\gamma^2}{2}-\beta \gamma +\frac{\mu}{\lambda} \frac{\gamma^2}{2}} \Phi\left( - \sqrt{\frac{\mu}{\lambda} \gamma} \right) \\
			&=\phi(\gamma) e^{-\gamma(\beta-\gamma)}\sqrt{\frac{\mu}{\lambda}} \phi\left( \sqrt{\frac{\mu}{\lambda}}\gamma \right)^{-1} \Phi\left( -\sqrt{\frac{\mu}{\lambda}}\gamma\right).
			\end{align*}
			\label{lem:LimitedA3}
		\end{lemma}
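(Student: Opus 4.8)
The plan is to collapse the sum into a partial exponential series, identify that series through the Poisson central limit theorem exactly as in the proof of Lemma~\ref{lem:StStateConditionedQEDDistribution}, and then pin down the surviving deterministic prefactor by Stirling's formula together with a second-order expansion. To streamline the bookkeeping I write $\rho=\lambda/\mu$, so that $B^r=\rho r+\beta\sqrt{\rho r}$ and $F^r=\rho r+\gamma\sqrt{\rho r}$.

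First I would substitute $j=r+B^r-k$. As $k$ runs over $[B^r,B^r+r]$ the new index $j$ runs over $[0,r]$, and the sum factors as
\begin{align*}
\sum_{k=B^r}^{B^r+r}\frac{1}{(r+B^r-k)!}\Bigl(\frac{\lambda}{\mu F^r}\Bigr)^{k}=\Bigl(\frac{\lambda}{\mu F^r}\Bigr)^{r+B^r}\sum_{j=0}^{r}\frac{1}{j!}\Bigl(\frac{\mu F^r}{\lambda}\Bigr)^{j}.
\end{align*}
The inner sum, after multiplying by $e^{-\mu F^r/\lambda}$, is precisely $\Prob(\mathrm{Pois}(\mu F^r/\lambda)\le r)$ with mean $\mu F^r/\lambda=r+\gamma\sqrt{\mu r/\lambda}$. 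Standardizing, the upper limit tends to $-\gamma\sqrt{\mu/\lambda}$, so by the central limit theorem
\begin{align*}
\sum_{j=0}^{r}\frac{1}{j!}\Bigl(\frac{\mu F^r}{\lambda}\Bigr)^{j}\sim e^{\mu F^r/\lambda}\,\Phi\Bigl(-\gamma\sqrt{\tfrac{\mu}{\lambda}}\Bigr),
\end{align*}
which already produces the Gaussian factor in the claimed limit.

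Next I would insert Stirling's formula $r!\sim\sqrt{2\pi r}\,r^r e^{-r}$ and ${F^r}^{F^r}/F^r!\sim e^{F^r}/\sqrt{2\pi F^r}$. The purely algebraic pieces collapse to $\sqrt{r/F^r}\to\sqrt{\mu/\lambda}$, the genuine exponentials combine into $\exp\{-r+F^r-\rho r+r+\gamma\sqrt{\mu r/\lambda}\}=\exp\{\gamma\sqrt{\rho r}+\gamma\sqrt{r/\rho}\}$, and the remaining powers of $r$, $F^r$, $\lambda$ and $\mu$ collect into $(\rho r/F^r)^{r+B^r}=(1+\gamma/\sqrt{\rho r})^{-(r+B^r)}$. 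The crux is that both $\exp\{\gamma\sqrt{\rho r}+\gamma\sqrt{r/\rho}\}$ and this power factor diverge like $\exp\{\pm(1+\rho)\gamma\sqrt{r}/\sqrt{\rho}\}$, and only their product has a finite limit; combining the two asymptotic equivalences is legitimate precisely because the regrouped expression converges to a finite nonzero value.

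The main obstacle is therefore this cancellation, which I would resolve by expanding $\log(1+\gamma/\sqrt{\rho r})=\gamma/\sqrt{\rho r}-\gamma^2/(2\rho r)+O(r^{-3/2})$ and multiplying by $-(r+B^r)=-((1+\rho)r+\beta\sqrt{\rho r})$. The $\sqrt{r}$-order term equals $-(1+\rho)\gamma\sqrt{r}/\sqrt{\rho}$, which exactly annihilates $\gamma\sqrt{\rho r}+\gamma\sqrt{r/\rho}$ coming from the exponentials; the next order leaves the finite quantity $(1+\rho)\gamma^2/(2\rho)-\beta\gamma=\gamma^2/2+(\mu/\lambda)\gamma^2/2-\beta\gamma$, while the cubic term is $O(r^{-1/2})\to0$. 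Assembling the prefactor $\sqrt{\mu/\lambda}$, the surviving exponential $e^{\gamma^2/2-\beta\gamma+(\mu/\lambda)\gamma^2/2}$ and the Gaussian factor $\Phi(-\gamma\sqrt{\mu/\lambda})$ gives the first stated expression; the second follows by substituting $\phi(x)=e^{-x^2/2}/\sqrt{2\pi}$, since then $\phi(\gamma)\,\phi(\sqrt{\mu/\lambda}\,\gamma)^{-1}e^{-\gamma(\beta-\gamma)}=e^{\gamma^2/2-\beta\gamma+(\mu/\lambda)\gamma^2/2}$. I note that the case $\gamma=0$ requires no separate treatment, as every expression above is continuous at $\gamma=0$ and reduces to $\tfrac12\sqrt{\mu/\lambda}$.
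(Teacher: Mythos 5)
Your proof is correct and follows essentially the same route as the paper's own proof: the identical factorization of the sum into $\left(\lambda/(\mu F^r)\right)^{B^r+r}$ times a partial exponential series identified as $\Prob\left(\mathrm{Pois}(\mu F^r/\lambda)\leq r\right)\rightarrow\Phi\left(-\gamma\sqrt{\mu/\lambda}\right)$, followed by two applications of Stirling's formula and a second-order expansion of $\log\left(1+\gamma/\sqrt{\lambda r/\mu}\right)$ in which the divergent $\sqrt{r}$-order terms cancel against $e^{\gamma\sqrt{\lambda r/\mu}+\gamma\sqrt{\mu r/\lambda}}$. The only differences are presentational: you make the cancellation and the sign of the exponent $-(B^r+r)$ explicit (the paper's display has sign/typo slips there) and you note the continuity at $\gamma=0$, which the paper leaves implicit.
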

		
\begin{proof}
		We observe that
		\begin{align*}
		\left(\frac{\lambda}{\mu F^r} \right)^{-(B^r+r)} &e^{-\frac{\mu F^r}{\lambda}} \sum_{k=B^r}^{B^r+r} \frac{1}{(r+B^r-k)!}\left(\frac{\lambda}{\mu F^r} \right)^k =  \sum_{k=0}^{r} \frac{1}{k!} \left(\frac{\mu F^r}{\lambda} \right)^k  e^{-\frac{\mu F^r}{\lambda}} = \Prob \left( \textrm{Pois}\left(\frac{\mu F^r}{\lambda} \right) \leq r \right) \\
		&= \Prob \left( \frac{\textrm{Pois}\left(\mu F^r/\lambda \right)-\mu F^r / \lambda}{\sqrt{\mu F^r / \lambda}} \leq \frac{r-\mu F^r / \lambda}{\sqrt{\mu F^r / \lambda}} \right) \rightarrow \Phi \left(-\gamma \sqrt{\frac{\mu}{\lambda}} \right)
		\end{align*}
		Moreover, applying Stirling's approximation twice,
		\begin{align*}
		\frac{r^{B^r} r! {F^r}^{F^r} }{F^r!} &e^{-\frac{\lambda r}{\mu}}   \left(\frac{\lambda}{\mu F^r} \right)^{B^r+r} e^{\frac{\mu F^r}{\lambda}} \sim \sqrt{\frac{r}{F^r}}  \left(\frac{\lambda r}{\mu F^r} \right)^{B^r+r} e^{-r+F^r-\frac{\lambda}{\mu}r +\frac{\mu}{\lambda} F^r} \\
		&\sim \sqrt{\frac{\mu}{\lambda}} \left(1+\frac{\gamma}{\sqrt{\lambda r/\mu}}\right)^{B^r+r} e^{\gamma \sqrt{\frac{\lambda r}{\mu}}+ \sqrt{\frac{\mu r}{\lambda}}} \\
		&\sim \sqrt{\frac{\mu}{\lambda}}e^{\gamma \sqrt{\frac{\lambda r}{\mu}}+ \sqrt{\frac{\mu r}{\lambda}}}  \textrm{exp}\left\{-\left(r+\frac{\lambda r}{\mu}+\beta \sqrt{\frac{\lambda r}{\mu}} \right)\left( \frac{\gamma}{\sqrt{\lambda r/\mu}} - \frac{1}{2} \frac{\gamma^2}{\lambda r/\mu} + O(r^{-3/2}) \right)  \right\}  \\
		&\rightarrow \sqrt{\frac{\mu}{\lambda}} \textrm{exp}\left\{\frac{\gamma^2}{2}-\beta \gamma +\frac{\mu}{\lambda} \frac{\gamma^2}{2}\right\}.
		\end{align*}
		Multiplying the two expression concludes the result.
\hfill\end{proof}
		
		\begin{lemma}
			Under scaling rules~\eqref{eq:QEDscalingSingleBSSModel}, as $r \rightarrow \infty$:
			\begin{align*}
			\Prob\left( Q^r < F^r \right) &\rightarrow  \frac{r_1}{r_1+r_2+r_3} \\
			\Prob\left( F^r \leq Q^r < B^r \right) &\rightarrow \frac{r_2}{r_1+r_2+r_3}\\
			\Prob\left( Q^r \geq B^r \right) &\rightarrow \frac{r_3}{r_1+r_2+r_3},
			\end{align*}
			where
			\begin{align*}
			r_1 &=1, \\
			r_2 &= \left\{ \begin{array}{ll}
			{\phi(\gamma)}{\Phi(\gamma)} \frac{1}{\gamma} \left(1-e^{-\gamma(\beta-\gamma)} \right) & \textrm{if } \gamma \neq 0, \\
			\sqrt{\frac{2}{\pi}} \beta & \textrm{if } \gamma = 0, \\
			\end{array}\right.\\
			r_3 &= \frac{\phi(\gamma)}{\Phi(\gamma)} e^{-\gamma(\beta-\gamma)}\sqrt{\frac{\mu}{\lambda}} \phi\left( \sqrt{\frac{\mu}{\lambda}}\gamma \right)^{-1} \Phi\left( -\sqrt{\frac{\mu}{\lambda}}\gamma\right).
			\end{align*}
			\label{lem:StStateIntervalDistributions}
		\end{lemma}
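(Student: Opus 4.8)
The plan is to write each of the three interval probabilities as a ratio of partial sums of the \emph{unnormalised} steady-state weights, so that the normalisation constant $\pi_0^{(B,F,r)}$ cancels, and then to read off the limits of these partial sums directly from Lemmas~\ref{lem:LimitedA1}--\ref{lem:LimitedA3}. Throughout I work in the relevant regime $F^r \leq B^r$, so that the steady-state distribution~\eqref{eq:StDistributionLimitedChargingPoints} consists of exactly the three pieces supported on $[0,F^r)$, $[F^r,B^r)$ and $[B^r,B^r+r]$. Writing $a_k = \pi_k^{(B,F,r)}/\pi_0^{(B,F,r)}$ for the unnormalised weights and $S_i^r = \sum_{k \in I_i} a_k$ for the sum over the $i$-th interval, the identity $\sum_k \pi_k = 1$ gives $(\pi_0^{(B,F,r)})^{-1} = S_1^r + S_2^r + S_3^r$, and hence
\begin{align*}
\Prob(Q^r < F^r) &= \frac{S_1^r}{S_1^r+S_2^r+S_3^r}, \\
\Prob(F^r \leq Q^r < B^r) &= \frac{S_2^r}{S_1^r+S_2^r+S_3^r}, \\
\Prob(Q^r \geq B^r) &= \frac{S_3^r}{S_1^r+S_2^r+S_3^r}.
\end{align*}

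The second step is to recognise $S_1^r, S_2^r, S_3^r$ as exactly the objects appearing in the preceding three lemmas. For $S_1^r = \sum_{k=0}^{F^r-1} (\lambda r/\mu)^k/k!$ this is immediate. For the remaining two I would use the elementary rewriting $\frac{(\lambda r/\mu)^k}{F! F^{k-F}} = \left(\frac{\lambda r}{\mu F}\right)^k \frac{F^F}{F!}$, which turns $S_2^r$ into $\sum_{k=F^r}^{B^r-1}(\lambda r/(\mu F^r))^k (F^r)^{F^r}/F^r!$ and $S_3^r$ into $\frac{r^{B^r} r! (F^r)^{F^r}}{F^r!}\sum_{k=B^r}^{B^r+r}\frac{1}{(r+B^r-k)!}(\lambda/(\mu F^r))^k$. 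Multiplying the numerator and denominator of each displayed ratio by the common factor $e^{-\lambda r/\mu}$ leaves the ratios unchanged, and the rescaled sums $e^{-\lambda r/\mu}S_i^r$ are precisely the quantities whose limits are established in Lemmas~\ref{lem:LimitedA1}, \ref{lem:LimitedA2} and~\ref{lem:LimitedA3} respectively (with the $\gamma = 0$ case of $S_2^r$ covered by the corresponding branch of Lemma~\ref{lem:LimitedA2}).

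It then remains to divide each limit by $\Phi(\gamma) = \lim_{r\to\infty} e^{-\lambda r/\mu}S_1^r > 0$, which is exactly the bookkeeping that converts the limits of $e^{-\lambda r/\mu}S_i^r$ into the constants $r_1 = 1$, $r_2$ and $r_3$ stated in the lemma. Since the limiting denominator $r_1 + r_2 + r_3$ is strictly positive, the map $(x_1,x_2,x_3) \mapsto x_i/(x_1+x_2+x_3)$ is continuous at the limit point, and the conclusion follows from the algebra of limits. I do not anticipate a genuine obstacle here, as Lemmas~\ref{lem:LimitedA1}--\ref{lem:LimitedA3} carry all the analytic weight (the Poisson central limit theorem and Stirling's approximation); the only points requiring care are the cancellation of $\pi_0^{(B,F,r)}$ through the ratio representation, the choice of the single common normalisation $e^{-\lambda r/\mu}$ that makes all three lemmas applicable at once, and the observation that the $O(1)$ discrepancies in the index ranges of~\eqref{eq:StDistributionNormalizationConstantLimitedChargingPoints1} contribute only negligibly in the limit.
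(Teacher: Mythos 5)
Your proof is correct and takes essentially the same route as the paper's: both reduce the three interval probabilities to the limits in Lemmas~\ref{lem:LimitedA1}--\ref{lem:LimitedA3}, applied once to the unnormalised sums over $[0,F^r)$, $[F^r,B^r)$, $[B^r,B^r+r]$ and once to the normalisation constant, and then conclude by the algebra of limits. The only difference is presentational: you cancel $\pi_0^{(B^r,F^r,r)}$ explicitly through the ratio representation, whereas the paper first records the limit of the normalisation constant (times $e^{\lambda r/\mu}$) and then applies the same three lemmas to the numerators.
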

		
\begin{proof}
		It follows from Lemmas~\ref{lem:LimitedA1}-\ref{lem:LimitedA3} that
		\begin{align*}
		\pi_0^{(B^r,F^r,r)} e^{\lambda r /\mu} \rightarrow \Phi(\gamma)(r_1+r_2+r_3),
		\end{align*}
		and applying Lemmas~\ref{lem:LimitedA1}-\ref{lem:LimitedA3} again yields
		\begin{align*}
		\Prob\left( Q^r < F^r \right) &\rightarrow  \frac{\Phi(\gamma)}{\Phi(\gamma)(r_1+r_2+r_3)}, \\
		\Prob\left( F^r \leq Q^r < B^r \right) &\rightarrow \frac{\Phi(\gamma)r_2}{\Phi(\gamma)(r_1+r_2+r_3)},\\
		\Prob\left( Q^r \geq B^r \right) &\rightarrow \frac{\Phi(\gamma)r_3}{\Phi(\gamma)(r_1+r_2+r_3)}.
		\end{align*}
\hfill\end{proof}
		
		Consequently, it follows that Theorem~\ref{thm:StStateToDiffusion} holds.
		
\begin{proof}[Proof of Theorem~\ref{thm:StStateToDiffusion}]
		This result follows directly from Lemmas~\ref{lem:StStateConditionedQEDDistribution} and~\ref{lem:StStateIntervalDistributions}.
\hfill\end{proof}
		
		\subsection{Performance measures} \label{app:SingleMainPerformance}
		Using the results of Theorem~\ref{thm:DiffusionLimitProcess}, one can derive the asymptotic behavior of the performance measures, as described in Section~\ref{sec:PerformanceSingleBSS}.
		
\begin{proof}[Proof of Theorem~\ref{thm:PerformanceMeasuresSingleBSS}]
		Since the PASTA property holds asymptotically, we find that the probability an arriving car has to wait for a fully-charged battery is asymptotically equivalent to the probability that the system is in a state where more than $B^r$ batteries are charging at the same time, which gives the first asymptotic relation. Then,
		\begin{align*}
		\Prob\left( \bar{Q}(\infty) \geq \beta \right) = r_3/(r_1+r_2+r_3)
		\end{align*}
		where $r_i, i=1,2,3$ are as in Theorem~\ref{thm:DiffusionLimitProcess}, from which the result follows directly.
		
		For the expected waiting time, we consider the expected number of waiting cars at the charging station. We observe that
		\begin{align*}
		\frac{\E(Q^W)}{\sqrt{\lambda r/\mu}} &\rightarrow \int_{\beta}^\infty (x-\beta) \hat{f}(x) \, dx = \alpha_3 \Phi\left(-\sqrt{\frac{\mu}{\lambda}}\gamma \right)^{-1} \int_{0}^\infty \sqrt{\frac{\lambda}{\mu}} y \phi\left( \frac{y+\mu/\lambda \gamma}{\sqrt{\mu/\lambda}}\right) \, dy\\
		&= \alpha_3 \left(  \sqrt{\frac{\mu}{\lambda}} \phi\left(\sqrt{\frac{\mu}{\lambda}}\gamma \right) \Phi\left(-\sqrt{\frac{\mu}{\lambda}} \gamma\right)^{-1} - \frac{\mu}{\lambda}\gamma \right)  ,
		\end{align*}
		where $\alpha_3$ is as in Theorem~\ref{thm:DiffusionLimitProcess}. We remark that this expression is always positive: trivially for $\gamma \leq 0$, and also for $\gamma >0$ since for every $x>0$
		\begin{align*}
		\Phi(-x) \leq \frac{\phi(x)}{x},
		\end{align*}
		which can be shown by partial integration. Due to Little's law, we note that
		\begin{align*}
		E(W) = \frac{\E(Q^W)}{\lambda \left( r - \E(Q^W)\right)},
		\end{align*}
		which yields the result for the expected waiting time.
		
		Finally, for the utilization levels, the scaling of $\hat{Q}(\infty)$ implies that
		\begin{align*}
		\E(\textrm{\# Idle Charging Points}) &= \Theta(\sqrt{r}), \\
		\E(\textrm{\# Fully-Charged Batteries}) = \Theta(\sqrt{r}), \hspace{1cm}& \E(Q^r) = \Theta(r).
		\end{align*}
		Therefore, the utilization levels satisfy~\eqref{eq:UtilizationSingleBSS} and the result follows.
\hfill\end{proof}
		
		\section{Unlimited number of charging points for single station system} \label{app:UnlimitedCPSingle}
		\label{app:UnlimitedCPSingleBSS}
		A related setting involves the case where there are an unlimited number of charging points ($F=\infty$) and also an unlimited number of swapping servers $G=\infty$. Comparing these results to the performance of a related system where $G < \infty$ and the charging points scaled as in~\eqref{eq:QEDscalingSingleBSSModel} provides good insight into the significance of the effect of these resources. The number of spare batteries remains as before, i.e.
		\begin{align}
		B^r &= \frac{\lambda r}{\mu} + \beta \sqrt{\frac{\lambda r}{\mu}}.
		\label{eq:QEDUnlimitedSingleBSS}
		\end{align}
		
		\subsection{Fluid and diffusion limits}
		Although our main focus is performance measures in steady state, it is instructive to see the transient behavior for this large-scale system. To that end, we explore the fluid and diffusion limits.
		
		Consider the fluid-scaled process $\{\bar{Q}^r(t), t \geq 0\}$, where $\bar{Q}^r(t) = Q^r(t)/r$ for all $r \geq 1$. The following theorem holds under the QED scaling.
		
		\begin{theorem}
			If $S=1$ and $\bar{Q}^r(0) \rightarrow \bar{Q}(0)$ in distribution with $\bar{Q}(0)$ a finite constant, then $\hat{Q}^r \rightarrow \bar{Q}$ in distribution as $r \rightarrow \infty$, where $\bar{Q}$ satisfies the ODE
			\begin{align*}
			\frac{d \bar{Q}(t)}{dt} = \lambda- \mu \bar{Q}(t)  - \lambda \left(\bar{Q}(t)- \frac{\lambda}{\mu}\right)^+,
			\end{align*}
			and has the unique steady state value
			\begin{align*}
			\bar{Q}(\infty) = \frac{\lambda}{\mu}.
			\end{align*}
		\end{theorem}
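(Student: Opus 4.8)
The plan is to reuse the argument of Proposition~\ref{prop:SingleFluidLimitProcess} essentially verbatim, the only structural change being that with $F=\infty$ and $G=\infty$ no charging-point or swapping-server bottleneck remains, so every battery in need of charging is charged at once. Concretely, the queue length $\{Q^r(t),t\geq 0\}$ is still a birth-death process on $\{0,1,\ldots,B^r+r\}$ with birth rate $\lambda_r(j)=\lambda\left(r-(j-B^r)^+\right)$ — the $r-(Q^r-B^r)^+$ driving cars each return at rate $\lambda$ — but the death rate loses its truncation and becomes $\mu_r(j)=\mu j$ instead of $\mu\min\{j,F^r\}$. This single modification of the death rate is exactly what distinguishes the present setting from Proposition~\ref{prop:SingleFluidLimitProcess} and is responsible for the different upper branch of the drift.

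First I would compute the drift and infinitesimal variance of the fluid-scaled chain $\bar{Q}^r$, whose state space is $\{0,1/r,\ldots,(B^r+r)/r\}$:
\begin{align*}
m_r(x) = \frac{\lambda_r(\lfloor rx\rfloor)-\mu_r(\lfloor rx\rfloor)}{r} = \lambda - \frac{\lambda(\lfloor rx\rfloor-B^r)^+}{r} - \frac{\mu\lfloor rx\rfloor}{r},
\qquad
\sigma_r^2(x) = \frac{\lambda_r(\lfloor rx\rfloor)+\mu_r(\lfloor rx\rfloor)}{r^2}.
\end{align*}
Since $B^r/r\to\lambda/\mu$ under~\eqref{eq:QEDUnlimitedSingleBSS}, letting $r\to\infty$ gives $m(x)=\lambda-\mu x-\lambda(x-\lambda/\mu)^+$, which is precisely the right-hand side of the stated ODE, while $\sigma_r^2(x)=O(1/r)\to 0$. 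The vanishing infinitesimal variance yields a degenerate limiting diffusion, so — exactly as in Proposition~\ref{prop:SingleFluidLimitProcess} — the limit $\bar{Q}$ is a deterministic, Lipschitz-continuous path solving $\bar{Q}'(t)=m(\bar{Q}(t))$ and pinned down by $\bar{Q}(0)$.

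It remains to identify the steady state, which I would do by a sign analysis on the two linear pieces of $m$. For $x<\lambda/\mu$ we have $m(x)=\lambda-\mu x>0$, so $\bar{Q}$ increases, whereas for $x>\lambda/\mu$ we have $m(x)=\lambda+\lambda^2/\mu-(\lambda+\mu)x<0$, so $\bar{Q}$ decreases; in both regimes the unique zero of $m$ sits at $x=\lambda/\mu$. Hence $\lambda/\mu$ is the unique, globally attracting fixed point and $\bar{Q}(\infty)=\lambda/\mu$ for every $\bar{Q}(0)$. One can in fact record the explicit branch solutions $\bar{Q}(t)=\lambda/\mu+(\bar{Q}(0)-\lambda/\mu)e^{-\mu t}$ when started below and $\bar{Q}(t)=\lambda/\mu+(\bar{Q}(0)-\lambda/\mu)e^{-(\lambda+\mu)t}$ when started above, noting that each stays on its own side of the kink, so no crossing issue arises.

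I do not anticipate a serious obstacle, as the computation mirrors Proposition~\ref{prop:SingleFluidLimitProcess}. The only points requiring care are the rigorous passage from the generator (drift/variance) limit to weak convergence of the processes — handled by the standard density-dependent Markov chain and diffusion-approximation machinery underlying that proposition and~\cite{BrowneWhitt1995} — and the verification that $\lambda/\mu$ is approached from both sides of the nondifferentiability of $m$, which the sign analysis above settles cleanly.
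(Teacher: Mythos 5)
Your proposal is correct and follows essentially the same route as the paper's own proof: the same birth--death description with untruncated death rate $\mu_r(j)=\mu j$, the same drift/variance limits yielding a degenerate diffusion, and the same sign analysis with the identical explicit branch solutions $\bar{Q}(t)=\lambda/\mu+(\bar{Q}(0)-\lambda/\mu)e^{-\mu t}$ and $\bar{Q}(t)=\lambda/\mu+(\bar{Q}(0)-\lambda/\mu)e^{-(\lambda+\mu)t}$. Nothing further is needed.
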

		
\begin{proof}
		We follow the framework of~\cite{BrowneWhitt1995} for birth-death processes. We observe that the $Q^r$ is a birth-death process with state space $\mathcal{Q}^r=\{0,1,\ldots,B^r+r\}$, and arrival rates $\lambda_r(j)=\lambda \left(r-(j-B^r)^+\right)$ and service rate $\mu_r(j) = j \mu$ for all $j \in \mathcal{Q}^r$. The fluid-scaled process $\bar{Q}^r$ therefore has state space $\{0,1/r,\ldots,(B^r+r)/r\}$, and drift and diffusion functions
		\begin{align*}
		m_r(x) = \frac{\lambda_r( \lfloor r x \rfloor)}{r} - \frac{\mu_r( \lfloor r x \rfloor)}{r} = \lambda - \frac{\lambda(\lfloor r x \rfloor -B^r)^+}{r}- \frac{\mu \lfloor r x \rfloor}{r}
		\end{align*}
		and
		\begin{align*}
		\sigma^2_r(x) = \frac{\lambda_r( \lfloor r x \rfloor)}{r^2} + \frac{\mu_r( \lfloor r x \rfloor)}{r^2} = \lambda - \frac{\lambda(\lfloor r x \rfloor -B^r)^+}{r} + \frac{\mu \lfloor r x \rfloor}{r}.
		\end{align*}
		Taking the limit yields
		\begin{align*}
		m(x) := \lim_{r \rightarrow \infty} m_r(x) = \lambda- \mu x  - \lambda \left(x-\frac{\lambda}{\mu}\right)^+
		\end{align*}
		and
		\begin{align*}
		\sigma^2(x):= \lim_{r \rightarrow \infty} \sigma^2_r(x) = 0.
		\end{align*}
		That is, we obtain a degenerate limiting diffusion (since the limiting infinitesimal variance is zero), and hence the limiting initial point will yield a deterministic path for the limiting process $\bar{Q}$. The mean directly provides the ODE that the limiting process $\bar{Q}$ satisfies. What remains to show is the limiting value (as $t \rightarrow \infty)$ .
		
		For all $t \geq 0$ for which $\bar{Q}(t) < \frac{\lambda}{\mu}$, we have
		\begin{align*}
		\frac{d \bar{Q}(t) }{dt} = \lambda - \mu \bar{Q}(t) > 0,
		\end{align*}
		and for all $t \geq 0$ for which $\bar{Q}(t) > \frac{\lambda}{\mu}$, we obtain
		\begin{align*}
		\frac{d \bar{Q}(t) }{dt} = \frac{\lambda(\lambda + \mu)}{\mu} - (\lambda+\mu) \bar{Q}(t) < 0.
		\end{align*}
		Moreover, if $\bar{Q}(t)=\lambda/\mu$ for some $t \geq 0$, then $d \bar{Q}(t) /dt =0$. This implies that the initial limiting value $\bar{Q}(0)$ determines the unique deterministic path for the limiting process. That is, if $\bar{Q}(0)< \lambda/\mu$, then the limiting process satisfies
		\begin{align*}
		\bar{Q}(t)= \frac{\lambda}{\mu}+\left(\bar{Q}(0)-\frac{\lambda}{\mu}\right)e^{-\mu t},
		\end{align*}
		a strictly increasing function that tends to $\lambda/\mu$ as $t \rightarrow \infty$. Similarly, if $\bar{Q}(0) > \lambda/\mu$, then the limiting process satisfies
		\begin{align*}
		\bar{Q}(t)= \frac{\lambda}{\mu}+\left(\bar{Q}(0)-\frac{\lambda}{\mu}\right)e^{-(\lambda+\mu) t},
		\end{align*}
		a strictly decreasing function that tends to $\lambda/\mu$ as $t \rightarrow \infty$. Finally, if $\bar{Q}(0) = \lambda/\mu$, then $d \bar{Q}(t) /dt =0$ and hence $\bar{Q}(t)=\lambda/\mu$ for all $t\geq 0$.
\hfill\end{proof}

		We now zoom in on the fluctuations around the fluid limit by taking the diffusion limit. We consider the centered scaled process
		\begin{align}
		\hat{Q}^r(t):= \frac{Q^r(t)- \lambda r/\mu}{\sqrt{\lambda r/\mu}}, \quad t\geq 0.
		\end{align}
		Under the QED scaling rule~\eqref{eq:QEDUnlimitedSingleBSS}, we find the following diffusion limiting process.
		
		\begin{theorem}
			Suppose the system operates under the QED~scaling rule~\eqref{eq:QEDUnlimitedSingleBSS}. If~$\hat{Q}^r(0) \rightarrow \hat{Q}(0)$ in distribution as $r \rightarrow \infty$, then $\hat{Q}^r \rightarrow \hat{Q}$ in distribution as $r \rightarrow \infty$, where $\hat{Q}$ can be described as a two pieced-together Ornstein-Uhlenbeck (OU) process. More precisely, $\hat{Q}$ is a diffusion process with mean
			\begin{align*}
			m(x) = -\lambda(x-\beta)^+ -\mu x,
			\end{align*}
			and constant infinitesimal variance $2\mu$. The steady state density $\hat{Q}(\infty)$ is given by
			\begin{align}
			\hat{f}(x) = \left\{\begin{array}{lr}
			(1-\alpha) \frac{\phi(x)}{\Phi(\beta)} & \textrm{if } x < \beta,\\
			\alpha \sqrt{\frac{\lambda+\mu}{\mu}} \phi\left( \frac{x-\beta \lambda/(\lambda+\mu)}{\sqrt{\mu/(\lambda+\mu)}} \right)\Phi\left(-\beta \sqrt{\frac{\mu}{\lambda+\mu}} \right)^{-1} & \textrm{if } x \geq \beta,\\
			\end{array} \right.
			\label{eq:DensityQED}
			\end{align}
			where
			\begin{align*}
			\alpha= \left( 1+ \sqrt{\frac{\lambda+\mu}{\mu}} e^{\frac{\beta^2}{2} \frac{\lambda}{\lambda+\mu}}\frac{\Phi(\beta)}{\Phi\left(-\beta \sqrt{\frac{\mu}{\lambda+\mu}}  \right)}\right)^{-1}.
			\end{align*}
			\label{thm:DiffusionDenstityQED}
		\end{theorem}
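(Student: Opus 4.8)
The plan is to follow the same route as the proof of Theorem~\ref{thm:DiffusionLimitProcess}, treating $Q^r$ as a one-dimensional birth--death process and invoking the diffusion-approximation framework for birth--death processes from~\cite{BrowneWhitt1995}. The only structural difference with the limited-charging-points setting is that here $F=\infty$, so every battery present at the station can charge simultaneously; consequently $Q^r$ has state space $\{0,1,\dots,B^r+r\}$, birth rate $\lambda_r(j)=\lambda\bigl(r-(j-B^r)^+\bigr)$ and death rate $\mu_r(j)=\mu j$ (rather than $\mu\min\{j,F^r\}$). First I would record these rates together with the diffusion scaling $\hat{Q}^r(t)=(Q^r(t)-\lambda r/\mu)/\sqrt{\lambda r/\mu}$.

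Next I would compute the infinitesimal mean and variance of the centered, scaled process and let $r\to\infty$. Writing $j=\lfloor \lambda r/\mu + x\sqrt{\lambda r/\mu}\rfloor$, one has $(j-B^r)^+\sim (x-\beta)^+\sqrt{\lambda r/\mu}$ and $\mu j\sim \lambda r+\mu x\sqrt{\lambda r/\mu}$, so that
\begin{align*}
m_r(x)=\frac{\lambda_r(j)-\mu_r(j)}{\sqrt{\lambda r/\mu}}\longrightarrow -\lambda(x-\beta)^+-\mu x,
\end{align*}
while
\begin{align*}
\sigma_r^2(x)=\frac{\lambda_r(j)+\mu_r(j)}{\lambda r/\mu}\longrightarrow \frac{2\lambda r}{\lambda r/\mu}=2\mu .
\end{align*}
Since the limiting drift is continuous and the limiting variance is the positive constant $2\mu$, the birth--death convergence criterion of~\cite{BrowneWhitt1995} (their Equations~(28) and~(33)) applies and gives $\hat{Q}^r\to\hat{Q}$ in distribution, with $\hat{Q}$ the stated piecewise-linear diffusion; the hypothesis $\hat{Q}^r(0)\to\hat{Q}(0)$ supplies the required initial convergence. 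This is exactly the step used for Theorem~\ref{thm:DiffusionLimitProcess}.

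For the steady-state density I would use the explicit stationary-density formulas for such two-piece Ornstein--Uhlenbeck diffusions (Equations~(5)--(6) of~\cite{BrowneWhitt1995}). On $\{x<\beta\}$ the drift is $-\mu x$ with variance $2\mu$, whose stationary law is Gaussian with variance $2\mu/(2\mu)=1$, i.e.\ proportional to $\phi(x)$; on $\{x\ge\beta\}$ the drift is $-(\lambda+\mu)x+\lambda\beta$ with the same variance, giving a Gaussian centered at $\lambda\beta/(\lambda+\mu)$ with variance $\mu/(\lambda+\mu)$, i.e.\ proportional to $\sqrt{(\lambda+\mu)/\mu}\,\phi\bigl((x-\beta\lambda/(\lambda+\mu))/\sqrt{\mu/(\lambda+\mu)}\bigr)$. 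Imposing continuity of $\hat{f}$ at $x=\beta$ together with the normalization $\int\hat{f}=1$ pins down the single mixing weight $\alpha=\Prob(\hat{Q}(\infty)\ge\beta)$; carrying out this bookkeeping, with $\Phi(\beta)$ governing the lower tail and $\Phi(-\beta\sqrt{\mu/(\lambda+\mu)})$ the upper piece, reproduces the stated expression for $\alpha$ and hence~\eqref{eq:DensityQED}.

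I expect the only genuinely delicate point to be verifying that the hypotheses of the~\cite{BrowneWhitt1995} convergence theorem hold uniformly on compacts, in particular that the $O(1/\sqrt{r})$ corrections in $m_r$ and $\sigma_r^2$ are asymptotically negligible and that tightness is in force. Because the coefficients converge to a continuous drift and a strictly positive constant variance, however, this is inherited verbatim from the argument already given for Theorem~\ref{thm:DiffusionLimitProcess}, and the remaining work (the two Ornstein--Uhlenbeck stationary densities and the constant $\alpha$) is routine computation.
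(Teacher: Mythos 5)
Your proposal is correct and follows essentially the same route as the paper's own proof: identify the birth--death rates $\lambda_r(j)=\lambda\bigl(r-(j-B^r)^+\bigr)$ and $\mu_r(j)=\mu j$, compute the limiting infinitesimal mean $-\lambda(x-\beta)^+-\mu x$ and variance $2\mu$, and invoke the piecewise-linear diffusion results of~\cite{BrowneWhitt1995} for both the convergence and the stationary density. Your determination of $\alpha$ via continuity of $\hat{f}$ at $\beta$ plus normalization is exactly the content of the paper's use of Equations~(5)--(6) of~\cite{BrowneWhitt1995}, which reduce to the same density-matching condition.
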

		
\begin{proof}
		The proof is similar to the techniques used in the proof of the fluid limits. The infinitesimal mean of the centered scaled process is given by
		\begin{align*}
		m_r(x) &= \frac{\lambda_r\left(\lfloor \lambda r / \mu + x \sqrt{\lambda r / \mu } \rfloor \right) - \mu_r\left(\lfloor \lambda r / \mu + x \sqrt{\lambda r / \mu } \rfloor \right)}{\sqrt{\lambda r/ \mu}} \\
		&= \frac{\lambda r - \lambda \left( \lfloor \lambda r / \mu + x \sqrt{\lambda r / \mu } \rfloor - B^r \right)^+ - \mu \left(\lfloor \lambda r / \mu + x \sqrt{\lambda r / \mu } \rfloor \right)}{\sqrt{\lambda r/ \mu}},
		\end{align*}
		which converges to
		\begin{align*}
		m(x) = \lim_{r \rightarrow \infty} m_r(x) = -\lambda(x-\beta)^+-\mu x
		\end{align*}
		as $r \rightarrow \infty$. The infinitesimal variance tends to
		\begin{align*}
		\sigma^2_r(x) &= \frac{\lambda_r\left(\lfloor \lambda r / \mu + x \sqrt{\lambda r / \mu } \rfloor \right) + \mu_r\left(\lfloor \lambda r / \mu + x \sqrt{\lambda r / \mu } \rfloor \right)}{\lambda r/ \mu} \rightarrow \frac{2\lambda r}{\lambda r/ \mu} = 2\mu,
		\end{align*}
		as $r \rightarrow \infty$. Using Equation~(28) from~\cite{BrowneWhitt1995}, this implies that the limiting process is a piecewise-linear diffusion process with steady state density given as in~\eqref{eq:DensityQED}, where $\alpha$ is the probability that the steady state process is above $\beta$. Equations~(5) and (6) from~\cite{BrowneWhitt1995} yield
		\begin{align*}
		\alpha = \left(1+ \frac{\hat{f}(\beta^+)/\alpha}{\hat{f}(\beta^-)/(1-\alpha)} \right)^{-1} = \left( 1+ \sqrt{\frac{\lambda+\mu}{\mu}} e^{\frac{\beta^2}{2} \frac{\lambda}{\lambda+\mu}}\frac{\Phi(\beta)}{\Phi\left(-\beta \sqrt{\frac{\mu}{\lambda+\mu}}  \right)}\right)^{-1}.
		\end{align*}
\hfill\end{proof}

		\subsection{Steady state limits}
		We point out that the fluid and diffusion limits are useful for studying the system in transience. The steady state limiting distribution is then obtained by setting $t \rightarrow \infty$ with respect to the limiting diffusion process. In other words, the steady state density function~\eqref{eq:DensityQED} is obtained by first taking the limit of the centered scaled process as $n \rightarrow \infty$, after which its steady state distribution is obtained by setting $t \rightarrow \infty$. The following result shows that interchanging the limits does not change the result.
		
		\begin{theorem}
			As $r \rightarrow \infty$,
			\begin{align*}
			\hat{Q}^r(\infty) \rightarrow \hat{{Q}}(\infty)
			\end{align*}
			in distribution, where $\hat{{Q}}(\infty)$ is the steady state distribution as in Theorem~\ref{thm:DiffusionDenstityQED}.
			\label{thm:UnlimitedSteadyStateTendsToDiffusionLimitProcess}
		\end{theorem}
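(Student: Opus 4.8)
The plan is to follow the same route as the proof of Theorem~\ref{thm:StStateToDiffusion}, but now starting from the explicit steady-state distribution $\pi_k^{(B,r)}$ of~\eqref{eq:StDistribution}--\eqref{eq:StDistributionNormalizationConstant}, which is the one valid in the present case $F=\infty$, $G=\infty$ under the scaling~\eqref{eq:QEDUnlimitedSingleBSS}. Here the state space decomposes into only two regimes, $\{0\le k<B^r\}$ and $\{B^r\le k\le B^r+r\}$, matching exactly the two branches of the target density~\eqref{eq:DensityQED}. I would (i) identify the limiting conditional law of $\hat{Q}^r(\infty)$ on each regime, (ii) compute the asymptotic weights $\Prob(Q^r(\infty)<B^r)$ and $\Prob(Q^r(\infty)\ge B^r)$, and (iii) assemble the two pieces via the law of total probability. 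Steps (i) and (ii) are the respective analogues of Lemmas~\ref{lem:StStateConditionedQEDDistribution} and~\ref{lem:StStateIntervalDistributions}.

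On the lower regime the unnormalized mass is $(\lambda r/\mu)^k/k!$, i.e.\ precisely the $\mathrm{Pois}(\lambda r/\mu)$ shape. Conditioning on $Q^r(\infty)<B^r$ and applying the central limit theorem for the Poisson distribution (as in Lemma~\ref{lem:LimitedA1}, with $F^r$ replaced by $B^r$) gives, for $x<\beta$,
\[
\Prob\!\left(Q^r(\infty)<\tfrac{\lambda r}{\mu}+x\sqrt{\tfrac{\lambda r}{\mu}}\ \Big|\ Q^r(\infty)<B^r\right)\longrightarrow\frac{\Phi(x)}{\Phi(\beta)},
\]
which is the first branch of~\eqref{eq:DensityQED}.

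The upper regime carries the new ingredient. For $B^r\le k\le B^r+r$ the mass is proportional to $\binom{r+B^r}{k}(\lambda/\mu)^k$. Introducing $q=\lambda/(\lambda+\mu)$, so that $q/(1-q)=\lambda/\mu$, one rewrites $\binom{r+B^r}{k}(\lambda/\mu)^k=\left(\tfrac{\lambda+\mu}{\mu}\right)^{r+B^r}\Prob(\mathrm{Bin}(r+B^r,q)=k)$, exhibiting this branch as a constant multiple of a $\mathrm{Bin}(r+B^r,q)$ mass function. This binomial has mean $(r+B^r)q=\tfrac{\lambda r}{\mu}+\tfrac{\lambda}{\lambda+\mu}\beta\sqrt{\lambda r/\mu}$ and variance $(r+B^r)q(1-q)\sim\tfrac{\lambda}{\lambda+\mu}r$, so under the diffusion scaling $(Q^r-\lambda r/\mu)/\sqrt{\lambda r/\mu}$ the De Moivre--Laplace theorem produces a Gaussian limit with mean $\beta\lambda/(\lambda+\mu)$ and variance $\mu/(\lambda+\mu)$. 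Conditioning on $Q^r(\infty)\ge B^r$ and truncating at $\beta$ then reproduces the second branch of~\eqref{eq:DensityQED}, including the factor $\Phi(-\beta\sqrt{\mu/(\lambda+\mu)})^{-1}$.

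It remains to pin down the mixing weight $\alpha=\lim_{r\to\infty}\Prob(Q^r(\infty)\ge B^r)$, and this is where I expect the real work to lie. Writing $L=\sum_{k=0}^{B^r-1}(\lambda r/\mu)^k/k!$ and $U=\frac{r!r^{B^r}}{(r+B^r)!}\sum_{k=B^r}^{B^r+r}\binom{r+B^r}{k}(\lambda/\mu)^k$, one has $\Prob(Q^r(\infty)\ge B^r)=U/(L+U)$. By Lemma~\ref{lem:LimitedA1}, $L\,e^{-\lambda r/\mu}\to\Phi(\beta)$; for $U$ one combines the prefactor $\frac{r!r^{B^r}}{(r+B^r)!}\left(\tfrac{\lambda+\mu}{\mu}\right)^{r+B^r}$ (via Stirling) with the binomial tail $\Prob(\mathrm{Bin}(r+B^r,q)\ge B^r)\to\Phi(-\beta\sqrt{\mu/(\lambda+\mu)})$, by the same Stirling-and-CLT technique as in Lemma~\ref{lem:LimitedA3}. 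The delicate point is exactly this bookkeeping: the prefactor contains divergent contributions of the form $e^{\pm\lambda r/\mu}$ and $e^{\pm\beta\sqrt{\lambda r/\mu}}$ that must cancel against $e^{-\lambda r/\mu}$ and against the Gaussian correction, leaving the finite constant $\sqrt{\mu/(\lambda+\mu)}\,e^{-\frac{\beta^2}{2}\frac{\lambda}{\lambda+\mu}}$, so that $U\,e^{-\lambda r/\mu}\to\sqrt{\mu/(\lambda+\mu)}\,e^{-\frac{\beta^2}{2}\frac{\lambda}{\lambda+\mu}}\Phi(-\beta\sqrt{\mu/(\lambda+\mu)})$. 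Forming the ratio $U/(L+U)$ then yields precisely the constant $\alpha$ of Theorem~\ref{thm:DiffusionDenstityQED}, with $\Prob(Q^r(\infty)<B^r)\to1-\alpha$. Assembling the two conditional limits with weights $1-\alpha$ and $\alpha$ through the law of total probability produces the density~\eqref{eq:DensityQED}, i.e.\ the law of $\hat{Q}(\infty)$, which establishes the theorem.
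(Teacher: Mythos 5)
Your proposal is correct and is essentially the paper's own argument: the paper likewise proves the theorem by combining the Poisson-CLT conditional limit on $\{Q^r(\infty) < B^r\}$ and the binomial-CLT conditional limit (with success probability $\lambda/(\lambda+\mu)$) on $\{Q^r(\infty) \geq B^r\}$ with the limiting weight $\Prob(Q^r(\infty) \geq B^r) \to \alpha$, obtained from Stirling-based asymptotics of the normalization constant (Lemma~\ref{lem:NormalizationConstantAsymptoticsSingleServer} and Proposition~\ref{prop:WaitProbSingleStationUnlimited}). The ``delicate bookkeeping'' you flag—cancellation of the $e^{\pm\lambda r/\mu}$ and $e^{\pm\beta\sqrt{\lambda r/\mu}}$ factors leaving $\sqrt{\mu/(\lambda+\mu)}\,e^{-\frac{\beta^2}{2}\frac{\lambda}{\lambda+\mu}}$—is exactly the content of that lemma, so your plan fills in to the paper's proof.
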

		
		Before moving to the proof of this result, we need to understand the limiting behavior of the steady state probability that more than $B^r$ batteries are in need of recharging. To do so, we first determine the asymptotic behavior of the normalization constant in~\eqref{eq:StDistributionNormalizationConstant}.
		
		\begin{lemma}
			Under scaling rule~\eqref{eq:QEDUnlimitedSingleBSS}, as $r \rightarrow \infty$,
			\begin{align*}
			\pi_0^{(B^r,r)} e^{\frac{\lambda}{\mu}r} \rightarrow \left( \Phi(\beta) + \sqrt{\frac{\mu}{\lambda+\mu}}e^{-\frac{\beta^2}{2}\frac{\lambda}{\lambda+\mu}} \Phi\left(-\beta \sqrt{\frac{\mu}{\lambda+\mu}} \right) \right)^{-1}.
			\end{align*}
			\label{lem:NormalizationConstantAsymptoticsSingleServer}
		\end{lemma}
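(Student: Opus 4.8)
The plan is to work directly from the explicit normalization constant~\eqref{eq:StDistributionNormalizationConstant}. Writing $S_r$ for the bracketed sum there, we have $\pi_0^{(B^r,r)} e^{\lambda r/\mu} = (S_r e^{-\lambda r/\mu})^{-1}$, so by continuity of $x \mapsto 1/x$ it suffices to show that $S_r e^{-\lambda r/\mu}$ converges to the reciprocal of the claimed limit, namely to $\Phi(\beta) + \sqrt{\mu/(\lambda+\mu)}\,e^{-\frac{\beta^2}{2}\frac{\lambda}{\lambda+\mu}}\,\Phi(-\beta\sqrt{\mu/(\lambda+\mu)})$. I would treat the two sums making up $S_r$ separately and identify each with one summand of this target.

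The first sum, multiplied by $e^{-\lambda r/\mu}$, is exactly $\Prob(\textrm{Pois}(\lambda r/\mu) \le B^r - 1)$, and since $B^r = \lambda r/\mu + \beta\sqrt{\lambda r/\mu}$ the central limit theorem for the Poisson distribution gives convergence to $\Phi(\beta)$, precisely as in Lemma~\ref{lem:LimitedA1}. This yields the first term of the target.

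For the second sum, the key step is to read the binomial weights as an (unnormalized) binomial distribution. Setting $n = r + B^r$ and $p = \lambda/(\lambda+\mu)$, one has $\lambda/\mu = p/(1-p)$ and hence $\binom{n}{k}(\lambda/\mu)^k = (1-p)^{-n}\binom{n}{k}p^k(1-p)^{n-k}$; summing over $B^r \le k \le n$ rewrites the second sum as $(1-p)^{-n}\,\Prob(\textrm{Bin}(n,p) \ge B^r)$. A direct computation of the mean $np = \lambda r/\mu + \frac{\lambda}{\lambda+\mu}\beta\sqrt{\lambda r/\mu}$ and variance $np(1-p) \sim \lambda r/(\lambda+\mu)$ shows that the standardized threshold $(B^r - np)/\sqrt{np(1-p)}$ converges to $\beta\sqrt{\mu/(\lambda+\mu)}$, so the central limit theorem gives $\Prob(\textrm{Bin}(n,p) \ge B^r) \to \Phi(-\beta\sqrt{\mu/(\lambda+\mu)})$.

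It then remains to show that the deterministic prefactor $P_r := \frac{r!\,r^{B^r}}{(r+B^r)!}(1-p)^{-n} e^{-\lambda r/\mu}$ converges to $\sqrt{\mu/(\lambda+\mu)}\,e^{-\frac{\beta^2}{2}\frac{\lambda}{\lambda+\mu}}$; multiplying the two limits then produces the second term, and combining with the first sum and inverting finishes the proof. This last step is where I expect the real work to lie. Applying Stirling's formula to $r!$ and $(r+B^r)!$ reduces $P_r$, up to the benign factor $\sqrt{r/n} \to \sqrt{\mu/(\lambda+\mu)}$, to $(1+a)^{-n}e^{\beta\sqrt{\lambda r/\mu}}$, where $a = \frac{\beta}{\lambda+\mu}\sqrt{\lambda\mu/r}$ measures the relative perturbation of $n$ away from $r(\lambda+\mu)/\mu$. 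The main obstacle is the bookkeeping in $\log(1+a)^{-n} = -n(a - a^2/2 + O(a^3))$: one must expand to second order and verify that $na = \beta\sqrt{\lambda r/\mu} + \frac{\beta^2\lambda}{\lambda+\mu}$ and $na^2/2 \to \frac{\beta^2\lambda}{2(\lambda+\mu)}$ while $na^3 \to 0$, so that the diverging $\Theta(\sqrt{r})$ contribution cancels exactly against the explicit factor $e^{\beta\sqrt{\lambda r/\mu}}$ and the surviving constant is precisely $-\frac{\beta^2\lambda}{2(\lambda+\mu)}$. This exact cancellation of the $\sqrt{r}$-order terms is the delicate point, and it mirrors the Stirling computations already carried out in Lemmas~\ref{lem:LimitedA2} and~\ref{lem:LimitedA3}.
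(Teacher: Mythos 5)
Your proposal is correct and takes essentially the same route as the paper's own proof: both split the normalization constant into a Poisson part handled by the CLT (giving $\Phi(\beta)$), a binomial part handled by the CLT with the same standardization (giving $\Phi(-\beta\sqrt{\mu/(\lambda+\mu)})$), and a deterministic prefactor evaluated via Stirling's formula plus a second-order Taylor expansion of the logarithm (giving $\sqrt{\mu/(\lambda+\mu)}\,e^{-\frac{\beta^2}{2}\frac{\lambda}{\lambda+\mu}}$). Your bookkeeping of the prefactor, with exponent $-n$ on $(1+a)$ and the exact cancellation of the $\Theta(\sqrt{r})$ terms against $e^{\beta\sqrt{\lambda r/\mu}}$, is in fact the corrected version of the paper's displayed computation, which contains sign slips in the intermediate lines but reaches the same limit.
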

		
\begin{proof}
		Note that
		\begin{align*}
		\left(\pi_0^{(B^r,r)}\right)^{-1} e^{-\frac{\lambda}{\mu}r} = \sum_{k=0}^{B^r} \frac{(\lambda r/\mu)^k}{k!} e^{-\frac{\lambda r}{\mu}} + \sum_{k=B^r+1}^{B^r+r} \frac{r! r^{B^r}}{(r+B^r)!} \binom{r+B^r}{k} \left(\frac{\lambda}{\mu}\right)^k  e^{-\frac{\lambda r}{\mu}}.
		\end{align*}
		The convergence of the first term follows directly from the CLT for a Poisson distributed random variable,
		\begin{align*}
		\sum_{k=0}^{B^r} \frac{(\lambda r/\mu)^k}{k!} e^{-\frac{\lambda r}{\mu}} = \Prob\left( \textrm{Pois} \left(\lambda r / \mu \right) \leq B^r \right) = \Prob\left( \frac{\textrm{Pois} \left(\lambda r / \mu \right)- \lambda r / \mu }{\sqrt{\lambda r / \mu }} \leq \beta \right) \rightarrow \Phi(\beta).
		\end{align*}
		We extract from the second term
		\begin{align*}
		\sum_{k=B^r+1}^{B^r+r} &\binom{r+B^r}{k} \left(\frac{\lambda}{\mu}\right)^k \left( \frac{\mu}{\lambda+ \mu}\right)^{B^r+r} = \Prob\left( \textrm{Bin}(B^r+r,\lambda/(\lambda+\mu)) \geq B^r+1 \right) \\
		&= \Prob\left( \frac{ \textrm{Bin}(B^r+r,\lambda/(\lambda+\mu)) - (B^r+r) \frac{\lambda}{\lambda+\mu}}{\sqrt{(B^r+r) \frac{\lambda \mu }{(\lambda+\mu)^2}}} \geq \frac{\mu B^r- \lambda r +\lambda +\mu	}{\sqrt{\lambda \mu } \sqrt{B^r+r}}\right) \rightarrow \Phi\left(-\beta \sqrt{\frac{\mu}{\lambda + \mu}} \right),
		\end{align*}
		where we used the CLT for a sum of independent Bernoulli distributed random variables. Finally, what is left of the second term is given by
		\begin{align*}
		\left( \frac{\mu}{\lambda +\mu}\right)&^{-(B^r+r)}  \frac{r! r^{B^r}}{(r+B^r)!} e^{-\frac{\lambda r}{\mu}} \sim \sqrt{\frac{\mu}{\lambda+\mu}} \left( 1+ \frac{\beta \frac{\sqrt{\lambda \mu}}{\lambda+\mu}}{\sqrt{r}}  \right)^{\frac{\lambda+\mu}{\mu} r + \beta \sqrt{\frac{\lambda r}{\mu}}} e^{\beta \sqrt{\frac{\lambda r}{\mu}}} \\
		&= \sqrt{\frac{\mu}{\lambda+\mu}} \textrm{Exp} \left\{ \left( \frac{\lambda+\mu}{\mu} r + \beta \sqrt{\frac{\lambda r}{\mu}}\right) \left( \beta \frac{\sqrt{\lambda \mu}}{\lambda+\mu} \frac{1}{\sqrt{r}} - \frac{\beta^2}{2} \frac{\lambda \mu}{(\lambda+\mu)^2} \right) + \beta \sqrt{\frac{\lambda r}{\mu}} \frac{1}{r} + O(r^{-3/2}) \right\}\\
		&= \sqrt{\frac{\mu}{\lambda+\mu}} \textrm{Exp} \left\{ -\frac{\beta^2}{2} \frac{\lambda}{\lambda+\mu} + O(r^{-1/2})\right\},
		\end{align*}
		where we used Stirling's approximation twice and a Taylor expansion for the logarithm term. Combining the three expressions yields the results.
\hfill\end{proof}

		Using Lemma~\ref{lem:NormalizationConstantAsymptoticsSingleServer}, we can derive the asymptotic behavior of having more than $B^r$ batteries in need of recharging in steady state.
		
		\begin{proposition}
			Under scaling rule~\eqref{eq:QEDUnlimitedSingleBSS}, the probability that an arriving car has to wait for a battery behaves as
			\begin{align*}
			\Prob(Q^r(\infty) \geq B^r) \rightarrow \left( 1+ \sqrt{\frac{\lambda+\mu}{\mu}} e^{\frac{\beta^2}{2} \frac{\lambda}{\lambda+\mu}}\frac{\Phi(\beta)}{\Phi\left(-\beta \sqrt{\frac{\mu}{\lambda+\mu}}  \right)}\right)^{-1}
			\end{align*}
			as $r \rightarrow \infty$.
			\label{prop:WaitProbSingleStationUnlimited}
		\end{proposition}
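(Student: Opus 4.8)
The plan is to express the desired probability directly through the steady-state distribution \eqref{eq:StDistribution}--\eqref{eq:StDistributionNormalizationConstant} and then recombine the asymptotic estimates already assembled in the proof of Lemma~\ref{lem:NormalizationConstantAsymptoticsSingleServer}. Since the event $\{Q^r(\infty) \geq B^r\}$ lies entirely in the regime $k \geq B^r$, where the two pieces of \eqref{eq:StDistribution} agree at $k=B^r$, I would first write
\begin{align*}
\Prob(Q^r(\infty) \geq B^r) = \sum_{k=B^r}^{B^r+r} \pi_k^{(B^r,r)} = \pi_0^{(B^r,r)} \frac{r!\, r^{B^r}}{(r+B^r)!} \sum_{k=B^r}^{B^r+r} \binom{r+B^r}{k}\left(\frac{\lambda}{\mu}\right)^k.
\end{align*}
Multiplying and dividing by $e^{\lambda r/\mu}$ factors this into $\pi_0^{(B^r,r)} e^{\lambda r/\mu}$ times the ``tail mass'' $e^{-\lambda r/\mu}\frac{r!\, r^{B^r}}{(r+B^r)!} \sum_{k=B^r}^{B^r+r} \binom{r+B^r}{k}(\lambda/\mu)^k$, and the $e^{\pm\lambda r/\mu}$ will cancel so that the product is again the probability.

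Next I would pass to the limit factor by factor, which is legitimate since both factors have finite limits. The first factor converges by Lemma~\ref{lem:NormalizationConstantAsymptoticsSingleServer} to $\left(\Phi(\beta) + \sqrt{\mu/(\lambda+\mu)}\,e^{-\beta^2\lambda/(2(\lambda+\mu))}\Phi(-\beta\sqrt{\mu/(\lambda+\mu)})\right)^{-1}$. For the second factor, the proof of that same lemma already shows, via the central limit theorem applied to $\mathrm{Bin}(B^r+r,\lambda/(\lambda+\mu))$ together with a double application of Stirling's formula, that the sum starting at $k=B^r+1$ converges to $\sqrt{\mu/(\lambda+\mu)}\,e^{-\beta^2\lambda/(2(\lambda+\mu))}\Phi(-\beta\sqrt{\mu/(\lambda+\mu)})$. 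The only discrepancy is the single boundary term at $k=B^r$, namely $e^{-\lambda r/\mu}(\lambda r/\mu)^{B^r}/B^r!$, which is a $\mathrm{Pois}(\lambda r/\mu)$ mass evaluated at its $\beta$-quantile; by the local central limit theorem this is of order $O(1/\sqrt{r})$ and hence drops out in the limit.

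Finally, I would take the ratio of the two limits and simplify. Dividing numerator and denominator by the common factor $\sqrt{\mu/(\lambda+\mu)}\,e^{-\beta^2\lambda/(2(\lambda+\mu))}\Phi(-\beta\sqrt{\mu/(\lambda+\mu)})$ and using $(\sqrt{\mu/(\lambda+\mu)})^{-1} = \sqrt{(\lambda+\mu)/\mu}$ converts the expression into the stated form $\left(1 + \sqrt{(\lambda+\mu)/\mu}\,e^{\beta^2\lambda/(2(\lambda+\mu))}\Phi(\beta)/\Phi(-\beta\sqrt{\mu/(\lambda+\mu)})\right)^{-1}$. I do not expect a genuine obstacle here: the substantive analytic work — the CLT and Stirling asymptotics — has already been carried out for Lemma~\ref{lem:NormalizationConstantAsymptoticsSingleServer}, so the only points requiring care are verifying that the $k=B^r$ boundary atom is asymptotically negligible and that the two convergent factors may be multiplied to obtain the limit of their product.
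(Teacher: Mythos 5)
Your proof is correct and follows essentially the same route as the paper's: factor the probability as $\pi_0^{(B^r,r)} e^{\lambda r/\mu}$ times the tail mass, apply Lemma~\ref{lem:NormalizationConstantAsymptoticsSingleServer} to the first factor, reuse the binomial-CLT and Stirling estimates from that lemma's proof for the second, and simplify the ratio. The only difference is that you explicitly check that the $k=B^r$ boundary atom is $O(1/\sqrt{r})$ by the local CLT — a point the paper passes over silently (its display even shifts the lower summation index from $B^r$ to $B^r+1$ without comment) — which is a minor gain in rigor rather than a different approach.
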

		
\begin{proof}
		Note that
		\begin{align*}
		\Prob(Q^r(\infty) \geq B^r) = \sum_{k=B^r}^{B^r+r} \pi_k^{(B^r,r)} = \pi_0^{(B^r,r)} e^{\frac{\lambda}{\mu}r} \sum_{k=B^r+1}^{B^r+r} \frac{r! r^{B^r}}{(r+B^r)!} \binom{r+B^r}{k} \left(\frac{\lambda}{\mu}\right)^k  e^{-\frac{\lambda r}{\mu}} .
		\end{align*}
		
		It follows from the proof of Lemma~\ref{lem:NormalizationConstantAsymptoticsSingleServer} that, as $r \rightarrow \infty$,
		\begin{align*}
		\sum_{k=B^r+1}^{B^r+r} \frac{r! r^{B^r}}{(r+B^r)!} \binom{r+B^r}{k} \left(\frac{\lambda}{\mu}\right)^k  e^{-\frac{\lambda r}{\mu}} \rightarrow \Phi\left(-\beta \sqrt{\frac{\mu}{\lambda + \mu}} \right) \sqrt{\frac{\mu}{\lambda+\mu}} \textrm{Exp} \left\{ -\frac{\beta^2}{2} \frac{\lambda}{\lambda+\mu} \right\}.
		\end{align*}
		Rewriting the terms concludes the result.
\hfill\end{proof}

		Next, we prove Theorem~\ref{thm:UnlimitedSteadyStateTendsToDiffusionLimitProcess}.
		
\begin{proof}[Proof of Theorem~\ref{thm:UnlimitedSteadyStateTendsToDiffusionLimitProcess}]
		By definition,
		\begin{align*}
		\hat{{Q}}(\infty) = \lim_{t \rightarrow \infty} \lim_{r \rightarrow \infty} \hat{Q}^r(t),
		\end{align*}
		and we want to show that changing the order of the limits yields the same distribution. The steady state distribution for a system with $r$ cars is given by~\eqref{eq:StDistribution}. Due to Proposition~\ref{prop:WaitProbSingleStationUnlimited}, it suffices to show that for every $x < \beta$
		\begin{align*}
		\lim_{r \rightarrow \infty} \Prob \left( \hat{Q}^r(\infty) \leq x \big| \hat{Q}^r(\infty) < \beta \right) \rightarrow \frac{\Phi(x)}{\Phi(\beta)},
		\end{align*}
		and for every $x \geq \beta$,
		\begin{align*}
		\lim_{r \rightarrow \infty} \Prob \left( \hat{Q}^r(\infty) > x \big| \hat{Q}^r(\infty) \geq \beta \right) \rightarrow \Phi\left(- \frac{x-\beta \lambda/(\lambda+\mu)}{\sqrt{\mu/(\lambda+\mu)}} \right)\Phi\left(-\beta \sqrt{\frac{\mu}{\lambda+\mu}} \right)^{-1}.
		\end{align*}
		
		\noindent
		Using the steady state distribution given in~\eqref{eq:StDistribution}, we observe that for every $\beta$,
		\begin{align*}
		\Prob \left( \hat{Q}^r(\infty) \leq x \big| \hat{Q}^r(\infty) < \beta \right) &= \frac{\Prob \left( \hat{Q}^r(\infty) \leq x \right)}{\Prob \left(  \hat{Q^r}(\infty) < \beta \right)} \\
		&= \Prob\left( \frac{\textrm{Pois}( \lambda r / \mu) -\lambda r / \mu}{\sqrt{\lambda r / \mu}} \leq x \right) \Prob\left( \frac{\textrm{Pois}( \lambda r / \mu) -\lambda r / \mu}{\sqrt{\lambda r / \mu}} < x \right)^{-1} \rightarrow \frac{\Phi(x)}{\Phi(\beta)}
		\end{align*}
		as $r \rightarrow \infty$. For every $x \geq \beta$,
		\begin{align*}
		\Prob \left( \hat{Q}^r(\infty) > x \big| \hat{Q}^r(\infty) \geq \beta \right) = \frac{\Prob \left( \hat{Q}^r(\infty) > x \right) }{\Prob \left(  \hat{Q}^r(\infty) \geq \beta \right) } = \frac{\Prob\left( \textrm{Bin}\left(B^r+r,\frac{\lambda}{\lambda+\mu}\right) > \frac{\lambda r}{\mu} + x \sqrt{\frac{\lambda r}{\mu}}\right)}{\Prob\left( \textrm{Bin}\left(B^r+r,\frac{\lambda}{\lambda+\mu}\right) > \frac{\lambda r}{\mu} + \beta \sqrt{\frac{\lambda r}{\mu}}\right)}.
		\end{align*}
		Similarly to the proof of Proposition~\ref{prop:WaitProbSingleStationUnlimited}, The CLT implies
		\begin{align*}
		\Prob\left( \textrm{Bin}\left(B^r+r,\frac{\lambda}{\lambda+\mu}\right) > \frac{\lambda r}{\mu} + x \sqrt{\frac{\lambda r}{\mu}}\right) \rightarrow \Phi\left(- \frac{x-\beta \lambda/(\lambda+\mu)}{\sqrt{\mu/(\lambda+\mu)}} \right)
		\end{align*}
		and
		\begin{align*}
		\Prob\left( \textrm{Bin}\left(B^r+r,\frac{\lambda}{\lambda+\mu}\right) > \frac{\lambda r}{\mu} + \beta \sqrt{\frac{\lambda r}{\mu}}\right) \rightarrow \Phi\left(-\beta \sqrt{\frac{\mu}{\lambda+\mu}} \right)
		\end{align*}
		as $r \rightarrow \infty$.
\hfill\end{proof}
		
		\subsection{Performance measures}
		We consider the waiting probability, the waiting time and the utilization level of the spare batteries. We note that $\alpha$ in~\eqref{eq:DensityQED} corresponds to the probability that $\{\hat{Q}(\infty) \geq \beta\}$. Although the PASTA property does not hold in this closed system, it does hold asymptotically, providing an appropriate approximation for the waiting probability.
		
		\begin{corollary}
			Under scaling rule~\eqref{eq:QEDUnlimitedSingleBSS}, as $r\rightarrow \infty$,
			\begin{align*}
			\Prob(W > 0) \rightarrow \Prob(\hat{{Q}}(\infty) \geq \beta) = \left( 1+ \sqrt{\frac{\lambda+\mu}{\mu}} e^{\frac{\beta^2}{2} \frac{\lambda}{\lambda+\mu}}\frac{\Phi(\beta)}{\Phi\left(-\beta \sqrt{\frac{\mu}{\lambda+\mu}}  \right)}\right)^{-1}.
			\end{align*}
		\end{corollary}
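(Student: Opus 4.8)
The plan is to reduce the statement to results already established, since the right-hand side is exactly the quantity $\alpha = \Prob(\hat{Q}(\infty) \geq \beta)$ identified in Theorem~\ref{thm:DiffusionDenstityQED} and shown to be the limit of $\Prob(Q^r(\infty) \geq B^r)$ in Proposition~\ref{prop:WaitProbSingleStationUnlimited}. The only genuinely new ingredient is the passage from the time-stationary event $\{Q^r(\infty) \geq B^r\}$ to the waiting probability perceived by an arriving EV.

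First I would argue that an arriving EV waits if and only if there is no fully-charged spare battery on hand at the station, and that this occurs precisely when $Q^r(\infty) \geq B^r$. With an unlimited number of charging points there is no restriction on simultaneous charging, so the number of fully-charged spare batteries available equals $(B^r - Q^r)^+$, which is zero exactly when $Q^r \geq B^r$. Hence the event that an arriving EV must wait coincides with the event that the queue length is at least $B^r$ at the arrival instant.

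Next I would invoke the asymptotic PASTA property, exactly as in Section~\ref{sec:PerformanceSingleBSS}. In this closed network the arrival stream is not Poisson but state-dependent, with instantaneous rate $\lambda(r - (Q^r - B^r)^+)$. However, under the diffusion scaling the deviation $(Q^r - B^r)^+$ is of order $O(\sqrt{r})$ with high probability, so the arrival rate equals $\lambda r - O(\sqrt{r})$ and the relative perturbation vanishes as $r \to \infty$. Consequently the distribution observed at arrival epochs coincides asymptotically with the time-stationary distribution; this can be made rigorous by the argument of~\cite{Jennings2008}. Combining this with the previous step yields $\Prob(W > 0) = \lim_{r \to \infty} \Prob(Q^r(\infty) \geq B^r)$.

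Finally I would apply Proposition~\ref{prop:WaitProbSingleStationUnlimited} to evaluate this limit as the stated closed-form expression, and note that by Theorem~\ref{thm:DiffusionDenstityQED} (equivalently, via the interchange-of-limits result Theorem~\ref{thm:UnlimitedSteadyStateTendsToDiffusionLimitProcess}) this expression is precisely $\alpha = \Prob(\hat{Q}(\infty) \geq \beta)$. The main subtlety is the asymptotic PASTA step, since the non-Poisson, state-dependent arrivals mean the naive PASTA identity fails for finite $r$ and must be justified through the vanishing of the relative rate perturbation; everything else is a direct appeal to the already-established steady-state and diffusion results.
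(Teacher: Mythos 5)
Your proposal is correct and follows essentially the same route as the paper: the paper also obtains this corollary by combining the asymptotic PASTA argument of Section~\ref{sec:PerformanceSingleBSS} (waiting occurs iff $Q^r(\infty) \geq B^r$, with the state-dependent rate perturbation $O(\sqrt{r})$ vanishing in the limit) with Proposition~\ref{prop:WaitProbSingleStationUnlimited} and the identification of the limit with $\alpha = \Prob(\hat{Q}(\infty) \geq \beta)$ from Theorem~\ref{thm:DiffusionDenstityQED}. Nothing is missing; your explicit justification of the waiting event via $(B^r - Q^r)^+ = 0$ is simply a spelled-out version of what the paper leaves implicit.
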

		
		For the waiting time, we point out that in this case the identity
		\begin{align*}
		E(W) = \frac{\E(Q^W)}{\lambda \left( r - \E(Q^W)\right)}
		\end{align*}
		still holds due to Little's law. Using this relation, we can obtain the expected waiting time.
		
		\begin{proposition}
			Under scaling rule~\eqref{eq:QEDUnlimitedSingleBSS}, as $r \rightarrow \infty$,
			\begin{align*}
			\frac{ \E(Q^W)}{\sqrt{r}} \longrightarrow \frac{\frac{\sqrt{\lambda \mu}}{\lambda+\mu} e^{ -\frac{\beta^2}{2} \frac{\lambda}{\lambda+\mu}}}{ \Phi(\beta) + \sqrt{\frac{\mu}{\lambda+\mu}}e^{-\frac{\beta^2}{2}\frac{\lambda}{\lambda+\mu}} \Phi\left(-\beta \sqrt{\frac{\mu}{\lambda+\mu}} \right) } \left( \frac{1}{\sqrt{2\pi}}  e^{-\frac{\beta^2}{2} \frac{\mu}{\lambda+\mu}}  - \beta \sqrt{\frac{\mu}{\lambda+\mu}}\Phi\left(-\beta \sqrt{\frac{\mu}{\lambda+\mu}}  \right) \right).
			\end{align*}
			Consequently, the waiting time of a car behaves as
			\begin{align*}
			E(W) \sqrt{r} &= \frac{\E(Q^W) \sqrt{r}}{\lambda r - \lambda \E(Q^W)} \\
			&\longrightarrow  \frac{\sqrt{\frac{\mu}{\lambda}}\frac{1}{\lambda+\mu} e^{ -\frac{\beta^2}{2} \frac{\lambda}{\lambda+\mu}}}{ \Phi(\beta) + \sqrt{\frac{\mu}{\lambda+\mu}}e^{-\frac{\beta^2}{2}\frac{\lambda}{\lambda+\mu}} \Phi\left(-\beta \sqrt{\frac{\mu}{\lambda+\mu}} \right) } \left( \frac{1}{\sqrt{2\pi}}  e^{-\frac{\beta^2}{2} \frac{\mu}{\lambda+\mu}}  - \beta \sqrt{\frac{\mu}{\lambda+\mu}}\Phi\left(-\beta \sqrt{\frac{\mu}{\lambda+\mu}}  \right) \right)
			\end{align*}
			as $r \rightarrow \infty$.
			\label{prop:WaitingTimeSingleStation}
		\end{proposition}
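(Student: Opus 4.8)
The plan is to reduce the statement about the expected number of waiting cars to a single computation against the limiting diffusion density of Theorem~\ref{thm:DiffusionDenstityQED}, and then to obtain the waiting time via Little's law. The starting point is the exact identity $Q^W = (Q^r(\infty) - B^r)^+$. Since $Q^r(\infty) = \lambda r/\mu + \hat{Q}^r(\infty)\sqrt{\lambda r/\mu}$ and $B^r = \lambda r/\mu + \beta\sqrt{\lambda r/\mu}$ under~\eqref{eq:QEDUnlimitedSingleBSS}, this rescales to
\begin{align*}
\frac{\E(Q^W)}{\sqrt{\lambda r/\mu}} = \E\left[(\hat{Q}^r(\infty) - \beta)^+\right],
\end{align*}
so the whole problem reduces to the limit of the truncated mean of $\hat{Q}^r(\infty)$.

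The crux is to pass from the weak convergence $\hat{Q}^r(\infty)\overset{d}{\to}\hat{Q}(\infty)$ supplied by Theorem~\ref{thm:UnlimitedSteadyStateTendsToDiffusionLimitProcess} to convergence of the expectations, $\E[(\hat{Q}^r(\infty)-\beta)^+] \to \int_\beta^\infty(x-\beta)\hat{f}(x)\,dx$. I expect this to be the only genuinely delicate point, since convergence in distribution does not by itself give convergence of means. I would settle it by proving uniform integrability of $\{(\hat{Q}^r(\infty)-\beta)^+\}_r$, exploiting the explicit steady-state law~\eqref{eq:StDistribution}: on $\{Q^r(\infty)\geq B^r\}$ the distribution coincides, up to normalization, with that of a $\textrm{Bin}(B^r+r,\lambda/(\lambda+\mu))$ variable, exactly as used in Proposition~\ref{prop:WaitProbSingleStationUnlimited}. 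A Bernstein/Hoeffding concentration bound for this Binomial yields sub-Gaussian upper tails for $\hat{Q}^r(\infty)$ above $\beta$ that are uniform in $r$, hence a uniform bound on $\E[((\hat{Q}^r(\infty)-\beta)^+)^2]$ and therefore uniform integrability.

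Granting the interchange, it remains to evaluate $\int_\beta^\infty(x-\beta)\hat{f}(x)\,dx$ with $\hat{f}$ the density~\eqref{eq:DensityQED}. On $[\beta,\infty)$ this is a Gaussian-type integral: writing $y_0 = \beta\sqrt{\mu/(\lambda+\mu)}$ and substituting to center the integrand, the elementary moments $\int_{y_0}^\infty y\phi(y)\,dy = \phi(y_0)$ and $\int_{y_0}^\infty\phi(y)\,dy = \Phi(-y_0)$ give
\begin{align*}
\int_\beta^\infty(x-\beta)\hat{f}(x)\,dx = \frac{\alpha}{\Phi(-y_0)}\left(\sqrt{\frac{\mu}{\lambda+\mu}}\,\phi(y_0) - \frac{\beta\mu}{\lambda+\mu}\Phi(-y_0)\right).
\end{align*}
I would then simplify the prefactor using the explicit $\alpha$ from Theorem~\ref{thm:DiffusionDenstityQED}: factoring the denominator shows $\alpha/\Phi(-y_0) = \sqrt{\mu/(\lambda+\mu)}\,e^{-\beta^2\lambda/(2(\lambda+\mu))}/C$, where $C$ is the reciprocal of the limit in Lemma~\ref{lem:NormalizationConstantAsymptoticsSingleServer}. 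Multiplying by $\sqrt{\lambda/\mu}$, collecting $\sqrt{\lambda/\mu}\cdot\mu/(\lambda+\mu) = \sqrt{\lambda\mu}/(\lambda+\mu)$, and using $\phi(y_0) = \tfrac{1}{\sqrt{2\pi}}e^{-\frac{\beta^2}{2}\frac{\mu}{\lambda+\mu}}$ produces exactly the claimed limit of $\E(Q^W)/\sqrt{r}$.

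Finally, the waiting-time assertion follows from the Little's-law identity $E(W) = \E(Q^W)/(\lambda(r-\E(Q^W)))$ recorded just before the proposition. The first part shows $\E(Q^W) = \Theta(\sqrt{r})$, hence $\E(Q^W)/r \to 0$, so
\begin{align*}
E(W)\sqrt{r} = \frac{\E(Q^W)/\sqrt{r}}{\lambda\left(1-\E(Q^W)/r\right)} \longrightarrow \frac{1}{\lambda}\lim_{r\to\infty}\frac{\E(Q^W)}{\sqrt{r}},
\end{align*}
and absorbing the factor $1/\lambda$ into the prefactor (so that $\sqrt{\lambda\mu}/(\lambda+\mu)$ becomes $\sqrt{\mu/\lambda}/(\lambda+\mu)$) gives the stated expression. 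The remaining steps beyond uniform integrability are routine Gaussian algebra and bookkeeping with the constants already isolated in Lemma~\ref{lem:NormalizationConstantAsymptoticsSingleServer}.
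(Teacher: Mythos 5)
Your proposal is correct and follows essentially the same route as the paper's proof: reduce $\E(Q^W)/\sqrt{\lambda r/\mu}$ to the truncated mean $\int_{\beta}^{\infty}(x-\beta)\hat{f}(x)\,dx$ of the limiting density from Theorem~\ref{thm:DiffusionDenstityQED}, evaluate that Gaussian integral, simplify the prefactor via the constant identified in Lemma~\ref{lem:NormalizationConstantAsymptoticsSingleServer}, and obtain the waiting time from Little's law. The only point of divergence is that the paper asserts the convergence of expectations directly from the distributional limit, whereas you flag this interchange as the delicate step and close it with a uniform-integrability argument based on the Binomial representation of the upper tail of~\eqref{eq:StDistribution} — a gap-filling refinement of the same argument rather than a different approach, and your moment bound does go through (a variance bound on the Binomial already suffices).
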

		
\begin{proof}
		We note that as $r\rightarrow \infty$,
		\begin{align*}
		\frac{\E(Q^W)}{\sqrt{\lambda r/ \mu}} \rightarrow \int_{\beta}^\infty (x-\beta) \hat{f}(x) \, dx= \int_{0}^\infty y \hat{f}(y+\beta) \, dy,
		\end{align*}
		Elementary calculus yields
		\begin{align*}
		\int_{\beta}^\infty y \phi\left( \frac{y-\beta \mu/(\lambda+\mu)}{\sqrt{\mu/(\lambda+\mu)}} \right) \, dx = \frac{\mu}{\lambda+ \mu} \phi\left( \beta \sqrt{\frac{\mu}{\lambda+\mu}}\right)-\beta \left(\frac{\mu}{\lambda+\mu}\right)^{3/2} \Phi\left( -\beta \sqrt{\frac{\mu}{\lambda+\mu}}\right).
		\end{align*}
		Then as $r \rightarrow \infty$, we obtain
		\begin{align*}
		\frac{\E(Q^W)}{\sqrt{\lambda r/ \mu}} \rightarrow \alpha \sqrt{\frac{\lambda+\mu}{\mu}} \left( \frac{\mu}{\lambda+ \mu} \phi\left( \beta \sqrt{\frac{\mu}{\lambda+\mu}}\right)-\beta \left(\frac{\mu}{\lambda+\mu}\right)^{3/2} \Phi\left( -\beta \sqrt{\frac{\mu}{\lambda+\mu}}\right) \right) \Phi\left(-\beta \sqrt{\frac{\mu}{\lambda+\mu}} \right)^{-1},
		\end{align*}
		where
		\begin{align*}
		\alpha = \frac{\Phi\left(-\beta \sqrt{\frac{\mu}{\lambda+\mu}}  \right)}{\Phi\left(-\beta \sqrt{\frac{\mu}{\lambda+\mu}}  \right) + \sqrt{\frac{\lambda+\mu}{\mu}} e^{\frac{\beta^2}{2}\frac{\lambda}{\lambda+\mu}}  \Phi(\beta)}.
		\end{align*}
		Simply rewriting the terms yields the result.
\hfill\end{proof}
		
		\noindent
		Finally, the utilization level follows directly from the scaling of the diffusion scaled process. More specifically, the scaling implies that $\E(\textrm{\# Fully-Charged Batteries})= \Theta(\sqrt{r})$ and $\E(Q^r)=\Theta(r)$.
		
		\begin{corollary}
			Under scaling rule~\eqref{eq:QEDUnlimitedSingleBSS},
			\begin{align*}
			\rho_{B^r} \rightarrow 1 \quad \text{as} \quad r\rightarrow \infty.
			\end{align*}
		\end{corollary}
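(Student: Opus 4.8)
The plan is to express the spare-battery utilization $\rho_{B^r}$ as one minus the ratio of the expected number of fully-charged batteries stocked at the station to the expected total number of batteries physically present there, and then to show that this ratio is of order $1/\sqrt{r}$.

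First I would carry out the battery accounting dictated by the conservation of batteries. In a state with $Q^r(\infty)$ batteries in need of charging, exactly $r-(Q^r(\infty)-B^r)^+$ cars are driving, each carrying one battery, so the number of batteries located at the station equals $\max\{Q^r(\infty),B^r\}=B^r+(Q^r(\infty)-B^r)^+$. Of these, $Q^r(\infty)$ are not fully charged, leaving $(B^r-Q^r(\infty))^+$ fully-charged stocked batteries. By the definition of the utilization level,
\begin{align*}
1-\rho_{B^r}=\frac{\E\big[(B^r-Q^r(\infty))^+\big]}{\E\big[\max\{Q^r(\infty),B^r\}\big]},
\end{align*}
so it suffices to prove that the right-hand side tends to $0$.

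Second I would estimate the numerator through the diffusion scaling $Q^r(\infty)=\lambda r/\mu+\hat{Q}^r(\infty)\sqrt{\lambda r/\mu}$ together with $B^r=\lambda r/\mu+\beta\sqrt{\lambda r/\mu}$, which gives $(B^r-Q^r(\infty))^+=\sqrt{\lambda r/\mu}\,(\beta-\hat{Q}^r(\infty))^+$. By Theorem~\ref{thm:UnlimitedSteadyStateTendsToDiffusionLimitProcess} we have $\hat{Q}^r(\infty)\to\hat{Q}(\infty)$ in distribution, and the limiting density~\eqref{eq:DensityQED} has Gaussian tails; hence, granting uniform integrability, $\E[(\beta-\hat{Q}^r(\infty))^+]$ converges to the finite positive constant $\E[(\beta-\hat{Q}(\infty))^+]$, so that $\E[(B^r-Q^r(\infty))^+]=\Theta(\sqrt{r})$, matching the claim $\E(\textrm{\# Fully-Charged Batteries})=\Theta(\sqrt{r})$. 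For the denominator, $\max\{Q^r(\infty),B^r\}\ge B^r=\Theta(r)$ deterministically, while the same scaling gives $\E[\max\{Q^r(\infty),B^r\}]\le B^r+\E[(Q^r(\infty)-B^r)^+]=\Theta(r)$; thus the denominator is exactly of order $r$, consistent with $\E(Q^r)=\Theta(r)$. Combining the two estimates yields $1-\rho_{B^r}=\Theta(\sqrt{r})/\Theta(r)=\Theta(1/\sqrt{r})\to 0$, and therefore $\rho_{B^r}\to 1$.

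The only genuine obstacle is the uniform integrability needed to pass from the weak convergence of $\hat{Q}^r(\infty)$ to convergence of the truncated first moment $\E[(\beta-\hat{Q}^r(\infty))^+]$. I would settle this with a uniform tail bound on the steady-state law~\eqref{eq:StDistribution}: on the event $\{Q^r(\infty)<B^r\}$ the distribution is proportional to that of a $\mathrm{Pois}(\lambda r/\mu)$ variable, whose centred-and-scaled lower deviations are uniformly sub-Gaussian, providing a uniformly integrable envelope for $(\beta-\hat{Q}^r(\infty))^+$. Should one prefer to avoid the moment interchange altogether, the numerator can instead be evaluated directly from~\eqref{eq:StDistribution} as the truncated sum $\E[(B^r-Q^r(\infty))^+]=\sum_{k=0}^{B^r-1}(B^r-k)\,\pi_k^{(B^r,r)}$, whose asymptotics---obtained with the same Poisson and Stirling estimates used in Lemma~\ref{lem:NormalizationConstantAsymptoticsSingleServer}---confirm the order $\Theta(\sqrt{r})$.
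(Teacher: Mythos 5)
Your proof is correct and takes essentially the same route as the paper, which also obtains the corollary directly from the diffusion scaling via the estimates $\E(\textrm{\# Fully-Charged Batteries}) = \Theta(\sqrt{r})$ and $\E(Q^r) = \Theta(r)$. The only addition on your side is the explicit uniform-integrability argument (the sub-Gaussian lower tail of the Poisson law on $\{Q^r(\infty) < B^r\}$) needed to pass from the weak convergence $\hat{Q}^r(\infty) \Rightarrow \hat{Q}(\infty)$ to convergence of the truncated mean, a step the paper leaves implicit.
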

		
		\section{Unlimited number of swapping servers for single station system} \label{app:UnlimitedGSingle}
		In this appendix, we consider a setting with a single station where the swapping servers pose no condition, i.e.~$G=\infty$ or equivalently $G \geq \max\{1,F^r-B^r\}$. That is, every battery of an EV that is at the station can be taken to a charging point (if available). We point out that for all $F^r \leq B^r$, this reduces to the main setting in this paper. Therefore, in this section, we consider only
		\begin{align}
		B^r &= \frac{\lambda r}{\mu} + \beta \sqrt{\frac{\lambda r}{\mu}}, \quad \beta \in \mathbb{R}, \\
		F^r &= \frac{\lambda r}{\mu} + \gamma \sqrt{\frac{\lambda r}{\mu}}, \quad \gamma > \beta.
		\label{eq:QEDscalingNetworkModelGUnlimited}
		\end{align}
		
		\subsection{Diffusion limits}
		The steady state distribution is given in Lemma~\ref{lem:SteadyStateSingleBSS}. Moreover, the fluid limit result of Proposition~\ref{prop:SingleFluidLimitProcess} and its proof remain valid in this case, since the fluctuations of order $\Theta(\sqrt{r})$ are too small to be of any impact on the fluid scale. There will however be a notable effect on the diffusion scale.
		
		\begin{theorem}
			Suppose the system operates under the QED~scaling rule~\eqref{eq:QEDscalingSingleBSSModel}. If~$\hat{Q}^r(0) \rightarrow \hat{Q}(0)$ in distribution as $r \rightarrow \infty$, then $\hat{Q}^r \rightarrow \hat{Q}$ in distribution as $r \rightarrow \infty$. The process $\{\hat{Q}(t),t \geq 0\}$ is a diffusion process with mean
			\begin{align*}
			m(x) = -\lambda(x-\beta)^+ -\mu \min\{x,\gamma\},
			\end{align*}
			and constant infinitesimal variance $2\mu$. The steady state density $\hat{Q}(\infty)$ is given by
			\begin{align}
			\hat{f}(x) = \left\{\begin{array}{ll}
			\alpha_1 \frac{\phi(x)}{\Phi(\beta)} & \textrm{if } x < \beta,\\
			\alpha_2 \sqrt{\frac{\lambda+\mu}{\mu}}\phi\left( \frac{x-\frac{\lambda}{\lambda+ \mu}\beta}{\sqrt{\mu/(\lambda+\mu)}}\right) \left( \Phi\left( \frac{\gamma-\frac{\lambda}{\lambda+ \mu}\beta}{\sqrt{\mu / (\lambda+\mu)}}\right)-  \Phi\left( \sqrt{\frac{\mu}{\lambda+\mu}\beta}\right)   \right)^{-1} & \textrm{if } \beta \leq x < \gamma, \\
			\alpha_3 \sqrt{\frac{\lambda}{\mu}} \phi\left( \frac{x-(\beta-\frac{\mu}{\lambda} \gamma)}{\sqrt{\mu / \lambda}}\right) \Phi\left(- \frac{\gamma-(\beta-\frac{\mu}{\lambda} \gamma)}{\sqrt{\mu / \lambda}}\right)^{-1} & \textrm{if } x \geq \gamma,\\
			\end{array} \right.
			\label{eq:DensityQEDLimitedGUnlimited}
			\end{align}
			where $\alpha_i=r_i/(r_1+r_2+r_3)$, $i=1,2,3$ with
			\begin{align*}
			r_1 &=1, \\
			r_2 &= \frac{\phi(\beta)}{\Phi(\beta)} \sqrt{\frac{\mu}{\lambda+\mu}}\phi\left( \sqrt{\frac{\mu}{\lambda+\mu}}\beta\right)^{-1} \left( \Phi\left( \frac{\gamma-\frac{\lambda}{\lambda+ \mu}\beta}{\sqrt{\mu / (\lambda+\mu)}}\right)-  \Phi\left( \sqrt{\frac{\mu}{\lambda+\mu}} \beta \right)   \right), \\
			r_3 &= \frac{\phi(\beta)}{\Phi(\beta)}  \phi\left( \frac{\gamma-\frac{\lambda}{\lambda+ \mu}\beta}{\sqrt{\mu/(\lambda+\mu)}} \right) \phi\left( \sqrt{\frac{\mu}{\lambda+\mu}}\beta\right)^{-1} \sqrt{\frac{\mu}{\lambda}} \phi\left( \frac{\gamma-(\beta-\frac{\mu}{\lambda} \gamma)}{\sqrt{\mu / \lambda}}\right)^{-1} \Phi\left(- \frac{\gamma-(\beta-\frac{\mu}{\lambda} \gamma)}{\sqrt{\mu / \lambda}}\right) .
			\end{align*}
			\label{thm:DiffusionDenstityQEDLimitedGUnlimited}
		\end{theorem}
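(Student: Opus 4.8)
The plan is to exploit the fact that, exactly as in the proof of Theorem~\ref{thm:DiffusionLimitProcess}, the queue length $Q^r$ is a one-dimensional birth--death process, and to invoke the diffusion-approximation machinery of Browne and Whitt~\cite{BrowneWhitt1995}. With $G=\infty$ the swapping servers impose no constraint, so the charging rate is capped only by the number of charging points; hence $Q^r$ has state space $\{0,1,\dots,B^r+r\}$, birth rate $\lambda_r(j)=\lambda(r-(j-B^r)^+)$ and death rate $\mu_r(j)=\mu\min\{j,F^r\}$, identical in form to the main single-station model. The only structural change relative to Theorem~\ref{thm:DiffusionLimitProcess} is that now $\gamma>\beta$, so the two kinks of the drift occur in the opposite order, which is precisely what produces three regimes instead of the earlier configuration.

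First I would form the centered, diffusion-scaled process $\hat{Q}^r(t)=(Q^r(t)-\lambda r/\mu)/\sqrt{\lambda r/\mu}$ and compute its infinitesimal mean $m_r(x)$ and variance $\sigma_r^2(x)$ verbatim as in the proof of Theorem~\ref{thm:DiffusionLimitProcess}. Substituting $j=\lfloor \lambda r/\mu + x\sqrt{\lambda r/\mu}\rfloor$ into $\lambda_r,\mu_r$ and letting $r\to\infty$ yields $m_r(x)\to -\lambda(x-\beta)^+-\mu\min\{x,\gamma\}$ and $\sigma_r^2(x)\to 2\mu$. Because $\beta<\gamma$, this drift is piecewise linear with three regimes: $m(x)=-\mu x$ for $x<\beta$, $m(x)=-(\lambda+\mu)x+\lambda\beta$ for $\beta\le x<\gamma$, and $m(x)=-\lambda x+\lambda\beta-\mu\gamma$ for $x\ge\gamma$. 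The functional convergence $\hat{Q}^r\to\hat{Q}$ in distribution to the diffusion with this drift and constant variance $2\mu$ then follows from the birth--death diffusion limit theorem underlying~\cite{BrowneWhitt1995}.

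Next I would read off the stationary density piece by piece through the scale/speed formula, i.e.\ Equations~(28) and~(33) in~\cite{BrowneWhitt1995}. On each linear regime the diffusion is an Ornstein--Uhlenbeck process, so its stationary density is Gaussian centered at the reversion level of that regime: mean $0$ on $x<\beta$ with variance $1$, mean $\tfrac{\lambda}{\lambda+\mu}\beta$ on $\beta\le x<\gamma$ with variance $\mu/(\lambda+\mu)$, and mean $\beta-\tfrac{\mu}{\lambda}\gamma$ on $x\ge\gamma$ with variance $\mu/\lambda$. This reproduces the three truncated-normal shapes in~\eqref{eq:DensityQEDLimitedGUnlimited}, the $\Phi$-factors being the normalizing constants of the corresponding truncations to each interval. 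Finally, the mixing weights $\alpha_i=r_i/(r_1+r_2+r_3)$ are pinned down by requiring $\hat{f}$ to be continuous at the two breakpoints $x=\beta$ and $x=\gamma$ (Equations~(5)--(6) in~\cite{BrowneWhitt1995}), which determines $r_2/r_1$ from the density ratio at $\beta$ and $r_3/r_2$ from the density ratio at $\gamma$.

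The routine part is the limit computation of $m_r$ and $\sigma_r^2$, which is word for word that of Theorem~\ref{thm:DiffusionLimitProcess}. The main obstacle is the bookkeeping for the mixing constants: since both the reversion level and the variance change across each breakpoint, evaluating the two continuity conditions and simplifying the resulting products of $\phi$ and $\Phi$ terms into the stated closed forms for $r_2$ and $r_3$ is where essentially all the work lies. In particular one must track the identity $\beta-\tfrac{\lambda}{\lambda+\mu}\beta=\tfrac{\mu}{\lambda+\mu}\beta$ so that the argument $\beta\sqrt{\mu/(\lambda+\mu)}$ emerges consistently when matching at $x=\beta$, and the analogous cancellation at $x=\gamma$, before the expressions collapse to those in the statement.
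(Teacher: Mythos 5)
Your proposal is correct and follows essentially the same route as the paper's own proof: compute the infinitesimal mean and variance of the diffusion-scaled birth--death process, invoke the Browne--Whitt piecewise-linear diffusion results (their Eqs.~(28) and~(33)) for the limit process and the form of its stationary density, and fix the mixing weights via their Eqs.~(5)--(6), i.e., matching the density at the breakpoints $\beta$ and $\gamma$. The additional detail you give (the three drift regimes, the OU reversion levels and stationary variances, and the identity $\beta-\frac{\lambda}{\lambda+\mu}\beta=\frac{\mu}{\lambda+\mu}\beta$) is consistent with the stated expressions for $r_2$ and $r_3$.
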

		
\begin{proof}
		The infinitesimal mean of the centered scaled process is given by
		\begin{align*}
		m_r(x) &= \frac{\lambda_r\left(\lfloor \lambda r / \mu + x \sqrt{\lambda r / \mu } \rfloor \right) - \mu_r\left(\lfloor \lambda r / \mu + x \sqrt{\lambda r / \mu } \rfloor \right)}{\sqrt{\lambda r/ \mu}} \rightarrow  -\lambda(x-\beta)^+ -\mu \min\{x,\gamma\}, \\
		\end{align*}
		and the infinitesimal variance is given by
		\begin{align*}
		\sigma^2_r(x) &= \frac{\lambda_r\left(\lfloor \lambda r / \mu + x \sqrt{\lambda r / \mu } \rfloor \right) + \mu_r\left(\lfloor \lambda r / \mu + x \sqrt{\lambda r / \mu } \rfloor \right)}{\lambda r/ \mu} \sim \frac{2\lambda r}{\lambda r/ \mu} = 2\mu.
		\end{align*}
		Using Equation~(28) and~(33) from~\cite{BrowneWhitt1995}, this implies that the limiting process is a piecewise-linear diffusion process with steady state density given as in~\eqref{eq:DensityQEDLimitedGUnlimited}, where $\alpha_i$, $i=1,2,3$ is the probability that the steady state process is in that interval. These steady state probabilities follow directly from Equations~(5) and (6) in~\cite{BrowneWhitt1995}.
\hfill\end{proof}
		
		\subsection{Steady state limits}
		The main goal of this section is to show that the limiting steady state distribution is the same process as the diffusion limiting process in steady state. Again, we observe that the form of the formula is different in the intervals $[0,F]$, $[B^r,F]$, and $[F,B^r+r]$. First we derive the limiting steady state distribution conditioned to be in one of these intervals.
		
		\begin{lemma}
			Under scaling rules~\eqref{eq:QEDscalingSingleBSSModel}, as $r \rightarrow \infty$,
			\begin{align*}
			\begin{array}{ll}
			\Prob\left(Q^r < \frac{\lambda r}{\mu} + x \sqrt{\frac{\lambda r}{\mu}} \big| Q^r < B^r \right) \rightarrow \frac{\Phi(x)}{\Phi(\beta)} & x<\beta, \\
			\Prob\left(Q^r < \frac{\lambda r}{\mu} + x \sqrt{\frac{\lambda r}{\mu}} \big| B^r \leq Q^r < F^r \right) \rightarrow \Phi\left( \frac{x-\frac{\lambda}{\lambda+ \mu}\beta}{\sqrt{\mu/(\lambda+\mu)}}\right) \left( \Phi\left( \frac{\gamma-\frac{\lambda}{\lambda+ \mu}\beta}{\sqrt{\mu / (\lambda+\mu)}}\right)-  \Phi\left( \sqrt{\frac{\mu}{\lambda+\mu}} \beta \right)   \right)^{-1}   & \beta \leq x < \gamma, \\
			\Prob\left(Q^r \geq \frac{\lambda r}{\mu} + x \sqrt{\frac{\lambda r}{\mu}} \big| Q^r \geq F^r \right) \rightarrow \Phi\left( - \frac{x-(\beta-\frac{\mu}{\lambda} \gamma)}{\sqrt{\mu / \lambda}}\right) \Phi\left(- \frac{\gamma-(\beta-\frac{\mu}{\lambda} \gamma)}{\sqrt{\mu / \lambda}}\right)^{-1} & x \geq \beta.
			\end{array}
			\end{align*}
		\end{lemma}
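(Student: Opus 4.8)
The plan is to use the explicit steady-state distribution of Lemma~\ref{lem:SteadyStateSingleBSS} in the regime $B \leq F$ (that is, $\beta < \gamma$), where the state space splits into the three intervals $[0,B^r]$, $[B^r,F^r]$ and $[F^r,B^r+r]$ appearing in~\eqref{eq:StDistributionLimitedChargingPoints}. On each interval the unnormalised stationary weights $\pi_k$ are proportional to a recognisable law, and each of the three conditional statements is a ratio of partial sums of $\pi_k$ over one interval. The strategy is to identify the relevant distribution on each interval, rewrite the conditional probability as a quotient of its partial sums, and invoke the central limit theorem, exactly as in the proof of Lemma~\ref{lem:StStateConditionedQEDDistribution}. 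The one structural difference is that, since now $\beta<\gamma$, the middle interval $[B^r,F^r]$ is governed by a binomial (Gaussian-type) law rather than the geometric law seen when $\gamma<\beta$. The first statement is the easiest: for $x<\beta$ both cutoffs lie in $[0,B^r]$ for large $r$, where $\pi_k \propto (\lambda r/\mu)^k/k!$; multiplying numerator and denominator by $e^{-\lambda r/\mu}$ turns the ratio into a quotient of $\textrm{Pois}(\lambda r/\mu)$ cumulative probabilities, and the Poisson CLT (as in Lemma~\ref{lem:LimitedA1}) gives $\Phi(x)/\Phi(\beta)$.

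For the second statement the key observation is that on $[B^r,F^r]$ one has $\frac{r^B r!}{(r+B-k)!}\frac{(\lambda/\mu)^k}{k!} \propto \binom{r+B}{k}(\lambda/\mu)^k$, so $\pi_k$ is proportional to the $\textrm{Bin}(r+B^r,\lambda/(\lambda+\mu))$ probability mass function; the conditional probability is therefore the corresponding ratio of binomial cumulative probabilities truncated to $[B^r,F^r]$. A direct computation gives centering $(r+B^r)\lambda/(\lambda+\mu)=\lambda r/\mu + \frac{\lambda}{\lambda+\mu}\beta\sqrt{\lambda r/\mu}$ and standard deviation asymptotic to $\sqrt{\lambda r/\mu}\,\sqrt{\mu/(\lambda+\mu)}$. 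Standardising the three cutoffs $\lambda r/\mu + x\sqrt{\lambda r/\mu}$, $B^r$ and $F^r$ and applying the CLT for sums of Bernoulli variables (as in Lemma~\ref{lem:NormalizationConstantAsymptoticsSingleServer}) produces the stated ratio of Gaussian probabilities with mean $\frac{\lambda}{\lambda+\mu}\beta$ and variance $\mu/(\lambda+\mu)$.

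For the third statement I would change variables to $j=r+B^r-k$: on $[F^r,B^r+r]$ the weight simplifies to $\pi_k \propto (r+B-k)!^{-1}(\lambda/(\mu F))^{k} \propto (\mu F/\lambda)^{j}/j!$, so $j$ is conditionally $\textrm{Pois}(\mu F^r/\lambda)$ distributed. Under this substitution $Q^r \geq \lambda r/\mu + x\sqrt{\lambda r/\mu}$ becomes $j \leq r+(\beta-x)\sqrt{\lambda r/\mu}$ and $Q^r\geq F^r$ becomes $j\leq r+(\beta-\gamma)\sqrt{\lambda r/\mu}$. Since $\mu F^r/\lambda = r + \frac{\mu}{\lambda}\gamma\sqrt{\lambda r/\mu}$ has standard deviation asymptotic to $\sqrt{r}$, standardising these two cutoffs and applying the Poisson CLT yields the quotient of $\Phi$-values with arguments $-(x-(\beta-\frac{\mu}{\lambda}\gamma))/\sqrt{\mu/\lambda}$ over $-(\gamma-(\beta-\frac{\mu}{\lambda}\gamma))/\sqrt{\mu/\lambda}$, as claimed.

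The main obstacle is bookkeeping rather than conceptual: correctly recognising the binomial structure on $[B^r,F^r]$ and the reversed Poisson structure (through $j=r+B^r-k$) on $[F^r,B^r+r]$, and then tracking the QED centering together with the $\sqrt{r}$-versus-$\sqrt{\lambda r/\mu}$ scalings so that each standardised cutoff converges to the precise argument of $\Phi$ in the statement. Particular care is needed with the direction of the inequalities after the change of variables in the third case, and with verifying that the effective variances $\mu/(\lambda+\mu)$ and $\mu/\lambda$ emerge correctly from the binomial and Poisson CLTs respectively.
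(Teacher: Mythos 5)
Your proposal is correct and is essentially the paper's own proof: a ratio of $\textrm{Pois}(\lambda r/\mu)$ CDFs with the CLT on $[0,B^r]$, the $\textrm{Bin}(r+B^r,\lambda/(\lambda+\mu))$ structure with the CLT on $[B^r,F^r]$ (with exactly the centering $\lambda r/\mu+\frac{\lambda}{\lambda+\mu}\beta\sqrt{\lambda r/\mu}$ and standard deviation $\sqrt{\lambda r/\mu}\sqrt{\mu/(\lambda+\mu)}$ you compute), and the index reversal $j=r+B^r-k$ turning the interval $[F^r,B^r+r]$ into a $\textrm{Pois}(\mu F^r/\lambda)$ tail, as in the paper. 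One remark: carrying out your middle-interval computation honestly (standardising all three cutoffs, including $B^r$) yields the numerator $\Phi\bigl(\frac{x-\frac{\lambda}{\lambda+\mu}\beta}{\sqrt{\mu/(\lambda+\mu)}}\bigr)-\Phi\bigl(\sqrt{\frac{\mu}{\lambda+\mu}}\,\beta\bigr)$ rather than the bare $\Phi$ term printed in the lemma, since the conditioning event starts at $B^r$; this discrepancy is a typo inherited from the paper's statement and proof, not a flaw in your argument.
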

		
\begin{proof}
		The first expression follows, similarly to the case of $F^r<B^r$, from the CLT and the properties of the Poisson distribution:
		\begin{align*}
		\Prob \left(Q^r < \frac{\lambda r}{\mu} + x \sqrt{\frac{\lambda r}{\mu}} \bigg| Q^r < B^r \right) &= \frac{\Prob\left(Q^r < \frac{\lambda r}{\mu} + x \sqrt{\frac{\lambda r}{\mu}} \right)}{\Prob\left( Q^r < B^r \right)} \\
		&= \frac{\Prob\left( \textrm{Pois}(\lambda r/ \mu) \leq  \lambda r/\mu + x\sqrt{\lambda r/\mu}-1 \right)}{\Prob\left( \textrm{Pois}(\lambda r/ \mu) \leq  \lambda r/\mu + \beta\sqrt{\lambda r/\mu}-1 \right)} \rightarrow \frac{\Phi(x)}{\Phi(\beta)}.
		\end{align*}
		The second expression follows as in the proof in Proposition~14, using the binomial distribution and the CLT:
		\begin{align*}
		\Prob\left(Q^r < \frac{\lambda r}{\mu} + x \sqrt{\frac{\lambda r}{\mu}} \big| B^r \leq Q^r < F^r \right) \rightarrow \Phi\left( \frac{x-\frac{\lambda}{\lambda+ \mu}\beta}{\sqrt{\mu/(\lambda+\mu)}}\right) \left( \Phi\left( \frac{\gamma-\frac{\lambda}{\lambda+ \mu}\beta}{\sqrt{\mu/(\lambda+\mu)}}\right)  - \Phi\left( \frac{\beta-\frac{\lambda}{\lambda+ \mu}\beta}{\sqrt{\mu/(\lambda+\mu)}}\right)  \right)^{-1}.
		\end{align*}
		The final expression again uses the CLT and the properties of the Poisson distribution:
		\begin{align*}
		\Prob&\left(Q^r \geq \frac{\lambda r}{\mu} + x \sqrt{\frac{\lambda r}{\mu}} \bigg| Q^r \geq F^r \right) = \frac{\Prob\left(Q^r \geq \frac{\lambda r}{\mu} + x \sqrt{\frac{\lambda r}{\mu}}  \right)}{\Prob\left(Q^r \geq F^r \right)} = \frac{\sum_{k=\lambda r/\mu + x\sqrt{\lambda r/\mu}}^{B^r+r} \frac{1}{(r+B^r-k)!} \left(\frac{\lambda}{\mu F^r}\right)^k }{\sum_{k=F^r}^{B^r+r}\frac{1}{(r+B^r-k)!} \left(\frac{\lambda}{\mu F^r}\right)^k} \\
		&= \frac{\sum_{k=0}^{B^r+r-\lambda r/\mu - x\sqrt{\lambda r/\mu}} \frac{1}{k!} \left(\frac{\lambda}{\mu F^r}\right)^{B^r+r-k} }{\sum_{k=0}^{B^r+r-F^r}\frac{1}{k!} \left(\frac{\lambda}{\mu F^r}\right)^{B^r+r-k} }= \frac{\sum_{k=0}^{r + (\beta- x)\sqrt{\lambda r/\mu}} \frac{1}{k!} \left(\frac{\mu F^r}{\lambda}\right)^{k} e^{-\mu F^r/\lambda}}{\sum_{k=0}^{r + (\beta- \gamma)\sqrt{\lambda r/\mu}}\frac{1}{k!} \left(\frac{\mu F^r}{\lambda}\right)^{k} e^{-\mu F^r/\lambda}}.
		\end{align*}
		Since for every $z \in \mathbb{R}$,
		\begin{align*}
		\Prob\left(\textrm{Pois}\left(\mu F^r/\lambda \right) \leq r +z \sqrt{\lambda r /\mu} \right) &\rightarrow \Phi\left( z \sqrt{\frac{\lambda}{\mu}} -\gamma\sqrt{\frac{\mu}{\lambda}}\right) = \Phi\left( \frac{z-\gamma \frac{\mu}{\lambda}}{\sqrt{\mu/\lambda}}\right)
		\end{align*}
		as $r \rightarrow \infty$, the result follows immediately.
\hfill\end{proof}

		\begin{lemma}
			Under scaling rules~\eqref{eq:QEDscalingSingleBSSModel}, as $r \rightarrow \infty$,
			\begin{align*}
			\Prob\left( Q^r < F^r \right) &\rightarrow  \frac{r_1}{r_1+r_2+r_3}, \\
			\Prob\left( F^r \leq Q^r < B^r \right) &\rightarrow \frac{r_2}{r_1+r_2+r_3},\\
			\Prob\left( Q^r \geq B^r \right) &\rightarrow \frac{r_3}{r_1+r_2+r_3},
			\end{align*}
			where
			\begin{align*}
			r_1 &=1, \\
			r_2 &= \frac{\phi(\beta)}{\Phi(\beta)} \sqrt{\frac{\mu}{\lambda+\mu}}\phi\left( \sqrt{\frac{\mu}{\lambda+\mu}}\beta\right)^{-1} \left( \Phi\left( \frac{\gamma-\frac{\lambda}{\lambda+ \mu}\beta}{\sqrt{\mu / (\lambda+\mu)}}\right)-  \Phi\left( \sqrt{\frac{\mu}{\lambda+\mu}} \beta \right)   \right), \\
			r_3 &= \frac{\phi(\beta)}{\Phi(\beta)}  \phi\left( \frac{\gamma-\frac{\lambda}{\lambda+ \mu}\beta}{\sqrt{\mu/(\lambda+\mu)}} \right) \phi\left( \sqrt{\frac{\mu}{\lambda+\mu}}\beta\right)^{-1} \sqrt{\frac{\mu}{\lambda}} \phi\left( \frac{\gamma-(\beta-\frac{\mu}{\lambda} \gamma)}{\sqrt{\mu / \lambda}}\right)^{-1} \Phi\left(- \frac{\gamma-(\beta-\frac{\mu}{\lambda} \gamma)}{\sqrt{\mu / \lambda}}\right) .
			\end{align*}
			\label{lem:IntervalProbabilitiesB}
		\end{lemma}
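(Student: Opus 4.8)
The plan is to mimic the argument used in the proof of Lemma~\ref{lem:StStateIntervalDistributions} for the case $F^r < B^r$, now adapted to the regime $\gamma > \beta$ of~\eqref{eq:QEDscalingNetworkModelGUnlimited} in which $B^r < F^r$. Starting from the explicit steady state distribution in Lemma~\ref{lem:SteadyStateSingleBSS}, I would write the inverse normalization constant $\left(\pi_0^{(B^r,F^r,r)}\right)^{-1}$ as a sum of three blocks matching the three relevant branches of~\eqref{eq:StDistributionLimitedChargingPoints}, namely $k\in[0,B^r)$, $k\in[B^r,F^r)$ and $k\in[F^r,B^r+r]$ (these are the intervals on which the preceding lemma conditions). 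Multiplying throughout by $e^{-\lambda r/\mu}$, each block becomes a probability statement for a Poisson or binomial random variable, so the task reduces to identifying the three limits $r_1,r_2,r_3$ up to the common prefactor $\Phi(\beta)$, and then forming the ratios.

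For the first block I would sum $\sum_{k=0}^{B^r-1}\frac{(\lambda r/\mu)^k}{k!}e^{-\lambda r/\mu} = \Prob(\textrm{Pois}(\lambda r/\mu) < B^r)$, which by the central limit theorem converges to $\Phi(\beta)$, giving the factor associated with $r_1=1$. The third block is handled exactly as in Lemma~\ref{lem:LimitedA3}: applying Stirling's approximation to $\frac{r^{B^r}r!}{(r+B^r-k)!}$ and to the $1/(F^r!\,(F^r)^{k-F^r})$ factor, the sum over $k\in[F^r,B^r+r]$ reduces to a Poisson tail of the form $\Prob\big(\textrm{Pois}(\mu F^r/\lambda)\le r+(\beta-\gamma)\sqrt{\lambda r/\mu}\big)$ times an explicit Gaussian prefactor, producing $r_3$. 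The central block $k\in[B^r,F^r)$ is the delicate one: here the death rate equals $\mu k$ (every battery in need is being charged, since $k<F^r$), and using $\binom{r+B^r}{k}=\frac{(r+B^r)!}{k!\,(r+B^r-k)!}$ the summand $\frac{r^{B^r}r!}{(r+B^r-k)!}\frac{(\lambda/\mu)^k}{k!}$ rewrites with a \emph{binomial} rather than Poisson structure, exactly as in the proof of Lemma~\ref{lem:NormalizationConstantAsymptoticsSingleServer} and Proposition~\ref{prop:WaitProbSingleStationUnlimited}. This recasts the block as $\Prob\!\left(\textrm{Bin}\big(B^r+r,\tfrac{\lambda}{\lambda+\mu}\big)\in[B^r,F^r)\right)$ times a Stirling prefactor, and a binomial central limit theorem with the variance scaling $\sqrt{\mu/(\lambda+\mu)}$ delivers the difference of $\Phi$-values appearing in $r_2$.

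Combining the three asymptotics gives $\pi_0^{(B^r,F^r,r)}e^{\lambda r/\mu}\to\Phi(\beta)(r_1+r_2+r_3)$, and dividing each individual block by this limit yields the three stated interval probabilities, since the common factor $\Phi(\beta)$ cancels in every ratio.

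I expect the main obstacle to be this middle block: unlike the purely Poisson first block or the purely Poisson-tail third block, it mixes the falling-factorial (binomial) term with the $1/k!$ (Poisson) term, so the Stirling bookkeeping must be carried out carefully to expose the correct Gaussian with mean $\tfrac{\lambda}{\lambda+\mu}\beta$ and variance $\mu/(\lambda+\mu)$, and one must check that the prefactors emerging from the three separate Stirling expansions are mutually consistent, so that the pieced-together density is continuous at the breakpoints $x=\beta$ and $x=\gamma$. A secondary bookkeeping point is that $B^r$ and $F^r$ in~\eqref{eq:QEDscalingNetworkModelGUnlimited} are generally non-integer, so the floors and the $-1$ index shifts in the CLT statements must be handled as in the earlier lemmas to confirm they do not affect the limits.
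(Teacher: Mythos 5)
Your proposal is correct and takes essentially the same route as the paper: the paper's proof of this lemma simply invokes three auxiliary results (Lemmas~\ref{lem:LimitedB1}--\ref{lem:LimitedB3}), which are precisely your three blocks — a Poisson CLT on $[0,B^r)$ giving $\Phi(\beta)$, a binomial-CLT-plus-Stirling computation modeled on Lemma~\ref{lem:NormalizationConstantAsymptoticsSingleServer} for the middle block $[B^r,F^r)$, and a Poisson-tail-plus-Stirling computation modeled on Lemma~\ref{lem:LimitedA3} for $[F^r,B^r+r]$ — after which the stated probabilities follow as ratios with the common factor $\Phi(\beta)$ cancelling. The only blemish is notational: the combined limit should read $\bigl(\pi_0^{(B^r,F^r,r)}\bigr)^{-1}e^{-\lambda r/\mu}\to\Phi(\beta)(r_1+r_2+r_3)$ rather than $\pi_0^{(B^r,F^r,r)}e^{\lambda r/\mu}\to\Phi(\beta)(r_1+r_2+r_3)$, a harmless slip that, incidentally, also appears in the paper's proof of the analogous Lemma~\ref{lem:StStateIntervalDistributions}.
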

\begin{proof}
		Applying Lemmas~\ref{lem:LimitedB1}-\ref{lem:LimitedB3} below yields the result.
\hfill\end{proof}

		\begin{lemma}
			Under scaling rules~\eqref{eq:QEDscalingSingleBSSModel}, as $r \rightarrow \infty$,
			\begin{align*}
			\sum_{k=0}^{B^r-1} \frac{\left( \frac{\lambda r}{\mu} \right)^k}{k!} e^{-\lambda r / \mu} \rightarrow \Phi(\beta).
			\end{align*}
			\label{lem:LimitedB1}
		\end{lemma}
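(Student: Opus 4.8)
The plan is to recognize the left-hand side as a Poisson tail probability and then invoke the central limit theorem, exactly paralleling the proof of Lemma~\ref{lem:LimitedA1} (where the same argument yielded $\Phi(\gamma)$ with the summation endpoint $F^r$). The only structural difference here is that the upper summation limit is $B^r$ rather than $F^r$, so the relevant scaling constant is $\beta$ rather than $\gamma$; the mechanism of the proof is otherwise identical.

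First I would rewrite the partial sum of Poisson point masses as a cumulative distribution function,
\[
\sum_{k=0}^{B^r-1} \frac{\left( \lambda r/\mu \right)^k}{k!} e^{-\lambda r / \mu} = \Prob\left( \textrm{Pois}(\lambda r/\mu) \leq B^r - 1 \right).
\]
Next I would substitute the QED scaling $B^r = \lambda r/\mu + \beta \sqrt{\lambda r/\mu}$ and standardize. Since a Poisson variable with mean $\lambda r/\mu$ has variance $\lambda r/\mu$, centering and dividing by $\sqrt{\lambda r/\mu}$ gives
\[
\Prob\left( \frac{\textrm{Pois}(\lambda r/\mu) - \lambda r/\mu}{\sqrt{\lambda r/\mu}} \leq \beta - \frac{1}{\sqrt{\lambda r/\mu}} \right).
\]

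As $r \rightarrow \infty$ the mean $\lambda r/\mu \rightarrow \infty$, so by the central limit theorem the standardized Poisson random variable converges in distribution to a standard normal. The threshold $\beta - 1/\sqrt{\lambda r/\mu}$ converges to $\beta$, and since the standard normal distribution function $\Phi$ is continuous, the probability converges to $\Phi(\beta)$, which completes the argument.

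I do not expect any substantial obstacle: the statement is a routine consequence of the CLT for Poisson random variables with diverging mean. The only point requiring a moment of care is the $-1$ correction in the summation endpoint arising from the strict inequality $k \leq B^r - 1$; this contributes a term of order $1/\sqrt{\lambda r/\mu}$ to the standardized threshold, which vanishes in the limit and therefore does not affect the value $\Phi(\beta)$.
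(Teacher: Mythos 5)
Your proof is correct and follows essentially the same route as the paper: the paper's own (one-line) proof simply invokes the CLT and the properties of the Poisson distribution, exactly as in its proof of Lemma~\ref{lem:LimitedA1}, recognizing the sum as $\Prob\left( \textrm{Pois}(\lambda r/\mu) < \lambda r/\mu + \beta\sqrt{\lambda r/\mu} \right) \rightarrow \Phi(\beta)$. Your write-up just makes explicit the standardization step and the vanishing $O(1/\sqrt{r})$ endpoint correction that the paper leaves implicit.
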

		
\begin{proof}
		This follows from the CLT and the properties of the Poisson distribution.
\hfill\end{proof}
		
		\begin{lemma}
			Under scaling rules~\eqref{eq:QEDscalingSingleBSSModel}, as $r \rightarrow \infty$,
			\begin{align*}
			\sum_{k=B^r}^{F^r-1} \frac{r^{B^r} r!}{(r+B^r-k)!} \frac{\left(\lambda/\mu \right)^k }{k!} &\rightarrow \sqrt{\frac{\mu}{\lambda+\mu}} e^{-\frac{\beta^2}{2} \frac{\lambda}{\lambda+\mu}} \left( \Phi\left( \frac{\gamma-\frac{\lambda}{\lambda+ \mu}\beta}{\sqrt{\mu / (\lambda+\mu)}}\right)-  \Phi\left( \sqrt{\frac{\mu}{\lambda+\mu}} \beta \right)   \right)\\
			&= \phi(\beta)\sqrt{\frac{\mu}{\lambda+\mu}}\phi\left( \sqrt{\frac{\mu}{\lambda+\mu}}\beta\right)^{-1} \left( \Phi\left( \frac{\gamma-\frac{\lambda}{\lambda+ \mu}\beta}{\sqrt{\mu / (\lambda+\mu)}}\right)-  \Phi\left( \sqrt{\frac{\mu}{\lambda+\mu}} \beta \right)   \right).
			\end{align*}
			\label{lem:LimitedB2}
		\end{lemma}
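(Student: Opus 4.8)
The plan is to recognize the summand as a discretized Gaussian and to evaluate the limit as a Riemann integral. Set $\rho := \lambda r/\mu$ and reindex the sum by $k = \rho + x\sqrt{\rho}$, so the index runs over a grid in $[\beta,\gamma)$ of spacing $1/\sqrt{\rho}$ (recall $B^r = \rho + \beta\sqrt{\rho}$, $F^r = \rho + \gamma\sqrt{\rho}$, and that in this regime $\gamma > \beta$, so the interval $[B^r,F^r)$ is nonempty). The factor $(\lambda/\mu)^k/k! = r^{-k}\rho^k/k!$ combines with the Poisson weight $e^{-\rho}$, which is exactly the normalization this piece of $\pi_0^{-1}e^{-\lambda r/\mu}$ carries (cf. the technique of Lemma~\ref{lem:LimitedB1}), so that $r^{-k}(\rho^k/k!)e^{-\rho}$ is $r^{-k}$ times a $\mathrm{Pois}(\rho)$ mass. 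By the local central limit theorem this equals $r^{-k}\rho^{-1/2}\phi(x)(1+o(1))$, uniformly for $x$ in the bounded interval $[\beta,\gamma]$.

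Next I would extract the asymptotics of the remaining factor $\frac{r^{B^r} r!}{(r+B^r-k)!}$. Writing $m = k - B^r \ge 0$, we have $r+B^r-k = r-m$ and $\frac{r!}{(r-m)!} = r^m \prod_{j=0}^{m-1}(1 - j/r)$. Expanding the logarithm of the product gives $\sum_{j<m}\log(1-j/r) = -m^2/(2r) + O(m^3/r^2)$; since $m = (x-\beta)\sqrt{\rho} = \Theta(\sqrt{r})$ the error term is $O(r^{-1/2})$ and, using $m^2/(2r) = (x-\beta)^2\lambda/(2\mu)$, one gets $\frac{r!}{(r-m)!} = r^m e^{-(x-\beta)^2\lambda/(2\mu)}(1+o(1))$. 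Hence $\frac{r^{B^r} r!}{(r+B^r-k)!} = r^{B^r+m} e^{-(x-\beta)^2\lambda/(2\mu)}(1+o(1)) = r^k e^{-(x-\beta)^2\lambda/(2\mu)}(1+o(1))$, and the powers $r^k$ cancel against the $r^{-k}$ from the first step. The full summand (with the $e^{-\rho}$ attached) is therefore $\rho^{-1/2}\phi(x)\,e^{-(x-\beta)^2\lambda/(2\mu)}(1+o(1))$.

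Summing over the grid of spacing $\Delta x = 1/\sqrt{\rho}$ turns the sum into the Riemann sum for $\int_\beta^\gamma \phi(x)\,e^{-(x-\beta)^2\lambda/(2\mu)}\,dx$. Completing the square in the exponent $-x^2/2 - (x-\beta)^2\lambda/(2\mu)$ recasts the integrand as a normal density with mean $\frac{\lambda}{\lambda+\mu}\beta$ and variance $\frac{\mu}{\lambda+\mu}$, multiplied by the constant $e^{-\frac{\beta^2}{2}\frac{\lambda}{\lambda+\mu}}$; integrating this normal over $[\beta,\gamma]$ yields exactly $\sqrt{\frac{\mu}{\lambda+\mu}}\,e^{-\frac{\beta^2}{2}\frac{\lambda}{\lambda+\mu}}\big(\Phi(\frac{\gamma-\frac{\lambda}{\lambda+\mu}\beta}{\sqrt{\mu/(\lambda+\mu)}}) - \Phi(\sqrt{\frac{\mu}{\lambda+\mu}}\beta)\big)$, the claimed limit. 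The second displayed form is then an algebraic rewrite via $\phi(\beta)/\phi(\sqrt{\mu/(\lambda+\mu)}\beta) = e^{-\frac{\beta^2}{2}\frac{\lambda}{\lambda+\mu}}$.

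The hard part will be making the two asymptotic estimates uniform over the whole range of summation and rigorously justifying the passage from sum to integral. The local CLT estimate and the product expansion must hold uniformly in $k$ (equivalently $x\in[\beta,\gamma]$), including where $m = \Theta(\sqrt{r})$ so that the term $m^2/(2r)$ is $\Theta(1)$ and genuinely contributes the Gaussian weight and cannot be discarded. I would secure this with uniform Stirling bounds and a summable dominating envelope, invoking dominated convergence for the Riemann sum. The endpoint rounding (the $-1$ in $F^r-1$, and the floors in $B^r,F^r$) and the $O(r^{-1/2})$ relative errors are routine and vanish in the limit.
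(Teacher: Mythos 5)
Your proof is correct, but it takes a genuinely different route from the paper's. The paper proves this lemma by the method of Lemma~\ref{lem:NormalizationConstantAsymptoticsSingleServer}: after factoring out $\left(\frac{\mu}{\lambda+\mu}\right)^{B^r+r}$, the summand becomes the probability mass function of a $\mathrm{Bin}\left(r+B^r,\frac{\lambda}{\lambda+\mu}\right)$ random variable, so the sum over $B^r\le k<F^r$ is a binomial probability which converges, by the ordinary (integral) CLT, to $\Phi\left(\frac{\gamma-\frac{\lambda}{\lambda+\mu}\beta}{\sqrt{\mu/(\lambda+\mu)}}\right)-\Phi\left(\sqrt{\frac{\mu}{\lambda+\mu}}\beta\right)$, while the remaining prefactor $\left(\frac{\mu}{\lambda+\mu}\right)^{-(B^r+r)}\frac{r!\,r^{B^r}}{(r+B^r)!}\,e^{-\lambda r/\mu}$ converges to $\sqrt{\frac{\mu}{\lambda+\mu}}\,e^{-\frac{\beta^2}{2}\frac{\lambda}{\lambda+\mu}}$ by Stirling's approximation and a Taylor expansion. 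You instead carry out a term-by-term local analysis: a Poisson local CLT for $\rho^k e^{-\rho}/k!$, an elementary expansion of the falling factorial $r!/(r-m)!$ producing the weight $e^{-(x-\beta)^2\lambda/(2\mu)}$, and a Riemann-sum passage followed by completing the square. Both are sound; the paper's factorization buys the summation for free (the integral CLT absorbs it, with no local estimates or Riemann sums), whereas your route is more self-contained and makes the Gaussian mechanism explicit, at the cost of the uniformity bookkeeping you flag. That worry is milder than you suggest, however: the summation range corresponds to the fixed compact interval $[\beta,\gamma]$, so uniform $(1+o(1))$ estimates suffice and no dominating envelope or dominated-convergence argument is needed (contrast Lemma~\ref{lem:LimitedB3}, whose range is unbounded). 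One further point in your favor: you correctly observed that the displayed sum must carry the weight $e^{-\lambda r/\mu}$ for the limit to be finite (without it each term is of order $e^{\lambda r/\mu}/\sqrt{r}$); this factor is missing from the lemma as stated --- evidently a typo, consistent with Lemmas~\ref{lem:LimitedB1} and~\ref{lem:LimitedB3} and with the lemma's use in Lemma~\ref{lem:IntervalProbabilitiesB} --- and your argument proves the intended statement.
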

		
\begin{proof}
		The proof is similar to that of Lemma~\ref{lem:NormalizationConstantAsymptoticsSingleServer}, using the properties of the binomial distribution and the CLT, and Stirling's approximation.
\hfill\end{proof}
		
		\begin{lemma}
			Under scaling rules~\eqref{eq:QEDscalingSingleBSSModel}, as $r \rightarrow \infty$,
			\begin{align*}
			\frac{r^{B^r} r! {F^r}^{F^r} }{F^r!} e^{-\frac{\lambda r}{\mu}} \sum_{k=F^r}^{B^r+r} \frac{1}{(r+B^r-k)!}\left(\frac{\lambda}{\mu F^r} \right)^k &\rightarrow \sqrt{\frac{\mu}{\lambda}} e^{\frac{\gamma^2}{2}-\beta \gamma +\frac{\mu}{\lambda} \frac{\gamma^2}{2}} \Phi\left( - \sqrt{\frac{\mu}{\lambda} \gamma} \right) \\
			&=\phi(\gamma) e^{-\gamma(\beta-\gamma)}\sqrt{\frac{\mu}{\lambda}} \phi\left( \sqrt{\frac{\mu}{\lambda}}\gamma \right)^{-1} \Phi\left( -\sqrt{\frac{\mu}{\lambda}}\gamma\right).
			\end{align*}
			\label{lem:LimitedB3}
		\end{lemma}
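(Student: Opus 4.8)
The plan is to mirror the proof of Lemma~\ref{lem:LimitedA3}, since the quantity to be analyzed has the same product structure: a Poisson tail probability times a Stirling-type prefactor. First I would split off these two factors by multiplying and dividing by $\left(\lambda/(\mu F^r)\right)^{B^r+r}e^{\mu F^r/\lambda}$, writing the left-hand side as
\begin{align*}
&\left[ \frac{r^{B^r} r! {F^r}^{F^r}}{F^r!} e^{-\frac{\lambda r}{\mu}} \left(\frac{\lambda}{\mu F^r}\right)^{B^r+r} e^{\frac{\mu F^r}{\lambda}} \right] \\
&\qquad \times \left[ \left(\frac{\lambda}{\mu F^r}\right)^{-(B^r+r)} e^{-\frac{\mu F^r}{\lambda}} \sum_{k=F^r}^{B^r+r} \frac{1}{(r+B^r-k)!}\left(\frac{\lambda}{\mu F^r}\right)^k \right].
\end{align*}
The two inserted factors cancel, so this is an identity, and it isolates the two asymptotic mechanisms.

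For the second bracket, I would reindex the sum by $j = r + B^r - k$ to recognize it as the cumulative distribution function of a Poisson random variable with mean $\mu F^r/\lambda$, and then invoke the central limit theorem for the Poisson distribution exactly as in the corresponding step of Lemma~\ref{lem:LimitedA3}; this produces the normal factor $\Phi\left(-\sqrt{\mu/\lambda}\,\gamma\right)$.

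For the first bracket, I would apply Stirling's formula to both $r!$ and $F^r!$, collect the resulting powers of $r$ and $F^r$, and Taylor-expand $\log\left(1 + \gamma/\sqrt{\lambda r/\mu}\right)$ to second order. As in Lemma~\ref{lem:LimitedA3}, all terms of order $\sqrt{r}$ in the exponent cancel, leaving the limit $\sqrt{\mu/\lambda}\,\exp\left\{\gamma^2/2 - \beta\gamma + (\mu/\lambda)\gamma^2/2\right\}$. Multiplying the two limits and rewriting the exponential through $\phi(\gamma)$ and $\phi\left(\sqrt{\mu/\lambda}\,\gamma\right)^{-1}$ then yields the stated expression.

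The main obstacle is the asymptotic analysis of the Stirling prefactor: each factorial contributes an exponentially large factor, and one must verify that they combine so that every term of order $\sqrt{r}$ in the exponent cancels, leaving only the constant $\gamma^2/2 - \beta\gamma + (\mu/\lambda)\gamma^2/2$. This requires carrying the Taylor expansion of the logarithm to second order and carefully tracking the cross terms between the exponent $B^r+r$ and the small quantity $\gamma/\sqrt{\lambda r/\mu}$. The Poisson-tail factor is, by comparison, a routine central-limit computation once the summation range has been correctly identified, which is handled as in Lemma~\ref{lem:LimitedA3}.
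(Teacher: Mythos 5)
Your overall architecture is the same as the paper's: multiply and divide by $\left(\lambda/(\mu F^r)\right)^{B^r+r}e^{\mu F^r/\lambda}$, identify the second bracket as a Poisson CDF via the reindexing $j=r+B^r-k$, and handle the first bracket by a double application of Stirling plus a second-order Taylor expansion of the logarithm. The Stirling half of your argument is correct and coincides with the paper's: the prefactor here is literally identical to the one in Lemma~\ref{lem:LimitedA3}, so the limit $\sqrt{\mu/\lambda}\,\exp\{\gamma^2/2-\beta\gamma+(\mu/\lambda)\gamma^2/2\}$ carries over unchanged.

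The genuine gap is in the Poisson step. You assert that the CLT computation goes through ``exactly as in Lemma~\ref{lem:LimitedA3}'' and produces $\Phi\left(-\sqrt{\mu/\lambda}\,\gamma\right)$, but the summation range differs in a way that changes the answer: in Lemma~\ref{lem:LimitedA3} the sum starts at $k=B^r$, so after reindexing the threshold is $j\leq r$, whereas here the sum starts at $k=F^r$, so the threshold is $j\leq B^r+r-F^r=r+(\beta-\gamma)\sqrt{\lambda r/\mu}$. In this appendix $\gamma>\beta$, so this is an order-$\sqrt{r}$ downward shift of the threshold relative to $r$, and standardizing against the mean $\mu F^r/\lambda=r+\gamma\sqrt{\mu r/\lambda}$ gives
\begin{align*}
\Prob\left(\textrm{Pois}\left(\frac{\mu F^r}{\lambda}\right)\leq B^r+r-F^r\right)\rightarrow \Phi\left((\beta-\gamma)\sqrt{\frac{\lambda}{\mu}}-\gamma\sqrt{\frac{\mu}{\lambda}}\right),
\end{align*}
which is exactly what the paper's proof obtains, and which only reduces to $\Phi\left(-\sqrt{\mu/\lambda}\,\gamma\right)$ in the excluded boundary case $\beta=\gamma$. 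Note that the shifted argument is also what is actually used downstream: the constant $r_3$ in Lemma~\ref{lem:IntervalProbabilitiesB} contains $\Phi\left(-\frac{\gamma-(\beta-\frac{\mu}{\lambda}\gamma)}{\sqrt{\mu/\lambda}}\right)$, which equals the expression above. The factor $\Phi\left(-\sqrt{\mu/\lambda}\,\gamma\right)$ appearing in the displayed statement of Lemma~\ref{lem:LimitedB3} is evidently an artifact copied from Lemma~\ref{lem:LimitedA3}; by importing that lemma's computation wholesale instead of redoing the threshold identification, your proposal reproduces the artifact rather than the correct limit.
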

		
\begin{proof}
		We observe that
		\begin{align*}
		&\left(\frac{\lambda}{\mu F^r} \right)^{-(B^r+r)} e^{-\frac{\mu F^r}{\lambda}} \sum_{k=F^r}^{B^r+r} \frac{1}{(r+B^r-k)!}\left(\frac{\lambda}{\mu F^r} \right)^k =\Prob \left( \textrm{Pois}\left(\frac{\mu F^r}{\lambda} \right) \leq B^r+r-F^r \right) \\
		&= \Prob \left( \frac{\textrm{Pois}\left(\mu F^r/\lambda \right)-\mu F^r / \lambda}{\sqrt{\mu F^r / \lambda}} \leq \frac{r+B^r-F^r-\mu F^r / \lambda}{\sqrt{\mu F^r / \lambda}} \right) \rightarrow \Phi \left((\beta-\gamma)\sqrt{\frac{\lambda}{\mu}}-\gamma \sqrt{\frac{\mu}{\lambda}} \right).
		\end{align*}
		Moreover, applying Stirling's approximation twice,
		\begin{align*}
		\frac{r^{B^r} r! {F^r}^{F^r} }{F^r!} &e^{-\frac{\lambda r}{\mu}}   \left(\frac{\lambda}{\mu F^r} \right)^{B^r+r} e^{\frac{\mu F^r}{\lambda}} \rightarrow \sqrt{\frac{\mu}{\lambda}} \textrm{exp}\left\{\frac{\gamma^2}{2}-\beta \gamma +\frac{\mu}{\lambda} \frac{\gamma^2}{2}\right\}\\
		&=\phi(\beta)  \phi\left( \frac{\gamma-\frac{\lambda}{\lambda+ \mu}\beta}{\sqrt{\mu/(\lambda+\mu)}} \right) \phi\left( \sqrt{\frac{\mu}{\lambda+\mu}}\beta\right)^{-1} \sqrt{\frac{\mu}{\lambda}} \phi\left( \frac{\gamma-(\beta-\frac{\mu}{\lambda} \gamma)}{\sqrt{\mu / \lambda}}\right)^{-1} \Phi\left(- \frac{\gamma-(\beta-\frac{\mu}{\lambda} \gamma)}{\sqrt{\mu / \lambda}}\right).
		\end{align*}
		Multiplying the two expressions concludes the result.
\hfill\end{proof}

		\subsection{Performance measures}
		\begin{theorem}
			Under scaling rules~\eqref{eq:QEDscalingSingleBSSModel},
			\begin{align*}
			\Prob&(W>0) \rightarrow \Prob\left( \hat{Q}(\infty) \geq \beta \right) \\
			&= 1-\left(1+\sqrt{\frac{\mu}{\lambda+\mu}} \frac{\phi(\beta)}{\phi(\sqrt{\mu/(\lambda+\mu)}\beta)}  \left(  \Phi\left( \frac{\gamma-\frac{\lambda}{\lambda+ \mu}\beta}{\sqrt{\mu / (\lambda+\mu)}}\right)- \Phi\left( \sqrt{\frac{\mu}{\lambda+\mu}} \beta \right)  \right)\Phi(\beta)^{-1} \right. \\
			&\left. \hspace{1.5cm}+ \sqrt{\frac{\mu}{\lambda}} \frac{\phi(\beta)}{ \phi\left( \sqrt{\frac{\mu}{\lambda+\mu}}\beta\right)}  \phi\left( \frac{\frac{\lambda+\mu}{\lambda}\gamma-\beta}{\sqrt{\mu / \lambda}}\right) \phi\left( \frac{\gamma-\frac{\lambda+\mu}{\lambda}\beta}{\sqrt{\mu/(\lambda+\mu)}} \right)^{-1} \Phi\left(-  \frac{\gamma-\lambda/(\lambda+ \mu)\beta}{\sqrt{\mu/(\lambda+\mu)}}\right) \Phi(\beta)^{-1} \right)^{-1}.
			\end{align*}
			The expected waiting time behaves as
			\begin{align*}
			\frac{\E(W)}{\sqrt{r}} \rightarrow \frac{\alpha_2}{\sqrt{\lambda \mu}} \left( \sqrt{\frac{\mu}{\lambda+\mu}} \frac{ \phi\left( \frac{\gamma-\frac{\lambda}{\lambda+ \mu}\beta}{\sqrt{\mu / (\lambda+\mu)}}\right)-  \phi\left( \sqrt{\frac{\mu}{\lambda+\mu}}\beta\right)  }{ \Phi\left( \frac{\gamma-\frac{\lambda}{\lambda+ \mu}\beta}{\sqrt{\mu / (\lambda+\mu)}}\right)-  \Phi\left( \sqrt{\frac{\mu}{\lambda+\mu}}\beta\right)   } -\frac{\mu}{\lambda+\mu}\beta \right)+\frac{\alpha_3}{\sqrt{\lambda \mu}} \left(\sqrt{\frac{\mu}{\lambda}} \frac{\phi\left( \frac{\gamma-\beta+\frac{\mu}{\lambda} \gamma}{\sqrt{\mu / \lambda}}\right) }{\Phi\left(- \frac{\gamma-\beta+\frac{\mu}{\lambda} \gamma}{\sqrt{\mu / \lambda}}\right)} - \frac{\mu}{\lambda} \gamma \right),
			\end{align*}
			with $\alpha_i$ as in Theorem~\ref{thm:DiffusionDenstityQEDLimitedGUnlimited}. Finally, the utilization levels behave as
			\begin{align*}
			\rho_{F^r} \rightarrow 1 , \hspace{1cm} \rho_{B^r} \rightarrow 1 \quad \text{as} \quad r \rightarrow \infty..
			\end{align*}
		\end{theorem}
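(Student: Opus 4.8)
The plan is to mirror, essentially line for line, the proof of Theorem~\ref{thm:PerformanceMeasuresSingleBSS}, now feeding in the three-piece limiting density $\hat{f}$ from Theorem~\ref{thm:DiffusionDenstityQEDLimitedGUnlimited} in place of the density used there. All three assertions are read off from this limiting steady-state density together with the asymptotic PASTA property (justified in Section~\ref{sec:PerformanceSingleBSS}) and Little's law. The only structural difference from the $F^r \leq B^r$ analysis is that here $\gamma > \beta$, so the event $\{\hat{Q}(\infty) \geq \beta\}$ now spans the two intervals $[\beta,\gamma)$ and $[\gamma,\infty)$; consequently both the waiting probability and the waiting-time integral acquire a contribution from each of these two pieces, which is precisely why the final formulas are sums of an $\alpha_2$-term and an $\alpha_3$-term.

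For the waiting probability, I would use asymptotic PASTA to write $\Prob(W>0) = \lim_{r\to\infty}\Prob(Q^r(\infty)\geq B^r) = \Prob(\hat{Q}(\infty)\geq\beta)$. Since $\hat{f}$ integrates to $\alpha_i$ on its $i$-th interval, this equals $\alpha_2+\alpha_3 = 1-\alpha_1 = 1-(1+r_2+r_3)^{-1}$, and substituting the expressions for $r_2$ and $r_3$ from Theorem~\ref{thm:DiffusionDenstityQEDLimitedGUnlimited} reproduces the stated closed form verbatim.

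For the expected waiting time, I would invoke Little's law in the form~\eqref{eq:LittleLawWaitingTime}, reducing the task to evaluating $\E(Q_W^r)/\sqrt{\lambda r/\mu} \to \int_\beta^\infty (x-\beta)\,\hat{f}(x)\,dx$. I split this integral at the kink $\gamma$: on $[\beta,\gamma)$ the integrand is the Gaussian piece carrying prefactor $\alpha_2$, and on $[\gamma,\infty)$ it is the Gaussian piece carrying $\alpha_3$. Each sub-integral is a first moment of a (truncated) normal density, computed by the elementary identity for $\int y\,\phi\!\big((y-a)/b\big)\,dy$ expressed through $\phi$ and $\Phi$; collecting the two contributions yields exactly the $\alpha_2$- and $\alpha_3$-terms in the statement. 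Since $\E(Q_W^r)=\Theta(\sqrt{r})=o(r)$, one has $r-\E(Q_W^r)\sim r$, and dividing through by $\lambda\big(r-\E(Q_W^r)\big)$ delivers the stated asymptotics for the rescaled expected waiting time.

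The utilization levels follow as before: the $\sqrt{r}$-scaling of $\hat{Q}(\infty)$ forces both the expected number of idle charging points and the expected number of fully-charged batteries to be $\Theta(\sqrt{r})$, while $\E(Q^r)=\Theta(r)$, whence $\rho_{F^r},\rho_{B^r}\to 1$. The main obstacle is purely computational and sits in the waiting-time step: unlike the $F^r\leq B^r$ case, where the middle piece is exponential, here the middle piece is a normal density \emph{truncated} at the finite endpoint $\gamma$, so the truncated first moment on $[\beta,\gamma)$ must be handled carefully, ensuring the boundary terms at both $\beta$ and $\gamma$ cancel correctly against the normalizing constants absorbed into $\alpha_2$ and $\alpha_3$. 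I would sanity-check these cancellations by confirming that the limiting $\E(Q_W^r)$ is continuous across $\gamma$ and that, in the degenerate limit $\gamma\downarrow\beta$, the two-term expression collapses to the single-kink formula of Theorem~\ref{thm:PerformanceMeasuresSingleBSS}.
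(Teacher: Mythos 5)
Your proposal matches the paper's own proof essentially step for step: asymptotic PASTA reduces the waiting probability to $\Prob(\hat{Q}(\infty)\geq\beta)=\alpha_2+\alpha_3=1-r_1/(r_1+r_2+r_3)$, Little's law in the form~\eqref{eq:LittleLawWaitingTime} reduces the waiting time to the integral $\int_\beta^\infty (x-\beta)\hat{f}(x)\,dx$ split at the kink $\gamma$ into the truncated-normal $\alpha_2$-piece and the tail $\alpha_3$-piece, and the utilization claims follow from the same $\Theta(\sqrt{r})$-scaling argument used for Theorem~\ref{thm:PerformanceMeasuresSingleBSS}. The additional sanity checks (continuity across $\gamma$ and the degenerate limit $\gamma\downarrow\beta$) are not in the paper but are harmless extras.
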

		
\begin{proof}
		Since the PASTA property holds asymptotically, we find that the probability an arriving car has to wait for a fully-charged battery is asymptotically equal to the probability that the system is in a state where more than $B^r$ batteries are charging at the same time, which gives the first asymptotic relation. Thus,
		\begin{align*}
		\Prob\left( \hat{Q}(\infty) \geq \beta \right) \rightarrow \left\{\begin{array}{ll}
		r_3/(r_1+r_2+r_3) & \textrm{if } \gamma<\beta,\\
		1- r_1/(r_1+r_2+r_3) & \textrm{if } \beta \leq \gamma,
		\end{array}\right.
		\end{align*}
		where $r_i, i=1,2,3$ are as in Theorem~\ref{thm:DiffusionDenstityQEDLimitedGUnlimited}, from which the result follows directly.
		
		For the expected waiting time, note that
		\begin{align*}
		\frac{\E(Q^W)}{\sqrt{\lambda r/\mu}} &\rightarrow \int_{0}^\infty y \hat{f}(y+\beta) \, dy = \alpha_3  \Phi\left(- \frac{\gamma-(\beta-\frac{\mu}{\lambda} \gamma)}{\sqrt{\mu / \lambda}}\right)^{-1} \int_{\gamma-\beta}^\infty \sqrt{\frac{\lambda}{\mu}} \phi\left( \frac{y+\frac{\mu}{\lambda} \gamma}{\sqrt{\mu / \lambda}}\right) \, dy\\
		&\hspace{2cm} + \alpha_2 \left( \Phi\left( \frac{\gamma-\frac{\lambda}{\lambda+ \mu}\beta}{\sqrt{\mu / (\lambda+\mu)}}\right)-  \Phi\left( \sqrt{\frac{\mu}{\lambda+\mu}}\beta\right)   \right)^{-1}  \int_{0}^{\beta-\gamma} \sqrt{\frac{\lambda+\mu}{\mu}}\phi\left( \frac{y+\frac{\mu}{\lambda+ \mu}\beta}{\sqrt{\mu/(\lambda+\mu)}}\right) \, dy \\
		&= \alpha_2 \left( \sqrt{\frac{\mu}{\lambda+\mu}} \frac{ \phi\left( \frac{\gamma-\frac{\lambda}{\lambda+ \mu}\beta}{\sqrt{\mu / (\lambda+\mu)}}\right)-  \phi\left( \sqrt{\frac{\mu}{\lambda+\mu}}\beta\right)  }{ \Phi\left( \frac{\gamma-\frac{\lambda}{\lambda+ \mu}\beta}{\sqrt{\mu / (\lambda+\mu)}}\right)-  \Phi\left( \sqrt{\frac{\mu}{\lambda+\mu}}\beta\right)   } -\frac{\mu}{\lambda+\mu}\beta \right)+\alpha_3 \left(\sqrt{\frac{\mu}{\lambda}} \frac{\phi\left( \frac{\gamma-\beta+\frac{\mu}{\lambda} \gamma}{\sqrt{\mu / \lambda}}\right) }{\Phi\left(- \frac{\gamma-\beta+\frac{\mu}{\lambda} \gamma}{\sqrt{\mu / \lambda}}\right)} - \frac{\mu}{\lambda} \gamma \right).
		\end{align*}
		Due to Little's law, we have the identity
		\begin{align*}
		E(W) = \frac{\E(Q^W)}{\lambda \left( r - \E(Q^W)\right)},
		\end{align*}
		which yield the results for the expected waiting time.
		
		Finally, for the utilization levels, the scaling of $\hat{Q}(\infty)$ implies that
		\begin{align*}
		\E(\textrm{\# Idle Charging Points}) &= \Theta(\sqrt{r}), \\
		\E(\textrm{\# Fully-Charged Batteries}) = \Theta(\sqrt{r}), \hspace{1cm}& \E(Q^r) = \Theta(r).
		\end{align*}
		Therefore, the utilization levels satisfy~\eqref{eq:UtilizationSingleBSS} and the result follows.
\hfill\end{proof}
		
		\section{Fluid limit proofs}\label{app:FluidLimit}
		The proof of Theorem~\ref{thm:FluitLimitResult} is similar to the proof of~\cite{DaiTezcan2011}, but adapted appropriately to our system.
		
\begin{proof}[Proof of Theorem~\ref{thm:FluitLimitResult}]
		First, we show that the fluid limits exist, where all components are Lipschitz continuous. For this purpose, we show that for all $\omega \in \mathcal{A}$ the sequence $\{\bar{\mathbb{X}}(\cdot,\omega)\}$ has a convergent subsequence, where every component in the limiting process is Lipschitz continuous.
		
		Fix $\omega \in \mathcal{A}$. We observe that for every $0 \leq t_1 <t_2$,
		\begin{align*}
		\left| \frac{T_j^r(t_2,\omega)}{r} - \frac{T_j^r(t_1,\omega)}{r}\right| \leq \frac{F_j^r (t_2-t_1)}{r} < \left(\frac{\lambda}{\mu}+ |\gamma| \sqrt{\frac{\lambda}{\mu}} \right)(t_2-t_1).
		\end{align*}
		Therefore, there exists a subsequence $\{r_l\}$ such that $\bar{T}_j^{r_l}(\cdot,\omega)$ converges u.o.c. to some $\bar{T}_j(\cdot, \omega)$ as $l \rightarrow \infty$ for every $j=1,\ldots,S$, which is Lipschitz continuous.
		
		Using Lemma~11 in~\cite{AtaKumar2005}, Equation~\eqref{eq:IdentityServiceCompletions} and the fact that $\omega \in \mathcal{A}$, it follows that $\bar{D}^r_j(\cdot, \omega)$ also converges u.o.c.~to $D_j(\cdot, \omega)$ for every $j=1,\ldots,S$. In fact, it follows that $D_j(\cdot, \omega) = \mu \bar{T}_j(\cdot, \omega)$, and is therefore also Lipschitz continuous.
		
		Next, we consider the arrival processes. First, we observe that $L^r(t) \leq r $ for every $t \geq 0$. Therefore,
		\begin{align*}
		\left| \frac{Y^r(t_2,\omega)}{r} - \frac{Y^r(t_1,\omega)}{r}\right| \leq t_2-t_1,
		\end{align*}
		for all $0 \leq t_1 <t_2$, and hence there exists a subsequence $\{r_l\}$ such that $\bar{Y}^{r_l}(\cdot,\omega)$ converges u.o.c. to some $\bar{Y}(\cdot, \omega)$ as $l \rightarrow \infty$ for every $j=1,\ldots,S$, which is again Lipschitz continuous.
		
		Moreover, it follows from~\eqref{eq:IdentityArrivals2} for all $0 \leq t_1 < t_2$,
		\begin{align*}
		\bar{A}_{ij}^r(t_2,\omega)-\bar{A}_{ij}^r(t_1,\omega) \leq \frac{\Lambda_{ij}\left(  r t_2 \right)-\Lambda_{ij}\left( r t_1 \right)}{r}.
		\end{align*}
		As $\omega \in \mathcal{A}$ and the FSLLN applies, it follows from Theorem~12.3 in~\cite{Billingsley1999} that there is some subsequence $\{r_l\}$ such that $\bar{A}_{ij}^r(\cdot,\omega)$ converges u.o.c. as $l \rightarrow \infty$ to some process $\bar{A}_{ij}(\cdot,\omega)$. In particular, it holds for all $0 \leq t_1 < t_2$ that
		\begin{align*}
		\bar{A}_{ij}(t_2,\omega)- \bar{A}_{ij}(t_1,\omega) \leq p_{ij} \lambda (t_2-t_1),
		\end{align*}
		and $\bar{A}_{ij}$ is hence Lipschitz continuous. Similarly, we can show the same convergence result for the processes $\bar{A}_{ij,i}^r(\cdot,\omega)$ to $\bar{A}_{ij,j}(\cdot,\omega)$.
		
		By~\eqref{eq:IdentityQueueLength}, it follows also that $\{Q_j^{r_l}(\cdot, \omega)\}$ is precompact, which in turn implies that $\{Z_j^{r_l}(\cdot, \omega)\}$ is precompact due to~\eqref{eq:IdentityBusyServers}. Moreover, $\{L^{r_l}(\cdot, \omega)\}$ is precompact by~\eqref{eq:IdentityDrivingCars}. In conclusion, the fluid limit exists with each component being Lipschitz continuous.
		
		Fluid equations~\eqref{eq:IdentityArrivals1Fluid}-\eqref{eq:IdentityDrivingCarsFluid} follow from the FSLLN results and applying Lemma~11 of~\cite{AtaKumar2005}. Equation~\eqref{eq:IdentityArrivalDerivativeFluid} requires additional arguments. Suppose $\bar{\mathbb{X}}$ to be a fluid limit with corresponding $\omega \in \mathcal{A}$ and subsequence $\{r_l\}_{l \in \mathbb{N}}$. If for some $t >0$ we have that $\bar{Q}_j(t)/p_j > \bar{Q}_i(t)/p_i$, then it follows by the continuity of the fluid limit that there exists a $\delta>0$ such that
		\begin{align*}
		\frac{\bar{Q}_j(s)}{p_j} > \frac{\bar{Q}_i(s)}{p_i}
		\end{align*}
		for all $s \in [t-\delta,t+\delta]$. By definition of the fluid limit, it holds for large enough $r_l$ that
		\begin{align*}
		\frac{\bar{Q}^{r_l}_j(s, \omega)}{p_j} > \frac{\bar{Q}^{r_l}_i(s, \omega)}{p_i}
		\end{align*}
		for all $s \in [t-\delta,t+\delta]$. In this case, the routing policy states that all arrivals of type $\{i,j\}$ move to station station $i$. Therefore, $A^{r_l}_{ij,j}$ remains constant on $[t-\delta,t+\delta]$, and hence $\bar{A}'_{ij,j}(t)=0$. Moreover, station~$i$ receives all arrivals and by the FSLLN and~\eqref{eq:IdentityArrivals2Fluid},
		\begin{align*}
		\bar{A}_{ij,i}(t_2, \omega)-\bar{A}_{ij,i}(t_1, \omega) = p_{ij}\lambda \bar{L}(t, \omega)(t_2-t_1).
		\end{align*}
		for all $t_1< t_2$ with $t_1,t_2 \in [t-\delta,t+\delta]$. It follows that $\bar{A}'_{ij,i}(t,\omega)= p_{ij} \lambda \bar{L}(t, \omega)$ by~\eqref{eq:IdentityArrivals1}.
		
		Finally, we show that there is a unique invariant state given by $q = (p_1 \lambda/\mu,\ldots,p_S \lambda/\mu)$. Introduce the function
		\begin{align*}
		h(t)= \max_{1\leq j \leq S} \left\{\frac{\bar{Q}_j(t)}{p_j} \right\} - \min_{1\leq j \leq S} \left\{\frac{\bar{Q}_j(t)}{p_j} \right\},
		\end{align*}
		and write
		\begin{align*}
		\bar{S}_{\max}(t) = \argmax_{1\leq j\leq S} \left\{\frac{\bar{Q}_j(t)}{p_j} \right\} , \hspace{0.5cm} \bar{S}_{\min}(t) = \argmin_{1\leq j\leq S} \left\{\frac{\bar{Q}_j(t)}{p_j} \right\}.
		\end{align*}
		Trivially, $h(t) \geq 0$. Since $\bar{Q}(\cdot)$ is Lipschitz continuous, so is $h(\cdot)$, and hence it is differentiable almost everywhere. To show that $h(0)=0$ implies $h(t) =0$ for all $t\geq0$, it therefore suffices to show that if $h(t)>0$ then $h'(t)<0$ for every regular point $t$ of $\bar{\mathbb{X}}$. By~\eqref{eq:IdentityQueueLengthFluid}, we observe that for every $j=1,\ldots,S$ and regular point $t\geq 0$,
		\begin{align*}
		\bar{Q}'_j(t)= \sum_{i : \{i,j\} \in E } \bar{A}'_{ij,j}(t)-\bar{D}'_j(t).
		\end{align*}
		Due to~\eqref{eq:IdentityServiceCompletionsFluid}-\eqref{eq:IdentityBusyServersFluid}, $\bar{D}'_j(t) = \min\{\mu \bar{Q}_j(t), p_j \lambda\}$. In particular, we observe that $\bar{D}'_i(t)/p_i = \bar{D}'_j(t)/p_j$ for all $i,j \in \bar{S}_{\max}(t)$, as well as for all $i,j \in \bar{S}_{\min}(t)$. Due to Lemma~2.8.6 from~\cite{Dai1999}, as $t$ is a regular point, it follows that
		\begin{align*}
		\frac{\sum_{i : \{i,j\} \in E } \bar{A}'_{ij,j}(t)}{p_j} = \frac{\sum_{k : \{k,l\} \in E } \bar{A}'_{kl,l}(t)}{p_l}
		\end{align*}
		for every $j,l \in \bar{S}_{\max}(t)$, as well as for every $j,l \in \bar{S}_{\min}(t)$. Due to~\eqref{eq:IdentityArrivals1Fluid},~\eqref{eq:IdentityArrivals2Fluid} and~\eqref{eq:IdentityArrivalDerivativeFluid}, we conclude that for every $j \in S_{\max}(t)$,
		\begin{align*}
		\frac{\sum_{i : \{i,j\} \in E } \bar{A}'_{ij,j}(t)}{p_j} = \frac{1}{p_j} \left( \frac{p_j}{\sum_{i \in \bar{S}_{\max}(t)} p_i} \sum_{\substack{\{i,j\} \in E,\\ i,j \in S_{\max}(t)}} p_{ij} \lambda \bar{L}(t) \right) < \lambda \bar{L}(t).
		\end{align*}
		On the other hand, for every $j \in S_{\min}(t)$,
		\begin{align*}
		\frac{\sum_{i : \{i,j\} \in E } \bar{A}'_{ij,j}(t)}{p_j} = \frac{1}{p_j} \left( \frac{p_j}{\sum_{i \in \bar{S}_{\min}(t)} p_i} \sum_{\substack{\{i,j\} \in E,\\ i\in \bar{S}_{\min}(t) \cup j\in \bar{S}_{\min}(t)}} p_{ij} \lambda \bar{L}(t) \right) > \lambda \bar{L}(t).
		\end{align*}
		Observing that $\bar{D}'_j(t)/p_j > \bar{D}'_i(t)/p_i$ for every $j \in S_{\max}(t)$ and $i\in S_{\min}(t)$, we therefore conclude that if $h(t) >0$ with $t$ a regular point, then
		\begin{align*}
		h'(t) < \lambda \bar{L}(t) - \lambda \bar{L}(t) = 0.
		\end{align*}
		In other words, for every invariant state of the fluid limit, it must hold that $\bar{Q}_i/p_i(t)= \bar{Q}_j/p_j(t)$ for every $i \neq j$. We observe, in view of~\eqref{eq:IdentityQueueLengthFluid}, that
		\begin{align*}
		\frac{\bar{Q}'_j(t)}{p_j} = \frac{\sum_{i : \{i,j\} \in E } \bar{A}'_{ij,j}(t)-\bar{D}'_j(t)}{p_j},
		\end{align*}
		where for every $1 \leq j \leq S$,
		\begin{align*}
		\frac{\bar{D}'_j(t)}{p_j} = \mu \min\{\bar{Q}_j(t) /p_j , \lambda/\mu\},
		\end{align*}
		and hence in view of~\eqref{eq:IdentityArrivals1Fluid} and~\eqref{eq:IdentityArrivals2Fluid},
		\begin{align*}
		\sum_{i : \{i,j\} \in E } \bar{A}'_{ij,j}(t) = p_j \lambda \bar{L}(t).
		\end{align*}
		That is, $\bar{Q}'_j(t) =0$ if and only if
		\begin{align*}
		\mu \min\{\bar{Q}_j(t) /p_j , \lambda/\mu\} = p_j \lambda \bar{L}(t).
		\end{align*}
		In view of~\eqref{eq:IdentityDrivingCarsFluid}, this occurs if and only if $\bar{Q}_j(t) = p_j \lambda /\mu$ for every $j=1,\ldots,S$.
\hfill\end{proof}
		
		\section{State space collapse proofs}\label{app:SSCproofs}
		To prove Theorem~\ref{thm:StrongSSC}, we use a framework similar to that of~\cite{Bramson1998}, and~\cite{DaiTezcan2011}. The construction consists of several steps, which we lay out next.
		
		\begin{enumerate}
			\item Divide interval $[0,T]$ into $T \sqrt{r}$ intervals of length $1/\sqrt{r}$, indexed by $m$. In each interval, consider the \textit{hydrodynamically}-scaled process of $\mathbb{X}$. For each of these interval, we
			\begin{enumerate}
				\item show the scaled process is "almost" Lipschitz continuous;
				\item show convergence to some \textit{hydrodynamic} limiting process for a sufficiently large part of the state space;
				\item derive the hydrodynamic limit equations.
			\end{enumerate}
			\item Relate the hydrodynamic scaling to the diffusion scaling, using a SSC function to deal with complications regarding the range of the time variable. Transferring the results appropriately, we show \textit{multiplicative} SSC with respect to the SSC function.
			\item Using a compact containment condition, we show that this implies \textit{strong} SSC.
		\end{enumerate}
		
		\subsection{Hydrodynamic scaling and its limiting process}
		In order to introduce the hydrodynamic scaling, we use a diffusion scaling for the values of the process but we slow the process down in time in order to analyze what occurs initially (what would happen instantaneously on diffusive scale). That is, we divide the interval $[0,T]$ in $T \sqrt{r}$ intervals of length $1/\sqrt{r}$, indexed by $m$. We write $p=(p_1,\ldots,p_S)$, and
		\begin{align}
		x_{r,m} = \max\left\{ \left\lvert Q^r\left( \frac{m}{\sqrt{r}}\right)  - p \frac{\lambda r}{\mu} \right\rvert^2 , \left\lvert L^r\left( \frac{m}{\sqrt{r}}\right)  - r \right\rvert^2, r \right\}.
		\label{eq:xStepVariableDefinition}
		\end{align}
		For the processes in $\mathbb{X}$, we introduce the following hydronimically-scaled variants. For $Q^r$, $Z^r$ and $L^r$, let
		\begin{align}
		Q^{r,m}(t) &= \frac{1}{\sqrt{x_{r,m}}} \left(Q^r\left(\frac{m}{\sqrt{r}} + \frac{\sqrt{x_{r,m}} t}{r} \right) - p \frac{\lambda r}{\mu} \right),\\
		Z^{r,m}(t) &= \frac{1}{\sqrt{x_{r,m}}} \left(Z^r\left(\frac{m}{\sqrt{r}} + \frac{\sqrt{x_{r,m}} t}{r} \right) - p \frac{\lambda r}{\mu} \right),\\
		L^{r,m}(t) &= \frac{1}{\sqrt{x_{r,m}}} \left(L^r\left(\frac{m}{\sqrt{r}} + \frac{\sqrt{x_{r,m}} t}{r} \right) - r \right),
		\end{align}
		the deviations of these processes with respect to their fluid limits. For the processes $A^r$, $A_d$, $Y^r$, $T^r$ and $D_r$, we introduce
		\begin{align}
		A^{r,m}(t) &= \frac{1}{\sqrt{x_{r,m}}} \left(A^r\left(\frac{m}{\sqrt{r}} + \frac{\sqrt{x_{r,m}} t}{r} \right) - A^r\left(\frac{m}{\sqrt{r}}  \right) \right), \\
		A^{r,m}_d(t) &= \frac{1}{\sqrt{x_{r,m}}} \left(A^r_d \left(\frac{m}{\sqrt{r}} + \frac{\sqrt{x_{r,m}} t}{r} \right) - A^r_d \left(\frac{m}{\sqrt{r}}  \right) \right),\\
		Y^{r,m}(t) &= \frac{1}{\sqrt{x_{r,m}}} \left(Y^r\left(\frac{m}{\sqrt{r}} + \frac{\sqrt{x_{r,m}} t}{r} \right) - Y^r\left(\frac{m}{\sqrt{r}}  \right) \right), \\
		T^{r,m}(t) &= \frac{1}{\sqrt{x_{r,m}}} \left(T^r\left(\frac{m}{\sqrt{r}} + \frac{\sqrt{x_{r,m}} t}{r} \right) - T^r\left(\frac{m}{\sqrt{r}}  \right) \right), \\
		D^{r,m}(t) &= \frac{1}{\sqrt{x_{r,m}}} \left(D^r\left(\frac{m}{\sqrt{r}} + \frac{\sqrt{x_{r,m}} t}{r} \right) - D^r\left(\frac{m}{\sqrt{r}}  \right) \right).
		\end{align}
		In other words, we track the increase of these processes during the interval  $[m/\sqrt{r},m/\sqrt{r}+\sqrt{x_{r,m}}t/r]$. By definition of $x_{r,m}$, we note that
		\begin{align*}
		\lvert \mathbb{X}^{r,m}(0) \rvert \leq 1,
		\end{align*}
		which will be a required compactness property when we prove convergence to a hydrodynamic limit. Moreover, due to our fluid limit results, we can show that $\sqrt{x_{r,m}}/r$ is very small for all $\omega \in \mathcal{A}$.
		
		\begin{lemma}
			Suppose $\hat{Q}^r(0) \rightarrow \hat{Q}(0)$ for some random vector $\hat{Q}(0)$, and let $M>0$ fixed. For every $\epsilon >0$ and $\omega \in \mathcal{A}$,
			\begin{align*}
			\max_{m < \sqrt{r} T} \left\{ \frac{\sqrt{x_{r,m}}}{r} \lVert Q^{r,m}(t) \rVert_M , \frac{\sqrt{x_{r,m}}}{r} \lVert L^{r,m}(t) \rVert_M \right\} \leq \epsilon
			\end{align*}
			for $r$ large enough.
			\label{lem:BoundedXrm}
		\end{lemma}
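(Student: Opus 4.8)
The plan is to recognize that, after undoing the hydrodynamic scaling, the two quantities in the statement are nothing but the \emph{fluid}-scaled deviations of $Q^r$ and $L^r$ from their invariant values, sampled at a time-stretched argument. Writing $\bar{Q}^r = Q^r/r$ and $\bar{L}^r = L^r/r$, one checks directly from the definitions that
\begin{align*}
\frac{\sqrt{x_{r,m}}}{r}\, Q^{r,m}(t) = \bar{Q}^r(s) - p\frac{\lambda}{\mu}, \qquad \frac{\sqrt{x_{r,m}}}{r}\, L^{r,m}(t) = \bar{L}^r(s) - 1,
\end{align*}
where $s = m/\sqrt{r} + \sqrt{x_{r,m}}\,t/r$. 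Thus the whole lemma reduces to a uniform (over the index $m$ and over $t \in [0,M]$) bound on how far the fluid-scaled process strays from its invariant state, \emph{provided} the sampling times $s$ stay inside a fixed compact interval.

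First I would control the time-stretching factor. Since $0 \le Q_j^r(\cdot) \le B_j^r + r$ and $0 \le L^r(\cdot) \le r$, the centered quantities appearing in the definition of $x_{r,m}$ are $O(r)$ uniformly in $m$; hence $x_{r,m} = O(r^2)$ and there is a constant $C$ (depending only on $\beta,\lambda,\mu,S$) with $\sqrt{x_{r,m}}/r \le C$ for all $m$ and all large $r$. Consequently, for every $m < \sqrt{r}T$ and every $t \in [0,M]$ the sampling time obeys $s \le T + CM =: T'$, so \emph{all} relevant arguments lie in the single fixed compact interval $[0,T']$, independent of $m$, $t$, and $r$.

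Next I would invoke the fluid-limit result. The diffusion-scale hypothesis $\hat{Q}^r(0) \to \hat{Q}(0)$ forces $\bar{Q}^r(0) = p\lambda/\mu + O(1/\sqrt{r}) \to q$, i.e.\ the fluid-scaled initial state converges to the unique invariant state $q = (p_1\lambda/\mu,\dots,p_S\lambda/\mu)$ of Theorem~\ref{thm:FluitLimitResult}. By that theorem the fluid limits exist, are Lipschitz, and started from $q$ remain identically equal to $q$; combined with $\bar{L} = 1 - \sum_j (\bar{Q}_j - p_j\lambda/\mu)^+ \equiv 1$, this yields $\bar{Q}^r \to q$ and $\bar{L}^r \to 1$ uniformly on compact sets. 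Hence $\sup_{0 \le s \le T'} |\bar{Q}^r(s) - p\lambda/\mu|$ and $\sup_{0 \le s \le T'} |\bar{L}^r(s) - 1|$ both tend to $0$, so each is $\le \epsilon$ for $r$ large. Since every sampled $s$ lies in $[0,T']$, this bound passes through the supremum over $m$ and over $t \in [0,M]$. I would phrase this last step as a subsequence/contradiction argument: were the bound to fail along some $r_l$ with indices $m_l,t_l$, the corresponding $s_l \in [0,T']$ would have a limit point $s_\star$ at which u.o.c.\ convergence to the continuous constant limit $q$ is contradicted.

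The hard part will be the uniformity over the $\sim \sqrt{r}\,T$ interval indices $m$ taken simultaneously with the time-stretching: one must rule out that the random, $m$-dependent stretch $\sqrt{x_{r,m}}/r$ pushes the sampled time out of one fixed compact window, which is exactly what the a~priori $O(r)$ boundedness of $Q^r$ and $L^r$ secures. A secondary point requiring care is that the fluid convergence must be run on the event $\mathcal{A}$ carrying the FSLLN, so that the initial-condition convergence $\bar{Q}^r(0)\to q$ inherited from the diffusion-scale assumption can be combined pathwise with Theorem~\ref{thm:FluitLimitResult} to conclude u.o.c.\ convergence to the constant invariant state.
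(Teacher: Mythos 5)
Your proposal is correct and takes essentially the same route as the paper: both rest on the identity $\frac{\sqrt{x_{r,m}}}{r}\,Q^{r,m}(t) = \bar{Q}^r(s) - p\lambda/\mu$ (and its analogue for $L^{r,m}$) at the stretched time $s$, combined with u.o.c.\ convergence of the fluid-scaled process to its unique invariant state from Theorem~\ref{thm:FluitLimitResult}, applied pathwise on $\mathcal{A}$. The only (cosmetic) difference is how the sampled times are confined to a compact window: you use the a priori deterministic $O(r)$ bound on the battery population to get $\sqrt{x_{r,m}}/r \le C$ and work on $[0,T+CM]$, whereas the paper first bounds $\sqrt{x_{r,m}}/r \le \epsilon$ via the fluid limit itself and then works on $[0,T+M\epsilon]$.
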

		
\begin{proof}
		Due to our fluid limit result in Theorem~\ref{thm:FluitLimitResult} and the definition of $x_{r,m}$ in~\eqref{eq:xStepVariableDefinition}, we observe that
		\begin{align*}
		\frac{\sqrt{x_{r,m}}}{r} \leq \max\{\lVert Q^r(t) - p \lambda r / \mu \rVert_T / r , \lVert L^r(t) -r \rVert_T / r , 1/\sqrt{r}  \} \leq \epsilon
		\end{align*}
		for $r$ large enough. Moreover, for $r$ large enough,
		\begin{align*}
		\max\left\{\left\lVert Q^r(t) - p \lambda r / \mu \right\rVert_{T+M\epsilon} / r , \lVert L^r(t) - r \rVert_{T+M\epsilon} / r   \right\} \leq \epsilon.
		\end{align*}
		We conclude that for every $m \leq \sqrt{r} T$,
		\begin{align*}
		\frac{\sqrt{x_{r,m}}}{r} \lVert Q^{r,m}(t) \rVert_M = \frac{\lVert Q^{r}(m/\sqrt{r}+\sqrt{x_{r,m}} / r t) - p \lambda r / \mu \rVert_M}{r} \leq \epsilon,
		\end{align*}
		and
		\begin{align*}
		\frac{\sqrt{x_{r,m}}}{r} \lVert L^{r,m}(t) - r \rVert_M \leq \epsilon.
		\end{align*}
\hfill\end{proof}
		
		For the hydrodynamically scaled process, the system identities translate to
		\begin{align}
		A_{ij}^{r,m}(t) &= A_{ij,i}^{r,m}(t) + A_{ij,j}^{r,m}(t) \hspace{0.5cm} \forall \{i,j\} \in E, \label{eq:HydroArrivals1}\\
		A_{ij}^{r,m}(t) &= \frac{1}{\sqrt{x_{r,m}}}\left( \Lambda_{ij}\left( Y^{r}(m/\sqrt{r}) +  \sqrt{x_{r,m}} Y^{r,m}(t) \right) -  \Lambda_{ij}\left( Y^{r}(m/\sqrt{r}) \right)  \right), \hspace{0.5cm} \forall \{i,j\} \in E, \label{eq:HydroArrivals2}\\
		Q_j^{r,m}(t) &= Q_j^{r,m}(0) + \sum_{i : \{i,j\} \in E } A_{ij,j}^{r,m}(t)-D_j^{r,m}(t), \hspace{0.5cm} \forall j=1,\ldots,S, \label{eq:HydroQueueLength}\\
		D_j^{r,m}(t) &= \frac{1}{\sqrt{x_{r,m}}} \left( S_j\left( T_j^{r}(m/\sqrt{r}) +  \sqrt{x_{r,m}} T_j^{r,m}(t) \right) -  S_j\left( T_j^{r}(m/\sqrt{r}) \right)  \right), \hspace{0.5cm} \forall j=1,\ldots,S, \label{eq:HydroServiceCompletions}\\
		Y^{r,m}(t) &= t + \frac{\sqrt{x_{r,m}}}{r} \int_0^t L^{r,m}(s) \, ds, \hspace{0.5cm} \forall j=1,\ldots,S, \label{eq:HydroAggregatedArrivalTime}\\
		T_j^{r,m}(t) &= p_j \frac{\lambda}{\mu}t + \frac{\sqrt{x_{r,m}}}{r} \int_0^t Z_j^{r,m}(s) \, ds, \hspace{0.5cm} \forall j=1,\ldots,S, \label{eq:HydroBusyTime}\\
		Z_j^{r,m}(t) &= \min\left\{Q_j^{r,m}(t), \frac{1}{\sqrt{x_{r,m}}} \gamma p \sqrt{\frac{\lambda r}{\mu}} \right\}, \hspace{0.5cm} \forall j=1,\ldots,S, \label{eq:HydroBusyServers}\\
		L^{r,m}(t) &= - \sum_{j=1}^S \left( Q_j^{r,m}(t) - \frac{1}{\sqrt{x_{r,m}}} \beta p_j \sqrt{\frac{\lambda r}{\mu}} \right)^+ \label{eq:HydroDrivingCars}\\
		A^{r,m}_{ij,i}(t) & \textrm{ can only increase when } \frac{Q_i^{r,m}(t)}{p_i} \leq \frac{Q_j^{r,m}(t)}{p_j} \hspace{0.5cm} \forall \{i,j\} \in E. \label{eq:HydroDerivativeIncreasePossibility}
		\end{align}
		
		In order to show that $\mathbb{X}^{r,m}$ is almost (with the exception of certain events) Lipschitz continuous, we would like to exclude these certain events, i.e.\ show that such events are unlikely to occur.
		
		\begin{lemma}
			Fix $\epsilon >0$, $M > 0$ and $T>0$. For $r$ large enough, there exists a constant $N >0$ (only depending on $\lambda$, $\mu$, and $\{p_{ij}; \{i,j\} \in E\}$) such that
			\begin{align}
			\Prob\left( \max_{m < \sqrt{r}T} \sup_{0 \leq t_1 \leq t_2 \leq M} \left\{ \left\lvert  A^{r,m}(t_2) - A^{r,m}(t_1) \right\rvert - N (t_2-t_1) \right\} \geq  \epsilon \right) \leq \epsilon,\label{eq:AlmostLipCont1}\\
			\Prob\left( \max_{m < \sqrt{r}T} \sup_{0 \leq t_1 \leq t_2 \leq M} \left\{ \left\lvert  D^{r,m}(t_2) - D^{r,m}(t_1) \right\rvert - N(t_2-t_1) \right\} \geq \epsilon \right) \leq \epsilon.\label{eq:AlmostLipCont2}
			\end{align}
			Moreover,
			\begin{align}
			\Prob\left( \max_{m < \sqrt{r}T} \left\lVert Y^{r,m}(t) - \frac{1}{p_{ij}\lambda} A_{ij}^{r,m}(t) \right\rVert_M \geq \epsilon \right) \leq \epsilon,  \hspace{0.5cm} \{i,j\} \in E \label{eq:YCloseToExpectation}\\
			\Prob\left( \max_{m < \sqrt{r}T} \left\lVert  T_j^{r,m}(t) - \frac{1}{\mu} D_j^{r,m}(t) \right\rVert_M \geq  \epsilon \right) \leq \epsilon, \hspace{0.5cm} j=1,\ldots,S,\label{eq:TCloseToExpectation}
			\end{align}
			\label{lem:UnlikelyBadEvents}
		\end{lemma}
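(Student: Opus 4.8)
The plan is to derive all four estimates from a single source: uniform control, over the $\lfloor\sqrt{r}T\rfloor$ blocks $m$ and over the $O(r)$-range of the (random) time-changed arguments, of the increments of the driving Poisson processes $\Lambda_{ij}$ and $S_j$. The geometry is fixed by Lemma~\ref{lem:BoundedXrm}: it guarantees $\sqrt{r}\le\sqrt{x_{r,m}}\le\epsilon r$ and, combined with the fluid bounds of Theorem~\ref{thm:FluitLimitResult} (which yield $Y^r(s)\le rs$ and $Z^r_j(s)\le F^r_j$), that every argument appearing inside $\Lambda_{ij}$ in~\eqref{eq:HydroArrivals2} or inside $S_j$ in~\eqref{eq:HydroServiceCompletions} lies in a deterministic window $[0,C_0 r]$ for a constant $C_0=C_0(T,M,\epsilon)$. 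The feature I would exploit throughout is that the hydrodynamic scaling divides Poisson increments by $\sqrt{x_{r,m}}\ge\sqrt{r}$ while the windows over which those increments are taken have length at most a constant times $\sqrt{x_{r,m}}$; since the standard deviation of a Poisson increment over a window of length $\ell$ is only $\sqrt{\ell}$, this mismatch is exactly what forces the vanishing right-hand sides.

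For the almost-Lipschitz bounds~\eqref{eq:AlmostLipCont1}--\eqref{eq:AlmostLipCont2} I would argue as follows. By~\eqref{eq:HydroAggregatedArrivalTime} and $L^{r,m}\le 0$ (from~\eqref{eq:HydroDrivingCars}), the map $t\mapsto Y^{r,m}(t)$ is nondecreasing with $Y^{r,m}(t_2)-Y^{r,m}(t_1)\le t_2-t_1$, so in~\eqref{eq:HydroArrivals2} the process $\Lambda_{ij}$ is evaluated between arguments $b_1\le b_2$ with $b_2-b_1\le\sqrt{x_{r,m}}(t_2-t_1)$. It therefore suffices to prove the deterministic-window statement that, with probability at least $1-\epsilon$, $\Lambda_{ij}(b_2)-\Lambda_{ij}(b_1)\le N(b_2-b_1)+\epsilon\sqrt{r}$ for all $0\le b_1\le b_2\le C_0 r$: dividing by $\sqrt{x_{r,m}}$ and using $b_2-b_1\le\sqrt{x_{r,m}}(t_2-t_1)$ together with $\sqrt{x_{r,m}}\ge\sqrt{r}$ then returns $A^{r,m}_{ij}(t_2)-A^{r,m}_{ij}(t_1)\le N(t_2-t_1)+\epsilon$. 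Choosing $N$ above $\max_{\{i,j\}\in E}p_{ij}\lambda$ makes the compensated process $\Lambda_{ij}(u)-Nu$ have strictly negative drift, so its maximal run-up $\sup_{0\le b_1\le b_2\le C_0 r}\bigl(\Lambda_{ij}(b_2)-\Lambda_{ij}(b_1)-N(b_2-b_1)\bigr)$ is $o(\sqrt{r})$ with high probability; a Chernoff/Bernstein estimate for Poisson increments, split over dyadic window lengths and unioned over $\{i,j\}\in E$, makes this precise and yields~\eqref{eq:AlmostLipCont1}. The departure bound~\eqref{eq:AlmostLipCont2} is identical, now with $S_j$ (rate $\mu$) in place of $\Lambda_{ij}$ and with~\eqref{eq:HydroBusyTime}--\eqref{eq:HydroBusyServers} giving $T^{r,m}_j(t_2)-T^{r,m}_j(t_1)\le (F^r_j/r)(t_2-t_1)\le C(t_2-t_1)$; one then takes the shared $N$ above both $\max_{\{i,j\}\in E}p_{ij}\lambda$ and $C\mu$ and inflates the final constant by $C$.

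For the two closeness statements~\eqref{eq:YCloseToExpectation}--\eqref{eq:TCloseToExpectation} I would instead center the Poisson increments. Writing $\nu=p_{ij}\lambda$ and subtracting $\nu\sqrt{x_{r,m}}Y^{r,m}(t)$ inside~\eqref{eq:HydroArrivals2},
\begin{align*}
\frac{1}{p_{ij}\lambda}A^{r,m}_{ij}(t)-Y^{r,m}(t)=\frac{1}{\nu\sqrt{x_{r,m}}}\Big(\Lambda_{ij}(a+\ell)-\Lambda_{ij}(a)-\nu\ell\Big),\qquad \ell=\sqrt{x_{r,m}}\,Y^{r,m}(t)\le\sqrt{x_{r,m}}\,M.
\end{align*}
So it suffices to show that, uniformly over $a\le C_0 r$ and $\ell\le\sqrt{x_{r,m}}M$, the centered increment is at most $\delta\sqrt{x_{r,m}}$ with $\delta=p_{ij}\lambda\,\epsilon$, since dividing by $\nu\sqrt{x_{r,m}}$ then gives the bound $\epsilon$. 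A Bernstein bound gives $\Prob\bigl(|\Lambda_{ij}(a+\ell)-\Lambda_{ij}(a)-\nu\ell|>\delta\sqrt{x_{r,m}}\bigr)\le 2\exp(-c\delta^2\sqrt{r})$ for every fixed window, because the target deviation $\delta\sqrt{x_{r,m}}$ is of order $(x_{r,m})^{1/2}$ while the standard deviation is only $(x_{r,m})^{1/4}$. Discretising $a$ over $[0,C_0 r]$ on a polynomially fine grid, taking a union over the $\lfloor\sqrt{r}T\rfloor$ blocks $m$ and over $\{i,j\}\in E$, and filling in the continuum in $t$ by monotonicity of $\Lambda_{ij}$ and of $Y^{r,m}$, the total cost is $r^{O(1)}\exp(-c\delta^2\sqrt{r})\to0$, which is below $\epsilon$ for $r$ large; this proves~\eqref{eq:YCloseToExpectation}. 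The bound~\eqref{eq:TCloseToExpectation} follows verbatim with $S_j$, rate $\mu$, and $\ell=\sqrt{x_{r,m}}\,T^{r,m}_j(t)\le C\sqrt{x_{r,m}}M$.

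The main obstacle, and the place where care is needed, is the simultaneous uniformity over the $\lfloor\sqrt{r}T\rfloor$ blocks and over the $O(r)$-range of the arguments, compounded by the fact that the compositions in~\eqref{eq:HydroArrivals2} and~\eqref{eq:HydroServiceCompletions} feed the Poisson processes through the random, path-dependent clocks $Y^{r,m}$ and $T^{r,m}_j$. I would resolve this dependence by dominating each random composition by the supremum of $\Lambda_{ij}$ (resp.\ $S_j$) over all \emph{deterministic} arguments in $[0,C_0 r]$, a reduction justified by Lemma~\ref{lem:BoundedXrm} and the monotonicity of the clocks; this turns every estimate into a statement about a single Poisson process on a fixed window, to which the exponential and union bounds apply. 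The delicate bookkeeping is to ensure the exponent $c\delta^2\sqrt{r}$ genuinely dominates the logarithm of the number of $(m,\text{grid point})$ pairs, and here the quantitative sandwich $\sqrt{r}\le\sqrt{x_{r,m}}\le\epsilon r$ is indispensable, since it is what simultaneously controls the window length and the scaling denominator.
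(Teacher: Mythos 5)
Your proposal is correct in substance, but it reaches the two harder statements \eqref{eq:YCloseToExpectation}--\eqref{eq:TCloseToExpectation} by a genuinely different route than the paper. For the almost-Lipschitz bounds \eqref{eq:AlmostLipCont1}--\eqref{eq:AlmostLipCont2} both arguments start identically (dominate the random clock by the identity clock, since $Y^{r,m}$ is nondecreasing and $1$-Lipschitz and $T_j^{r,m}$ has slope at most $F_j^r/r$), but the paper then works block by block: by the memoryless property every block $m$ has the same law, a per-block bound of order $\epsilon/(M^2\sqrt{r})$ is quoted from Proposition~4.3 of \cite{Bramson1998}, and a union bound over the $\sqrt{r}T$ blocks finishes; you instead build one global event on which the compensated processes $\Lambda_{ij}(u)-Nu$ and $S_j(u)-Nu$ have uniformly small run-up over all subwindows of $[0,C_0 r]$, covering every block at once. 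The more substantial divergence is in \eqref{eq:YCloseToExpectation}--\eqref{eq:TCloseToExpectation}: the paper never centers Poisson increments over random windows. It inverts the time change, introducing the renewal partial sums $U_{ij}(n)=\sum_{l\le n}u_{ij}(l)$ and $V_j(n)=\sum_{l\le n}v_j(l)$ together with the sandwich $U_{ij}(A_{ij}^r(t))\le Y^r(t)\le U_{ij}(A_{ij}^r(t)+1)$ (and its analogue for $T_j^r$), so the randomness of the clock is absorbed exactly into the random index of a partial-sum process, to which LLN-type bounds (Proposition~4.2 and Lemma~5.1 of \cite{Bramson1998}) apply with no discretization at all. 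You instead center directly via the identity $\frac{1}{\nu}A_{ij}^{r,m}(t)-Y^{r,m}(t)=\frac{1}{\nu\sqrt{x_{r,m}}}\bigl(\Lambda_{ij}(a+\ell)-\Lambda_{ij}(a)-\nu\ell\bigr)$ and run Bernstein plus a polynomial grid over window endpoints, exploiting that the target deviation $\delta\sqrt{x_{r,m}}\ge\delta\sqrt{r}$ dwarfs the standard deviation $O(x_{r,m}^{1/4})$, so the exponent $c\delta^2\sqrt{r}$ beats every polynomial union bound; this is sound, provided you also stratify over dyadic values of the random scale $\sqrt{x_{r,m}}$ (it appears on both sides of your target inequality), a bookkeeping step your sketch leaves implicit. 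What each buys: the paper's inversion is exact and leverages ready-made estimates, while yours is self-contained modulo standard concentration and dispenses with the renewal machinery. One small repair: you invoke Lemma~\ref{lem:BoundedXrm} for $\sqrt{x_{r,m}}\le\epsilon r$, which imports the hypothesis $\hat{Q}^r(0)\to\hat{Q}(0)$ that the present lemma does not assume, and which is a pathwise eventually-statement that you would have to convert into an additional exceptional event; this is avoidable, since the crude deterministic bounds $|Q_j^r-p_j\lambda r/\mu|=O(r)$ and $|L^r-r|\le r$ already give $\sqrt{x_{r,m}}=O(r)$, which is all that your window containment and grid counting actually require.
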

		
\begin{proof}
		First, we note that due to the memoryless property which both the arrival and service completion processes satisfy, the choice of $m$ is irrelevant and thus can be made arbitrarily. To prove~\eqref{eq:AlmostLipCont1}, we start by showing that for every $\{i,j\} \in E$,
		\begin{align*}
		\Prob\left( \sup_{0 \leq t_1 \leq t_2 \leq M} \left\{   A_{ij}^{r,m}(t_2) - A_{ij}^{r,m}(t_1)  - \frac{1}{p_{ij} \lambda} (t_2-t_1) \right\} \geq  \epsilon \right) \leq \epsilon.
		\end{align*}
		From~\eqref{eq:HydroAggregatedArrivalTime} and~\eqref{eq:HydroDrivingCars}, we observe that $Y_{r,m}(t) \leq t$ and non-decreasing. Due to the properties of Poisson processes,
		\begin{align*}
		A_{ij}^{r,m}(t_2) - A_{ij}^{r,m}(t_1) \overset{d}{=} \frac{\Lambda_{ij}\left(\sqrt{x_{r,m}} Y^{r,m}(t_2)\right) -\Lambda_{ij}\left(\sqrt{x_{r,m}} Y^{r,m}(t_1)\right)}{\sqrt{x_{r,m}}} \overset{d}{\leq}  \frac{\Lambda_{ij}\left(\sqrt{x_{r,m}} t_2\right) -\Lambda_{ij}\left(\sqrt{x_{r,m}} t_1 \right)}{\sqrt{x_{r,m}}}.
		\end{align*}
		Therefore,
		\begin{align*}
		\Prob&\left( \sup_{0 \leq t_1 \leq t_2 \leq M} \left\{   A_{ij}^{r,m}(t_2) - A_{ij}^{r,m}(t_1)  - \frac{t_2-t_1}{p_{ij} \lambda}  \right\} \geq  \epsilon \right) \\
		& \hspace{3cm}\leq \Prob\left( \sup_{0 \leq t_1 \leq t_2 \leq M} \left\{  \frac{\Lambda_{ij}\left(\sqrt{x_{r,m}} t_2\right) -\Lambda_{ij}\left(\sqrt{x_{r,m}} t_1 \right)}{\sqrt{x_{r,m}}}  - \frac{t_2-t_1}{p_{ij} \lambda}  \right\} \geq  \epsilon \right) \\
		&\hspace{3cm} \leq \Prob\left( \left\lVert \frac{\Lambda_{ij}\left(\sqrt{x_{r,m}} t\right)}{\sqrt{x_{r,m}}}  - \frac{t}{p_{ij} \lambda} \right\rVert_M \geq  \epsilon/2 \right)  \leq \frac{\epsilon}{2 M^2 \sqrt{x_{r,m}}} \leq \frac{\epsilon}{2 M^2 \sqrt{r}} ,
		\end{align*}
		where the final to last inequality follows from Proposition~4.3 in~\cite{Bramson1998}. Choosing $N = 1/(\lambda \min_{\{i,j\} \in E} p_{ij})$  and applying the union bound twice, we obtain
		\begin{align*}
		\Prob\left( \max_{m < \sqrt{r}T} \sup_{0 \leq t_1 \leq t_2 \leq M} \left\{ \left\lvert  A^{r,m}(t_2) - A^{r,m}(t_1) \right\rvert - N (t_2-t_1) \right\} \geq  \epsilon \right) \leq \frac{\epsilon T |E|}{2 M^2},
		\end{align*}
		which yields~\eqref{eq:AlmostLipCont1}.
		
		The proof for~\eqref{eq:AlmostLipCont2} is completely analogous, but with minor adaptions as one uses $T_j^{r,m}(t) \leq p_j \lambda / \mu t$ instead of $Y^{r,m}(t) \leq t$. We conclude that~\eqref{eq:AlmostLipCont1} and~\eqref{eq:AlmostLipCont2} show that the hydrodynamically scaled arrival process and service completion process are almost Lipschitz continuous.
		
		In order to prove~\eqref{eq:YCloseToExpectation} and~\eqref{eq:TCloseToExpectation}, we introduce the following processes. Let $\{u_{ij}(l), l\geq 1\}$ be independent exponentially distributed random variables with rate $p_{ij} \lambda$, representing the time that a car has before it needs recharging at either station~$i$ or~$j$. Let $\{v_j(l), l\geq 1\}$ be independent exponentially distributed random variables with rate $\mu$, representing the service requirement (recharging time) of a battery at station $j$. Define
		\begin{align*}
		U_{ij}(n) &= \sum_{l=1}^n u_{ij}(l), \hspace{0.5cm} \{i,j\} \in E \\
		V_{j}(n) &= \sum_{l=1}^n v_{j}(l), \hspace{0.5cm} j=1,\ldots,S,
		\end{align*}
		the aggregated interarrival time of $n$ cars that will choose between stations $i$ and $j$, and the total service requirement of $n$ batteries at station $j$, respectively. We observe the identities
		\begin{align*}
		\Lambda_{ij}(t) = \max\{n : U_{ij}(n) \leq t\},  \hspace{0.5cm} S_j(t) = \max\{n : V_j(n) \leq t\}, \hspace{0.5cm} t\geq 0.
		\end{align*}
		Moreover, due to~\eqref{eq:IdentityArrivals2} and~\eqref{eq:IdentityServiceCompletions}, we observe
		\begin{align}
		U_{ij}(A_{ij}^r(t)) \leq &Y^r(t) \leq  U_{ij}(A_{ij}^r(t)+1), \hspace{0.75cm} \{i,j\} \in E \label{eq:arrivalBoundsBusy} \\
		V_j(D_j^r(t)) \leq &T_j^r(t) \leq V_j(D_j(t)+1), \hspace{1cm} j=1,\ldots,S. \label{eq:serverBoundsBusy}
		\end{align}
		As in~\cite{DaiTezcan2011}, we define for notational convention, for $b=(b_1,b_2) \in \mathbb{N}$,
		\begin{align}
		U^{r,m}_{ij} \left(A_{ij}^{r,m}(t),b\right) &= \frac{1}{\sqrt{x_{r,m}}} \left( U^r_{ij}\left( A_{ij}^{r} \left( \frac{m}{\sqrt{r}} + \frac{\sqrt{x_{r,m}} t}{r} \right) + b_1 \right) - U^r_{ij}\left( A_{ij}^{r} \left( \frac{m}{\sqrt{r}}  \right) + b_2 \right) \right), \\
		V^{r,m}_j \left(D_j^{r,m}(t),b\right) &= \frac{1}{\sqrt{x_{r,m}}} \left( V^r_j\left( D_j^{r} \left( \frac{m}{\sqrt{r}} + \frac{\sqrt{x_{r,m}} t}{r} \right) + b_1 \right) - V^r_j\left( D_j^{r} \left( \frac{m}{\sqrt{r}}  \right) + b_2 \right) \right).
		\end{align}
		In view of~\eqref{eq:arrivalBoundsBusy} and~\eqref{eq:serverBoundsBusy}, this yields the inequalities
		\begin{align}
		U_{ij}^{r,m}(A_{ij}^r(t),(0,1)) \leq &Y^{r,m}(t) \leq  U_{ij}^{r,m}(A_{ij}^r(t),(1,0)), \hspace{0.8cm} \{i,j\} \in E \label{eq:hydroArrivalBoundsBusy} \\
		V_j^{r,m}(D_j(t),(0,1)) \leq &T_j^{r,m}(t) \leq V_j^{r,m}(D_j(t),(1,0)), \hspace{1cm} j=1,\ldots,S. \label{eq:hydroServerBoundsBusy}
		\end{align}
		Using these processes, we first prove the following bounds:
		\begin{align}
		\Prob\left( \max_{m < \sqrt{r}T} \left\lVert  U_{ij}(A_{ij}^{r,m}(t),b) - \frac{1}{p_{ij}\lambda} A_{ij}^{r,m}(t) \right\rVert_M \geq \epsilon \right) \leq \epsilon, \hspace{1cm} \forall \{i,j\} \in E, \label{eq:ArrivalsCloseToExpectation}\\
		\Prob\left( \max_{m < \sqrt{r}T} \left\lVert  V_{j}(D^{r,m}(t),b) - \frac{1}{\mu} D_j^{r,m}(t) \right\rVert_M \geq  \epsilon \right) \leq \epsilon, \hspace{1cm} j=1,\ldots,S,\label{eq:ServicesCloseToExpectation}
		\end{align}
		for $b=(1,0)$ and $b=(0,1)$. The proof is similar to that of (78) in~\cite{Tezcan2008}. We observe that the proof of~\eqref{eq:AlmostLipCont1} implies that in particular
		\begin{align*}
		\Prob&\left( A_{ij}^{r}\left(\frac{\sqrt{x_{r,m}}M}{r} \right)  \geq \frac{2M}{p_{ij}\lambda} \sqrt{x_{r,m}} \right) \leq \frac{\epsilon}{M^2 \sqrt{r}},
		\end{align*}
		and hence also
		\begin{align*}
		\Prob&\left( A_{ij}^{r}\left(\frac{\sqrt{x_{r,m}}M}{r} \right) +1  \geq \frac{3M}{p_{ij}\lambda} \sqrt{x_{r,m}} \right) \leq \frac{\epsilon}{M^2 \sqrt{r}}
		\end{align*}
		for $r$ large enough. Proposition~4.2 of~\cite{Bramson1998} states
		\begin{align*}
		\Prob \left( \left\lVert U_{ij}(l) - \frac{l}{p_{ij}\lambda} \right\rVert_n  \geq \epsilon n \right) \leq \frac{\epsilon}{n}.
		\end{align*}
		Therefore, it follows that
		\begin{align*}
		\Prob \left( \left\lVert U_{ij}(A_{ij}^r(t)) - \frac{A_{ij}^r(t)}{p_{ij}\lambda} \right\rVert _{\sqrt{x_{r,m}}M/r} \geq \epsilon \frac{2M\sqrt{x_{r,m}}}{p_{ij}\lambda} \right) \leq \frac{\epsilon}{\sqrt{r}} \left(\frac{1}{M^2} + \frac{p_{ij}\lambda}{2 M} \right),
		\end{align*}
		and
		\begin{align*}
		\Prob \left( \left\lVert U_{ij}(A_{ij}^r(t)+1) - \frac{A_{ij}^r(t)}{p_{ij}\lambda} \right\rVert _{\sqrt{x_{r,m}}M/r} \geq \epsilon \frac{3M\sqrt{x_{r,m}}}{p_{ij}\lambda} \right) \leq \frac{\epsilon}{\sqrt{r}} \left(\frac{1}{M^2} + \frac{p_{ij}\lambda}{3 M} \right).
		\end{align*}
		Increasing $\epsilon$ appropriately, we obtain
		\begin{align*}
		\Prob \left( \left\lVert U_{ij}^{r,m}(A_{ij}^{r,m}(t),b) - \frac{A_{ij}^{r,m}(t)}{p_{ij}\lambda} \right\rVert _{M} \geq \epsilon \right)  \leq \frac{\epsilon}{T\sqrt{r}}
		\end{align*}
		for both $b=(0,0)$ and $b=(1,0)$.
		Using the union bound yields
		\begin{align*}
		\Prob \left( \max_{m < \sqrt{r} T} \left\lVert U_{ij}^{r,m}(A_{ij}^{r,m}(t),b) - \frac{A_{ij}^{r,m}(t)}{p_{ij}\lambda} \right\rVert _{M} \geq \epsilon \right)  \leq \epsilon.
		\end{align*}
		for both $b=(0,0)$ and $b=(1,0)$. To conclude the proof for $b=(0,1)$ as well, we observe
		\begin{align*}
		\Prob & \left( \max_{m < \sqrt{r} T}  \left\lVert U_{ij}^{r,m}(A_{ij}^{r,m}(t),b) - U_{ij}^{r,m}(A_{ij}^{r,m}(t),(0,0))\right\rVert _{M} \geq \epsilon \right) \\
		&\leq \Prob \left( \max_{m < \sqrt{r} T}  \left\lvert  U_{ij}\left(A_{ij}^{r} \left(\frac{m}{\sqrt{r}} \right) +1 \right) -  U_{ij}\left(A_{ij}^{r} \left(\frac{m}{\sqrt{r}}\right) \right) \right\rvert \geq \epsilon \sqrt{x_{r,m}} \right)  \leq \Prob\left(u_{ij}^{r,T,\max} \geq \sqrt{x_{r,m}} \epsilon \right) \leq \epsilon,
		\end{align*}
		where the final inequality follows from Lemma~5.1 in~\cite{Bramson1998} with
		\begin{align*}
		u_{ij}^{r,T,\max}= \max\{u_{ij}(l) : U_i(l-1) \leq r T\}.
		\end{align*}
		
		The proof of~\eqref{eq:ServicesCloseToExpectation} is analogous to~\eqref{eq:ArrivalsCloseToExpectation}, replacing the arrival processes by the service processes. Equations~\eqref{eq:YCloseToExpectation} and~\eqref{eq:TCloseToExpectation} are then a direct consequence of~\eqref{eq:hydroArrivalBoundsBusy} and~\eqref{eq:hydroServerBoundsBusy}.
\hfill\end{proof}
		
		Using the previous result, we can show that $\mathbb{X}$ is almost Lipschitz continuous.
		\begin{proposition}
			Fix $\epsilon >0$, $M > 0$ and $T>0$. For $r$ large enough,
			\begin{align*}
			\Prob\left( \max_{m < \sqrt{r}T} \sup_{0 \leq t_1 \leq t_2 \leq M} \left\{ \left\lvert  \mathbb{X}^{r,m}(t_2) - \mathbb{X}^{r,m}(t_1) \right\rvert - N (t_2-t_1) \right\} \geq \epsilon \right) \leq \epsilon,
			\end{align*}
			where $N < \infty$ is constant (depending only on $\lambda$, $\mu$ and $\{p_{ij}; \{i,j\} \in E\}$).
			\label{prop:AlmostLipschitzContinuousHydroScaling}
		\end{proposition}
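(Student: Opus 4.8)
The plan is to bootstrap from Lemma~\ref{lem:UnlikelyBadEvents}, which already establishes the desired almost-Lipschitz estimate for the two ``driving'' coordinates $A^{r,m}$ and $D^{r,m}$, and to propagate this property to every remaining coordinate of $\mathbb{X}^{r,m}$ through the hydrodynamic system identities~\eqref{eq:HydroArrivals1}--\eqref{eq:HydroDrivingCars}. The point is that each identity writes a coordinate as an affine or $1$-Lipschitz functional of coordinates that are already under control, so almost-Lipschitz continuity is inherited with only a change in the constant. Since $|\cdot|$ is the max-norm over the finitely many coordinates (their number depending only on $S$ and $|E|$, not on $r$), it suffices to produce, for each coordinate, a bound of the stated form with its own constant and then take $N$ to be the maximum of these finitely many constants, absorbing the finitely many exceptional probabilities by a union bound and relabelling $\epsilon$.

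Concretely, I would first handle the routing coordinates $A_d^{r,m}$: by~\eqref{eq:HydroArrivals1} each $A_{ij,i}^{r,m}$ and $A_{ij,j}^{r,m}$ is non-negative, non-decreasing, and sums to $A_{ij}^{r,m}$, so for $t_1\le t_2$ its increment is dominated by that of $A_{ij}^{r,m}$; hence~\eqref{eq:AlmostLipCont1} transfers verbatim. Next, the queue-length coordinates follow from~\eqref{eq:HydroQueueLength}, since
\begin{align*}
\bigl| Q_j^{r,m}(t_2)-Q_j^{r,m}(t_1)\bigr| \le \sum_{i:\{i,j\}\in E} \bigl| A_{ij,j}^{r,m}(t_2)-A_{ij,j}^{r,m}(t_1)\bigr| + \bigl| D_j^{r,m}(t_2)-D_j^{r,m}(t_1)\bigr|,
\end{align*}
a finite sum of almost-Lipschitz increments. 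The remaining ``state'' coordinates $Z^{r,m}$ and $L^{r,m}$ are then immediate, because $x\mapsto\min\{x,c\}$ and $x\mapsto (x-c)^+$ are $1$-Lipschitz and the thresholds in~\eqref{eq:HydroBusyServers} and~\eqref{eq:HydroDrivingCars} do not depend on $t$; thus their increments are bounded by those of $Q^{r,m}$.

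The two ``workload'' coordinates $Y^{r,m}$ and $T^{r,m}$ I would treat separately and more directly, using the integral representations~\eqref{eq:HydroAggregatedArrivalTime} and~\eqref{eq:HydroBusyTime} together with Lemma~\ref{lem:BoundedXrm}. For instance,
\begin{align*}
Y^{r,m}(t_2)-Y^{r,m}(t_1) = (t_2-t_1) + \frac{\sqrt{x_{r,m}}}{r}\int_{t_1}^{t_2} L^{r,m}(s)\,ds,
\end{align*}
and the remainder is at most $\frac{\sqrt{x_{r,m}}}{r}\lVert L^{r,m}\rVert_M\,(t_2-t_1)\le \epsilon(t_2-t_1)$ on the event of Lemma~\ref{lem:BoundedXrm}; similarly for $T_j^{r,m}$ with main slope $p_j\lambda/\mu$ and $Z_j^{r,m}$ in place of $L^{r,m}$, the relevant remainder $\tfrac{\sqrt{x_{r,m}}}{r}\lVert Z_j^{r,m}\rVert_M$ being controlled through~\eqref{eq:HydroBusyServers}, Lemma~\ref{lem:BoundedXrm}, and the fact that the threshold contributes only a term of order $r^{-1/2}$. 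Hence these coordinates are genuinely Lipschitz with slope arbitrarily close to their deterministic affine part, so they satisfy the required bound with $N\ge\max\{1,\max_j p_j\lambda/\mu\}$ and no exceptional event beyond that of Lemma~\ref{lem:BoundedXrm}.

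The main obstacle is not any single estimate but the uniformity over the growing family of time windows $m<\sqrt r\,T$. This is precisely what Lemma~\ref{lem:UnlikelyBadEvents} already absorbs for $A^{r,m}$ and $D^{r,m}$ (via the memoryless property, which makes the law of each window independent of $m$, followed by a union bound), so the delicate part is to verify that the propagation steps introduce \emph{no further} randomness: once the good events of Lemmas~\ref{lem:BoundedXrm} and~\ref{lem:UnlikelyBadEvents} hold, the coordinates $A_d^{r,m},Q^{r,m},Z^{r,m},L^{r,m},Y^{r,m},T^{r,m}$ are deterministic functionals of the already-controlled processes, so their exceptional events are contained in those already bounded and the max over $m$ costs nothing extra. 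Collecting the finitely many coordinatewise bounds, choosing $N$ as the maximum of the constants appearing above, and shrinking $\epsilon$ to a common value completes the argument.
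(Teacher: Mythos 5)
Your overall strategy coincides with the paper's: on the intersection of the good events of Lemma~\ref{lem:UnlikelyBadEvents}, propagate the almost-Lipschitz bounds for $A^{r,m}$ and $D^{r,m}$ deterministically through the hydrodynamic identities, coordinate by coordinate ($A_d^{r,m}$ by non-negativity and monotonicity of the summands in~\eqref{eq:HydroArrivals1}, $Q^{r,m}$ via~\eqref{eq:HydroQueueLength}, $Z^{r,m}$ and $L^{r,m}$ as $1$-Lipschitz images of $Q^{r,m}$ via~\eqref{eq:HydroBusyServers} and~\eqref{eq:HydroDrivingCars}), then conclude with a union bound, taking $N$ as the maximum of finitely many constants and shrinking $\epsilon$. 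This is exactly the paper's proof, including the observation that it suffices to prove a bound of the form $N_1(t_2-t_1)+N_2\epsilon$ on the good event. The one place you deviate is the treatment of $Y^{r,m}$ and $T^{r,m}$: the paper controls their increments by the remaining two estimates \eqref{eq:YCloseToExpectation} and \eqref{eq:TCloseToExpectation} of Lemma~\ref{lem:UnlikelyBadEvents} (increments of $Y^{r,m}$ track those of $A_{ij}^{r,m}/(p_{ij}\lambda)$ up to $2\epsilon$, and similarly $T_j^{r,m}$ tracks $D_j^{r,m}/\mu$), whereas you use the integral representations \eqref{eq:HydroAggregatedArrivalTime}--\eqref{eq:HydroBusyTime} together with Lemma~\ref{lem:BoundedXrm}.

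That deviation introduces a technical mismatch you should repair: Lemma~\ref{lem:BoundedXrm} is proved from the fluid limit under the hypothesis $\hat{Q}^r(0)\rightarrow\hat{Q}(0)$, which is \emph{not} among the hypotheses of the Proposition, and the paper's proof never uses it. As written, your argument therefore establishes the statement only under an extra initial-condition assumption (harmless in all later applications, where that assumption is in force, but strictly weaker than what is claimed). The fix is immediate and keeps your route: Lemma~\ref{lem:BoundedXrm} is not needed at all, since $0\leq L^r(s)\leq r$ by~\eqref{eq:IdentityDrivingCars} and $0\leq Z_j^r(s)\leq F_j^r$ by~\eqref{eq:IdentityBusyServers} give, deterministically,
\begin{align*}
0 \leq Y^{r,m}(t_2)-Y^{r,m}(t_1) \leq t_2-t_1, \hspace{1cm} 0 \leq T_j^{r,m}(t_2)-T_j^{r,m}(t_1) \leq \frac{F_j^r}{r}\,(t_2-t_1),
\end{align*}
and $F_j^r/r \rightarrow p_j\lambda/\mu$, so these two coordinates are genuinely Lipschitz with no exceptional event whatsoever; alternatively, invoke \eqref{eq:YCloseToExpectation}--\eqref{eq:TCloseToExpectation} as the paper does. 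With either replacement your proof is complete and matches the paper's.
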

		
\begin{proof}
		This follows in a straightforward way from Lemma~\ref{lem:UnlikelyBadEvents} and the hydrondynamically-scaled system equations. That is, let $\mathcal{V}^r$ denote the intersection of the complements of the events given in equations~\eqref{eq:AlmostLipCont1}-\eqref{eq:TCloseToExpectation}, so $\Prob(\mathcal{V}^r) \leq 1-N_0 \epsilon$ with $N_0$ the number of equations in Lemma~\ref{lem:UnlikelyBadEvents}. We note that in order to prove the proposition, it suffices to show that for every $\omega \in \mathcal{V}^r$, and for every $t_1,t_2 \in [0,T]$ and $m < \sqrt{r}T$,
		\begin{align}
		\left\lvert  \mathbb{X}^{r,m}(t_2) - \mathbb{X}^{r,m}(t_1) \right\rvert \leq N_1 (t_2-t_1)  + N_2 \epsilon, \label{eq:toShowAlternativeLipschitzContinuous}
		\end{align}
		where $N_1$ and $N_2$ are only dependent on the system parameters (i.e.~$\lambda$, $\mu$, $p$). Let $t_1,t_2 \in [0,T]$ with $t_1 \leq t_2$. By definition of $\mathcal{V}^r$,
		\begin{align*}
		\left\lvert  A^{r,m}(t_2) - A^{r,m}(t_1) \right\rvert \leq N(t_2-t_1) + \epsilon,
		\end{align*}
		and
		\begin{align*}
		\left\lvert  D^{r,m}(t_2) - D^{r,m}(t_1) \right\rvert \leq N(t_2-t_1) + \epsilon,
		\end{align*}
		for $N$ as in Lemma~\ref{lem:UnlikelyBadEvents}. Due to~\eqref{eq:HydroArrivals1},
		\begin{align*}
		\left\lvert  A^{r,m}_d(t_2) - A^{r,m}_d(t_1) \right\rvert \leq \left\lvert  A^{r,m}(t_2) - A^{r,m}(t_1) \right\rvert \leq N(t_2-t_1) + \epsilon.
		\end{align*}
		In view of~\eqref{eq:HydroQueueLength} and~\eqref{eq:HydroArrivals1}, we observe
		\begin{align*}
		\left\lvert  Q^{r,m}(t_2) - Q^{r,m}(t_1) \right\rvert \leq |E| \left\lvert  A^{r,m}(t_2) - A^{r,m}(t_1) \right\rvert + S \left\lvert  D^{r,m}(t_2) - D^{r,m}(t_1) \right\rvert \leq (|E|+S) N(t_2-t_1) + 2 \epsilon.
		\end{align*}
		Due to~\eqref{eq:YCloseToExpectation},
		\begin{align*}
		\left\lvert  Y^{r,m}(t_2) - Y^{r,m}(t_1) \right\rvert \leq \sum_{\{i,j\}\in E} \frac{ \left\lvert  A^{r,m}(t_2) - A^{r,m}(t_1) \right\rvert}{p_{ij}\lambda} + 2\epsilon \leq  \sum_{\{i,j\}\in E} \frac{N}{p_{ij}\lambda}(t_2-t_1) + \left( \sum_{\{i,j\}\in E} \frac{1}{p_{ij}\lambda}+ 2\right) \epsilon.
		\end{align*}
		and similarly, due to~\eqref{eq:TCloseToExpectation},
		\begin{align*}
		\left\lvert  T^{r,m}(t_2) - T^{r,m}(t_1) \right\rvert \leq  \frac{1}{\mu} \left\lvert  D^{r,m}(t_2) - D^{r,m}(t_1) \right\rvert + 2\epsilon \leq \frac{N}{\mu}(t_2-t_1) + \left(\frac{1}{\mu}+2\right) \epsilon.
		\end{align*}
		In view of~\eqref{eq:HydroBusyServers},
		\begin{align*}
		\left\lvert  Z^{r,m}(t_2) - Z^{r,m}(t_1) \right\rvert \leq \left\lvert  Q^{r,m}(t_2) - Q^{r,m}(t_1) \right\rvert \leq (|E|+S) N(t_2-t_1) + 2 \epsilon.
		\end{align*}
		Finally, due to~\eqref{eq:HydroDrivingCars},
		\begin{align*}
		\left\lvert  L^{r,m}(t_2) - L^{r,m}(t_1) \right\rvert \leq S \left\lvert  Q^{r,m}(t_2) - Q^{r,m}(t_1) \right\rvert  \leq S(|E|+S) N(t_2-t_1) + 2 S \epsilon.
		\end{align*}
		We conclude that~\eqref{eq:toShowAlternativeLipschitzContinuous} is satisfied, as each process in~$\mathbb{X}^{r,m}$ satisfies this property.
\hfill\end{proof}
		
		As is done in~\cite{Bramson1998,DaiTezcan2011}, one can take $\epsilon$ appropriately small for every system. That is, for fixed $M>0$, $N>0$ and $T>0$, let
		\begin{align*}
		\mathcal{K}_0^r = \left\{ \max_{m < \sqrt{r}T} \sup_{0 \leq t_1 \leq t_2 \leq M} \left\lvert  \mathbb{X}^{r,m}(t_2) - \mathbb{X}^{r,m}(t_1) \right\rvert \geq N (t_2-t_1) + \epsilon(r)  \right\},
		\end{align*}
		where $\epsilon(r)\rightarrow 0$ as $r \rightarrow \infty$ is a sequence of positive real numbers. Moreover, in view of Lemma~\ref{lem:BoundedXrm}, let for that same sequence $\{\epsilon(r)\}_{r \in \mathbb{R}}$,
		\begin{align*}
		\mathcal{H}^r = \left\{ \max_{m < \sqrt{r} T} \left\{ \frac{\sqrt{x_{r,m}}}{r} \lVert Q^{r,m}(t) \rVert_M , \frac{\sqrt{x_{r,m}}}{r} \lVert L^{r,m}(t) \rVert_M \right\} \leq \epsilon(r) \right\}.
		\end{align*}
		Let $\mathcal{K}^r$ denote the intersection of $\mathcal{K}_0^r$, $\mathcal{H}^r$, and the complements of the events in Lemma~\ref{lem:UnlikelyBadEvents}. We note that Lemma~\ref{lem:BoundedXrm}, Lemma~\ref{lem:UnlikelyBadEvents} and Proposition~\ref{prop:AlmostLipschitzContinuousHydroScaling} continue to hold for the sequence $\epsilon(r)$ if $\epsilon(r) \rightarrow 0$ sufficiently slow. We conclude that $\Prob(\mathcal{K}^r) \rightarrow 1$ as $r \rightarrow \infty$.
		
		\begin{corollary}
			Fix $M>0$ and choose $N>0$ and $\epsilon(r)$ as above. Then,
			\begin{align*}
			\lim_{r \rightarrow \infty} \Prob(\mathcal{K}^r) =1.
			\end{align*}
		\end{corollary}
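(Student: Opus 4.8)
The plan is to realize $\mathcal{K}^r$ as the intersection of finitely many regularity events, each of which we already know holds with overwhelming probability, and then to conclude via the union bound. Concretely, $\mathcal{K}^r$ is the intersection of the complement of the almost-Lipschitz failure event $\mathcal{K}_0^r$, the event $\mathcal{H}^r$ controlling the time-change factor $\sqrt{x_{r,m}}/r$, and the complements of the unlikely events catalogued in Lemma~\ref{lem:UnlikelyBadEvents}. Writing $N_0$ for the total number of constituent events, we note that $N_0$ is finite and depends only on the network: $\mathcal{K}_0^r$ and $\mathcal{H}^r$ contribute one each, while equations~\eqref{eq:AlmostLipCont1}--\eqref{eq:TCloseToExpectation} of Lemma~\ref{lem:UnlikelyBadEvents} contribute $2+|E|+S$ events.

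Denote by $\mathcal{K}^r(\delta)$ the version of $\mathcal{K}^r$ in which the slack $\epsilon(r)$ is replaced by a fixed $\delta>0$. For each such $\delta$, Proposition~\ref{prop:AlmostLipschitzContinuousHydroScaling}, Lemma~\ref{lem:BoundedXrm} and Lemma~\ref{lem:UnlikelyBadEvents} assert that every one of the $N_0$ constituent good events (with slack $\delta$) fails with probability at most $\delta$ once $r$ exceeds some threshold $r_0(\delta)$. The union bound then yields
\begin{align*}
\Prob\left( \mathcal{K}^r(\delta)^c \right) \leq N_0 \, \delta \hspace{1cm} \text{for all } r \geq r_0(\delta).
\end{align*}

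To upgrade the fixed slack $\delta$ to a genuinely vanishing sequence, I would diagonalize. Choose $\delta_k \downarrow 0$ and an increasing sequence $r_k \uparrow \infty$ with $r_k \geq r_0(\delta_k)$, and define $\epsilon(r) = \delta_k$ for $r_k \leq r < r_{k+1}$. Then $\epsilon(r) \to 0$, and for every $r$ in this range the estimate of the previous display applies with $\delta = \epsilon(r)$, so that $\Prob((\mathcal{K}^r)^c) \leq N_0 \, \epsilon(r) \to 0$ as $r \to \infty$. This is exactly the sense in which $\epsilon(r)$ must tend to zero sufficiently slowly, and it gives $\Prob(\mathcal{K}^r) \to 1$.

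The only delicate point — and the step requiring the most care — is this last diagonalization: the three auxiliary results are each phrased for a fixed slack and $r$ large, and one must check that they can be used simultaneously along a single admissible sequence $\epsilon(r)$, i.e.\ that the thresholds $r_0(\delta_k)$ (taken as the maximum over all $N_0$ constituents) can be interleaved into an increasing sequence matched to a decay $\epsilon(r) \to 0$. Once such a sequence is fixed, the remainder is the routine combination of a finite union bound with the already-established high-probability estimates.
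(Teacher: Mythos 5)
Your proposal is correct and is essentially the paper's own argument: the paper gives no explicit proof, merely asserting that Lemma~\ref{lem:BoundedXrm}, Lemma~\ref{lem:UnlikelyBadEvents} and Proposition~\ref{prop:AlmostLipschitzContinuousHydroScaling} ``continue to hold for the sequence $\epsilon(r)$ if $\epsilon(r)\rightarrow 0$ sufficiently slowly,'' and your finite union bound plus diagonalization (interleaving the thresholds $r_0(\delta_k)$ with a sequence $\delta_k\downarrow 0$) is precisely the standard way to make that assertion rigorous. The only cosmetic difference is that you must implicitly convert the pathwise statement of Lemma~\ref{lem:BoundedXrm} (which holds for each $\omega\in\mathcal{A}$ for $r$ large) into a probability bound via $\Prob(\mathcal{A})=1$, which is routine.
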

		
		Following the framework of~\cite{Bramson1998}, we can use these results to state that the hydrodynamically scaled system convergences to a hydrodynamic limit. Fix $M >0$ and let $\tilde{E}$ be the set of right-continuous functions $x: [0,M] \rightarrow \mathbb{R}^d$ with left limits. Let
		\begin{align*}
		E'= \{x \in \tilde{E} : |x(0)| \leq 1, |x(t_2)-x(t_1)| \leq N |t_2-t_1| \;\;\; \forall t_1,t_2 \in [0,M] \}.
		\end{align*}
		Moreover, we set
		\begin{align*}
		E^r= \{\mathbb{X}^{r,m},  m < \sqrt{r} T, \omega \in \mathcal{K}^r \},
		\end{align*}
		and
		\begin{align*}
		\mathcal{E} = \{E^r, r \in \mathbb{N}\}.
		\end{align*}
		We remark that these definitions are not related to $E$, the set of all possible pairs of stations where cars can move to.
		
		\begin{definition}
			A hydrodynamic limit of $\mathcal{E}$ is a point $x \in \tilde{E}$ such that for all $\epsilon >0$  there exists a  $r_0 \in \mathbb{N}$ so that for every $r \geq r_0$ there is some $y \in E^r$ such that $\lvert x(\cdot) - y(\cdot) \rvert_M < \epsilon$.
		\end{definition}
		
		Since $|\mathbb{X}^{r,m}(0)|\leq1$, the following result is a consequence of Proposition~4.1 in~\cite{Bramson1998}.
		\begin{corollary}
			Let $\tilde{E}, E^r, \mathcal{E}$ be as above. Fix $\epsilon>0$, $M>0$, $T\geq 0$, and choose $r$ large enough. Then, for $\omega \in \mathcal{K}^r$ and any $m < \sqrt{r}T$,
			\begin{align*}
			\lVert \mathbb{X}^{r,m}(\cdot) - \mathbb{X}(\cdot) \rVert_M \leq \epsilon
			\end{align*}
			for some hydrodynamic limit $\mathbb{X}(\cdot) \in E'$ of $\mathcal{E}$.
			\label{cor:ConvergenceToHydrodynamicLimit}
		\end{corollary}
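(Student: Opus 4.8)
The plan is to read this corollary as the packaging of an abstract relative-compactness statement and to reduce it to verifying the hypotheses of Proposition~4.1 of~\cite{Bramson1998}, which are exactly two: a uniform bound on the paths at the initial time, and an approximate Lipschitz (asymptotic equicontinuity) estimate with a vanishing error. Both have just been established. First I would record that, by the very definition of the normalizer $x_{r,m}$ in~\eqref{eq:xStepVariableDefinition}, one has $|\mathbb{X}^{r,m}(0)| \le 1$, so the whole family starts inside a fixed ball. Second I would invoke the good event $\mathcal{K}^r$, on which --- by construction and Proposition~\ref{prop:AlmostLipschitzContinuousHydroScaling} --- every path satisfies
\[ \left\lvert \mathbb{X}^{r,m}(t_2)-\mathbb{X}^{r,m}(t_1) \right\rvert \le N(t_2-t_1) + \epsilon(r), \qquad 0 \le t_1 \le t_2 \le M, \]
uniformly over $m < \sqrt{r}T$, with $\epsilon(r)\to 0$ as $r\to\infty$.

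These two facts are precisely the input to an Arzel\`a--Ascoli argument. The target set $E'$ of genuine $N$-Lipschitz paths on $[0,M]$ starting in the unit ball is uniformly bounded (by $1+NM$) and equicontinuous, hence relatively compact for $\lVert \cdot \rVert_M$. Since the elements of $E^r$ deviate from exact $N$-Lipschitz continuity only by the additive slack $\epsilon(r)$, they are asymptotically equicontinuous, so every sequence extracted from $\mathcal{E}$ has a subsequence converging uniformly on $[0,M]$ to a limit lying in $E'$. Proposition~4.1 of~\cite{Bramson1998} formalizes exactly this passage: under the bounded-start condition together with the approximate-Lipschitz bound with vanishing error, the paths in $E^r$ accumulate on the compact set of genuine limits, and these limits are by definition the hydrodynamic limits of $\mathcal{E}$. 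Applying it gives, for $r$ large, a hydrodynamic limit $\mathbb{X}(\cdot) \in E'$ with $\lVert \mathbb{X}^{r,m}(\cdot) - \mathbb{X}(\cdot) \rVert_M \le \epsilon$, which is the assertion.

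The step I expect to require the most care is matching the paper's definition of a hydrodynamic limit --- which demands an approximating element of $E^r$ for \emph{every} $r \ge r_0$, not merely along a subsequence --- to the subsequential compactness that Arzel\`a--Ascoli directly supplies. The genuine content of the corollary is therefore not the continuity estimate (that labor is already discharged in Proposition~\ref{prop:AlmostLipschitzContinuousHydroScaling} and in the control of $\sqrt{x_{r,m}}/r$ from Lemma~\ref{lem:BoundedXrm}) but the bookkeeping that verifies the family $\{E^r\}$ meets the precise conditions of Proposition~4.1 in~\cite{Bramson1998}, so that the set of hydrodynamic limits is closed and the uniform approximation holds across the entire family rather than along one sequence. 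Once that identification is in place, the conclusion is immediate.
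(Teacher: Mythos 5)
Your proposal is correct and takes essentially the same route as the paper: the paper's entire proof consists of observing that $\lvert \mathbb{X}^{r,m}(0)\rvert \leq 1$ (immediate from the definition of $x_{r,m}$) and that the almost-Lipschitz bound with vanishing error $\epsilon(r)$ holds on $\mathcal{K}^r$ by Proposition~\ref{prop:AlmostLipschitzContinuousHydroScaling}, and then invoking Proposition~4.1 of~\cite{Bramson1998} as a black box. Your additional Arzel\`a--Ascoli sketch and the caveat about subsequential versus full-family approximation are exactly the content that Bramson's proposition is being cited to supply, so they add exposition but do not change the argument.
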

		
		Finally, to conclude this section, we derive the equations that are satisfied by any hydrodynamic limit.
		\begin{proposition}
			Let $M>0$ be fixed, and let $\tilde{\mathbb{X}}$ be a hydrodynamic limit of $\mathcal{E}$ over $[0,M]$. Then $\tilde{\mathbb{X}}$ satisfies the following equations:
			\begin{align}
			\tilde{A}_{ij}(t) &= \tilde{A}_{ij,i}(t) + \tilde{A}_{ij,j}(t) \hspace{0.5cm} \forall \{i,j\} \in E, \label{eq:HydroLimitArrivals1Fluid}\\
			\tilde{A}_{ij}(t) &=  p_{ij} \lambda \tilde{Y}(t) = p_{ij} \lambda t \hspace{0.5cm} \forall \{i,j\} \in E,  \label{eq:HydroLimitArrivals2Fluid}\\
			\tilde{Q}_j(t) &= \tilde{Q}_j(0) + \sum_{i : \{i,j\} \in E } \tilde{A}_{ij,j}(t)-\tilde{D}_j(t), \hspace{0.5cm} \forall j=1,\ldots,S, \label{eq:HydroLimitQueueLengthFluid} \\
			\tilde{D}_j(t) &= \mu \tilde{T}_j(t) = p_j \lambda t , \hspace{0.5cm} \forall j=1,\ldots,S,\label{eq:HydroLimitServiceCompletionsFluid}\\
			\tilde{Y}(t) &= t, \hspace{0.5cm} \forall j=1,\ldots,S, \label{eq:HydroLimitAggregatedArrivalTimeFluid}\\
			\tilde{T}_j(t) &= p_j \lambda/\mu t, \hspace{0.5cm} \forall j=1,\ldots,S, \label{eq:HydroLimitBusyTimeFluid}\\
			\tilde{A}_{ij,i}'(t) &= \left\{ \begin{array}{ll}
			p_{ij} \lambda & \textrm{if }  \frac{\tilde{Q}_i(t)}{p_i} < \frac{\tilde{Q}_j(t)}{p_j}\\
			0 & \textrm{if }  \frac{\tilde{Q}_j(t)}{p_j} > \frac{\tilde{Q}_i(t)}{p_i}
			\end{array} \right.  \hspace{0.5cm} \forall \{i,j\} \in E. \label{eq:HydroLoadBalancingArrivalsDerivative}
			\end{align}
			\label{prop:HydroLimitingEquations}
		\end{proposition}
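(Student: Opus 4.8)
The plan is to pass to the limit in each of the hydrodynamically-scaled system identities \eqref{eq:HydroArrivals1}--\eqref{eq:HydroDerivativeIncreasePossibility}, exploiting that by Corollary~\ref{cor:ConvergenceToHydrodynamicLimit} any hydrodynamic limit $\tilde{\mathbb{X}}$ is a uniform-on-$[0,M]$ limit of the scaled paths $\mathbb{X}^{r,m}(\cdot,\omega)$ taken along $\omega \in \mathcal{K}^r$ and $m < \sqrt{r}T$. Since every such path satisfies the exact identities, the purely linear relations \eqref{eq:HydroArrivals1} and \eqref{eq:HydroQueueLength} carry over verbatim to $\tilde{\mathbb{X}}$, yielding \eqref{eq:HydroLimitArrivals1Fluid} and \eqref{eq:HydroLimitQueueLengthFluid}.

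First I would dispose of the time-change correction terms. On the event $\mathcal{H}^r \subset \mathcal{K}^r$, Lemma~\ref{lem:BoundedXrm} gives $\frac{\sqrt{x_{r,m}}}{r}\lVert Q^{r,m}\rVert_M \le \epsilon(r)$ and $\frac{\sqrt{x_{r,m}}}{r}\lVert L^{r,m}\rVert_M \le \epsilon(r)$; moreover \eqref{eq:HydroBusyServers} expresses $Z_j^{r,m}$ as a minimum of $Q_j^{r,m}$ and a cap whose scaled value $\gamma p_j\sqrt{\lambda/(\mu r)}$ vanishes, so that $\frac{\sqrt{x_{r,m}}}{r}Z_j^{r,m}$ likewise tends to $0$ uniformly. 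Hence the integral terms in \eqref{eq:HydroAggregatedArrivalTime} and \eqref{eq:HydroBusyTime} disappear in the limit, giving $\tilde{Y}(t)=t$ and $\tilde{T}_j(t)=p_j\lambda/\mu\,t$, i.e. \eqref{eq:HydroLimitAggregatedArrivalTimeFluid} and \eqref{eq:HydroLimitBusyTimeFluid}. The proportionality estimates \eqref{eq:YCloseToExpectation} and \eqref{eq:TCloseToExpectation} of Lemma~\ref{lem:UnlikelyBadEvents} then force $\tilde{A}_{ij}(t)=p_{ij}\lambda\tilde{Y}(t)$ and $\tilde{D}_j(t)=\mu\tilde{T}_j(t)$; substituting the expressions just obtained produces \eqref{eq:HydroLimitArrivals2Fluid} and \eqref{eq:HydroLimitServiceCompletionsFluid}.

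The remaining and genuinely delicate step is the routing derivative \eqref{eq:HydroLoadBalancingArrivalsDerivative}, which mirrors the fluid argument behind \eqref{eq:IdentityArrivalDerivativeFluid}. Fix a regular point $t$ with $\tilde{Q}_i(t)/p_i < \tilde{Q}_j(t)/p_j$. Since $\tilde{\mathbb{X}} \in E'$ is Lipschitz, hence continuous, the strict inequality persists on some interval $[t-\delta,t+\delta]$; by the uniform closeness of $\mathbb{X}^{r,m}$ to $\tilde{\mathbb{X}}$ on $\mathcal{K}^r$, the prelimit inequality $Q_i^{r,m}(s)/p_i < Q_j^{r,m}(s)/p_j$ also holds throughout this interval for all large $r$. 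The monotonicity constraint \eqref{eq:HydroDerivativeIncreasePossibility} then shows $A_{ij,j}^{r,m}$ cannot increase there, so $\tilde{A}_{ij,j}$ is constant on $[t-\delta,t+\delta]$ and $\tilde{A}'_{ij,j}(t)=0$; by \eqref{eq:HydroLimitArrivals1Fluid} and \eqref{eq:HydroLimitArrivals2Fluid} all the arriving mass then routes to $i$, giving $\tilde{A}'_{ij,i}(t)=p_{ij}\lambda$. The symmetric case follows identically.

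I expect this routing step to be the main obstacle: one must transfer a \emph{strict} inequality between continuous limit quantities back to the prelimit uniformly in $m$, and only afterwards convert the resulting ``no increase'' statement into a pointwise derivative identity, taking care that the approximation error $\epsilon(r)$ and the interval half-width $\delta$ are handled in the right order. The verification that $\frac{\sqrt{x_{r,m}}}{r}Z_j^{r,m}\to 0$ carries the minor subtlety that the server cap in \eqref{eq:HydroBusyServers} is of order $\sqrt{r}$ before scaling, but its explicit deterministic form makes the estimate routine.
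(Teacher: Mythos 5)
Your proposal is correct and takes essentially the same route as the paper's own proof: approximate the hydrodynamic limit by prelimit paths from $\mathcal{K}^r$, use Lemma~\ref{lem:BoundedXrm} to eliminate the correction integrals in $Y^{r,m}$ and $T_j^{r,m}$, invoke \eqref{eq:YCloseToExpectation}--\eqref{eq:TCloseToExpectation} to identify $\tilde{A}_{ij}$ and $\tilde{D}_j$, carry the linear identities over directly, and obtain the routing derivative by persistence of the strict inequality, transfer to the prelimit, and the constraint \eqref{eq:HydroDerivativeIncreasePossibility}. Your explicit verification that $\frac{\sqrt{x_{r,m}}}{r}Z_j^{r,m}$ vanishes (via the $O(\sqrt{r})$ cap) is a minor point the paper leaves implicit, but otherwise the arguments coincide.
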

		
		\begin{remark}\normalfont
			We cannot provide such general equations for $\tilde{Z}(\cdot)$ or $\tilde{L}(\cdot)$, since these limits depend on $x_{r,m}$. That is, the processes $\tilde{Z}^{r,m}(\cdot)$ and $\tilde{L}^{r,m}(\cdot)$ converge to a limit, but the limiting process may differ for different $m$. In the proof, we specify the limiting equations of these processes as well.
		\end{remark}
		
\begin{proof}[Proof of Proposition~\ref{prop:HydroLimitingEquations}]
		Let $\tilde{\mathbb{X}}$ be a hydrodynamic limit of~$\mathcal{E}$. For a given $\delta >0$, choose $(r,m)$ such that $\epsilon(r) \leq \delta$, and
		\begin{align*}
		\lVert \tilde{\mathbb{X}}(t) - \mathbb{X}^{r,m}(t, \omega) \rVert_M \leq \delta.
		\end{align*}
		Due to~\eqref{eq:HydroAggregatedArrivalTime}, \eqref{eq:HydroBusyTime} and $\omega \in \mathcal{H}^r$, we derive
		\begin{align*}
		\lVert \tilde{Y}(t) - t \rVert_M \leq (1+M) \delta, \hspace{1cm} \lVert \tilde{T}_j(t) - p_j \lambda / \mu t \rVert_M \leq (1+M) \delta, \hspace{0.25cm} j=1,\ldots,S.
		\end{align*}
		From~\eqref{eq:YCloseToExpectation} and~\eqref{eq:TCloseToExpectation}, we obtain
		\begin{align*}
		\lVert \tilde{A}_{ij}(t) - p_{ij} \lambda t \rVert_M \leq (2+M) \delta, \hspace{1cm} \{i,j\} \in E,
		\end{align*}
		and
		\begin{align*}
		\lVert \tilde{D}_{j}(t) - p_{j} \lambda t \rVert_M \leq (2+M) \delta, \hspace{1cm} j=1,\ldots,S.
		\end{align*}
		Equation~\eqref{eq:HydroLimitArrivals1Fluid} is a clear consequence of~\eqref{eq:HydroArrivals1}. Combining the above equations, we observe
		\begin{align*}
		\left\lVert \tilde{Q}_j(t) - \tilde{Q}_j(0) - \sum_{i : \{i,j\} \in E } \tilde{A}_{ij,j}(t) +\tilde{D}_j(t) \right\rVert_M \leq 2(|E|+S)(2+M)\delta
		\end{align*}
		These bounds imply that any hydrodynamic limit satisfies equations~\eqref{eq:HydroLimitArrivals1Fluid}-\eqref{eq:HydroLimitBusyTimeFluid}. Finally, we still have to show~\eqref{eq:HydroLoadBalancingArrivalsDerivative}. If for some $t \in [0,M]$,
		\begin{align*}
		\frac{\tilde{Q}_j(t)}{p_j}  > \frac{\tilde{Q}_i(t)}{p_i} ,
		\end{align*}
		then by continuity of $\tilde{\mathbb{X}}$, there exists a $\eta >0$ such that this holds for all $s \in [t-\eta, t+\eta)$, and also
		\begin{align*}
		\frac{\tilde{Q}^{r,m}_j(t)}{p_j}  > \frac{\tilde{Q}^{r,m}_i(t)}{p_i} .
		\end{align*}
		Due to~\eqref{eq:HydroDerivativeIncreasePossibility}, this implies that $A_{ij,i}^{r,m}(s)$ is constant on $s \in [t-\eta, t+\eta])$. Therefore, its limit is also constant on $[t-\eta, t+\eta]$, and hence the derivative is zero. On the other hand, if
		\begin{align*}
		\frac{\tilde{Q}_j(t)}{p_j}  < \frac{\tilde{Q}_i(t)}{p_i} ,
		\end{align*}
		then by continuity of $\tilde{\mathbb{X}}$, there exists a $\eta >0$ such that this holds for all $s \in [t-\eta, t+\eta])$, and
		\begin{align*}
		\frac{\tilde{Q}^{r,m}_j(t)}{p_j}  < \frac{\tilde{Q}^{r,m}_i(t)}{p_i} .
		\end{align*}
		Since $\tilde{A}'_{ij,j}=0$, and due to~\eqref{eq:HydroArrivals1} with limiting process~\eqref{eq:HydroLimitArrivals2Fluid},
		\begin{align*}
		\tilde{A}'_{ij,i} = \lim_{\eta \downarrow 0} \frac{\tilde{A}'_{ij}(t+ \eta)-\tilde{A}'_{ij}(t)}{\eta} = p_{ij} \lambda.
		\end{align*}\hfill\end{proof}
		
		\subsection{The SSC function}
		In this section, we introduce the state space collapse (SSC) function under which we show multiplicative state space collapse. The SSC function we use in our paper is $g: \mathbb{R}^S \rightarrow \mathbb{R}$, defined as
		\begin{align}
		g(q) = \max_{1 \leq j \leq S} \frac{q_j}{p_j} -  \min_{1 \leq j \leq S} \frac{q_j}{p_j}
		\label{eq:SSCfunctionDefinition}
		\end{align}
		where $q=(q_1,\ldots,q_S)$. We note that $g(\cdot)$ is a non-negative continuous function and satisfies
		\begin{align*}
		g(\alpha q) = \alpha g(q)
		\end{align*}
		for every $\alpha >0$. 
		
		\begin{lemma}
			Suppose $g: \mathbb{R}^S \rightarrow \mathbb{R}$ is defined as in~\eqref{eq:SSCfunctionDefinition}. Then,
			\begin{align*}
			g(\tilde{Q}(t)) \leq H(t) \hspace{0.5cm} \forall t\geq 0,
			\end{align*}
			for every hydrodynamic model solution $\tilde{\mathbb{X}}$ satisfying $|\tilde{\mathbb{X}}(0)|\leq 1$, where
			\begin{align}
			H(t) = \left( \frac{2}{\min_{1 \leq j \leq S} p_j} -h t \right)^+
			\label{eq:HfunctionDefinition}
			\end{align}
			with $h >0$ some constant that depends only on $\lambda$ and $\{p_{ij}, \{i,j\} \in E\}$. Moreover, if $g(\tilde{Q}(0))=0$ and $|\tilde{\mathbb{X}}(0)|\leq 1$, then $g(\tilde{Q} (t))=0$ for all $t \geq 0$.
			\label{lem:BoundedSSCfunction}
		\end{lemma}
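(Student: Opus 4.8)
My plan is to adapt, almost verbatim, the uniqueness-of-invariant-state argument used in the proof of Theorem~\ref{thm:FluitLimitResult}, taking advantage of a decisive simplification available at the hydrodynamic scale: by~\eqref{eq:HydroLimitServiceCompletionsFluid} the departure process satisfies $\tilde{D}_j(t)=p_j\lambda t$, so $\tilde{D}'_j(t)/p_j=\lambda$ for every $j$ at every regular point. Consequently the departures are already perfectly balanced across stations, and the evolution of $G(t):=g(\tilde{Q}(t))$ is governed entirely by the arrival routing rule~\eqref{eq:HydroLoadBalancingArrivalsDerivative}. I would begin by recording the initial estimate: since $|\tilde{\mathbb{X}}(0)|\le 1$ forces $|\tilde{Q}_j(0)|\le 1$ for all $j$, we get $G(0)\le 2/\min_j p_j=H(0)$. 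Because any hydrodynamic limit lies in $E'$ and is therefore Lipschitz, $G$ is Lipschitz and differentiable almost everywhere, so the whole lemma reduces to establishing the differential inequality $G'(t)\le -h$ at every regular point $t$ with $G(t)>0$.

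To obtain that inequality I would fix such a regular point $t$ and consider $S_{\max}=\argmax_j \tilde{Q}_j(t)/p_j$ and $S_{\min}=\argmin_j \tilde{Q}_j(t)/p_j$, which are disjoint nonempty proper subsets of $V$ precisely because $G(t)>0$. Invoking Lemma~2.8.6 of~\cite{Dai1999} (exactly as in the fluid proof) gives $\tfrac{d}{dt}\max_j \tilde{Q}_j/p_j=\tilde{Q}'_j(t)/p_j$ for any $j\in S_{\max}$, and shows that the weighted inflow $\sum_i \tilde{A}'_{ij,j}(t)/p_j$ is constant over $S_{\max}$ (and separately over $S_{\min}$). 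Using~\eqref{eq:HydroLoadBalancingArrivalsDerivative}, every edge joining $S_{\max}$ to its complement sends all of its arrivals to the strictly less-loaded endpoint, so the only inflow to $S_{\max}$ comes from its internal edges (those with both endpoints in $S_{\max}$); combining this with the identity $\sum_{l\in S_{\max}}p_l=\sum_{\{i,j\}\subset S_{\max}}p_{ij}+\tfrac12\sum_{\partial S_{\max}}p_{ik}$ I expect to obtain, for $j\in S_{\max}$,
\begin{align*}
\frac{\tilde{Q}'_j(t)}{p_j}=\frac{\lambda\sum_{\{i,j\}\subset S_{\max}}p_{ij}}{\sum_{l\in S_{\max}}p_l}-\lambda=-\lambda\,\frac{\tfrac12\sum_{\partial S_{\max}}p_{ik}}{\sum_{l\in S_{\max}}p_l}.
\end{align*}
The symmetric computation on $S_{\min}$, where every boundary edge now feeds into $S_{\min}$, should give the strictly positive value $\lambda(\tfrac12\sum_{\partial S_{\min}}p_{ik})/(\sum_{l\in S_{\min}}p_l)$ for $\tilde{Q}'_j(t)/p_j$ with $j\in S_{\min}$.

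Subtracting the two, $G'(t)$ equals minus the sum of these two nonnegative contributions. The final step is to make this bound uniform: by Assumption~\ref{ass:ConnectedUnderlyingGraph} the graph is connected, so every nonempty proper subset has a boundary edge of positive weight, and since there are only finitely many subsets of $V$ the constant
\begin{align*}
h:=\min_{\emptyset\neq S\subsetneq V}\ \lambda\,\frac{\tfrac12\sum_{\{i,k\}\in E,\,i\in S,\,k\notin S}p_{ik}}{\sum_{l\in S}p_l}
\end{align*}
is strictly positive and depends only on $\lambda$ and $\{p_{ij}\}$. This yields $G'(t)\le -2h\le -h$ whenever $G(t)>0$. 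I would then integrate this differential inequality---using $G\ge 0$ and the fact that $G$ strictly decreases while positive---to conclude $G(t)\le(G(0)-ht)^+\le(2/\min_j p_j-ht)^+=H(t)$, which is the first assertion; the second follows immediately, since $G(0)=0$ makes the right-hand side vanish while $G\ge0$ forces $G\equiv 0$. I expect the main obstacle to be the balance identity for the arrival derivatives over $S_{\max}$ and $S_{\min}$---that is, justifying via Lemma~2.8.6 of~\cite{Dai1999} that the weighted inflow is equalized within each extremal set and that the maximum and minimum may be differentiated along a single index---together with cleanly extracting the uniform rate $h$ from connectivity; once the departure balance $\tilde{D}'_j/p_j=\lambda$ is in hand, the remaining estimates are routine.
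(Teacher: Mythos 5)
Your proposal is correct and takes essentially the same route as the paper's own proof: the same extremal-set decomposition with $S_{\max}$ and $S_{\min}$, the same appeal to Lemma~2.8.6 of~\cite{Dai1999} to differentiate the max/min and equalize weighted inflows, the balanced departures $\tilde{D}'_j(t)=p_j\lambda$ from~\eqref{eq:HydroLimitServiceCompletionsFluid}, the routing rule~\eqref{eq:HydroLoadBalancingArrivalsDerivative} forcing all boundary-edge arrivals out of $S_{\max}$ and into $S_{\min}$, and integration of the resulting differential inequality. Your explicit boundary-edge identity and your per-set constant $h$ (giving the bound $-2h$) are merely a cleaner repackaging of the paper's constant, which is defined as a minimum over disjoint pairs of sets; both yield a strictly positive rate depending only on $\lambda$ and $\{p_{ij}\}$, as required.
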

		
\begin{proof}
		The proof relies heavily on the ideas used in the proof for the fluid limit. Let
		\begin{align*}
		h = \min\left\{ \frac{\lambda }{\sum_{i \in \mathcal{I}} p_i} \sum_{\substack{\{i,j\} \in E ,\\ i \in \mathcal{I} \cup j \in \mathcal{I}}} p_{ij} - \frac{\lambda }{\sum_{j \in \mathcal{J}} p_j} \sum_{\substack{\{i,j\} \in E ,\\ i \in \mathcal{J} \cap j \in \mathcal{J}}} p_{ij} : \mathcal{I}, \mathcal{J} \subset \{1,\ldots,S\}, \mathcal{I} \cap \mathcal{J} = \emptyset, \mathcal{I} \neq \emptyset, \mathcal{J} \neq \emptyset \right\}.
		\end{align*}
		Since
		\begin{align*}
		\frac{1 }{\sum_{i \in \mathcal{I}} p_i} \sum_{\substack{\{i,j\} \in E ,\\ i \in \mathcal{I} \cup j \in \mathcal{I}}} p_{ij} >1 , \hspace{1cm} \frac{1 }{\sum_{j \in \mathcal{J}} p_j} \sum_{\substack{\{i,j\} \in E ,\\ i \in \mathcal{J} \cap j \in \mathcal{J}}} p_{ij} <1,
		\end{align*}
		for any non-empty $\mathcal{I}, \mathcal{J} \subset \{1,\ldots,S\}$ with $\mathcal{I} \cap \mathcal{J} = \emptyset$, we observe that $h <0$. For a hydrodynamic limiting process $\tilde{\mathbb{X}}$, let $H_{\tilde{\mathbb{X}}}(\cdot)$ be given by
		\begin{align*}
		H_{\tilde{\mathbb{X}}}(t) = \left(g(\tilde{Q}(0)) - h t \right)^+.
		\end{align*}
		We note that this function is non-negative, and satisfies $H_{\tilde{\mathbb{X}}}(t) = 0$ for all $t \geq  g(\tilde{Q}(0))/h$.
		
		To show that $g(\tilde{Q}(t))$ is bounded by this function, we note that it suffices to show that whenever $g(\tilde{Q}(t)) >0$ with $t\geq 0$ being a regular point of $\tilde{\mathbb{X}}$,
		\begin{align*}
		g'(\tilde{Q}(t)) \leq -h,
		\end{align*}
		For this purpose, let
		\begin{align*}
		S_{\max}(t) = \left\{ i \in \{1,\ldots,S\} : \tilde{Q}_i(t)/p_i = \max_{1 \leq j \leq S}  \tilde{Q}_j(t)/p_j \right\},
		\end{align*}
		and
		\begin{align*}
		S_{\min}(t) = \left\{ i \in \{1,\ldots,S\} : \tilde{Q}_i(t)/p_i = \min_{1 \leq j \leq S}  \tilde{Q}_j(t)/p_j \right\}.
		\end{align*}
		Due to Lemma~2.8.6 in~\cite{Dai1999}, it holds for all $i,j \in S_{\max}(t)$ that $\tilde{Q}'_i(t)/p_i= \tilde{Q}'_j(t)/p_j$, and similarly, for all $i,j \in S_{\min}(t)$ it holds that $\tilde{Q}'_i(t)/p_i= \tilde{Q}'_j(t)/p_j$. Therefore, due to hydrodynamic limit equations~\eqref{eq:HydroLimitArrivals1Fluid}-\eqref{eq:HydroLoadBalancingArrivalsDerivative} and the observation that there is at least one station $j \not\in S_{\max}(t)$ such that $\{i,j\} \in E$,  it follows that
		\begin{align*}
		\frac{\tilde{Q}'_i(t)}{p_i} < \frac{\lambda }{ \sum_{i \in S_{\max}(t)} p_i} \sum_{\substack{\{i,j\} \in E ,\\ i ,j \in S_{\max}(t)}} p_{ij} - \lambda
		\end{align*}
		for all $i \in S_{\max}(t)$. Similarly, for all $i \in S_{\min}$,
		\begin{align*}
		\frac{\tilde{Q}'_i(t)}{p_i} > \frac{\lambda }{\sum_{i \in S_{\min}(t)} p_i} \sum_{\substack{\{i,j\} \in E ,\\ i \in S_{\min}(t) \cup j \in S_{\min}(t)}} p_{ij} - \lambda .
		\end{align*}
		We conclude that $g'(\tilde{Q}(t)) \leq -h$, and hence $g(\tilde{Q}(t)) \leq H_{\tilde{\mathbb{X}}}(t)$ for all $t\geq 0$. In particular, if $g(\tilde{Q}(0))=0$ and $|\tilde{\mathbb{X}}(0)|\leq 1$, it follows from the definition of $H_{\tilde{\mathbb{X}}}(\cdot)$ that $g(\tilde{Q} (t))=0$ for all $t \geq 0$.
		
		The first statement of the lemma follows since for every hydrodynamic model solution $\tilde{\mathbb{X}}$ satisfying $|\tilde{\mathbb{X}}(0)|\leq 1$, it holds that $g(\tilde{Q}(0)) \leq 2/\min_{1\leq j \leq S} p_j$. Hence, $H_{\tilde{\mathbb{X}}}(\cdot) \leq H(\cdot)$ for every hydrodynamic model solution $\tilde{\mathbb{X}}$ satisfying $|\tilde{\mathbb{X}}(0)|\leq 1$.
		
\hfill\end{proof}
		
		This result implies that the hydrodynamically scaled queue length almost satisfies this property as well.
		The next result is an immediate consequence of Corollary~\ref{cor:ConvergenceToHydrodynamicLimit}.

		\begin{corollary}
			Fix $\epsilon >0$, $M>0$ and $T>0$. Then, for every $\omega \in \mathcal{K}^r$,
			\begin{align*}
			g(Q^{r,m}(t)) \leq H(t) + \epsilon
			\end{align*}
			for all $t \in [0,M]$ and $m < \sqrt{r}T$, where $H(\cdot)$ is as in Lemma~\ref{lem:BoundedSSCfunction}. Moreover, if $g(\hat{Q}(0)) \rightarrow 0$ in probability as $r \rightarrow \infty$, then for all $\omega \in \mathcal{L}^r$ with
			\begin{align*}
			\mathcal{L}^r = \mathcal{K}^r \cap \left\{ \left\lvert g(Q^{r,0}(0)) \right\rvert \leq \epsilon \right\}
			\end{align*}
			it holds that
			\begin{align*}
			\lVert g(Q^{r,0}(t)) \rVert_M \leq \epsilon,
			\end{align*}
			and
			\begin{align*}
			\lim_{r \rightarrow \infty} \Prob(\mathcal{L}^r) = 1.
			\end{align*}
			\label{cor:BoundedSSCfunctionHydro}
		\end{corollary}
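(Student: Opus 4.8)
The plan is to read off both assertions from three ingredients already in place: the convergence of the hydrodynamically-scaled process to a hydrodynamic limit (Corollary~\ref{cor:ConvergenceToHydrodynamicLimit}), the pointwise bound $g(\tilde{Q}(t))\leq H(t)$ along hydrodynamic limits (Lemma~\ref{lem:BoundedSSCfunction}), and the Lipschitz continuity of the SSC function. First I would record that $g$ from~\eqref{eq:SSCfunctionDefinition}, being a difference of a maximum and a minimum of the coordinates $q_j/p_j$, satisfies $\lvert g(q)-g(q')\rvert\leq L\,\lvert q-q'\rvert$ with $L=2/\min_{1\leq j\leq S}p_j$ in the max-norm used throughout.

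For the uniform bound, fix $\omega\in\mathcal{K}^r$ and $m<\sqrt{r}T$. Applying Corollary~\ref{cor:ConvergenceToHydrodynamicLimit} with tolerance $\epsilon/L$ produces a hydrodynamic limit $\tilde{\mathbb{X}}\in E'$ with $\lVert\mathbb{X}^{r,m}(\cdot)-\tilde{\mathbb{X}}(\cdot)\rVert_M\leq\epsilon/L$; since $\tilde{\mathbb{X}}\in E'$ forces $\lvert\tilde{\mathbb{X}}(0)\rvert\leq1$, Lemma~\ref{lem:BoundedSSCfunction} gives $g(\tilde{Q}(t))\leq H(t)$ for all $t$, with $H$ as in~\eqref{eq:HfunctionDefinition}. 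The Lipschitz estimate then yields
\[
g(Q^{r,m}(t))\leq g(\tilde{Q}(t))+L\,\lVert Q^{r,m}(\cdot)-\tilde{Q}(\cdot)\rVert_M\leq H(t)+\epsilon
\]
for every $t\in[0,M]$, uniformly in $m<\sqrt{r}T$.

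For the second assertion I would specialise to $m=0$ and $\omega\in\mathcal{L}^r$, so that $g(Q^{r,0}(0))\leq\epsilon$. Choosing a hydrodynamic limit $\tilde{\mathbb{X}}$ close to $\mathbb{X}^{r,0}$, the Lipschitz transfer at $t=0$ controls $g(\tilde{Q}(0))$ in terms of $\epsilon$, and the sharper bound $g(\tilde{Q}(t))\leq(g(\tilde{Q}(0))-ht)^+\leq g(\tilde{Q}(0))$ from the proof of Lemma~\ref{lem:BoundedSSCfunction}, i.e.\ the fact that $g(\tilde{Q}(\cdot))$ is non-increasing along any hydrodynamic limit with $\lvert\tilde{\mathbb{X}}(0)\rvert\leq1$, propagates this smallness over all of $[0,M]$. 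Transferring back by Lipschitz continuity and absorbing the (arbitrarily small) approximation constants into $\epsilon$ gives $\lVert g(Q^{r,0}(\cdot))\rVert_M\leq\epsilon$. To see $\Prob(\mathcal{L}^r)\to1$, I use $\Prob(\mathcal{K}^r)\to1$ from the corollary asserting $\lim_{r\to\infty}\Prob(\mathcal{K}^r)=1$, together with the scaling identity
\[
g(Q^{r,0}(0))=\frac{\sqrt{\lambda r/\mu}}{\sqrt{x_{r,0}}}\Bigl(\max_{j}\hat{Q}^r_j(0)-\min_{j}\hat{Q}^r_j(0)\Bigr),
\]
whose prefactor is bounded by $\sqrt{\lambda/\mu}$ since $x_{r,0}\geq r$, while the bracket is at most $\max_{i,j}\lvert\hat{Q}^r_i(0)-\hat{Q}^r_j(0)\rvert$, which tends to $0$ in probability by hypothesis; hence $g(Q^{r,0}(0))\overset{\Prob}{\rightarrow}0$ and $\Prob(\mathcal{L}^r)\to1$.

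The genuinely delicate step is the second assertion: the bare ``exact zero'' implication in Lemma~\ref{lem:BoundedSSCfunction} does not apply, because the approximating hydrodynamic limit need only have $g(\tilde{Q}(0))$ small rather than zero, so I must instead lean on the quantitative, monotone bound extracted from that lemma's proof and keep careful track of how the two Lipschitz transfers and the convergence tolerance combine, so that the final constant is exactly $\epsilon$ rather than a fixed multiple of it.
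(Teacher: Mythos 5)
Your proof is correct and matches the paper's (implicit) argument: the paper states this corollary without proof, as an immediate consequence of Corollary~\ref{cor:ConvergenceToHydrodynamicLimit} combined with Lemma~\ref{lem:BoundedSSCfunction}, which is exactly the hydrodynamic-approximation-plus-Lipschitz-transfer route you take, including the scaling identity relating $g(Q^{r,0}(0))$ to the diffusion-scaled differences for $\Prob(\mathcal{L}^r)\to 1$. Your observation that the second assertion requires the quantitative monotone bound $g(\tilde{Q}(t))\leq(g(\tilde{Q}(0))-ht)^+$ extracted from the lemma's proof (rather than its bare ``exact zero'' implication) is precisely the right way to fill in the detail the paper glosses over.
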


		\subsection{Multiplicative state space collapse}
		The goal of this section is to show multiplicative state space collapse for the SSC function defined in~\eqref{eq:SSCfunctionDefinition}. To do so, we first need to relate the hydrodynamic and diffusion scaling. That is, we observe that
		\begin{align*}
		Q_j^{r,m}(t) = p_j \sqrt{\frac{\lambda r/\mu }{x_{r,m}}} \hat{Q}_j^r\left( \frac{m}{\sqrt{r}}+\frac{\sqrt{x_{r,m}} t}{r} \right) = \frac{p_j \sqrt{\lambda / \mu}}{y_{r,m}} \hat{Q}_j^r\left( \frac{1}{\sqrt{r}} (m+y_{r,m} t) \right) ,
		\end{align*}
		where
		\begin{align*}
		y_{r,m} = \sqrt{\frac{x_{r,m}}{r}} = \max \left\{ \left\lvert p \sqrt{\frac{\lambda}{\mu}} \hat{Q}^r\left( \frac{m}{\sqrt{r}} \right) \right\rvert, \left\lvert \hat{L}^r\left( \frac{m}{\sqrt{r}} \right) \right\rvert, 1 \right\}
		\end{align*}
		with
		\begin{align*}
		\hat{L}^r(t) = \frac{L^r(t)-r}{\sqrt{r}}.
		\end{align*}
		Corollary~\ref{cor:BoundedSSCfunctionHydro} can be translated to the diffusion scaled process.
		Consider the SSC function $\hat{g}: \mathbb{R}^S \rightarrow \mathbb{R}$ defined as
		\begin{align*}
		\hat{g}(q) = \max_{1 \leq j \leq S} q_j -  \min_{1 \leq j \leq S} q_j
		\end{align*}
		with $q=(q_1,\ldots,q_S)$.
		
		\begin{corollary}
			Fix $\epsilon >0$, $M>0$ and $T>0$. Then for $r$ large enough, and $\omega \in \mathcal{K}^r$,
			\begin{align*}
			\hat{g}(\hat{Q}^r(t)) \leq \frac{y_{r,m}}{\sqrt{\lambda/\mu}} H \left( \frac{1}{y_{r,m}} (\sqrt{r}t-m) \right) + \epsilon \frac{y_{r,m}}{\sqrt{\lambda/\mu}}
			\end{align*}
			for all $t\in [0,T]$ with $m \in \mathbb{N}$ such that
			\begin{align*}
			\frac{m}{\sqrt{r}} \leq t \leq \frac{m+y_{r,m}M}{\sqrt{r}}.
			\end{align*}
			Also, for all $\omega \in \mathcal{L}^r$,
			\begin{align*}
			\lVert \hat{g}(\hat{Q}^r(t) \rVert_{M y_{r,0}/\sqrt{r}} \leq \epsilon \frac{y_{r,0}}{\sqrt{\lambda/\mu}}.
			\end{align*}
			\label{cor:BoundedSSCfunctionDiffusion}
		\end{corollary}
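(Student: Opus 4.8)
The plan is to obtain the corollary as a direct, pathwise translation of the hydrodynamic bound in Corollary~\ref{cor:BoundedSSCfunctionHydro} through the explicit scaling identity relating $Q^{r,m}$ and $\hat{Q}^r$ recorded just above it. The crucial algebraic observation is that the two SSC functions are compatible with this identity: since $Q_j^{r,m}(t)/p_j = (\sqrt{\lambda/\mu}/y_{r,m})\,\hat{Q}_j^r\big(\tfrac{1}{\sqrt{r}}(m+y_{r,m}t)\big)$, the weighting by $1/p_j$ built into $g$ in~\eqref{eq:SSCfunctionDefinition} exactly cancels the $p_j$ appearing in the scaling relation, so that taking $\max_j$ minus $\min_j$ yields
\[
g(Q^{r,m}(t)) = \frac{\sqrt{\lambda/\mu}}{y_{r,m}}\,\hat{g}\Big(\hat{Q}^r\big(\tfrac{1}{\sqrt{r}}(m+y_{r,m}t)\big)\Big).
\]
In other words, the weighted spread of the hydrodynamically scaled queue vector and the unweighted spread of the diffusion-scaled queue vector differ only by the deterministic factor $y_{r,m}/\sqrt{\lambda/\mu}$.

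First I would fix $\omega\in\mathcal{K}^r$ and, for a given $t\in[0,T]$, choose $m\in\mathbb{N}$ with $m/\sqrt{r}\le t\le (m+y_{r,m}M)/\sqrt{r}$; the natural choice $m=\lfloor\sqrt{r}t\rfloor$ works because $y_{r,m}\ge 1$. I would then change the time variable by setting $s = (\sqrt{r}t-m)/y_{r,m}$, so that $\tfrac1{\sqrt r}(m+y_{r,m}s)=t$ and the constraint on $t$ becomes precisely $s\in[0,M]$, the interval on which Corollary~\ref{cor:BoundedSSCfunctionHydro} is valid. Substituting $s$ into the displayed identity and invoking the hydrodynamic bound $g(Q^{r,m}(s))\le H(s)+\epsilon$ on $\mathcal{K}^r$ gives
\[
\hat{g}(\hat{Q}^r(t)) = \frac{y_{r,m}}{\sqrt{\lambda/\mu}}\,g(Q^{r,m}(s)) \le \frac{y_{r,m}}{\sqrt{\lambda/\mu}}\,H\!\Big(\frac{\sqrt{r}t-m}{y_{r,m}}\Big) + \epsilon\,\frac{y_{r,m}}{\sqrt{\lambda/\mu}},
\]
which is the first assertion. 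I would also check that $m<\sqrt{r}T$ for every admissible pair $(t,m)$, so that the index lies in the range covered by Corollary~\ref{cor:BoundedSSCfunctionHydro}; this follows at once from $m\le\sqrt{r}t\le\sqrt{r}T$.

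For the second assertion I would specialise to $m=0$ and work on $\mathcal{L}^r$. The scaling identity then reads $\hat{g}(\hat{Q}^r(t)) = (y_{r,0}/\sqrt{\lambda/\mu})\,g(Q^{r,0}(\sqrt{r}t/y_{r,0}))$, and for $t\in[0,My_{r,0}/\sqrt{r}]$ the argument $\sqrt{r}t/y_{r,0}$ ranges over $[0,M]$. Applying the second part of Corollary~\ref{cor:BoundedSSCfunctionHydro}, namely $\lVert g(Q^{r,0}(\cdot))\rVert_M\le\epsilon$ on $\mathcal{L}^r$, and taking the supremum over $t\in[0,My_{r,0}/\sqrt{r}]$ delivers $\lVert\hat{g}(\hat{Q}^r(\cdot))\rVert_{My_{r,0}/\sqrt{r}}\le\epsilon\,y_{r,0}/\sqrt{\lambda/\mu}$, as required.

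I do not expect any genuine obstacle here, since the statement is a deterministic reformulation and the probabilistic content ($\Prob(\mathcal{K}^r)\to1$ and $\Prob(\mathcal{L}^r)\to1$) has already been established. The only points requiring care are the bookkeeping of the time change---ensuring the hydrodynamic time variable stays in $[0,M]$ and that the chosen $m$ remains below $\sqrt{r}T$---and the verification that the $1/p_j$ weighting in $g$ matches the $p_j$ factors in the scaling relation, so that $g(Q^{r,m})$ and $\hat{g}(\hat{Q}^r)$ are proportional with exactly the stated constant.
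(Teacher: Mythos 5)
Your proposal is correct and takes essentially the same route the paper intends: the paper states this corollary without a written proof, as an immediate translation of Corollary~\ref{cor:BoundedSSCfunctionHydro} through the scaling identity $Q_j^{r,m}(s) = \frac{p_j \sqrt{\lambda/\mu}}{y_{r,m}} \hat{Q}_j^r\bigl( \tfrac{1}{\sqrt{r}}(m + y_{r,m} s)\bigr)$ displayed just before it. Your two key observations---that the $1/p_j$ weighting in $g$ exactly cancels the $p_j$ in that identity, so $g(Q^{r,m}(s)) = \frac{\sqrt{\lambda/\mu}}{y_{r,m}} \hat{g}\bigl(\hat{Q}^r(t)\bigr)$, and that the time change $s = (\sqrt{r}t - m)/y_{r,m}$ maps the admissible window for $t$ onto the hydrodynamic interval $[0,M]$ (with $m=0$ on $\mathcal{L}^r$ for the second claim)---constitute precisely that translation.
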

		
		Since $H(\cdot)$ is given as in~\eqref{eq:HfunctionDefinition}, we observe that $H(t)=0$ for all $t \geq 2/(h \min_{1 \leq j \leq S} p_j)$. We would like to show that $(\sqrt{r}t-m)/y_{r,m}$ can be chosen large enough to obtain a very small upper bound, and use that property to show multiplicative state space collapse.
		
		\begin{lemma}
			Suppose $M \geq 2(N+2)$ is fixed, and let
			\begin{align*}
			m_{r}(t) = \min\left\{m \in \mathbb{N} : \frac{m}{\sqrt{r}} \leq t \leq \frac{m+y_{r,m}M}{\sqrt{r}} \right\}
			\end{align*}
			Then for $r$ large enough,
			\begin{align*}
			\frac{\sqrt{r}t-m_r(t)}{y_{r,m_r(t)}}  \geq \frac{M}{2(N+2)}
			\end{align*}
			for every $\omega \in \mathcal{K}^r$ and $t \in ( M y_{r,0} /\sqrt{r} , T]$.
			\label{lem:makeDomainHLargeEnough}
		\end{lemma}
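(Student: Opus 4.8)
The plan is to analyze the minimizing index $m^\ast := m_r(t)$ directly and to reduce the claim to a one-step increment estimate for $y_{r,m}$. First I would note that the restriction $t \in (M y_{r,0}/\sqrt{r}, T]$ rules out $m^\ast = 0$, so that $m^\ast \geq 1$ and the index $m^\ast - 1$ is available. By minimality of $m^\ast$, this index fails the defining inequality; since the lower bound $m^\ast - 1 \leq \sqrt{r}t$ holds trivially, it is the upper bound that is violated, giving
\begin{align*}
\sqrt{r}\,t - m^\ast > y_{r,m^\ast-1} M - 1.
\end{align*}
Dividing by $y_{r,m^\ast}$, the lemma will follow once I control the ratio $y_{r,m^\ast-1}/y_{r,m^\ast}$, equivalently the increment $y_{r,m^\ast} - y_{r,m^\ast-1}$.

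The key step is to bound this increment using the almost-Lipschitz continuity from Proposition~\ref{prop:AlmostLipschitzContinuousHydroScaling}. Since $y_{r,m} = \sqrt{x_{r,m}}/\sqrt{r}$ and $\sqrt{x_{r,m}}$ is a maximum of absolute deviations of $Q^r$ and $L^r$ (the constant $\sqrt{r}$ term dropping out of differences), the inequality $|\max_i a_i - \max_i b_i| \leq \max_i |a_i - b_i|$ reduces the increment of $y_{r,m}$ to the increments of $Q^r$ and $L^r$ over the time step $1/\sqrt{r}$. I would then rewrite these increments through the hydrodynamic scaling on interval $m^\ast - 1$: evaluating $Q^{r,m^\ast-1}$ at time $1/y_{r,m^\ast-1}$ maps the argument $(m^\ast-1)/\sqrt{r}$ to $m^\ast/\sqrt{r}$, so the increment equals $\sqrt{x_{r,m^\ast-1}} = \sqrt{r}\,y_{r,m^\ast-1}$ times a hydrodynamic increment. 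Because $y_{r,m^\ast-1} \geq 1$ gives $1/y_{r,m^\ast-1} \leq 1 \leq M$, Proposition~\ref{prop:AlmostLipschitzContinuousHydroScaling} applies on $\mathcal{K}^r$ and yields, after simplification,
\begin{align*}
|y_{r,m^\ast} - y_{r,m^\ast-1}| \leq N + y_{r,m^\ast-1}\,\epsilon(r).
\end{align*}

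Finally I would close the estimate algebraically. Writing $y := y_{r,m^\ast-1} \geq 1$ and combining the two displays,
\begin{align*}
\frac{\sqrt{r}\,t - m^\ast}{y_{r,m^\ast}} > \frac{yM - 1}{y(1+\epsilon(r)) + N}.
\end{align*}
The right-hand side is increasing in $y$ (its derivative has positive numerator $MN + 1 + \epsilon(r)$), hence minimized at $y = 1$, where it equals $(M-1)/(1 + N + \epsilon(r))$. For $\epsilon(r) = 0$ the hypothesis $M \geq 2(N+2)$ gives $(M-1)/(1+N) \geq M/(2(N+2))$ with strict inequality --- it reduces to $M(N+3) \geq 2(N+2)$ --- so by continuity the required bound persists once $\epsilon(r)$ is small, i.e.\ for $r$ large. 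The main obstacle is precisely this uniform control of the increment: the error $y\,\epsilon(r)$ is not small when $y$ is of order $\sqrt{r}$, but monotonicity saves the argument, since for large $y$ the ratio approaches $M/(1+\epsilon(r))$, which lies well above the target threshold $M/(2(N+2))$.
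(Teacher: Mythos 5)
Your proposal is correct and takes essentially the same route as the paper's proof: both use minimality of $m_r(t)$ (with $m_r(t)\geq 1$ forced by $t > M y_{r,0}/\sqrt{r}$) to obtain $\sqrt{r}\,t - m_r(t) > y_{r,m_r(t)-1}M - 1$, then control $y_{r,m_r(t)}$ in terms of $y_{r,m_r(t)-1}$ by applying the almost-Lipschitz property on $\mathcal{K}^r$ over hydrodynamic time $1/y_{r,m_r(t)-1}\leq M$ (one real-time step of length $1/\sqrt{r}$), and close with the hypothesis $M\geq 2(N+2)$. The only difference is bookkeeping of the error term: the paper absorbs $\epsilon(r)\leq 1$ to get the clean one-step bound $y_{r,m+1}\leq (N+2)\,y_{r,m}$ and then uses $\frac{M}{N+2}-1\geq \frac{M}{2(N+2)}$, whereas you keep $\epsilon(r)$ explicit and conclude via monotonicity in $y$ plus continuity as $\epsilon(r)\to 0$.
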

		
\begin{proof}
		For every $\omega \in \mathcal{K}^r$, by definition of the set,
		\begin{align*}
		\lvert \mathbb{X}^{r,m}(t_2) - \mathbb{X}^{r,m}(t_2) \rvert \leq N |t_2-t_1| +\epsilon,
		\end{align*}
		for $t_1,t_2 \in [0,M]$ and $m < \sqrt{r} T$. In particular, for $t_2=1/y_{r,m}$, $t_1=0$ and $\epsilon \leq 1$,
		\begin{align*}
		\max\left\{ \left\lvert Q^r\left( \frac{m+1}{\sqrt{r}} \right) - Q^r\left( \frac{m}{\sqrt{r}} \right) \right\rvert ,  \left\lvert L^r\left( \frac{m+1}{\sqrt{r}} \right) - L^r\left( \frac{m}{\sqrt{r}} \right) \right\rvert \right\} \leq \sqrt{x_{r,m}} \frac{N}{y_{r,m}} + \sqrt{x_{r,m}}.
		\end{align*}
		Applying the reverse triangle inequality, we observe
		\begin{align*}
		\left\lvert p \sqrt{\frac{\lambda}{\mu}} \hat{Q}^r\left( \frac{m+1}{\sqrt{r}} \right) \right\rvert &- \left\lvert p \sqrt{\frac{\lambda}{\mu}}  \hat{Q}^r\left( \frac{m}{\sqrt{r}} \right) \right\rvert \leq \left\lvert p \sqrt{\frac{\lambda}{\mu}}  \hat{Q}^r\left( \frac{m+1}{\sqrt{r}} \right) - p \sqrt{\frac{\lambda}{\mu}}  \hat{Q}^r\left( \frac{m}{\sqrt{r}} \right) \right\rvert \\
		&\leq N + y_{r,m} \leq (N+1) y_{r,m},
		\end{align*}
		and similarly,
		\begin{align*}
		\left\lvert \hat{L}^r\left( \frac{m+1}{\sqrt{r}} \right) \right\rvert &- \left\lvert  \hat{L}^r\left( \frac{m}{\sqrt{r}} \right) \right\rvert \leq  (N+1) y_{r,m}.
		\end{align*}
		Therefore, it always holds that
		\begin{align*}
		y_{r,m+1} \leq y_{r,m} + (N+1) y_{r,m}  = (N+2) y_{r,m}.
		\end{align*}
		For every $t \in (M y_{r,0}/\sqrt{r},T]$, it follows by definition of $m_r(t)$ that
		\begin{align*}
		\sqrt{r} t \geq m_r(t) -1 + y_{r,m_r(t)-1} M.
		\end{align*}
		In particular,
		\begin{align*}
		\frac{\sqrt{r}t-m_r(t)}{y_{r,m_r(t)}} \geq \frac{ y_{r,m_r(t)-1} M -1 }{y_{r,m_r(t)}} \geq \frac{M}{N+2} - \frac{ 1 }{y_{r,m_r(t)}} \geq \frac{M}{2(N+2)}
		\end{align*}
		where the last inequality follows since $M \geq 2(N+2)$.
\hfill\end{proof}
		
		Next, we show the main result of this section.
		\begin{theorem}
			Suppose $\hat{Q}^r(0) \rightarrow \hat{Q}(0)$ for some random vector $\hat{Q}(0)$. For every $T>0$, $\epsilon >0$ and $M<\infty$ with
			\begin{align*}
			M \geq \max\left\{ \frac{4(N+2)}{h \min_{1 \leq j \leq S} p_j} , 2(N+2), 1  \right\},
			\end{align*}
			it holds that
			\begin{align}
			\Prob\left( \frac{\sup_{M y_{r,0}/\sqrt{r} \leq t \leq T } \hat{g}(\hat{Q}^r(t)) }{ \max\{ \lVert \hat{Q}^r(t) \rVert_T , 1 \} } > \epsilon \right) \rightarrow 0 ,
			\label{eq:MultiplicativeSSCInitallyDifferent}
			\end{align}
			as $r \rightarrow \infty$. If, in addition,  $\hat{g}(\hat{Q}^r(0)) \rightarrow 0$ in probability as $r \rightarrow \infty$, then for every $T>0$,
			\begin{align}
			\frac{\Vert \hat{g}(\hat{Q}^r(t))\rVert_T }{ \max\{ \lVert\hat{Q}^r(t)\rVert_T ,  1 \} }  \overset{\Prob}{\rightarrow} 0 ,
			\label{eq:MultiplicativeSSCInitallyTheSame}
			\end{align}
			as $r \rightarrow \infty$.
			\label{thm:MultiplicativeSSC}
		\end{theorem}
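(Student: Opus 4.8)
The plan is to combine the pathwise bound on $\hat g(\hat Q^r)$ furnished by Corollary~\ref{cor:BoundedSSCfunctionDiffusion} with the time-rescaling estimate of Lemma~\ref{lem:makeDomainHLargeEnough}, and then to turn the resulting bound (which carries a stray factor $y_{r,m}$) into the claimed \emph{multiplicative} statement by controlling $y_{r,m}$ in terms of $\max\{\lVert \hat Q^r(\cdot)\rVert_T,1\}$. Throughout we work on the good event $\mathcal K^r$ (respectively $\mathcal L^r$), which under the hypothesis $\hat Q^r(0)\to\hat Q(0)$ satisfies $\Prob(\mathcal K^r)\to1$ by the earlier corollaries.

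The first step is the deterministic inequality
\[
y_{r,m} \leq C \max\{\lVert \hat Q^r(\cdot)\rVert_T,\,1\}, \qquad m<\sqrt r\,T,
\]
with $C$ depending only on $\lambda,\mu,p,\beta$. For the first entry of $y_{r,m}$ this is immediate, since $\lvert p\sqrt{\lambda/\mu}\,\hat Q^r(m/\sqrt r)\rvert \leq \sqrt{\lambda/\mu}\,(\max_j p_j)\,\lVert\hat Q^r(\cdot)\rVert_T$. For the entry $\lvert\hat L^r(m/\sqrt r)\rvert$ I would use identity~\eqref{eq:IdentityDrivingCars} together with the scaling of $B_j^r$, which gives $\hat L^r(t) = -\sqrt{\lambda/\mu}\sum_{j=1}^S p_j(\hat Q_j^r(t)-\beta)^+$; because $\sum_j p_j =1$ this is bounded by $\sqrt{\lambda/\mu}(\lVert\hat Q^r(\cdot)\rVert_T+\lvert\beta\rvert)$, yielding the claim. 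This step is exactly what forces the collapse to be multiplicative rather than additive, and I expect it to be the main obstacle, since one must control $\hat L^r$ at the single time point $m/\sqrt r$ uniformly over $m$.

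For~\eqref{eq:MultiplicativeSSCInitallyDifferent}, fix $t\in(My_{r,0}/\sqrt r,T]$ and set $m=m_r(t)$. On $\mathcal K^r$, Lemma~\ref{lem:makeDomainHLargeEnough} gives $(\sqrt r\,t-m)/y_{r,m}\geq M/(2(N+2))$; since $M\geq 4(N+2)/(h\min_j p_j)$ this exceeds $2/(h\min_j p_j)$, which by~\eqref{eq:HfunctionDefinition} is precisely the point past which $H(\cdot)=0$. Hence the first bound of Corollary~\ref{cor:BoundedSSCfunctionDiffusion} collapses to $\hat g(\hat Q^r(t)) \leq \epsilon(r)\,y_{r,m}/\sqrt{\lambda/\mu}$. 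Dividing by $\max\{\lVert\hat Q^r(\cdot)\rVert_T,1\}$ and applying the first step bounds the ratio in~\eqref{eq:MultiplicativeSSCInitallyDifferent} by $\epsilon(r)\,C/\sqrt{\lambda/\mu}$ on $\mathcal K^r$; since $\epsilon(r)\to0$ and $\Prob(\mathcal K^r)\to1$, the stated probability tends to zero.

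Finally, for~\eqref{eq:MultiplicativeSSCInitallyTheSame} I would additionally cover the short initial window $[0,My_{r,0}/\sqrt r]$ with the second bound of Corollary~\ref{cor:BoundedSSCfunctionDiffusion}, which on $\mathcal L^r$ gives $\hat g(\hat Q^r(t))\leq \epsilon\,y_{r,0}/\sqrt{\lambda/\mu}$ there; combined with the previous paragraph this controls $\lVert\hat g(\hat Q^r(\cdot))\rVert_T$ over the whole of $[0,T]$ by a bound of the required multiplicative form. To see $\Prob(\mathcal L^r)\to1$ under the extra hypothesis $\hat g(\hat Q^r(0))\overset{\Prob}{\to}0$, I would use the relation $g(Q^{r,0}(0)) = (\sqrt{\lambda/\mu}/y_{r,0})\,\hat g(\hat Q^r(0))$ together with $y_{r,0}\geq1$, so that $\hat g(\hat Q^r(0))\overset{\Prob}{\to}0$ forces $g(Q^{r,0}(0))\overset{\Prob}{\to}0$ and hence $\Prob(\mathcal L^r)\to1$. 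Letting the prescribed tolerance be arbitrary then yields the convergence in probability in~\eqref{eq:MultiplicativeSSCInitallyTheSame}.
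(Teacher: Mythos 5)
Your proposal is correct and follows essentially the same route as the paper's own proof: Lemma~\ref{lem:makeDomainHLargeEnough} together with the stated lower bound on $M$ forces the $H$-term in Corollary~\ref{cor:BoundedSSCfunctionDiffusion} to vanish on $(M y_{r,0}/\sqrt r, T]$, and the bound on $\hat L^r$ via identity~\eqref{eq:IdentityDrivingCars} (the paper's~\eqref{eq:NoLNeeded}) converts the stray factor $y_{r,m}$ into the multiplicative denominator $\max\{\lVert \hat Q^r\rVert_T,1\}$, with the initial window $[0,My_{r,0}/\sqrt r]$ handled on $\mathcal L^r$ exactly as in the paper. The only differences are presentational: you isolate $y_{r,m}\leq C\max\{\lVert\hat Q^r\rVert_T,1\}$ as a standalone first step and make explicit, via $g(Q^{r,0}(0))=(\sqrt{\lambda/\mu}/y_{r,0})\,\hat g(\hat Q^r(0))$ and $y_{r,0}\geq 1$, why $\Prob(\mathcal L^r)\to 1$, a step the paper leaves implicit in Corollary~\ref{cor:BoundedSSCfunctionHydro}.
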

		
\begin{proof}
		Fix $\eta >0$ and note that by construction there exists a $r_0$ such that for all $r > r_0$,
		\begin{align*}
		\Prob(\mathcal{K}^r) >1 -\eta.
		\end{align*}
		For every $\omega \in \mathcal{K}^r$, we have derived bounds that only require that $M$ is bounded. We note that for $M$ as in the statement of the theorem allows for every $t \in [0,T]$, $m/\sqrt{r} \leq t \leq (m+y_{r,m} M )/\sqrt{r}$ for some $m < \sqrt{r} M$. Moreover, it follows from Lemma~\ref{lem:makeDomainHLargeEnough} and~\eqref{eq:HfunctionDefinition}, that
		\begin{align*}
		H \left( \frac{1}{y_{r,m_r(t)}} (\sqrt{r}t-m_r(t)) \right) = 0
		\end{align*}
		for all $t \in ( M y_{r,0} /\sqrt{r} , T]$. In view of Corollary~\ref{cor:BoundedSSCfunctionDiffusion}, we obtain for every $\epsilon >0$,
		\begin{align*}
		\hat{g}(\hat{Q}^r(t)) \leq \epsilon \frac{y_{r,m_r(t)}}{\sqrt{\lambda/\mu}}
		\end{align*}
		for all $t \in ( M y_{r,0} /\sqrt{r} , T]$. Since for all $t \in [0,T]$,
		\begin{align*}
		y_{r,m_r(t)} = \max \left\{ \left\lvert p \sqrt{\frac{\lambda}{\mu}} \hat{Q}^r\left( \frac{m_r(t)}{\sqrt{r}} \right) \right\rvert, \left\lvert \hat{L}^r\left( \frac{m_r(t)}{\sqrt{r}} \right) \right\rvert, 1 \right\} \leq  \max \left\{  \left\lVert p \sqrt{\frac{\lambda}{\mu}} \hat{Q}^r\left( t\right) \right\rVert_T , \left\lVert \hat{L}^r\left(t \right) \right\rVert_T, 1 \right\} ,
		\end{align*}
		we obtain for every $\omega \in \mathcal{K}^r$,
		\begin{align*}
		\frac{\sup_{ \frac{M y_{r,0}}{\sqrt{r}} \leq t \leq T} \hat{g}(\hat{Q}^r(t)) }{ \max \left\{  \left\lVert p \sqrt{\frac{\lambda}{\mu}} \hat{Q}^r\left( t\right) \right\rVert_T, \left\lVert \hat{L}^r\left(t \right) \right\rVert_T, 1 \right\}} \leq \frac{\epsilon}{\sqrt{\lambda/\mu}}.
		\end{align*}
		Note that for every $t \geq 0$,
		\begin{align}
		\lvert\hat{L}^r(t)\rvert  = \left\lvert \sum_{j=1}^S \left( p_j \sqrt{\frac{\lambda}{\mu}} \hat{Q}_j^r(t) - \beta \sqrt{\frac{\lambda}{\mu}}\right)^+ \right\rvert \leq \sqrt{\frac{\lambda}{\mu}} \left( \lvert\hat{Q}^r(t)\rvert + S|\beta| \right).
		\label{eq:NoLNeeded}
		\end{align}
		Moreover, since $\epsilon>0$ is arbitrary, we can conclude that~\eqref{eq:MultiplicativeSSCInitallyDifferent} holds.

		If $|\hat{Q}^r(0)| \overset{\Prob}{\rightarrow} 0$, it follows from Corollary~\ref{cor:BoundedSSCfunctionDiffusion}, that for all $\omega \in \mathcal{L}^r$ and $t \in [0,M y_{r,0}/\sqrt{r}]$
		\begin{align*}
		\hat{g}(\hat{Q}^r(t)) \leq \frac{\epsilon}{\sqrt{\lambda/\mu}}y_{r,0} \leq \frac{\epsilon}{\sqrt{\lambda/\mu}} \max \left\{ \left\lVert p \sqrt{\frac{\lambda}{\mu}} \hat{Q}^r\left(t\right) \right\rVert_T, \left\lVert \hat{L}^r\left(t \right) \right\rVert_T, 1 \right\}.
		\end{align*}
		Since $\epsilon>0$ is arbitrary, together with~\eqref{eq:MultiplicativeSSCInitallyDifferent} and~\eqref{eq:NoLNeeded}, we obtain~\eqref{eq:MultiplicativeSSCInitallyTheSame}.\hfill\end{proof}
		
		\begin{remark}\normalfont
			Note that the bounds in Theorem~\ref{thm:MultiplicativeSSC} are obtained for every fixed $T>0$. Yet, from the proof it is clear that one has the following slightly more general result. Suppose $\hat{Q}^r(0) \rightarrow \hat{Q}(0)$ for some random vector $\hat{Q}(0)$. For every $\epsilon >0$, $M<\infty$ as in Theorem~\ref{thm:MultiplicativeSSC}, and $t_r \in (M y_{r,0}/\sqrt{r},\infty)$,
			\begin{align}
			\Prob\left( \frac{\sup_{M y_{r,0}/\sqrt{r} \leq s \leq t_r } \hat{g}(\hat{Q}^r(s)) }{ \max\{ \lVert \hat{Q}^r(s) \rVert_{t_r} , 1 \} } > \epsilon \right) \rightarrow 0 ,
			\label{eq:MultiplicativeSSCInitallyDifferentSlightlyMoreGeneral}
			\end{align}
			as $r \rightarrow \infty$. If, in addition,  $\hat{g}(\hat{Q}^r(0)) \rightarrow 0$ in probability as $r \rightarrow \infty$, then for every $t_r \in (M y_{r,0}/\sqrt{r},\infty)$,
			\begin{align}
			\frac{\lVert \hat{g}(\hat{Q}^r(s))\rVert_{t_r} }{ \max\{ \lVert\hat{Q}^r(t)\rVert_{t_r} ,  1 \} }  \overset{\Prob}{\rightarrow} 0 ,
			\label{eq:MultiplicativeSSCInitallyTheSameMoreGeneral}
			\end{align}
			as $r \rightarrow \infty$. In other words, the interval over which the state space collapse is considered can also be chosen as a sequence of intervals indexed by $r$.
		\end{remark}
		
		\subsection{Strong state space collapse}
		Although Theorem~\ref{thm:MultiplicativeSSC} shows multiplicative state space collapse, our interest lies in the strong state space collapse as is stated in Theorem~\ref{thm:StrongSSC}. To do so, it suffices to show that the denominators in Theorem~\ref{thm:MultiplicativeSSC} are bounded in a probabilistic sense. More specifically, $\lVert\hat{Q}^r(t)\rVert_T$ should satisfy the compact containment property. Before doing so, we prove a result that shows that even if the diffusion-scaled queue lengths are initially not necessarily close to one another, these queue lengths do not explode for a sufficiently short period of time.
		
		\begin{lemma}
			Suppose $\hat{Q}^r(0) \rightarrow \hat{Q}(0)$ for some random vector $\hat{Q}(0)$, and $M \in [0,\infty)$. Then,
			\begin{align*}
			\lim_{K \rightarrow \infty} \lim_{r\rightarrow \infty} \Prob\left(\lVert\hat{Q}^r(t)\rVert_{M y_{r,0} /\sqrt{r}} > K \right) =0.
			\end{align*}
			\label{lem:QueueLengthInitiallyBounded}
		\end{lemma}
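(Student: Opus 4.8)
The plan is to reduce this statement about the \emph{random} time window $[0, M y_{r,0}/\sqrt{r}]$ to the control of the hydrodynamically scaled process at the single index $m=0$, which has already been established. The crucial observation is the scaling identity recorded just before Corollary~\ref{cor:BoundedSSCfunctionDiffusion}: for $m=0$ it reads
\[
Q_j^{r,0}(t) = \frac{p_j \sqrt{\lambda/\mu}}{y_{r,0}}\, \hat{Q}_j^r\!\left( \frac{y_{r,0}\, t}{\sqrt{r}} \right), \qquad j=1,\dots,S.
\]
As $t$ ranges over $[0,M]$, the argument $y_{r,0}t/\sqrt{r}$ ranges over exactly $[0, M y_{r,0}/\sqrt{r}]$ (note $y_{r,0}\geq 1$, so this is a genuine bijection), which is precisely the window in the statement. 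Inverting the identity gives $\hat{Q}_j^r(y_{r,0}t/\sqrt{r}) = (y_{r,0}/(p_j\sqrt{\lambda/\mu})) Q_j^{r,0}(t)$, so that
\[
\lVert \hat{Q}^r \rVert_{M y_{r,0}/\sqrt{r}}
= \max_{1\leq j \leq S}\, \frac{y_{r,0}}{p_j \sqrt{\lambda/\mu}}\, \lVert Q_j^{r,0} \rVert_M
\leq \frac{y_{r,0}}{(\min_j p_j)\sqrt{\lambda/\mu}}\, \lVert Q^{r,0} \rVert_M .
\]
(The case $M=0$ is trivial, since then the norm is $\lvert \hat{Q}^r(0)\rvert$, which is tight by $\hat{Q}^r(0) \overset{d}{\to} \hat{Q}(0)$.)

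Next I would bound $\lVert Q^{r,0}\rVert_M$ by a deterministic constant on the good event $\mathcal{K}^r$ (defined for this fixed $M$ and any $T>0$, say $T=1$). By construction $\lvert \mathbb{X}^{r,0}(0)\rvert \leq 1$, and on $\mathcal{K}^r$ the almost-Lipschitz bound of Proposition~\ref{prop:AlmostLipschitzContinuousHydroScaling} (applied with $t_1=0$, $t_2 = t \leq M$, and $m=0$) yields $\lvert Q^{r,0}(t) - Q^{r,0}(0)\rvert \leq N t + \epsilon(r)$. Hence for $r$ large enough (so that $\epsilon(r)\leq 1$),
\[
\lVert Q^{r,0}\rVert_M \leq 1 + NM + \epsilon(r) \leq 2 + NM =: C_0 ,
\]
and therefore, on $\mathcal{K}^r$, $\lVert \hat{Q}^r\rVert_{M y_{r,0}/\sqrt{r}} \leq C\, y_{r,0}$ with $C = C_0 /((\min_j p_j)\sqrt{\lambda/\mu})$ depending only on $M$ and the system parameters.

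Finally I would combine this with the tightness of $\{y_{r,0}\}$. Since $y_{r,0}$ is a continuous (max-of-absolute-values) function of $\hat{Q}^r(0)$ through the definition of $y_{r,m}$ and the relation~\eqref{eq:NoLNeeded}, the continuous mapping theorem gives $y_{r,0}\overset{d}{\to} y_0$ for a finite random variable $y_0$, so the family is tight. Writing
\[
\Prob\!\left( \lVert \hat{Q}^r\rVert_{M y_{r,0}/\sqrt{r}} > K \right)
\leq \Prob\!\left( y_{r,0} > K/C \right) + \Prob\!\left( (\mathcal{K}^r)^c \right),
\]
I send $r\to\infty$ for fixed $K$: the second term vanishes because $\Prob(\mathcal{K}^r)\to 1$, and for all but countably many $K$ (continuity points of the law of $y_0$) the first term tends to $\Prob(y_0 > K/C)$. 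Taking $K\to\infty$ then drives $\Prob(y_0 > K/C)\to 0$ since $y_0$ is a.s.\ finite, giving the iterated limit $0$. The main obstacle — and the step requiring the most care — is the first one: correctly identifying that the random window matches the $m=0$ hydrodynamic scaling and transferring the uniform-in-$m$ almost-Lipschitz estimate to the needed bound; once this bookkeeping is in place, the tightness argument is routine.
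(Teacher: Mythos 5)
Your proof is correct, but it takes a genuinely different route from the paper's. The paper argues from scratch: it splits on the event $\{\max\{\lvert\hat{Q}^r(0)\rvert, y_{r,0}\}\leq \epsilon K\}$ (whose complement is handled by tightness of $\hat{Q}^r(0)$, essentially your last step), and on that event it bounds the deviation over the short window $[0,\epsilon M K/\sqrt{r}]$ by the trivial domination $\lvert Q^r(t)\rvert \leq \lvert Q^r(0)\rvert + \max\{\sum_{\{i,j\}\in E}\Lambda_{ij}(rt),\max_j S_j(F_j^r t)\}$, and then invokes the LLN for the Poisson processes to show the resulting diffusion-scaled increment stays below $(1-\epsilon)K$ with probability tending to one. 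You instead recycle the hydrodynamic machinery: the exact identity $Q^{r,0}_j(t) = \bigl(p_j\sqrt{\lambda/\mu}/y_{r,0}\bigr)\,\hat{Q}^r_j\bigl(y_{r,0}t/\sqrt{r}\bigr)$, the deterministic bound $\lvert\mathbb{X}^{r,0}(0)\rvert\leq 1$, and the almost-Lipschitz estimate of Proposition~\ref{prop:AlmostLipschitzContinuousHydroScaling} on the event $\mathcal{K}^r$, to obtain $\lVert\hat{Q}^r(t)\rVert_{M y_{r,0}/\sqrt{r}} \leq C\, y_{r,0}$ on $\mathcal{K}^r$ with $C$ deterministic, after which tightness of $y_{r,0}$ finishes the argument. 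Both routes are sound, and yours is non-circular: the scaling identity, Proposition~\ref{prop:AlmostLipschitzContinuousHydroScaling}, and the corollary $\Prob(\mathcal{K}^r)\to 1$ all precede this lemma in the paper's development (note that the $\mathcal{H}^r$ part of $\mathcal{K}^r$, via Lemma~\ref{lem:BoundedXrm}, uses the same hypothesis $\hat{Q}^r(0)\Rightarrow\hat{Q}(0)$ assumed here). Your version is shorter and yields the cleaner intermediate statement that the supremum over the random window is controlled linearly by $y_{r,0}$ on an event of asymptotically full probability; the paper's version buys self-containedness, needing only the Poisson LLN rather than the whole $\mathcal{K}^r$ apparatus. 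One cosmetic point: you do not actually need $y_{r,0}$ to converge in distribution (the appeal to the continuous mapping theorem is slightly overstated, since $y_{r,0}$ also involves $\hat{L}^r(0)$); stochastic boundedness of $y_{r,0}$, immediate from~\eqref{eq:NoLNeeded} and tightness of $\hat{Q}^r(0)$, already suffices for the iterated limit.
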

		
		\noindent
\begin{proof}
		Fix $\epsilon \in (0,1)$ small. First, note that
		\begin{align}
		\begin{split}
		&\Prob\left(\lVert\hat{Q}^r(t)\rVert_{M y_{r,0} /\sqrt{r}} > K \right)\\
		&\hspace{1cm} \leq \Prob\left(\lVert\hat{Q}^r(t)\rVert_{M y_{r,0} /\sqrt{r}} > K ; \max\left\{ \lvert\hat{Q}^r(0)\rvert, y_{r,0} \right\} \leq \epsilon K \right) +  \Prob\left( \max\left\{ \lvert\hat{Q}^r(0)\rvert, y_{r,0} \right\} > \epsilon K \right).
		\end{split}
		\label{eq:SplittingTermsForSmallLemma}
		\end{align}
		By definition,
		\begin{align*}
		y_{r,0} = \max \left\{ \left\lvert p \sqrt{\frac{\lambda}{\mu}} \hat{Q}^r\left( 0\right) \right\rvert, \left\lvert \hat{L}^r\left(0 \right) \right\rvert, 1 \right\} .
		\end{align*}
		Since $\hat{Q}^r(0) \rightarrow \hat{Q}(0)$ for some random vector $\hat{Q}(0)$,
		\begin{align*}
		\lim_{K \rightarrow \infty} \lim_{r\rightarrow \infty} \Prob\left( \lvert\hat{Q}^r(0)\rvert > \epsilon K \right) =0,
		\end{align*}
		and due to the definition of $y_{r,0}$ and~\eqref{eq:NoLNeeded} for $t=0$, this implies
		\begin{align*}
		\lim_{K \rightarrow \infty} \lim_{r\rightarrow \infty} \Prob\left( \max\left\{ \lvert\hat{Q}^r(0)\rvert, y_{r,0} \right\} > \epsilon K \right) =0.
		\end{align*}
		
		To bound the first term in~\eqref{eq:SplittingTermsForSmallLemma} as well, we observe that the queue length at some time is trivially bounded by
		\begin{align*}
		|Q^r(t)| \leq |Q^r(0)|  + \max\left\{ \sum_{\{i,j\} \in E} \Lambda_{ij}(r t) ,  \max_{1\leq j \leq S} \{S_j(F_j^r t) \} \right\}.
		\end{align*}
		We observe that $F_j^r \leq (1+\epsilon) \lambda r/\mu$ for $r$ large enough. Moreover, if $\{\Lambda(t),t\geq 0\}$ denotes a Poisson process with rate $\lambda$, then due to the properties of the Poisson process it holds that $\sum_{\{i,j\} \in E} \Lambda_{ij}(\cdot) \overset{d}{=} \Lambda(\cdot)$. In terms of the diffusion scaling, the above bound yields for all $t \geq 0$,
		\begin{align*}
		|\hat{Q}^r(t)| \leq |\hat{Q}^r(0)|  + \frac{\max\left\{  \Lambda(r t) ,  \max_{1\leq j \leq S} \{S_j((1+\epsilon)\lambda r/\mu t) \} \right\}}{\min_{1 \leq j \leq S} p_j \sqrt{\lambda r / \mu}}.
		\end{align*}
		Therefore, using this bound for $t = M y_{r,0} / \sqrt{r} \leq \epsilon M K /\sqrt{r}$ and noting that Poisson processes are (non-decreasing) counting processes,
		\begin{align*}
		&\Prob\left(\lVert\hat{Q}^r(t)\rVert_{M y_{r,0} /\sqrt{r}} > K ; \max\left\{ \lvert\hat{Q}^r(0)\rvert, y_{r,0} \right\} \leq \epsilon K \right) \\
		&\hspace{1cm}\leq \Prob\left( \frac{\max\left\{  \Lambda(\epsilon M K \sqrt{r}) ,  \max_{1\leq j \leq S} \left\{S_j\left(\epsilon(1+\epsilon)\lambda/\mu M K \sqrt{r} \right) \right\} \right\}}{\min_{1 \leq j \leq S} p_j \sqrt{\lambda r / \mu}} > (1- \epsilon )K \right).
		\end{align*}
		Due to the LLN, we observe that
		\begin{align*}
		\lim_{K \rightarrow \infty} \lim_{r\rightarrow \infty} \Prob\left( \frac{  \Lambda(\epsilon M K \sqrt{r}) }{\min_{1 \leq j \leq S} p_j \sqrt{\lambda / \mu} K \sqrt{ r }} > (1- \epsilon ) \right)= 0
		\end{align*}
		for $\epsilon >0$ small enough (e.g. for $\epsilon < 1-M/(M+\sqrt{\lambda/\mu}\min_{1 \leq j \leq S} p_j)$). Similarly, due to the LLN,
		\begin{align*}
		\lim_{K \rightarrow \infty} \lim_{r\rightarrow \infty} \sum_{j=1}^S \Prob\left( \frac{  S_j\left(\epsilon(1+\epsilon)\lambda/\mu M K \sqrt{r} \right) }{\min_{1 \leq j \leq S} p_j \sqrt{\lambda / \mu} K \sqrt{ r }} > (1- \epsilon ) \right) =0
		\end{align*}
		for $\epsilon >0$ small enough (e.g. for $\epsilon < \min_{1 \leq j \leq S} p_j/(M\sqrt{\lambda/\mu}+\min_{1 \leq j \leq S} p_j)$). We conclude that the first term in~\eqref{eq:SplittingTermsForSmallLemma} converges to zero, i.e.
		\begin{align*}
		\lim_{K \rightarrow \infty} \lim_{r\rightarrow \infty} \Prob\left(\lVert\hat{Q}^r(t)\rVert_{M y_{r,0} /\sqrt{r}} > K ; \max\left\{ \lvert\hat{Q}^r(0)\rvert, y_{r,0} \right\} \leq \epsilon K \right) = 0,
		\end{align*}
		and hence the result follows.
\hfill\end{proof}
		
		Next, we show that the process $\hat{Q}^r(\cdot)$ satisfies the compact containment property.
		\begin{proposition}
			Suppose $|\hat{Q}^r(0)| \rightarrow \hat{Q}(0)$ for some random vector $\hat{Q}(0)$. Then, for every $T>0$ and $\epsilon >0$,
			\begin{align}
			\lim_{K \rightarrow \infty} \lim_{r \rightarrow \infty} \Prob\left(  \lVert\hat{Q}^r(t)\rVert_T > K \right)  =0.\label{eq:CompactContainmentProperty}
			\end{align}
			\label{prop:CompactContainmentProperty}
		\end{proposition}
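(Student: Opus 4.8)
The plan is to deduce compact containment of the full vector process $\hat{Q}^r$ from compact containment of the one-dimensional aggregate $\hat{Q}_\Sigma^r = \sum_{j=1}^S p_j \hat{Q}_j^r$, and then to establish the latter directly from the mean-reverting drift of the aggregate. First I would record the elementary fact that, since $\sum_j p_j = 1$, the aggregate is a convex combination of the coordinates, so $\min_k \hat{Q}_k^r(t) \le \hat{Q}_\Sigma^r(t) \le \max_k \hat{Q}_k^r(t)$ and hence $|\hat{Q}_j^r(t) - \hat{Q}_\Sigma^r(t)| \le \hat{g}(\hat{Q}^r(t))$ for every $j$, giving $|\hat{Q}_j^r(t)| \le |\hat{Q}_\Sigma^r(t)| + \hat{g}(\hat{Q}^r(t))$. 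Splitting $[0,T]$ into the initial window $[0, M y_{r,0}/\sqrt{r}]$ and the bulk $[M y_{r,0}/\sqrt{r}, T]$, Lemma~\ref{lem:QueueLengthInitiallyBounded} controls $\|\hat{Q}^r\|$ on the former, while on the latter the multiplicative state space collapse of Theorem~\ref{thm:MultiplicativeSSC} gives $\sup_{\text{bulk}} \hat{g}(\hat{Q}^r(t)) \le \epsilon \max\{\|\hat{Q}^r\|_T, 1\}$ with probability tending to one. Combining these yields the self-bounding inequality $\sup_{\text{bulk}}|\hat{Q}^r| \le \|\hat{Q}_\Sigma^r\|_T + \epsilon \max\{\|\hat{Q}^r\|_T,1\}$; choosing $\epsilon < 1$ lets me absorb the $\epsilon\|\hat{Q}^r\|_T$ term into the left-hand side, so that compact containment of $\hat{Q}^r$ reduces, up to the initial window, to compact containment of $\hat{Q}_\Sigma^r$.

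For the aggregate I would use the semimartingale decomposition already exploited in the proof of Theorem~\ref{thm:DiffusionLimitProcessA}: writing $\hat{Q}_\Sigma^r(t) = \hat{Q}_\Sigma^r(0) + \hat{M}^r(t) + \int_0^t b^r(s)\,ds$, the martingale part $\hat{M}^r$ (the centred arrival and service Poisson processes, suitably scaled) is tight by the FCLT and converges to a Brownian motion of variance $2\mu$, so $\|\hat{M}^r\|_T$ is stochastically bounded. The drift is $b^r(s) = -\lambda\sum_j p_j(\hat{Q}_j^r(s)-\beta)^+ - \mu\sum_j p_j\min\{\hat{Q}_j^r(s),\gamma\}$, and since $x\mapsto(x-\beta)^+$ and $x\mapsto\min\{x,\gamma\}$ are $1$-Lipschitz, I can compare it with the ideal stabilizing drift $m(\hat{Q}_\Sigma^r(s))$ through $|b^r(s) - m(\hat{Q}_\Sigma^r(s))| \le (\lambda+\mu)\,\hat{g}(\hat{Q}^r(s))$. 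The key structural property is that $m$ is mean-reverting: for $|x|$ large one has $x\,m(x) \le -\min\{\lambda,\mu\}\,x^2 + C|x|$, so $m$ supplies a restoring force of order $\min\{\lambda,\mu\}\,|x|$.

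The final step is a localization argument that converts this restoring drift into tightness while handling the coupling between the drift error and the quantity being bounded. I would set $\tau_K = \inf\{t : |\hat{Q}^r(t)| > K\}$ and work on the high-probability event where both the multiplicative collapse bound and the martingale bound hold; on $[0,\tau_K]$ one has $\|\hat{Q}^r\| \le K$, hence $\hat{g} \le \epsilon K$ and the drift error is at most $(\lambda+\mu)\epsilon K$. Near the boundary the collapse bound forces $|\hat{Q}_\Sigma^r|$ to be of order $(1-\epsilon)K$, so the restoring drift $m(\hat{Q}_\Sigma^r)$ is of order $-\min\{\lambda,\mu\}(1-\epsilon)K$; taking $\epsilon$ small relative to $\min\{\lambda,\mu\}/(\lambda+\mu)$, independently of $K$, makes the net drift strictly restoring at level $(1-\epsilon)K$, so that an excursion of $\hat{Q}_\Sigma^r$ to such a level is a supermartingale and its probability is controlled by a maximal inequality that vanishes as $K\to\infty$ uniformly in $r$. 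I expect the main obstacle to be exactly this coupling: the error in replacing $b^r$ by $m(\hat{Q}_\Sigma^r)$ is proportional to $\hat{g}$, which the multiplicative collapse bounds only in terms of $\|\hat{Q}^r\|_T$ itself, so the argument cannot simply substitute a fixed drift and must be closed self-consistently through the stopping time $\tau_K$, using that $\epsilon$ may be chosen small independently of the containment level $K$.
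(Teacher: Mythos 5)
Your proposal is correct, and its skeleton --- reduction to the aggregate $\hat{Q}^r_\Sigma=\sum_j p_j\hat{Q}^r_j$ via multiplicative state space collapse (Theorem~\ref{thm:MultiplicativeSSC}), control of the initial window by Lemma~\ref{lem:QueueLengthInitiallyBounded}, and an excursion argument closed self-consistently through stopping times --- is the same as the paper's; where you genuinely diverge is in the mechanism that kills the excursion probability. The paper uses two-level stopping times ($\hat{T}^r_K$, the last time the minimum diffusion-scaled coordinate lies below $K/2$, and $\hat{\tau}^r_K$, the first time the maximum exceeds $K$) together with stochastic domination: on the upper excursion every coordinate exceeds $K/2>\gamma$, so all charging points are busy and departures run at the maximal rate $\sum_j S_j(F_j^r\,\cdot)$ while arrivals are dominated by a rate-$\lambda r$ Poisson process, whence the scaled increment of $\hat{Q}^r_\Sigma$ is dominated by a process converging to a Brownian motion with \emph{constant} drift $-\gamma\mu$, for which an increment of size $(1-3\epsilon)K/2$ over $[0,T]$ has vanishing probability as $K\to\infty$ (the lower excursion is symmetric, using that $L^r\equiv r$ there). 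The paper thus never needs the drift to be restoring, only bounded on the excursion event; the model structure enters through the all-servers-busy/no-waiting-cars dichotomy. You instead keep the exact semimartingale decomposition, bound $\lvert b^r - m(\hat{Q}^r_\Sigma)\rvert\le(\lambda+\mu)\hat{g}\le(\lambda+\mu)\epsilon K$ by the $1$-Lipschitz property of $x\mapsto(x-\beta)^+$ and $x\mapsto\min\{x,\gamma\}$, and invoke mean reversion of $m$ with $\epsilon$ chosen small relative to $\min\{\lambda,\mu\}/(\lambda+\mu)$, independently of $K$. This buys a more generic argument --- both boundaries are handled uniformly, and the proof would survive any modification of the routing rule preserving the Lipschitz drift comparison --- at the cost of two details the paper's route avoids: the uniform-in-$r$ stochastic boundedness of the martingale part should be justified by crude domination (time changes bounded via $L^r\le r$ and $Z^r_j\le F^r_j$, then Doob's inequality) rather than by convergence to a variance-$2\mu$ Brownian motion, which presupposes the fluid limit; and your maximal-inequality step needs an explicit anchor, namely the last entrance time of $\hat{Q}^r_\Sigma$ below a level like $(1-\epsilon)K/2$, which is precisely the role $\hat{T}^r_K$ plays in the paper. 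Finally, your application of the collapse bound on the random interval $[0,\tau_K]$ is legitimate for the same reason the paper's is: the hydrodynamic estimate bounding $\hat{g}(\hat{Q}^r(t))$ involves the process only at the earlier time $m_r(t)/\sqrt{r}\le t$, so restricting both numerator and denominator to $[0,\tau_K]$ is harmless.
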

		
\begin{proof}
		Fix $\epsilon \in (0,1/3)$ small, and  let $K > \max\{2|\beta|+2,2|\gamma|+2\}$. Introduce the sequence of stopping times
		\begin{align*}
		\hat{\tau}_K^r = \inf\left\{ t \geq 0 : \max_{1\leq j \leq S} \bar{Q}^r_j(t) > K \right\}, \hspace{1cm} \hat{T}_K^r = \sup\left\{ 0 \leq t \leq \hat{\tau}_k : \min_{1\leq j \leq S} \bar{Q}^r_j(t) \leq K/2 \right\},
		\end{align*}
		and similarly,
		\begin{align*}
		\breve{\tau}_K^r = \inf\left\{ t \geq 0 : \min_{1\leq j \leq S} \bar{Q}^r_j(t) < -K \right\}, \hspace{1cm} \breve{T}_K^r = \sup\left\{ 0 \leq t \leq \breve{\tau_k} : \max_{1\leq j \leq S} \bar{Q}^r_j(t) \geq -K/2 \right\}.
		\end{align*}
		Clearly,
		\begin{align}
		\Prob&\left( \lVert\hat{Q}^r(t)\rVert_T > K \right) \leq \Prob\left( \hat{\tau}_K^r \leq \breve{\tau}_K^r \leq T \right) + \Prob\left( \breve{\tau}_K^r \leq \hat{\tau}_K^r \leq T \right).
		\label{eq:SeperateTwoWaysOfCrossingBoundary}
		\end{align}
		
		In order to improve the readability of the proof, we first present a proof in the case when $\hat{g}(\hat{Q}^r(0)) \rightarrow 0$ in probability as $r \rightarrow \infty$. We then comment on the changes needed to adapt the proof to the case when this condition does not necessarily hold.
		\vspace{0.25cm}
		
		\noindent
		\textit{Case I: $\hat{g}(\hat{Q}^r(0)) \rightarrow 0$ in probability as $r \rightarrow \infty$.}
		
		Since $|\hat{Q}^r(0)| \rightarrow \hat{Q}(0)$ for some random vector $\hat{Q}(0)$, it holds that
		\begin{align*}
		\lim_{K \rightarrow \infty} \lim_{r \rightarrow \infty} \Prob\left( \lvert\hat{Q}^r(0)\rvert > K/2 \right) = 0,
		\end{align*}
		and hence we can assume that both $\hat{\tau}_K^r > \hat{T}_K^r >0$ and $\breve{\tau}_K^r > \breve{T}_K^r >0$ (for $K$ large enough).  Moreover, for every $t \leq \min\{\breve{\tau}_K^r,\hat{\tau}_K^r\}$ ,
		\begin{align}
		\frac{\Vert \hat{g}(\hat{Q}^r(t))\rVert_{\min\{\breve{\tau}_K^r,\hat{\tau}_K^r\}} }{ \max\{ \lVert\hat{Q}^r(t)\rVert_{\min\{\breve{\tau}_K^r,\hat{\tau}_K^r\}} , 1 \} } \leq \epsilon \hspace{0.5cm} \Rightarrow \hspace{0.5cm} \max_{1 \leq i \leq S} \hat{Q}_i^r(t) - \min_{1 \leq i \leq S} \hat{Q}_i^r(t) \leq \epsilon K.
		\label{eq:ImplicationContionalEvent}
		\end{align}
		Next, we provide bounds for the two ways of crossing the boundary $K$ separately. First, we consider the first term in~\eqref{eq:SeperateTwoWaysOfCrossingBoundary}. We observe
		\begin{align*}
		\Prob\left(\hat{\tau}_K^r \leq \breve{\tau}_K^r \leq T \right) &\leq \Prob\left( \hat{\tau}_K^r \leq \breve{\tau}_K^r \leq T \; ; \frac{\Vert \hat{g}(\hat{Q}^r(t))\rVert_{ \hat{\tau}_K^r} }{ \max\{ \lVert\hat{Q}^r(t)\rVert_{ \hat{\tau}_K^r} , 1 \} } \leq \epsilon \right) + \Prob\left( \frac{\Vert \hat{g}(\hat{Q}^r(t))\rVert_{ \hat{\tau}_K^r} }{ \max\{ \lVert\hat{Q}^r(t)\rVert_{ \hat{\tau}_K^r} , 1 \} } > \epsilon \right).
		\end{align*}
		Due to Theorem~\ref{thm:MultiplicativeSSC} and~\eqref{eq:MultiplicativeSSCInitallyTheSameMoreGeneral},
		\begin{align*}
		\lim_{r \rightarrow \infty}  \Prob\left( \frac{\Vert \hat{g}(\hat{Q}^r(t))\rVert_{ \hat{\tau}_K^r} }{ \max\{ \lVert\hat{Q}^r(t)\rVert_{ \hat{\tau}_K^r} , 1 \} } > \epsilon \right) =0.
		\end{align*}
		To bound the first term, define the process $\{\hat{Q}^r_\Sigma(t), t\geq 0\}$ with
		\begin{align*}
		\hat{Q}^r_\Sigma(t) = \frac{\sum_{j=1}^S \left(  Q_j^r(t) - p_j \lambda r / \mu \right)}{\sqrt{\lambda r /\mu}} = \sum_{j=1}^S p_j \hat{Q}^r_j(t).
		\end{align*}
		We observe that
		\begin{align}
		\min_{1 \leq j \leq S} \hat{Q}_j^r(t) \leq \hat{Q}^r_\Sigma(t) \leq \max_{1 \leq j \leq S} \hat{Q}_j^r(t).
		\label{eq:BoundsQSigma}
		\end{align}
		Due to the system identities, we observe that for every $t \in [\hat{T}_K^r, \hat{\tau}_K^r]$,
		\begin{align*}
		\hat{Q}^r_\Sigma(t) = \hat{Q}^r_\Sigma(\hat{T}_K^r) + \frac{\sum_{\{i,j\} \in E} A_{ij}^r(t) -A_{ij}^r(\hat{T}_K^r)}{\sqrt{\lambda r/\mu}} - \frac{\sum_{j=1}^S D_j^r(t) - D_j^r(\hat{T}_K^r)}{\sqrt{\lambda r/\mu}}.
		\end{align*}
		We note that due to the properties of the Poisson process,
		\begin{align*}
		\sum_{\{i,j\} \in E} A_{ij}^r(t) -A_{ij}^r(\hat{T}_K^r) \leq_{\textrm{ST}} \sum_{\{i,j\} \in E} \Lambda_{ij}^r(r t) -\Lambda_{ij}^r(r \hat{T}_K^r) \overset{d}{=} \Lambda(r t) - \Lambda(r \hat{T}_K^r)
		\end{align*}
		with $\{\Lambda(t),t\geq 0\}$ an (independent) Poisson process with rate $\lambda$. Moreover, since for all $t \in [\hat{T}_K^r, \hat{\tau}_K^r]$ it holds that $\hat{Q}^r_j(t) \geq \gamma$ for every $i \in \{1,\ldots,S\}$,
		\begin{align*}
		\sum_{j=1}^S D_j^r(t) - D_j^r(\hat{T}_K^r) = \sum_{j=1}^S S_j( F_j^r t) -  S_j( F_j^r \hat{T}_K^r)   .
		\end{align*}
		Using the FCLT, we observe that
		\begin{align*}
		\frac{\Lambda(r t) - \Lambda(r \hat{T}_K^r) - \sum_{j=1}^S S_j( F_j^r t) -  S_j( F_j^r \hat{T}_K^r) }{\sqrt{\lambda r/\mu}} \overset{d}{\rightarrow} \textrm{BM}(t) - \textrm{BM}(\hat{T}_K^r) -\gamma \mu (t -\hat{T}_K^r)
		\end{align*}
		as $r \rightarrow \infty$, where $\{\textrm{BM}(t), t\geq 0\}$ is a Brownian motion with zero mean and finite variance (independent of $K$). Finally, by the definitions of the stopping times, and in view of~\eqref{eq:ImplicationContionalEvent} and~\eqref{eq:BoundsQSigma}, for all $t \in [\hat{T}_K^r, \hat{\tau}_K^r]$,
		\begin{align*}
		\hat{Q}^r_\Sigma(\hat{\tau}_K^r) - \hat{Q}^r_\Sigma(\hat{T}_K^r) \geq (1-\epsilon)K - (1+\epsilon) K/2 = (1-3\epsilon) K/2.
		\end{align*}
		We conclude that
		\begin{align*}
		\lim_{r \rightarrow \infty} \Prob\left( \hat{\tau}_K^r \leq \breve{\tau}_K^r \leq T \; ; \frac{\Vert \hat{g}(\hat{Q}^r(t))\rVert_{ \hat{\tau}_K^r} }{ \max\{ \lVert\hat{Q}^r(t)\rVert_{ \hat{\tau}_K^r} , 1 \} } \leq \epsilon \right) \leq \Prob\left( \sup_{0 \leq s \leq t \leq T}  \textrm{BM}(t) - \textrm{BM}(s) -\gamma \mu (t -s) \geq \frac{1-3\epsilon}{2} K \right),
		\end{align*}
		which converges to zero as $K \rightarrow \infty$ since $\epsilon \in (0,1/3)$.
		
		The analysis of the second term in~\eqref{eq:SeperateTwoWaysOfCrossingBoundary} uses similar arguments as the first term. We observe
		\begin{align*}
		\Prob\left(\breve{\tau}_K^r \leq \hat{\tau}_K^r \leq T \right) &\leq \Prob\left( \breve{\tau}_K^r \leq \hat{\tau}_K^r \leq T \; ; \frac{\lVert \hat{g}(\hat{Q}^r(t))\rVert_{ \breve{\tau}_K^r} }{ \max\{ \lVert\hat{Q}^r(t)\rVert_{ \breve{\tau}_K^r} , 1 \} } \leq \epsilon \right) + \Prob\left( \frac{\Vert \hat{g}(\hat{Q}^r(t))\rVert_{ \breve{\tau}_K^r} }{ \max\{ \lVert\hat{Q}^r(t)\rVert_{ \breve{\tau}_K^r} , 1 \} } > \epsilon \right).
		\end{align*}
		Again, due to Theorem~\ref{thm:MultiplicativeSSC} and~\eqref{eq:MultiplicativeSSCInitallyTheSameMoreGeneral},
		\begin{align*}
		\lim_{r \rightarrow \infty}  \Prob\left( \frac{\Vert \breve{g}(\hat{Q}^r(t))\rVert_{ \breve{\tau}_K^r} }{ \max\{ \lVert\breve{Q}^r(t)\rVert_{ \breve{\tau}_K^r} , 1 \} } > \epsilon \right) =0.
		\end{align*}
		Due to the system identities, we observe that for every $t \in [\breve{T}_K^r, \breve{\tau}_K^r]$,
		\begin{align*}
		\hat{Q}^r_\Sigma(t) = \hat{Q}^r_\Sigma(\breve{T}_K^r) + \frac{\sum_{\{i,j\} \in E} A_{ij}^r(t) -A_{ij}^r(\breve{T}_K^r)}{\sqrt{\lambda r/\mu}} - \frac{\sum_{j=1}^S D_j^r(t) - D_j^r(\breve{T}_K^r)}{\sqrt{\lambda r/\mu}}.
		\end{align*}
		Due to the definitions of the stopping times, we observe that for all $t \in [\breve{T}_K^r, \breve{\tau}_K^r]$, it holds that $\hat{Q}_j^r(t) \leq \beta$ for every $j \in \{1,\ldots,S\}$, and hence $L^r(t)=r$. Therefore, due to the properties of the Poisson process,
		\begin{align*}
		\sum_{\{i,j\} \in E} A_{ij}^r(t) -A_{ij}^r(\breve{T}_K^r) = \sum_{\{i,j\} \in E} \Lambda_{ij}^r(r t) -\Lambda_{ij}^r(r \breve{T}_K^r) \overset{d}{=} \Lambda(r t) - \Lambda(r \breve{T}_K^r)
		\end{align*}
		with $\{\Lambda(t),t\geq 0\}$ a Poisson process with rate $\lambda$. Moreover,
		\begin{align*}
		\sum_{j=1}^S D_j^r(t) - D_j^r(\breve{T}_K^r) \leq_{\textrm{ST}} \sum_{j=1}^S S_j( F_j^r t) -  S_j( F_j^r \breve{T}_K^r)   .
		\end{align*}
		Using the FCLT, we observe again that
		\begin{align*}
		\frac{\Lambda(r t) - \Lambda(r \breve{T}_K^r) - \sum_{j=1}^S S_j( F_j^r t) -  S_j( F_j^r \breve{T}_K^r) }{\sqrt{\lambda r/\mu}} \overset{d}{\rightarrow} \textrm{BM}(t) - \textrm{BM}(\breve{T}_K^r) -\gamma \mu (t -\breve{T}_K^r)
		\end{align*}
		as $r \rightarrow \infty$, where we recall that $\{\textrm{BM}(t), t\geq 0\}$ is a Brownian motion with zero mean and finite variance (independent of $K$). Finally, by definition of the stopping times, and in view of~\eqref{eq:ImplicationContionalEvent} and~\eqref{eq:BoundsQSigma}, it holds for all $t \in [\breve{T}_K^r, \breve{\tau}_K^r]$,
		\begin{align*}
		\hat{Q}^r_\Sigma(\breve{\tau}_K^r) - \hat{Q}^r_\Sigma(\breve{T}_K^r) \leq -(1-\epsilon)K - (- (1+\epsilon) K/2) = - \frac{1-3\epsilon}{2} K.
		\end{align*}
		We conclude that
		\begin{align*}
		\lim_{r \rightarrow \infty}  \Prob\left( \breve{\tau}_K^r \leq \hat{\tau}_K^r \leq T \; ; \frac{\Vert \hat{g}(\hat{Q}^r(t))\rVert_{ \breve{\tau}_K^r} }{ \max\{ \lVert\hat{Q}^r(t)\rVert_{ \breve{\tau}_K^r} , 1 \} } \leq \epsilon \right)\leq \Prob\left( \sup_{0 \leq s \leq t \leq T}  \textrm{BM}(t) - \textrm{BM}(s) -\gamma \mu (t -s) \leq -\frac{1-3\epsilon}{2} K \right),
		\end{align*}
		which also converges to zero as $K \rightarrow \infty$ since $\epsilon \in (0,1/3)$. Since this holds for both of the two summed probabilities in~\eqref{eq:SeperateTwoWaysOfCrossingBoundary}, we conclude that~\eqref{eq:CompactContainmentProperty} holds.
		\vspace{0.25cm}
		
		\noindent
		\textit{Case II: general case, i.e.\ when we do not assume that~$\hat{g}(\hat{Q}^r(0)) \rightarrow 0$ in probability as $r \rightarrow \infty$.}
		
		Let $M \in [1,\infty)$ be fixed and satisfy the property as in~\ref{thm:MultiplicativeSSC}. Since $|\hat{Q}^r(0)| \rightarrow \hat{Q}(0)$ for some random vector $\hat{Q}(0)$ and due to Lemma~\ref{lem:QueueLengthInitiallyBounded}, it holds that
		\begin{align*}
		\lim_{K \rightarrow \infty} \lim_{r \rightarrow \infty} \Prob\left( \lVert\hat{Q}^r(t)\rVert_{M y_{r,0}/\sqrt{r}} > K/2 \right) = 0.
		\end{align*}
		Therefore, we can assume that both $\hat{\tau}_K^r > \hat{T}_K^r >  M y_{r,0}/\sqrt{r}$ and $\breve{\tau}_K^r > \breve{T}_K^r > M y_{r,0}/\sqrt{r} $ (for $K$ large enough). The proof in this general case is then completely analogous to that in the previous one: $\lVert \hat{g}(\hat{Q}^r(t))\rVert_{ \breve{\tau}_K^r}$ needs to be replaced with $\sup_{t \in ( M y_{r,0}/\sqrt{r}, \breve{\tau}_K^r]} \lvert \hat{g}(\hat{Q}^r(t))\rvert$, and $\lVert \hat{g}(\hat{Q}^r(t))\rVert_{ \hat{\tau}_K^r}$ -- with $\sup_{t \in ( M y_{r,0}/\sqrt{r}, \hat{\tau}_K^r]} \lvert \hat{g}(\hat{Q}^r(t))\rvert$.
\hfill\end{proof}
		
		\noindent
		Next, we prove our main result stated as in Theorem~\ref{thm:StrongSSC}.
		
\begin{proof}[Proof of Theorem~\ref{thm:StrongSSC}]
		Equation~\eqref{eq:StrongSSCInititiallyEqual} is a consequence of Theorem~\ref{thm:MultiplicativeSSC} and Proposition~\ref{prop:CompactContainmentProperty}. To prove~\eqref{eq:StrongSSCInititiallyNonequal}, note that for every $\epsilon >0$ and any sequence $\{K^r,r \in \mathbb{N}\}$,
		\begin{align*}
		\Prob\left( \sup_{K^r/\sqrt{r} \leq t \leq T } \hat{g}(\hat{Q}^r(t))  > \epsilon \right) &= \Prob\left( \sup_{K^r/\sqrt{r} \leq t \leq T } \hat{g}(\hat{Q}^r(t))  > \epsilon \; ;  K^r > M y_{r,0}  \right) + \Prob\left( K^r  \leq M y_{r,0} \right) \\
		&\leq \Prob\left( \sup_{M y_{r,0}/\sqrt{r} \leq t \leq T } \hat{g}(\hat{Q}^r(t))  > \epsilon   \right) + \Prob\left( K^r  \leq M y_{r,0} \right).
		\end{align*}
		Theorem~\ref{thm:MultiplicativeSSC} and Proposition~\ref{prop:CompactContainmentProperty} imply that for every $\epsilon >0$,
		\begin{align*}
		\lim_{r \rightarrow \infty} \Prob\left( \sup_{M y_{r,0}/\sqrt{r} \leq t \leq T } \hat{g}(\hat{Q}^r(t))  > \epsilon \right) = 0 ,
		\end{align*}
		Moreover, for any sequence $\{K^r,r \in \mathbb{N}\}$ for which $K^r=o(\sqrt{r})$ with $K^r \rightarrow \infty$ as $r \rightarrow \infty$, it holds that
		\begin{align*}
		\lim_{r \rightarrow \infty} \Prob\left( K^r  \leq M y_{r,0} \right) = 0,
		\end{align*}
		by definition of $y_{r,0}$,~\eqref{eq:NoLNeeded} and since $\hat{Q}^r(0) \rightarrow \hat{Q}(0)$ for some random vector $\hat{Q}(0)$. We conclude that~\eqref{eq:StrongSSCInititiallyNonequal} holds as well.
\hfill\end{proof}

\end{document}